\tikzstyle{new edge style 2}=[-, thick]
\newcommand{\cmark}{\ding{51}}%
\newtheorem{theorem}{Theorem}[section]
\newtheorem{cor}[theorem]{Corollary}
\newtheorem{prop}[theorem]{Proposition}
\newtheorem{lem}[theorem]{Lemma}
\newtheorem{question}[theorem]{Question}
\newtheorem{example}[theorem]{Example}
\newenvironment{ex}{\begin{example} \rm}{\end{example}}
\newcommand{\N}{\mathbb{N}}
\newcommand{\R}{\mathbb{R}}
\newcommand{\T}{\mathcal{T}}
\renewcommand{\S}{\mathcal{S}}
\newcommand{\set}[2]{\{ #1:#2 \} }  
\newcommand{\genset}[1]{\langle #1 \rangle }  
\newcommand{\Inj}{\operatorname{Inj}}  
\newcommand{\Surj}{\operatorname{Surj}}  
\newcommand{\Homeo}{\operatorname{Homeo}}  
\newcommand{\Sym}{\operatorname{Sym}}  
\newcommand{\im}{\operatorname{im}}
\newcommand{\id}{\operatorname{id}}
\renewcommand{\P}{\mathcal{P}}
\newcommand{\Q}{\mathbb{Q}}
\newcommand{\Z}{\mathbb{Z}}
\newcommand{\B}{\mathscr{B}}
\renewcommand{\to}{\longrightarrow}
\newcommand{\pentagram}{\bullet}
\newcommand{\dom}{\operatorname{dom}}
\newcommand{\na}{\mathbin{\blacklozenge}}
\newcommand{\ar}{\operatorname{arity}}
\renewcommand{\P}{\mathcal{P}}
\newcommand{\mft}[2]{V_{#1, #2}}
\newcommand{\Aut}{\operatorname{Aut}}
\newcommand{\End}{\operatorname{End}}
\newcommand{\Poly}{\operatorname{Pol}}
\newcommand{\Hom}{\operatorname{Hom}}
\newcommand{\x}{\mathbf{x}}
\newcommand{\y}{\mathbf{y}}
\newcommand{\z}{\mathbf{z}}
\title[Automatic continuity, unique Polish topologies, and Zariski
topologies]{Automatic continuity, unique Polish topologies, and Zariski
topologies on monoids and clones}
\author{L. Elliott, J. Jonu\v{s}as, Z. Mesyan, J. D. Mitchell, M. Morayne,
and Y. P\'eresse}
\thanks{L. Elliott would like to acknowledge the support of Mathematics and
Statistics at the University of St Andrews for supporting their Ph.D. studies.
J. Jonu\v{s}as received funding from the Austrian Science Fund (FWF) through
Lise Meitner grant No M 2555.}
\begin{document}
\maketitle
\begin{abstract}
  In this paper we explore  the extent to which the algebraic structure of a
  monoid $M$ determines the topologies on $M$ that are compatible with its
  multiplication. Specifically we study the notions of automatic continuity;
  minimal Hausdorff or $T_1$ topologies; Polish semigroup topologies; and we
  formulate a notion of the Zariski topology for monoids and inverse monoids.

  If $M$ is a topological monoid such that every homomorphism from $M$ to a
  second countable topological monoid $N$ is continuous, then we say that $M$
  has \emph{automatic continuity}.  We show that many well-known, and
  extensively studied, monoids have automatic continuity with respect to a
natural semigroup topology,  namely: the full transformation monoid
  $\mathbb{N} ^ \mathbb{N}$; the full binary relation monoid $B_{\mathbb{N}}$;
  the partial transformation monoid $P_{\mathbb{N}}$; the symmetric inverse
  monoid $I_{\mathbb{N}}$; the monoid $\operatorname{Inj}(\mathbb{N})$
  consisting of the injective  transformations of $\mathbb{N}$; and the monoid
  $C(2^{\mathbb{N}})$ of continuous functions on the Cantor set
  $2^{\mathbb{N}}$.

  The monoid $\mathbb{N} ^ \mathbb{N}$ can be equipped with the product
  topology, where the natural numbers $\mathbb{N}$ have the discrete topology;
  this topology is referred to as the \textit{pointwise topology}.  We show
  that
  the pointwise topology on $\mathbb{N} ^ \mathbb{N}$, and its analogue on
  $P_{\mathbb{N}}$, are the unique Polish semigroup topologies on these
  monoids.
  The compact-open topology is the unique Polish semigroup topology on $C(2 ^
    \mathbb{N})$, and on the monoid $C([0, 1] ^ \mathbb{N})$ of continuous
  functions on the Hilbert cube $[0, 1] ^ \mathbb{N}$.	The symmetric inverse
  monoid $I_{\mathbb{N}}$ has at least 3 Polish semigroup topologies, but a
  unique Polish inverse semigroup topology.  The full binary relation monoid
  $B_{\mathbb{N}}$ has no Polish semigroup topologies, nor do the partition
  monoids. At the other extreme, $\operatorname{Inj}(\mathbb{N})$ and the
  monoid
  $\operatorname{Surj}(\mathbb{N})$ of all surjective transformations of
  $\mathbb{N}$ each have infinitely many distinct Polish semigroup topologies.

  We prove that the Zariski topologies on $\mathbb{N} ^ \mathbb{N}$,
  $P_{\mathbb{N}}$, and $\operatorname{Inj}(\mathbb{N})$ coincide with the
  pointwise topology; and we characterise the Zariski topology on
  $B_{\mathbb{N}}$.

  Along the way we provide many additional results relating to the Markov topology, the small
  index property for monoids, and topological embeddings of semigroups in
  $\mathbb{N}^{\mathbb{N}}$ and inverse monoids in $I_{\mathbb{N}}$.
        
        Finally, the techniques developed in this paper to prove the results
        about monoids, are applied to function clones. In particular, we show
        that: the full function clone has a unique Polish topology; the Horn
        clone, the polymorphism clones of the Cantor set and the countably
        infinite atomless Boolean algebra all have automatic continuity with
        respect to second countable function clone topologies.  
\end{abstract}

\tableofcontents

\section{Introduction}
\subsection{Summary of main theorems and background}

A \textit{semigroup} is a set together with an associative binary operation,
and a \textit{monoid} is a semigroup with an identity.	Broadly speaking, the
topic of this paper is to explore the extent to which the algebraic structure
of a semigroup, or monoid, $S$ determines the topologies on $S$ that are
compatible with its multiplication.  If $S$ is a semigroup and $\T$ is a
topology on the set $S$, then $\T$ is called a \textit{semigroup topology} if
the multiplication function from $S\times S$, with the product topology, to $S$
is continuous.	A semigroup together with a semigroup topology is referred to
as a \textit{topological semigroup}.  For many semigroups and monoids, the
problem of determining the admissible semigroup topologies seems to be
difficult.  In this article, we consider the following well-known, and
extensively studied, infinite monoids: the full transformation monoid $X ^ X$;
the full binary relation monoid $B_{X}$; the partial transformation monoid
$P_{X}$; the symmetric inverse monoid $I_{X}$; $\Inj(X)$  consisting of the
injective transformations of $X$; $\Surj(X)$ consisting of the surjective
transformations of $X$; and the monoids  $C(2^{\N})$ and $C([0, 1] ^ \N)$ of
continuous functions on the Cantor set $2^{\mathbb{N}}$ and the Hilbert cube
$[0, 1] ^ \N$, respectively.   The general aim being to classify the minimal and
maximal semigroup topologies, under some natural additional assumptions, such
as being $T_1$, second countable, or Polish, for example. 
Along the way we
will encounter a number of natural topologies that can be defined for arbitrary
semigroups (the Fr\'echet-Markov, Hausdorff-Markov, and Zariski topologies),
and discuss there interrelations (\cref{section-markov}); we develop
some general machinery (property \textbf{X}, \cref{section-property-x});
study the related notions of automatic continuity and the small index property
for semigroups (\cref{subsection-auto-cont}); and characterise when a
topological semigroup topologically embeds in $\N ^ \N$
(\cref{subsection-full-transf}) and when an inverse monoid topologically
embeds in $I_{\N}$  (\cref{subsection-sym-inv}). 
Finally, we foray slightly into the land of clones, defined in \cref{section-clones}, and prove analogues of several of the main results in the previous sections. 

This paper arose out of
a previous version \cite{mesyan2018topological} which considered topologies on
the full transformation monoid $X^X$.

The main theorems in this paper are summarised in~\cref{table-the-only} and~\cref{table-the-not-so-only};
the relevant definitions are given in the later sections of the paper.
Tables~\ref{table-the-only} and~\ref{table-the-not-so-only} are not exhaustive, in particular, many of the
results in the paper apply to sets of arbitrary cardinality, rather than just
$\N$. 

\begin{table}
  \begin{tabular}{ll|c|c|c|c|c|c|c|l}
                                          &                                      & AC
                                          & \# PST                               & Min    & Z      & F-M    & H-M    &
    \# PIST                                                                                                            \\ \hline
    full transformations                  & $\N ^ \N$                            & \cmark
                                          & 1                                    & \cmark & \cmark & \cmark &
    \cmark                                & -                                    &
    \textsection\ref{subsection-full-transf}
    \\
    binary relations                      & $B_{\N}$                             & \cmark
                                          & 0                                    & \cmark & \cmark & \cmark & \cmark &
    -                                     &
    \textsection\ref{subsection-binary}

    \\
    partial transformations               & $P_{\N}$                             & \cmark
                                          & 1                                    & \cmark & \cmark & \cmark & \cmark &
    -                                     &
    \textsection\ref{subsection-partial}

    \\
    partial bijections                    & $I_{\N}$                             & \cmark
                                          & $\geq 3$                             & \cmark & ?      & \cmark & \cmark
                                          & 1                                    &
    \textsection\ref{subsection-sym-inv}
    \\
    injective transformations             & $\Inj(\N)$                           & \cmark
                                          & $\geq\aleph_0$                       & ?      & \cmark & ?      & \cmark
                                          & -                                    &
    \textsection\ref{subsection-inj-surj}
    \\
    surjective transformations            & $\Surj(\N)$                          & ?
                                          & $\geq\aleph_0$                       & ?      & ?      & ?      & ?
                                          & -                                    &
    \textsection\ref{subsection-inj-surj}
    \\
    continuous functions on $[0, 1] ^ \N$ & $C([0, 1] ^ \N)$                     & ?
                                          & 1                                    & ?      & ?      & ?      & ?      &
    -                                     & \textsection\ref{subsection-hilbert}                                       \\
    continuous functions on $2 ^ \N$      & $C(2 ^ \N)$                          & \cmark
                                          & 1                                    & ?      & ?      & ?      & ?      &
    -                                     & \textsection\ref{subsection-cantor}                                        \\
  \end{tabular}
  \medskip

  \begin{tabular}{rcl}
    AC      & = & automatic continuity with respect to a second countable
    topology on the                                                       \\
            &   & semigroup and the class of second countable topological
    semigroups                                                            \\
    \# PST  & = & number of Polish semigroup topologies
    \\
    Min     & = & the minimum $T_1$ semitopological topology is known
    \\
    Z       & = & the semigroup Zariski topology is known
    \\
    F-M     & = & Fr\'echet-Markov topology is known
    \\
    H-M     & = & Hausdorff-Markov topology is known
    \\
    \# PIST & = & number of Polish inverse semigroup topologies
  \end{tabular}
  \medskip

  \caption[A summary of the main results about monoids in this paper]{A
    summary of the main results in this paper, and where they can
    be found in the document.}
  \label{table-the-only}
\end{table}

\begin{table}
  \begin{tabular}{ll|c|c|l}
                    &   & AC     & \# PCT          & \\ \hline
    full function clone& $\mathscr{O}_{\N}$ & \cmark & 1               &
    \textsection\ref{subsection-full-clone}                    \\
    Horn clone& $\mathscr{H}_{\N}$ & \cmark & $\geq \aleph_0$ &
    \textsection\ref{subsection-horn-clone}                    \\
    polymorphisms on \(2^\N\) &$\Poly(2^\N)$      & \cmark & $1$             &
    \textsection\ref{subsection-poly-clone}                    \\
    polymorphisms on \(B_\infty\) &$\Poly(B_\infty)$  & \cmark & $ 1$            &
    \textsection\ref{subsection-poly-clone}                    \\
  \end{tabular}
  \begin{tabular}{rcl}
    AC     & = & automatic continuity with respect to the class of second countable topological function clones\\
    \# PCT & = & number of Polish clone topologies                                             \\
  \end{tabular}
  \caption[A summary of the main results for clones in this paper]{A summary of
    the main results for clones in this paper, and where they can be
    found in the document.}
  \label{table-the-not-so-only}
\end{table}

The questions that are the focus of this paper originally arose in the context
of groups; we will briefly, and probably incompletely, discuss the history of
these questions for groups, and then for semigroups.  A topological semigroup
that happens to be a group is called a \textit{paratopological group},~i.e.\
multiplication is continuous but inversion need not be.  If $G$ is a
paratopological group and inversion is also continuous, then the topology is a
\textit{group topology}, and the group $G$ is a \textit{topological group}.

The problem of determining what topologies are compatible with the
multiplication and inversion in a group has an extensive history that can be
traced back to Cauchy and Markov.  Markov~\cite{Markov1950aa} asked whether
there exists an infinite group whose only group topologies are the trivial and
discrete topologies; such a group is called \textit{non-topologizable}.
Shelah~\cite{Shelah1980aa} showed that a non-topologizable group exists
assuming the continuum hypothesis; Hesse~\cite{Hesse1979aa} showed that the
assumption of the continuum hypothesis in Shelah's construction can be avoided.
Olshanskii~\cite{Olshanskii1980aa} showed that an infinite family of the Adian
groups (constructed by Adian~\cite{Adian1975aa} as a counter-example to the
Burnside problem) are non-topologizable. A more recent paper on this topic
is~\cite{Klyachko2013aa}.

Given that there exist groups where the only group topologies are trivial and
discrete, it is natural to ask if there are groups that admit a unique
non-trivial non-discrete group topology. The only results in this direction,
that we are aware of, require additional assumptions on the topology.

A topological space $X$ is \textit{Polish} if it is completely
metrizable and separable. A \textit{Polish semigroup} is just a topological
semigroup where the topology is Polish.
\textit{Polish groups} are defined analogously to Polish semigroups.
The fundamental results of R. M. Solovay~\cite{Solovay1970aa} and S.
Shelah~\cite{Shelah1984aa} show that it is consistent with ZF without choice
that any Polish group has a unique Polish group topology. The same is not true
in ZFC: the additive group of real numbers $\R$ is a Polish group with the
usual
topology on $\R$, as too is the additive group $\R ^ 2$. The two groups $\R$
and $\R ^ 2$ are isomorphic, since they are vector spaces of equal dimension
over the rationals $\Q$, but are not homeomorphic, since $\R ^ 2$ with any
point removed is connected and $\R$ with any point removed is not.

The Baire space $\N ^ \N$ is equipped with the product topology, where the
natural numbers $\N = \{0, 1, \ldots\}$ have the discrete topology.  This
topology on $\N ^ \N$, and the subspace topology that it induces on any subset
of $\N ^ \N$, will be referred to as the \textit{pointwise topology}
throughout this paper. The pointwise topology on $\N ^ \N$ is Polish. In
addition to being a topological space, $\N ^ \N$ is a monoid under composition
of functions; called the \textit{full transformation monoid}. Indeed, $\N ^
  \N$ is a topological semigroup with respect to the pointwise topology; see
\cref{subsection-full-transf}.

The analogue, in the context of groups, of the full transformation monoid $\N ^
  \N$ is the symmetric group $\Sym(\N)$.  Since $\Sym(\N)$ is a $G_{\delta}$
subset of $\N ^ \N$, the pointwise topology on $\Sym(\N)$ is also Polish (see
\cite[Theorem 3.11]{Kechris1995aa}). Furthermore, $\Sym(\N)$ with the pointwise
topology is a topological group.  In problem number 96 of the famous Scottish
Book~\cite{Mauldin2015aa}, Ulam asked if the symmetric group $\Sym(\N)$ on the
natural numbers has a locally compact Polish group topology.  Ulam's problem
was answered in the negative by Gaughan~\cite{Gaughan1967aa}, who also showed
that every $T_1$ group topology on $\Sym(\N)$ contains the pointwise topology.
It can be shown that if $\T_1$ and $\T_2$ are Polish group topologies on the
same group and $\T_1\subseteq \T_2$, then $\T_1 = \T_2$. It therefore follows
by Gaughan's result in~\cite{Gaughan1967aa} that $\Sym(\N)$ has a unique Polish
group topology. This result was strengthened in~\cite[Theorem
  6.26]{Rosendal2007ac}, where it was shown that the pointwise topology is the
only non-trivial separable group topology on $\Sym(\N)$.

Many further examples of groups are known to have unique Polish group
topologies: the groups of isometries of the Urysohn space and of the Urysohn
sphere~\cite{Sabok2019aa}; homeomorphism groups of a wide class of metric
spaces such as any separable metric manifold~\cite{Kallman1986aa} (including
the Hilbert cube $[0,1]^{\N}$) or the Cantor set $2^{\N}$); the automorphism
groups of many countable relational structures, in particular, Fra\"iss\'e
limits, such as the rational numbers $\Q$ under the usual linear order, the
countable random graph $R$~\cite{Hodges1993ab, Hrushovski1992aa}, and the group
of Lipschitz homeomorphisms of the Baire space~\cite{Kechris2007aa}.  Some
further references on the uniqueness of Polish group topologies
are~\cite{Chang2017aa, Gartside2008ab}.  On the other hand, many groups have
been shown to have no non-discrete Polish group topologies, for example: free
groups~\cite{Dudley1961aa};  the homeomorphism groups of the irrational and of
the rational numbers and the group of Borel automorphisms of
$\R$~\cite{Rosendal2005aa}. Additional references include~\cite{Cohen2016aa,
  Kallman1976aa, Kallman1979aa, Kallman1984aa, Kallman1984ab, Kallman2010aa,
  Rosendal2007ac}.

A topological group $G$ is said to have \emph{automatic continuity} with
respect to a class of topological groups, if every homomorphism from $G$ to a
member of that class is continuous.  Several of the topological groups
mentioned in the previous paragraph have automatic continuity with respect to
a natural class of topological groups, such as, for example, the separable
topological groups.  The group $\Aut([0, 1], \lambda)$ was shown to have
automatic continuity by Yaacov, Berenstein, and Melleray~\cite{Yaacov2010aa},
as was the infinite-dimensional unitary or orthogonal group to a separable
group; see Tsankov~\cite{Tsankov2013aa}; further references
are~\cite{Barbina2007,Bodirsky2014,Evans1990,Herwig1998aa,Lascar1991,Mann2016aa,Paolini2019,Paolini2020,  Rubin1994,Truss1989}.  If $G$ is a Polish group that has automatic continuity
with respect the class of Polish groups, then $G$ has a unique Polish group
topology.

Among many other interesting results in~\cite{Kechris2007aa}, it is shown that
a Polish group $G$ has automatic continuity with respect to the class of
separable groups whenever there is a comeagre orbit of the action by
conjugation of $G$ on $G ^ n$ for every $n\in \N$; such a group $G$ is said to
have \textit{ample generics}. Many permutation groups have ample generics: the
symmetric group $\Sym(\N)$; the automorphisms of the random
graph~\cite{Hodges1993ab, Hrushovski1992aa}; the free group on countably many
generators~\cite{Bryant1997aa}; some further references are~\cite{Herwig2000aa,
  Schmerl2005aa, Solecki2005aa}. Some groups do not have ample generics, for
instance: $\Aut(\Q, \leq)$~\cite{Kuske2001aa}, and the homeomorphism group
$\Homeo(\R)$ of the reals $\R$.

Another notion that is implied by automatic continuity for a group (with
respect to the class of separable groups) is the so-called small index
property.  A topological group $G$ has the \textit{small index property} if
every subgroup of index at most $\aleph_0$ is open; some authors assume this
condition for subgroups of index less than $2 ^ {\aleph_0}$. Small index
property does not necessarily imply automatic continuity; see
\cref{example-small-index-reals}.  A topological group $G$ has the small index
property if and only if every homomorphism from $G$ to $\Sym(\N)$ is
continuous.  Many groups were shown to have the small index property before
they were shown to have automatic continuity: $\Sym(\N)$~\cite{Dixon1986aa,
  Rabinovich1977aa, Semmes1981aa}, the automorphism groups of countable vector
spaces over finite fields \cite{Evans1986aa}; $\Aut(\Q, <)$ and the
automorphism group of the countably infinite atomless Boolean
algebra~\cite{Truss1989aa}; the automorphism group of the random graph and all
automorphism groups of $\omega$-categorical $\omega$-stable
structures~\cite{Hodges1993ab}; the automorphism groups of the Henson
graphs~\cite{Herwig1998aa}.

Analogously to the definition for groups, we will say that a topological
semigroup $S$ has \emph{automatic continuity} with respect to a class of
topological semigroups, if every homomorphism from $S$ to a member of that
class is continuous.  A related notion is that of \textit{automatic
  homeomorphicity}: a topological semigroup $S$ has automatic homeomorphicity
with respect to a class $\mathcal{C}$ of topological semigroups if every
isomorphism from $S$ to a member of $\mathcal{C}$ is necessarily a
homeomorphism.	In the literature, such as \cite{Behrisch2017aa,
  Bodirsky2018aa, Bodirsky2017aa, Pech2016aa, Pech2017aa, Pech:2018aa}, the
terms
automatic continuity and automatic homeomorphicity are  exclusively used for the case where $S$ is a
submonoid of the full transformation monoid $\N ^ \N$ with the pointwise
topology, and the class $\mathcal{C}$ consists of submonoids of $\N ^ \N$ with
the pointwise topology.

Examples of monoids with automatic homeomorphicity with respect to the class of
closed submonoids of $\N ^ {\N}$ include the full transformation monoid $\N ^
  \N$ and the monoids of: order-endomorphisms or order-embeddings of the
rational
numbers $\Q$; injective functions $\Inj(\N)$ on $\N$; self-embeddings of the
countable random graph (the Rado graph); endomorphisms of the countable random
graph; self-embeddings of the countable universal homogeneous digraph
(see~\cite{Behrisch2017aa, Bodirsky2017aa}).  Many of the results
in~\cite{Behrisch2017aa, Bodirsky2017aa} are extended to the corresponding
clones of polymorphisms; see also~\cite{Bodirsky2018aa, Pech2016aa, Pech2017aa,
  Pech:2018aa}.

\subsection{Preliminaries}
We aim to give the relevant definitions within the sections where they are used
whenever possible. In this section, we collect some technical definitions that
are required throughout the paper.

In this paper, functions are written to the right of their arguments,
and composed from left to right. If $S$ is a semigroup, then we use $S^1$ to
denote the monoid obtained by adjoining an identity $1$ to $S$. A set $X$ is
\textit{countable} if it is finite or has cardinality $\aleph_0$.

Suppose that $S$ is a semigroup and that $\T$ is a topology on the set $S$.
Then we
will say that $\T$ is \textit{left semitopological} for $S$ if for every $s\in
  S$ the function $\lambda_s: S \to S$ defined by $(x)\lambda_s = sx$ is
continuous. If every function $\rho_s: S\to S$ defined by $(x)\rho_s = xs$ is
continuous, then we say that $\T$ is \textit{right semitopological} for $S$. If
$\T$ is left and right semitopological for $S$, then we say that $\T$ is
\textit{semitopological} for $S$. Every semigroup topology is semitopological.
We refer to a semigroup with a left semitopological topology as a \textit{left
  semitopological semigroup}.  Analogous definitions can be made for
\textit{right semitopological} and \textit{semitopological}.

If $\T_1$ and $\T_2$ are topologies on a set $X$ (or indeed any collections of
subsets of $X$), then the least topology on $X$ containing $\T_1$ and $\T_2$
will be referred to as \textit{the topology generated by} $\T_1$ and $\T_2$. If
$X$ and $Y$ are topological spaces and $\B$ is a subbasis for $Y$, then $f:
  X\to Y$ is continuous if and only if $(B)f ^ {-1}$ is open for all $B\in \B$.
Hence if $\T_1$ and $\T_2$ are topological for $S$, then so too is the topology
generated by $\T_1$ and $\T_2$. The analogous statement holds if
``topological'' is replaced by ``semitopological'', ``right semitopological'',
or ``left semitopological''.

An \textit{inverse semigroup} is a semigroup $S$ such that for every $x\in S$
there exists a unique $y\in S$ such that $xyx = x$ and $yxy = y$; $y$ is
usually denoted by $x ^ {-1}$.	\textit{Inverse semigroup topologies} and
\textit{topological inverse semigroups} are defined analogously to group
topologies and topological groups. In other words, a topological inverse
semigroup is a topological semigroup with continuous inversion.  A
semitopological group or inverse semigroup is a semitopological semigroup that
happens to be a group or inverse semigroup, i.e.\ inversion is not assumed to
be continuous.

\section{Fr\'echet-Markov, Hausdorff-Markov, and Zariski topologies}
\label{section-markov}

In this section, we introduce, and prove some results about, three topologies
that can be defined for any semigroup, that, in some sense, arise from the
algebraic structure of that semigroup.

\subsection{Definitions}
The \textit{Fr\'echet-Markov topology} of a semigroup $S$ is the intersection
of all $T_1$ semigroup topologies on $S$. Similarly, the
\textit{Hausdorff-Markov topology} of a semigroup $S$ is the intersection of
all Hausdorff semigroup topologies for $S$.  The \textit{inverse
  Fr\'echet-Markov topology} and \textit{inverse Hausdorff-Markov topologies}
of
an inverse semigroup $S$ are similarly defined to be the intersections of all
$T_1$ and Hausdorff inverse semigroup topologies on $S$ respectively.  Clearly,
the Fr\'echet-Markov topology on a semigroup is contained in the
Hausdorff-Markov topology.  The intersection of $T_1$ topologies is $T_1$ and
the intersection of semigroup topologies is semitopological. Hence the
Fr\'echet-Markov and Hausdorff-Markov topologies of a semigroup $S$ are $T_1$
and $S$ is semitopological with respect to both.

It is well-known that if $G$ is a topological group, then every $T_0$ group
topology is also $T_{3\frac{1}{2}}$ and so, in particular, there is no
distinction in the theory of topological groups between the notions of inverse
Fr\'echet-Markov and inverse Hausdorff-Markov topologies.  In
\cref{ex-hausdorff} we show that these notions are distinct in the context of
topological semigroups and topological inverse semigroups.

By definition, the least $T_1$ topology that is semitopological for $S$ is
contained in the Fr\'echet-Markov topology of $S$.  These two topologies may
coincide, for example, if $S$ is a semigroup of right zeros, then every
topology on $S$ is a semigroup topology. Hence the least $T_1$ topology on a
semigroup $S$ of right zeros is the cofinite topology, and the Hausdorff-Markov
topology is the intersection of all the Hausdorff topologies on the set $S$,
and it is straightforward to verify that this is also the cofinite topology.
We will show that for  several well-known examples of semigroups the
Fr\'echet-Markov and  Hausdorff-Markov topologies coincide. In particular, if
$S$ is a semigroup where the least $T_1$ topology $\T$ that is semitopological
for $S$ happens to be a Hausdorff semigroup topology, then $\T$ is both the
Fr\'echet-Markov and Hausdorff-Markov topology for $S$.

Recall that the \textit{group Zariski} topology on a group $G$ is defined as
the
topology with subbasis consisting of the sets
\[
  \set{g\in G}{(g)\phi \not= 1_G}
\]
where $1_G$ is the identity of $G$ and $\phi: G\to G$ satisfies $(g)\phi = h_0
  g ^ {i_0} h_1 g ^ {i_1} \cdots h_{k-1} g ^ {i_{k-1}}$ for every $g\in G$ and
for some fixed $h_0, \ldots, h_{k-1}\in G$ and $i_0, \ldots, i_{k-1}\in \{-1,
  1\}$.  The group Zariski topology has been extensively studied in the
literature of topological groups; see, for example, \cite{bryant1977verbal,
  DIKRANJAN20101125, Markov1950aa}.

Analogously to the definition for groups, we define the \textit{semigroup
  Zariski} topology on a semigroup $S$ as the topology with subbasis consisting
of
\[
  \set{s\in S}{(s)\phi_0 \not= (s)\phi_1}
\]
where $\phi_0, \phi_1: S\to S$ are any functions such that $(s)\phi_0 = t_0 s
  t_1 s \cdots t_{k-1} s $, $k\geq 1$ for every $s\in S$ and for some fixed
$t_0,
  \ldots, t_{k-1}\in S ^ 1$, and $\phi_1$ is defined analogously for some fixed
$u_0, \ldots, u_{l-1} \in S ^ 1$. Similar to the notion for groups used in the
literature,  the complement of a set in the basis for the Zariski topology:
\[
  \set{s\in S}{(s)\phi_0 = (s)\phi_1}
\]
will be referred to as an \textit{elementary algebraic} set.

The \textit{inverse Zariski} topology for an inverse semigroup is defined
analogously, where the functions are of the form $(s)\phi = t_0  s ^ {i_0} t_1
  s ^ {i_1}\cdots t_{k-1}  s^ {i_{k-1}}$, where $i_0, \ldots, i_{k-1}\in \{-1,
  1\}$.  The inverse Zariski topology on a group coincides with the usual
notion
of the Zariski topology on a group; the inverse semigroup and semigroup Zariski
topologies on an inverse semigroup do not always coincide, see
\cref{cor-zariski-inverse}; the dual question comparing the group and
semigroup Zariski topology on group is open, see~\cref{question-zariski-2}.

We are primarily concerned with the semigroup Zariski topology in this paper,
which we will often refer to as the Zariski topology, if no ambiguity will
arise (such as when the semigroup under consideration is not inverse).

\subsection{Relations between the topologies}

In this section we prove some results about the Zariski, Fr\'echet-Markov, and
Hausdorff-Markov topologies, and their relationships to each other, for an
arbitrary semigroup.

\begin{prop}
  \label{prop-luke-1}
  The Zariski topology on any  semigroup $S$ is contained in every Hausdorff
  semigroup topology on $S$. Similarly, the inverse Zariski topology on any
  inverse semigroup $S$ is contained in every inverse Hausdorff semigroup
  topology on $S$.
\end{prop}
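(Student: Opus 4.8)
The plan is to verify that every subbasic open set of the Zariski topology is open in an arbitrary Hausdorff semigroup topology $\T$ on $S$; since the Zariski topology is generated by such sets, this suffices. So fix a subbasic set
\[
  U = \set{s \in S}{(s)\phi_0 \neq (s)\phi_1},
\]
where $(s)\phi_0 = t_0 s t_1 s \cdots t_{k-1} s$ and $(s)\phi_1 = u_0 s u_1 s \cdots u_{l-1} s$ for the fixed $t_i, u_j \in S^1$.

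The first, and essentially only, substantive step is to observe that each word map $\phi_0, \phi_1 \colon S \to S$ is $\T$-continuous. This is exactly where the hypothesis that $\T$ is a genuine semigroup topology (joint continuity of multiplication), rather than merely semitopological, enters. I would write $\phi_0$ as the composite of the diagonal map $\Delta \colon S \to S^k$, $s \mapsto (s, \ldots, s)$, which is continuous into the product topology, followed by the map $S^k \to S$ sending $(s_0, \ldots, s_{k-1})$ to $t_0 s_0 t_1 s_1 \cdots t_{k-1} s_{k-1}$; the latter is continuous because it is obtained by iterating the jointly continuous multiplication and inserting the fixed constants $t_i \in S^1$. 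Hence $\phi_0$, and symmetrically $\phi_1$, is $\T$-continuous.

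Next I would package these into a single map $\psi \colon S \to S \times S$ given by $(s)\psi = \big((s)\phi_0, (s)\phi_1\big)$, which is continuous since both coordinate functions are. Because $\T$ is Hausdorff, the diagonal $D = \set{(x, x)}{x \in S}$ is closed in $S \times S$ with the product topology, so its complement is open. Finally, $U = \set{s \in S}{(s)\psi \notin D}$ is precisely the $\psi$-preimage of this open complement, and is therefore $\T$-open. This establishes that the Zariski topology is contained in $\T$.

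For the inverse statement the argument is identical, except that the word maps now have the form $(s)\phi = t_0 s^{i_0} t_1 s^{i_1} \cdots t_{k-1} s^{i_{k-1}}$ with $i_j \in \{-1, 1\}$; in an inverse Hausdorff semigroup topology inversion is continuous, so these maps are again continuous (now using continuity of inversion together with joint continuity of multiplication), and the closed-diagonal argument goes through verbatim. The only place requiring care is the continuity of the word maps, but since these are just finite products with constants inserted, this is immediate from joint continuity of multiplication; the Hausdorff hypothesis itself is needed only at the final step, to guarantee that $D$ is closed.
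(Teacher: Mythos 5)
Your proof is correct and is essentially the paper's own argument: the paper establishes continuity of the word maps $\phi_0,\phi_1$ by the same decomposition (a coordinatewise-continuous map into a finite power of $S$ followed by iterated multiplication) and then invokes the standard fact that $\set{x}{(x)f=(x)g}$ is closed for continuous $f,g$ into a Hausdorff space, which is exactly the closed-diagonal argument you spell out via $\psi=(\phi_0,\phi_1)$. Your treatment of the inverse case (continuity of inversion handling the $s^{-1}$ factors) likewise matches the paper's remark that the inverse semigroup proof is dual.
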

\begin{proof}
  We prove the result for semigroups without inversion, the inverse semigroup
  proof is dual.  Recall that if $X$ is a Hausdorff topological space and $f,
    g: X\to X$ are continuous functions, then the set
  \[
    \set{x\in X}{(x)f = (x)g}
  \]
  is closed in $X$. Let $\T$ denote a Hausdorff semigroup topology on $S$.  Let
  $\phi_0:S \to S$ be from the definition of a subbasic open set for the
  Zariski topology. In other words, $(s)\phi_0 = t_0 s t_1 s \cdots t_{k-1} s
  $, $k\geq 1$ for every $s\in S$ and for some fixed $t_0, \ldots, t_{k-1}\in S
    ^ 1$.  If $m = 2k$, then the function $\psi_0: S\to S ^ m$ defined by
  \[
    (s)\psi_0 = (t_0, s, t_1, s, t_2, \ldots, t_{k-1},s )
  \]
  is continuous in every coordinate, and hence is continuous with respect to
  $\T$. The function $\phi_0$ is then the composite of $\psi_0$ and the
  multiplication function from $S ^ m$ to $S$, and is hence continuous with
  respect to $\T$. The function $\phi_1$ is continuous by an analogous
  argument.  It follows that every subbasic open set $\set{s\in S}{(s)\phi_0
      \not= (s)\phi_1}$ for the Zariski topology is open in $\T$.
\end{proof}

\begin{prop}\label{prop-luke-2}
  The Zariski topology is semitopological on any semigroup.
\end{prop}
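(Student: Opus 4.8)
The plan is to verify the two defining conditions of a semitopological topology directly, namely that every left translation $\lambda_a\colon x\mapsto ax$ and every right translation $\rho_a\colon x\mapsto xa$ is continuous with respect to the Zariski topology $\T$. By the subbasis criterion for continuity recalled in the preliminaries, it suffices to show that $(U)\lambda_a^{-1}$ and $(U)\rho_a^{-1}$ are open for each subbasic set $U=\set{s\in S}{(s)\phi_0\neq(s)\phi_1}$, where $(s)\phi_0=t_0 s t_1 s\cdots t_{k-1}s$ and $(s)\phi_1$ is of the analogous form. The organising idea is that the class of word functions used to define the subbasis is closed under pre-composition by translations, so that these preimages are again subbasic; I would isolate this as the one fact to check and then read off continuity.

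First I would treat the left translation. Pre-composing with $\lambda_a$ and using associativity gives $(x)(\lambda_a\phi_0)=(ax)\phi_0=(t_0a)\,x\,(t_1a)\,x\cdots(t_{k-1}a)\,x$, which still has $k\geq 1$ occurrences of $x$ and coefficients $t_ia\in S\subseteq S^1$, hence is again a word function of the required shape; the same holds for $\lambda_a\phi_1$. Therefore $(U)\lambda_a^{-1}=\set{x\in S}{(x)(\lambda_a\phi_0)\neq(x)(\lambda_a\phi_1)}$ is itself subbasic, and $\lambda_a$ is continuous. For the right translation the analogous substitution yields $(x)(\rho_a\phi_0)=(xa)\phi_0=t_0\,x\,(at_1)\,x\cdots(at_{k-1})\,x\,a$, and again $(U)\rho_a^{-1}=\set{x\in S}{(x)(\rho_a\phi_0)\neq(x)(\rho_a\phi_1)}$; the goal is to recognise these as subbasic as well.

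The main obstacle is exactly this right-hand computation: on the left the translating element is absorbed into the leading coefficients, but on the right it survives as a \emph{trailing} factor $a$ after the last occurrence of $x$, so $x\mapsto t_0 x(at_1)x\cdots(at_{k-1})x\,a$ ends in a constant rather than in the variable. To complete the argument I would use that the class of functions defining the Zariski subbasis is closed under right translation as well as left --- that is, that words are allowed to carry a trailing constant (equivalently, that the defining class is taken in its symmetric form of arbitrary unary word functions with constants from $S^1$). With the class recognised as closed under pre-composition by both $\lambda_a$ and $\rho_a$, the preimages above are subbasic, both translations are continuous, and $\T$ is semitopological; the left translation case, by contrast, needs no such care.
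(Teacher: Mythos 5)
Your argument is correct and is essentially the paper's proof: the paper likewise substitutes into the word functions and absorbs the translating element into the coefficients, concluding from $t_0(yx)t_1(yx)\cdots t_{k-1}(yx)=(t_0y)\,x\,(t_1y)\,x\cdots(t_{k-1}y)\,x$ that preimages of subbasic sets under $\lambda_y$ are again subbasic. Where you go beyond the paper is the right-translation case: the paper disposes of it with the single sentence that the proof is ``dual,'' whereas you carry out the computation and notice that it is \emph{not} literally dual, since $(xa)\phi_0=t_0\,x\,(at_1)\,x\cdots(at_{k-1})\,x\,a$ ends in the constant $a$, while the paper's displayed word form $t_0st_1s\cdots t_{k-1}s$ always ends in the variable.

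Your caution here is fully warranted, and the fix you propose --- taking the defining class in its symmetric form, with a trailing constant from $S^1$ permitted --- is in fact necessary, not merely convenient. With the asymmetric subbasis read literally, the proposition fails: let $S=\{a,y,m,0\}$ with $ay=m$ and every other product equal to $0$ (this is associative, as all products of three elements are $0$). The only functions of the form $s\mapsto t_0st_1s\cdots t_{k-1}s$ on this $S$ are the identity, $s\mapsto as$, and the constant $0$ (every word with $k\geq 2$ vanishes identically), so the Zariski-open sets are exactly $\varnothing$, $\{y\}$, $\{a,y,m\}$, and $S$; yet $\bigl(\set{s\in S}{s\neq 0}\bigr)\rho_y^{-1}=\set{s\in S}{sy\neq 0}=\{a\}$ is not among them, so $\rho_y$ is discontinuous. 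Once trailing constants are admitted, $\set{s\in S}{sy\neq 0}$ is itself subbasic, and both of your substitution computations land back in the defining class, completing the proof exactly as you outline. Note that the symmetric reading is the one the paper tacitly uses elsewhere anyway: in the proof of \thref{prop-all-topo}, the image of an elementary algebraic set under an anti-automorphism is defined by reversed words ending in constants and is asserted there to be elementary algebraic. So your proof is the paper's proof done carefully; just state explicitly that you adopt the symmetric form of the Zariski subbasis.
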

\begin{proof}
  Let $S$ be any semigroup, and let $y\in S$ be arbitrary. We will show that
  $\lambda_y : S\to S$ defined by $(x)\lambda_y = yx$ is continuous.  Suppose
  that $\phi_0, \phi_1:S\to S$ are defined by
  \begin{eqnarray*}
    (s)\phi_0 &= &t_0 s t_1 s \cdots t_{k-1} s	\\
    (s)\phi_1 &= &u_0 s  u_1 s	\cdots u_{l-1} s
  \end{eqnarray*}
  for some  $k, l\geq 1$, for every $s\in S$, and for some fixed $t_0, \ldots,
    t_{k-1}, u_0, \ldots, u_{l-1}\in S ^ 1$. Then
  \begin{eqnarray*}
    \set{s\in S}{(s)\phi_0 \not=(s)\phi_1}\lambda_y ^ {-1}
    &  = & \set{x\in S}{(yx)\phi_0 \not=(yx)\phi_1} \\
    & = & \set{x \in S}{t_0 (yx) \cdots t_{k-1} (yx)
      \not= u_0 (yx)  \cdots u_{l-1} (yx)
    }.
  \end{eqnarray*}
  If
  \begin{eqnarray*}
    (x)\phi_0' &= &t_0 (yx) t_1 (yx) \cdots t_{k-1} (yx) \\
    (x)\phi_1' &= & u_0 (yx) u_1 (yx) \cdots u_{l-1} (yx),
  \end{eqnarray*}
  then it is clear that $\set{x\in S}{(x)\phi_0' \not=(x)\phi_1'}$ is open and
  so $S$ is left semitopological with respect to the Zariski topology.

  The proof that $S$ is right semitopological with respect to the Zariski
  topology is dual.
\end{proof}

See \cref{figure-the-only} for the Hass\'e diagram of the containment of
the Hausdorff-Markov, Fr\'echet-Markov, Zariski, and minimal $T_1$ topology
that is semitopological for a given semigroup.

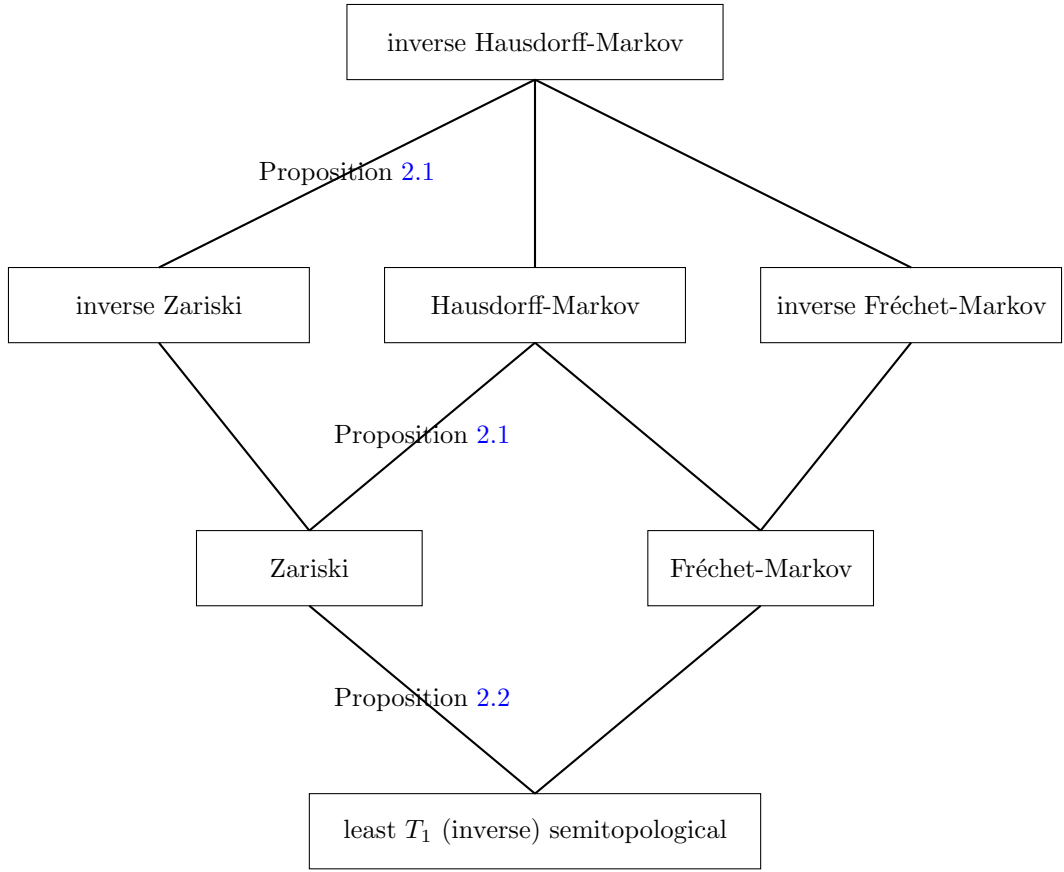
\begin{figure}
  \begin{tikzpicture}
    \begin{pgfonlayer}{nodelayer}
      \draw (-2.5,6.5) rectangle (2.5,7.5) node[pos=.5] {inverse
          Hausdorff-Markov};
      \draw (-7,3) rectangle (-3,4) node[pos=.5] {inverse Zariski};
      \draw (7,3) rectangle (3,4) node[pos=.5] {inverse Fr\'echet-Markov};

      \draw (-2,3) rectangle (2,4) node[pos=.5] {Hausdorff-Markov};
      \draw (-4.5,-0.5) rectangle (-1.5,0.5) node[pos=.5] {Zariski};
      \draw (4.5,-0.5) rectangle (1.5,0.5) node[pos=.5] {Fr\'echet-Markov};
      \draw (-3,-3) rectangle (3,-4) node[pos=.5] {least $T_1$ (inverse)
          semitopological};

      \node [style=none] (1) at (0, 3) {};

      \node [style=none] (2) at (-3, 0.5) {};
      \node [style=none] (5) at (-3, -0.5) {};

      \node [style=none] (3) at (3, 0.5) {};
      \node [style=none] (6) at (3, -0.5) {};

      \node [style=none] (4) at (0, -3) {};

      \node [style=none] (7) at (-5, 3) {};
      \node [style=none] (8) at (5, 3) {};
      \node [style=none] (9) at (-5, 4) {};
      \node [style=none] (10) at (0, 4) {};
      \node [style=none] (11) at (5, 4) {};
      \node [style=none] (12) at (0, 6.5) {};
    \end{pgfonlayer}
    \begin{pgfonlayer}{edgelayer}
      \draw [style=new edge style 2] (2.center) to node [midway]
      {\cref{prop-luke-1}} (1.center);
      \draw [style=new edge style 2] (4.center) to node [midway]
      {\cref{prop-luke-2}} (5.center);
      \draw [style=new edge style 2] (3.center) to (1.center);
      \draw [style=new edge style 2] (4.center) to (6.center);

      \draw [style=new edge style 2] (2.center) to  (7.center);
      \draw [style=new edge style 2] (3.center) to  (8.center);
      \draw [style=new edge style 2] (9.center) to node [midway]
      {\cref{prop-luke-1}} (12.center);
      \draw [style=new edge style 2] (10.center) to  (12.center);
      \draw [style=new edge style 2] (11.center) to (12.center);
    \end{pgfonlayer}
  \end{tikzpicture}
  \caption{A diagram of containments of the Hausdorff-Markov, Fr\'echet-Markov,
    and Zariski topologies for a given (inverse) semigroup; a line from one
    topology to another indicates that for every (inverse) semigroup the
    topology below is contained in the one above. Note that the absence of a
    line does not necessarily indicate that one topology is not contained in
    the other. A label on an edge shows where the indicated containment is
    shown in the paper, trivial containments are not labelled. It is shown that
    the least $T_1$ topology that is semitopological for an inverse semigroup
    $S$ coincides with the least $T_1$ topology that is inverse semitopological
    for $S$ in \cref{cor-inverse}.
  }
  \label{figure-the-only}
\end{figure}

An \textit{anti-homomorphism} from a semigroup $S$ to a semigroup $T$ is a
function $\phi: S \to T$ such that $(st)\phi=(t)\phi\ (s)\phi$ for all $s,t\in
  S$.  An \textit{anti-automorphism} of a semigroup $S$ is a bijective
anti-homomorphism from $S$ to $S$.

\begin{prop}\label{prop-anti-automorph}
  Let $S$ be a semigroup, let $\phi:S\to S$ be an automorphism or
  anti-automorphism, and let $\T$ be a topology on $S$. Then the following
  hold:
  \begin{enumerate}[\rm (i)]
    \item\label{prop-anti-automorph-i}
          if $\T$ is semitopological for $S$, then  $(\T)\phi =
            \set{(U)\phi}{U\in \T}$ is semitopological for $S$;

    \item\label{prop-anti-automorph-ii}
          if $\T$ is topological for $S$, then $(\T)\phi$
          is topological for $S$;

    \item\label{prop-anti-automorph-iii}
          if $\mathscr{B}$ is a subbasis for $\T$, then $\set{(B)\phi}{B \in
              \mathscr{B}}$ is a subbasis for $(\T)\phi$.
  \end{enumerate}
\end{prop}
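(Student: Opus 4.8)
The plan is to first establish part (iii), since it is the structural heart of the statement, and then deduce parts (i) and (ii) from it together with direct unwinding of the definitions. For part (iii), the key observation is that applying a bijection $\phi$ to every set in a subbasis produces a subbasis for the image topology, because $\phi$ carries arbitrary unions and finite intersections to arbitrary unions and finite intersections. Concretely, if $\mathscr{B}$ is a subbasis for $\T$, then the collection of finite intersections of members of $\mathscr{B}$ forms a basis, and every element of $\T$ is a union of such basic sets. Since $\phi$ is a bijection, $(\bigcup_i U_i)\phi = \bigcup_i (U_i)\phi$ and $(U \cap V)\phi = (U)\phi \cap (V)\phi$, so the image under $\phi$ of a basic set for $\T$ is a finite intersection of sets of the form $(B)\phi$, and every member of $(\T)\phi$ is a union of these. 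This shows $\{(B)\phi : B \in \mathscr{B}\}$ generates exactly $(\T)\phi$ as a subbasis. Note that for this argument it is immaterial whether $\phi$ is an automorphism or an anti-automorphism; only bijectivity is used.

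For part (i), suppose $\T$ is semitopological for $S$; I want to show each left and right translation is continuous with respect to $(\T)\phi$. The clean way is to reduce continuity to the subbasis supplied by part (iii): a map into a space with subbasis $\B'$ is continuous iff the preimage of each subbasic set is open, a fact already stated in the Preliminaries. So it suffices to check that for each $s \in S$, the preimage under $\lambda_s$ (and under $\rho_s$) of each set $(B)\phi$ is $(\T)\phi$-open. The main point is a conjugation identity: when $\phi$ is an automorphism, $\phi^{-1}\lambda_s\phi = \lambda_{(s)\phi}$ (since $(x)\phi^{-1}\lambda_s\phi = (s\,(x)\phi^{-1})\phi = (s)\phi\,(x) = (x)\lambda_{(s)\phi}$, using that $\phi$ preserves products), and when $\phi$ is an anti-automorphism the roles of $\lambda$ and $\rho$ are swapped, i.e. $\phi^{-1}\lambda_s\phi = \rho_{(s)\phi}$. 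In either case the translations of $S$ with respect to $(\T)\phi$ are conjugates by $\phi$ of translations with respect to $\T$, and since $\phi:(S,\T)\to(S,(\T)\phi)$ is by construction a homeomorphism, continuity of the original translations transfers directly. Because $\T$ being semitopological gives continuity of all $\lambda_s$ and all $\rho_s$ simultaneously, the swap caused by an anti-automorphism does no harm: the full set of translations is continuous either way.

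For part (ii), the argument is entirely parallel but upgraded from one variable to two: I verify that multiplication $S\times S \to S$ is continuous with respect to $(\T)\phi$ by conjugating through $\phi$. Writing $m$ for multiplication, the identity $(\phi^{-1}\times\phi^{-1})\,m\,\phi = m$ holds when $\phi$ is an automorphism, while for an anti-automorphism one has $(\phi^{-1}\times\phi^{-1})\,m\,\phi = m\circ\operatorname{swap}$, where $\operatorname{swap}$ exchanges the two coordinates of $S\times S$; the coordinate swap is a homeomorphism of $S\times S$ with the product topology, so it preserves continuity of $m$. Thus multiplication with respect to $(\T)\phi$ factors as a composite of the homeomorphism $\phi^{-1}\times\phi^{-1}$ (a product of homeomorphisms, hence a homeomorphism for the product topologies), the original continuous multiplication, and the homeomorphism $\phi$, and is therefore continuous. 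I expect the only genuinely delicate point to be bookkeeping the anti-automorphism case correctly—ensuring that reversing the order of multiplication is absorbed by the coordinate swap and by the interchange of $\lambda$ and $\rho$—rather than any topological subtlety; everything else is a routine transport of structure along the homeomorphism $\phi$.
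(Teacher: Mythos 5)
Your proof is correct, and it takes a genuinely different route from the paper for parts (i) and (ii). The paper argues directly with neighbourhoods: given $(s)\phi\,(t)\phi\in (U)\phi$ it translates back to $ts\in U$, invokes (semi)continuity of multiplication for $\T$ to produce $V_s$ (and $V_t$) with $tV_s\subseteq U$ (resp.\ $V_tV_s\subseteq U$), and pushes these forward through $\phi$ -- note the order reversal $(V_s)\phi\,(t)\phi=(tV_s)\phi$ is absorbed pointwise rather than through any functional identity. You instead observe that $\phi:(S,\T)\to(S,(\T)\phi)$ is a homeomorphism by construction and transport the structure: the conjugation identities $\phi^{-1}\lambda_s\phi=\lambda_{(s)\phi}$ (automorphism) and $\phi^{-1}\lambda_s\phi=\rho_{(s)\phi}$ (anti-automorphism), and for multiplication $m=(\phi^{-1}\times\phi^{-1})\,m\,\phi$ versus $m=(\phi^{-1}\times\phi^{-1})\,\mathrm{swap}\,m\,\phi$, express the new translations and multiplication as composites of continuous maps. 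Your identities check out in the paper's left-to-right composition convention (modulo a small typo where $(s)\phi\,x$ appears as ``$(s)\phi\,(x)$''), and your remark that semitopologicity of $\T$ on \emph{both} sides absorbs the $\lambda$/$\rho$ interchange is exactly the point that makes the anti-automorphism case work; the paper handles the same point implicitly by quoting left semitopologicity to prove right semitopologicity of $(\T)\phi$. What your approach buys is a uniform, conceptually transparent treatment of both the automorphism and anti-automorphism cases (the paper proves only the anti case and dismisses the other as ``even more straightforward''), with the coordinate swap isolated as the sole effect of anti-multiplicativity; what the paper's approach buys is complete elementarity, needing no functional bookkeeping. For part (iii) the two arguments coincide: both use only that a bijection carries unions and finite intersections to unions and finite intersections. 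One cosmetic remark: your opening suggestion that (i) reduces to checking preimages of the subbasic sets from (iii) is never actually used, since the conjugation argument gives continuity outright; you could delete it without loss.
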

\begin{proof}
  We will prove the proposition in the case when $\phi$ is an
  anti-automorphism.  The other case is even more straightforward.\medskip

  \noindent \textbf{(\ref{prop-anti-automorph-i}).} 
  We will show that $(\T)\phi$ is right
  semitopological. The proof that $(\T)\phi$ is left semitopological follows by symmetry.  Let $U\in \T$ be
  arbitrary. If $(s)\phi, (t)\phi \in S$ are such that $(s)\phi\ (t)\phi \in
    (U)\phi$, then, since $\phi$ is an anti-automorphism, $(s)\phi\
    (t)\phi=(ts)\phi \in (U)\phi$ and so $ts \in U$. Since $\T$ is left
  semitopological for $S$, there exists an open neighbourhood $V_s \in \T$ of
  $s$, such that $tV_s \subseteq U$. Thus $(V_s)\phi$ is an open neighbourhood
  of $(s)\phi$	under $(\T)\phi$ and $(V_s)\phi\ (t)\phi=(tV_s)\phi\subseteq
    (U)\phi$.\medskip

  \noindent \textbf{(\ref{prop-anti-automorph-ii}).} 
  Let $U\in \T$ be arbitrary. If $(s)\phi, (t)\phi \in S$ are such that
  $(s)\phi\ (t)\phi \in (U)\phi$, then, since $\phi$ is an anti-automorphism,
  $(s)\phi\ (t)\phi=(ts)\phi \in (U)\phi$ and so $ts \in U$. Since $\T$ is a
  semigroup topology, there exist open neighbourhoods $V_s, V_t \in \T$ of $s$
  and $t$, respectively, such that $V_tV_s \subseteq U$. Thus $(V_s)\phi$ and
  $(V_t)\phi$ are open neighbourhoods of $(s)\phi$  and $(t)\phi$ under
  $(\T)\phi$ and $(V_s)\phi\ (V_t)\phi=(V_tV_s)\phi\subseteq (U)\phi$.\medskip

  \noindent \textbf{(\ref{prop-anti-automorph-iii}).} 
  By definition, $\set{(B)\phi}{B \in \mathscr{B}} \subseteq (\T)\phi$.
  Furthermore, $\phi$ is a bijection, so the unions and (finite) intersections
  of images are images of unions and (finite) intersections.
\end{proof}

\begin{prop}\label{prop-all-topo}
  Every automorphism, or anti-automorphism, of a semigroup $S$ is continuous
  with respect to each of the  Zariski, Fr\'echet-Markov, and Hausdorff-Markov
  topologies for $S$, as well as the least $T_1$ topology that is
  semitopological for $S$.
\end{prop}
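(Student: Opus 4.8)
The plan is to show that each of the four topologies is preserved under an automorphism or anti-automorphism $\phi$ in the sense that $(\T)\phi = \T$, from which continuity of $\phi$ is immediate (since $(U)\phi^{-1}$ is then open for every open $U$, as $\phi^{-1}$ is itself an automorphism or anti-automorphism). The key observation is that each of these topologies is defined purely in terms of the algebraic structure of $S$, and is characterised as either the smallest or the intersection of a class of topologies closed under the map $\T \mapsto (\T)\phi$. I will handle the Zariski topology directly via its subbasis, and the other three via the Markov-style minimality/intersection descriptions, leaning on \thref{prop-anti-automorph} throughout.

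First I would treat the Zariski topology. By \thref{prop-anti-automorph}(iii), if $\mathscr{B}$ is the defining subbasis for the Zariski topology, then $\set{(B)\phi}{B\in\mathscr{B}}$ is a subbasis for $(\T)\phi$. So it suffices to show that the image under $\phi$ of a subbasic Zariski-open set is again subbasic Zariski-open. A subbasic set has the form $\set{s\in S}{(s)\psi_0\neq(s)\psi_1}$ where $(s)\psi_0 = t_0 s t_1 s\cdots t_{k-1}s$ and $(s)\psi_1 = u_0 s u_1 s\cdots u_{l-1}s$. Since $\phi$ is a bijection, $(s)\psi_0\neq(s)\psi_1$ if and only if $((s)\psi_0)\phi \neq ((s)\psi_1)\phi$, and for $s' = (s)\phi$ one computes $((s)\psi_0)\phi$ as a word in $s'$ with coefficients drawn from the $\phi$-images of the $t_i$ (for an automorphism) or the reversed $\phi$-images (for an anti-automorphism, using $(ab)\phi=(b)\phi(a)\phi$). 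In either case this rewriting exhibits the image as another subbasic Zariski set, so $(\T_{\mathrm{Zar}})\phi = \T_{\mathrm{Zar}}$.

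Next I would treat the Fr\'echet-Markov and Hausdorff-Markov topologies together. The Fr\'echet-Markov topology is the intersection of all $T_1$ semigroup topologies on $S$. By \thref{prop-anti-automorph}(ii) the map $\T\mapsto(\T)\phi$ sends semigroup topologies to semigroup topologies, and since $\phi$ is a bijection it preserves the $T_1$ separation axiom; hence $\T\mapsto(\T)\phi$ is a bijection from the family of $T_1$ semigroup topologies to itself. Because taking images under a bijection commutes with arbitrary intersections, the intersection of this family is fixed by $\phi$, giving $(\T_{\mathrm{FM}})\phi = \T_{\mathrm{FM}}$; the identical argument with ``semigroup'' replaced by ``Hausdorff semigroup'' handles the Hausdorff-Markov case. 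For the least $T_1$ semitopological topology I would argue by minimality rather than intersection: \thref{prop-anti-automorph}(i) shows $\T\mapsto(\T)\phi$ preserves being $T_1$ and semitopological, so it permutes this class; if $\T$ is the least such topology then $(\T)\phi$ is also least, and by uniqueness of a minimum $(\T)\phi=\T$.

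The only mild subtlety, and the one place to be careful, is the anti-automorphism bookkeeping in the Zariski step: one must verify that reversing the word and applying $\phi$ to each coefficient genuinely produces coefficients back in $S^1$ and a word of the same permitted shape (in particular $1\phi = 1$ so that coefficients equal to the adjoined identity are handled correctly). This is routine but is the crux of the argument; everything else reduces to the structural facts already established in \thref{prop-anti-automorph} and the definitions of the four topologies.
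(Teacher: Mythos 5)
Your proposal is correct and takes essentially the same route as the paper: the paper likewise shows that an (anti-)automorphism carries elementary algebraic sets to elementary algebraic sets via exactly your word-reversal computation (yours is the complementary, subbasic-open-set version), and it handles the Fr\'echet-Markov, Hausdorff-Markov, and least $T_1$ semitopological topologies by observing, via \thref{prop-anti-automorph}, that $\T\mapsto(\T)\phi$ permutes the defining family of topologies and hence fixes their intersection. Your only cosmetic deviations---arguing the least $T_1$ case by uniqueness of the minimum rather than by commuting $\phi$ with the intersection, and explicitly flagging the $S^1$-coefficient bookkeeping---do not change the argument.
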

\begin{proof}
  We first consider the Zariski topology. It suffices to show that the image of
  an elementary algebraic set under an automorphism or anti-automorphism is
  also elementary algebraic.  Let
  \[F=\{s\in S:t_0 s t_1 s  \cdots t_{k-1} s =u_0 s
    u_1
    s  \cdots u_{l-1} s\}\]
  be an elementary algebraic set. Let $\phi:S\to S$ be an anti-automorphism.
  Then
  \begin{align*}
    (F)\phi & =\{(s)\phi\in S:t_0 s t_1 s  \cdots t_{k-1} s =u_0 s  u_1 s
    \cdots u_{l-1} s \}                                                     \\
            & =\{s\in S:t_0 (s\phi^{-1})  t_1 (s\phi^{-1}) \cdots t_{k-1}
    (s\phi^{-1}) =u_0 (s\phi^{-1}) u_1 (s\phi^{-1})
    \cdots u_{l-1} (s\phi^{-1}) \}
    \\
            & =\{s\in S:(t_0 (s\phi^{-1})  t_1 (s\phi^{-1})  \cdots t_{k-1}
    (s\phi^{-1}))\phi=(u_0 (s\phi^{-1})  u_1 (s\phi^{-1})  \cdots u_{l-1}
    (s\phi^{-1}) )\phi\}                                                    \\
            & =\{s\in S: s t_{k-1}\phi s \cdots t_1\phi s t_0\phi= s
    u_{l-1}\phi s  \cdots u_1\phi s u_0\phi\},
  \end{align*}
  which is elementary algebraic. Similarly if $\phi$ is an automorphism of $S$,
  then $(F)\phi$ is also elementary algebraic.

  Let $\phi$ be an automorphism or anti-automorphism of $S$.  If $\T$ is any
  $T_1$ semigroup topology on $S$, then, by \cref{prop-anti-automorph},
  $(\mathcal{T})\phi$ is also a $T_1$ semigroup topology for $S$.  If
  $\mathfrak{T}$ is the collection of all $T_1$ semigroup topologies on $S$,
  then
  \[\left(\bigcap_{\T\in \mathfrak{T}} \T\right) \phi = \bigcap_{\T\in
      \mathfrak{T}} (\T \phi) = \bigcap_{\T\in \mathfrak{T}} \T\]
  and thus $\phi$ is continuous with respect to the Fr\'echet-Markov topology,
  which equals $\bigcap_{\T\in \mathfrak{T}} \T$.  The proofs for the
  Hausdorff-Markov topology and the least $T_1$ topology that is
  semitopological for $S$ are similar (the latter using
  \cref{prop-anti-automorph}\eqref{prop-anti-automorph-i}.
\end{proof}

The analogue of \cref{prop-all-topo} holds for the inverse Zariski, inverse
Fr\'echet-Markov, and inverse Hausdorff-Markov, and the proof is similar.

\begin{cor}\label{cor-inverse}
  If $S$ is an inverse semigroup, then inversion is continuous in each of the
  Zariski, Fr\'echet-Markov, and Hausdorff-Markov topologies on $S$, as well as
  the least $T_1$ topology that is semitopological for $S$.
\end{cor}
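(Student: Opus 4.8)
The plan is to observe that in any inverse semigroup the inversion map is an anti-automorphism, and then to invoke \thref{prop-all-topo} directly. Recall that \thref{prop-all-topo} already establishes that every automorphism or anti-automorphism of a semigroup $S$ is continuous with respect to each of the Zariski, Fr\'echet-Markov, and Hausdorff-Markov topologies, as well as the least $T_1$ topology that is semitopological for $S$. So the entire content of the corollary reduces to identifying inversion as such a map.

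First I would verify that the map $\iota : S \to S$ defined by $(x)\iota = x^{-1}$ is an anti-automorphism of $S$. It is a bijection because it is an involution: uniqueness of inverses yields $(x^{-1})^{-1} = x$ for every $x \in S$, so $\iota$ is its own two-sided inverse. It is an anti-homomorphism because the identity $(xy)^{-1} = y^{-1}x^{-1}$ holds in every inverse semigroup; one checks that $y^{-1}x^{-1}$ satisfies the two defining relations for the inverse of $xy$, namely $(xy)(y^{-1}x^{-1})(xy) = xy$ and $(y^{-1}x^{-1})(xy)(y^{-1}x^{-1}) = y^{-1}x^{-1}$, after which uniqueness of inverses forces $(xy)^{-1} = y^{-1}x^{-1}$. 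In the paper's right-action convention this reads $(xy)\iota = (y)\iota\,(x)\iota$, which is precisely the anti-homomorphism condition.

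With this identification in place, the corollary is immediate: since inversion is an anti-automorphism of $S$, \thref{prop-all-topo} applies verbatim and gives continuity of inversion with respect to each of the four topologies named in the statement.

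I do not anticipate any real obstacle. The only step requiring attention is the verification that inversion is an anti-automorphism, and in particular the anti-homomorphism identity $(xy)^{-1} = y^{-1}x^{-1}$; this is standard but does rely essentially on the uniqueness of inverses that defines an inverse semigroup, so I would state it explicitly rather than assert it. Everything beyond that is a direct appeal to the already-proved \thref{prop-all-topo}.
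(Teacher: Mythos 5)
Your proposal is correct and is exactly the argument the paper intends: the corollary is stated without proof immediately after \thref{prop-all-topo} precisely because inversion on an inverse semigroup is an anti-automorphism, so that proposition applies verbatim. Your explicit verification that $(xy)^{-1} = y^{-1}x^{-1}$ via uniqueness of inverses is the standard fact being tacitly invoked, and spelling it out is a reasonable addition rather than a deviation.
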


As mentioned above, if a topological group is $T_0$, then it is
$T_{3\frac{1}{2}}$, and hence being $T_0$ and $T_{3\frac{1}{2}}$ is equivalent
for such topologies.  On the other hand, every topology is a semigroup
topology, and so no such implication holds for topological semigroups, in
general. It is natural to ask if there is any implication among separation
axioms for certain classes of semigroups, such as the inverse semigroups. We
show, in \cref{ex-hausdorff}, that there is a topological inverse semigroup
which is $T_1$ but not $T_2$. In fact, we find a inverse semigroup such that
its Fr\'echet-Markov and its Hausdorff-Markov topologies are not equal.

Let $\leq$ be a total order on a set $X$. Then the \textit{order topology} on
$X$ is the topology with subbasis consisting of the sets $\set{y\in X}{x < y}$
and $\set{y\in X}{x > y}$.  Moreover, $X$ together with the operation $\max$
forms a commutative inverse semigroup.

\begin{ex}
  \label{ex-hausdorff}
  Let $X$ be an infinite set, let $\leq$ be a total order on $X$, and endow $X$
  with the structure of a semigroup by taking multiplication to be $\max$.
  Then:
  \begin{enumerate}[\rm (i)]
    \item\label{ex-hausforff-i}
          the Hausdorff-Markov topology and the Zariski topology on $X$
          coincide
          with the order topology on $X$;

    \item\label{ex-hausforff-ii}
          the sets of the form
          \[
            B_{x,U}:=\set{y\in U}{y>x}
          \]
          where $U$ is a cofinite subset of $X$ and $x\in X ^ 1$ is arbitrary,
          form a basis for the Fr\'echet-Markov topology on $X$.
  \end{enumerate}
\end{ex}
\begin{proof}
  \textbf{(\ref{ex-hausforff-i}).} The elementary algebraic sets for the Zariski topology are precisely
  $V_{a,b}:=\{x\in X:\max\{x,a\}=\max\{x,b\}\}$ and $W_{a,b}:=\{x\in
    X:\max\{x,a\}=b\}$ where $a, b\in X ^ 1$ are arbitrary.

  If $a, b\in X$ are arbitrary, then the following hold:
  \begin{enumerate}
    \item
          if $a=b$, then $V_{a,b}=X$ and $W_{a, b} = \set{x\in X}{x \leq a}$;

    \item
          if $a<b$, then $V_{a,b}=\set{x\in X}{x\geq b}$ and $W_{a, b} =
            \{b\}$;

    \item
          if $a>b$, then $V_{a, b} = \set{x\in X}{x\geq a}$ and
          $W_{a,b}=\varnothing$.
  \end{enumerate}
  Hence the complements of the sets $V_{a, b}$ and $W_{a, b}$ are subbasic open
  sets for the order topology on $X$ and are a basis for the Zariski topology,
  and so these two topologies coincide.

  The order topology is also a Hausdorff semigroup topology for $X$. It is
  routine to verify that $X$ is Hausdorff. We now show that the order topology
  is indeed a semigroup topology.  Suppose that $\max\{a, b\} = b \in U$ for
  some subbasic open $U$.  If $a, b \in U$, then $UU = U$. If $a\not\in U$,
  then $U = \set{y\in X}{x < y}$ for some $x < b$, and so if $V = \set{y\in
      X}{b > y}$, then $VU\subseteq U$.  Finally, the order topology, being
  equal
  to the Zariski topology, is contained in the Hausdorff-Markov topology, which
  is contained in the order topology by above.\medskip

  \noindent\textbf{(\ref{ex-hausforff-ii}).}
  Let $\mathcal{T}$ be the topology on $X$ with the sets $B_{x, U}$ as a basis.
  Note that $B_{1,U}=U$ for all $U$ where $1\in X ^ 1$ is the adjoined
  identity. Hence the cofinite sets are basic open in $\mathcal{T}$, and so
  $\T$ contains the cofinite topology, and it is $T_1$.  Let $\S$ be a $T_1$
  semigroup topology for $X$. We show that $\mathcal{T}\subseteq \S$. Since
  $\S$ is $T_1$, the singletons are closed in $\S$ and hence
  \[
    (X\backslash\{x\})\lambda_x^{-1}=\{y\in X: \max\{x,y\}\neq x\}= \{y\in
    X:y>x\}\in \S
  \]
  for all $x\in X$. If $x\in X$ is arbitrary and $U$ is a cofinite subset of
  $X$, then
  \[B_{x,U} =\{y\in X:y>x\} \cap U, \]
  and so $\mathcal{T} \subseteq \S$. It follows that $\T$ is contained in the
  Fr\'echet-Markov topology for $X$, and hence it suffices to show that $\T$
  contains the Fr\'echet-Markov topology. We will show that $\mathcal{T}$ is a
  semigroup topology for $X$.

  Let $B_{s,U}\in \mathcal{T}$ be an arbitrary basic open set and let $a,b\in
    X$ be such that $\max\{a,b\}\in B_{s,U}$. We show that there are open
  neighbourhoods $U_a,U_b$ of $a$ and $b$ respectively, such that
  $U_aU_b\subseteq B_{s,U}$. If $a=b$ then we choose $U_b=U_a=B_{s,U}$.
  Otherwise assume without loss of generality that $a<b$. Let $U_b:=
    B_{\max\{a,s\},U}$ and $U_a:= U \cup \{a\}$. If $a' \in U_a$ and $b'\in
    U_b$
  then either $\max\{a',b'\}=b'\in U_{b}\subseteq B_{s,U}$ or
  $a'>b'>\max\{a,s\}$, in which case $\max\{a',b'\}=a'\in
    B_{\max\{a,s\},U}\subseteq B_{s,U}$ as required.
\end{proof}

The topology defined in \cref{ex-hausdorff}\eqref{ex-hausforff-i} can be distinct from the
topology defined in \cref{ex-hausdorff}\eqref{ex-hausforff-ii}. For example, if $X=\Z$, then
every open set in the Fr\'echet-Markov topology is unbounded above, whereas
some of the basic open sets in the Hausdorff-Markov topology are bounded above
by definition. It follows that Fr\'echet-Markov topology for a semigroup can be
strictly contained in its Hausdorff-Markov topology, even if we only consider
commutative inverse semigroups.

\subsection{Open questions}

We end this section with some open problems.
\begin{question}
  Is the Fr\'echet-Markov topology always contained in the Zariski topology?
\end{question}

\begin{question}\label{question-zariski-2}
  Is the semigroup version of the Zariski topology applied
  to a group equal to the group version of the Zariski topology
  applied to the same group?
\end{question}

We will determine the Zariski topologies on several well-known classes of
semigroups. Notably absent from this list is the symmetric inverse monoid, and
so we ask the following question also.

\begin{question}
  What is the Zariski topology of the symmetric inverse monoid?
\end{question}

\section{Property \textbf{X}}\label{section-property-x}

In this section, we introduce a property that will be central to this paper.
If $S$ is a semitopological semigroup and $A$ is a subset of $S$, then we say
that $S$ satisfies \textit{property \textbf{X} with respect to $A$} if the
following holds:
\begin{quote}
  for every $s\in S$ there exists $f_s, g_s\in S$ and $t_s\in A$ such that
  $s = f_s t_s g_s$ and for every neighbourhood $B$ of $t_s$ the set $f_s
    (B\cap
    A)g_s$ is a neighbourhood of $s$.
\end{quote}
Although somewhat technical, property \textbf{X} is the crucial ingredient in
many of the proofs in this paper where we determine unique or maximal Polish
topologies on a semigroup $S$, or show automatic continuity for $S$ with
respect to the class of second countable topological semigroups.

\begin{theorem}
  \label{lem-luke-0}
  Let $S$ be a semigroup, let $\T$ be a topology with respect to which $S$
  is semitopological, and let $A\subseteq S$. If $S$ has property \textbf{X}
  with respect to $A$, then the following hold:
  \begin{enumerate}[\rm (i)]
    \item
          \label{lem-luke-0-i}
          if $T$ is a semitopological semigroup and $\phi: S \to T$ is a
          homomorphism such that $\phi|_{A}$ is continuous, then $\phi$ is
          continuous;

    \item
          \label{lem-luke-0-ii}
          if $\T'$ is a topology with respect to which $S$ is semitopological
          and
          $\T'$ induces the same topology on $A$ as $\T$, then $\T'$ is
          contained
          in $\T$;

    \item
          \label{lem-luke-0-iii}
          if $\T$ is Polish and $A$ is a Polish subgroup of $S$, then $\T$ is
          maximal among the Polish topologies with respect to which $S$ is
          semitopological;

    \item
          \label{lem-luke-0-iv}
          if $A$ is a semigroup which has automatic continuity with respect to
          a class $\mathcal{C}$ of topological semigroups, then the semigroup
          $S$ has automatic continuity with respect to $\mathcal{C}$ also.
  \end{enumerate}
\end{theorem}

To prove \cref{lem-luke-0}, we require the notion of a Borel measurable
function between topological spaces. Recall that a \textit{$\sigma$-algebra} on
a set $X$ is a collection of subsets of $X$ containing $\varnothing$ and which
is closed under complements and countable unions (and hence also closed under
countable intersections).  If $X$ is a topological space, then a set $B$ is
\textit{Borel} if it belongs to the least $\sigma$-algebra containing the open
sets in $X$.  If $X$ and $Y$ are topological spaces, then $f : X\to Y$ is
\textit{Borel measurable} if the pre-image of every Borel set is Borel.
We require the following propositions which follows from Theorem 9.10, Proposition 11.5,  and Corollary 15.2 in~\cite{Kechris1995aa}.

\begin{prop}
  \label{prop-borel-imples-cts}
  If $G$ and $H$ are Polish semitopological groups and $f: G \to H$ is a Borel
  measurable homomorphism, then $f$ is continuous.
\end{prop}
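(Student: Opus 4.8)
The plan is to reduce the statement to the classical automatic-continuity result for Polish topological groups and then run the Baire-category argument of Pettis. First I would note that since $G$ is Polish, every Borel subset of $G$ has the Baire property; as $f$ is Borel measurable, $(U)f^{-1}$ is Borel, and hence has the Baire property, for every open $U\subseteq H$. So I may treat $f$ as a Baire-measurable homomorphism. I would also record the one place where more than separate continuity is used: by the classical fact that every Polish (indeed every \v{C}ech-complete) semitopological group is automatically a topological group, I may assume that $G$ and $H$ are Polish topological groups. In fact only $H$ will need this upgrade.

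Next I would reduce continuity of $f$ to continuity at the identity. Since $G$ and $H$ are semitopological groups, all left translations $\lambda_g$ are homeomorphisms (each is a continuous bijection with continuous inverse $\lambda_{g^{-1}}$). The homomorphism identity $(gx)f=(g)f\,(x)f$ says that $f$ intertwines left translation by $g$ in $G$ with left translation by $(g)f$ in $H$; since both translations are homeomorphisms, continuity of $f$ at $e_G$ forces continuity at every $g\in G$. So, writing $e_H=(e_G)f$, it suffices to fix an open neighbourhood $V$ of $e_H$ and produce an open neighbourhood $U$ of $e_G$ with $(U)f\subseteq V$.

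For the core argument I would first use that $H$ is a topological group to choose an open neighbourhood $W$ of $e_H$ with $W^{-1}W\subseteq V$. As $H$ is second countable and $\{hW:h\in H\}$ is an open cover of $H$, there is a countable subcover $\{h_nW:n\in\N\}$, so $G=\bigcup_{n}(h_nW)f^{-1}$. Each $A_n:=(h_nW)f^{-1}$ has the Baire property, and $G$ is a Baire space, so some $A:=A_n$ is non-meagre with the Baire property. Pettis's theorem (cf.\ Theorem 9.10 and Proposition 11.5 in~\cite{Kechris1995aa}) then gives that $A^{-1}A$ is a neighbourhood of $e_G$; let $U$ be its interior. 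For $a,b\in A$ the homomorphism property gives $(a^{-1}b)f=((a)f)^{-1}(b)f\in(h_nW)^{-1}(h_nW)=W^{-1}W\subseteq V$, so $(U)f\subseteq(A^{-1}A)f\subseteq V$, which is what we wanted.

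The main obstacle is the hypothesis that $G$ and $H$ are only semitopological. Pettis's theorem itself is not the obstacle: its proof uses only that left and right translations of $G$ are homeomorphisms and that $G$ is a Baire space, both of which hold for a Polish semitopological $G$, so $G$ may stay semitopological throughout. The genuine difficulty is producing the neighbourhood $W$ with $W^{-1}W\subseteq V$, which needs joint continuity of multiplication together with continuity of inversion in $H$ --- exactly the data a merely semitopological group may lack. The substance of the argument is therefore the appeal to the theorem that completeness upgrades separate continuity of the group operations to a full topological group structure on $H$; granting that, the remainder is the standard Baire-category computation.
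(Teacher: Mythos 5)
Your proof is correct and is essentially the argument the paper intends: the paper offers no proof of its own, only the citation to Kechris (Pettis's theorem together with the standard Baire-category covering argument via a countable subcover $\{h_nW\}$), which is exactly what you reconstruct. Your explicit use of the fact that a Polish (indeed \v{C}ech-complete) semitopological group is automatically a topological group --- together with the accurate observation that Pettis's theorem itself needs only that translations of $G$ are homeomorphisms and that $G$ is Baire, so only $H$ needs the upgrade --- correctly supplies the step the paper leaves implicit in passing from Kechris's topological-group hypotheses to the semitopological setting.
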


\begin{prop}
  \label{prop-borel-measurable}
  If $X$ and $Y$ are Polish spaces and $f: X \to Y$ is a Borel measurable
  bijection, then $f ^ {-1}$ is Borel measurable also.
\end{prop}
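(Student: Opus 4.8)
The plan is to prove the slightly stronger statement that $f$ carries Borel sets to Borel sets, since this is precisely what is needed: for a bijection one has $(f ^ {-1}) ^ {-1}(B) = f(B)$ for every $B \subseteq X$, so $f ^ {-1}$ is Borel measurable if and only if $f(B)$ is Borel in $Y$ for every Borel $B \subseteq X$. I would therefore fix an arbitrary Borel set $B \subseteq X$ and aim to show that $f(B)$ is Borel.

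The crux is to use bijectivity to exhibit $f(B)$ as \emph{both} analytic and coanalytic. First, $f(B)$ is analytic: since $Y$ is second countable with countable basis $\{V_n\}$, the graph $\set{(x,y) \in X \times Y}{(x)f = y} = \bigcap_n \bigl[ ((V_n)f ^ {-1} \times V_n) \cup ((X \setminus (V_n)f ^ {-1}) \times (Y \setminus V_n)) \bigr]$ is Borel in the Polish space $X \times Y$, and $f(B)$ is the projection to $Y$ of the intersection of this graph with $B \times Y$; a projection of a Borel subset of a Polish space is analytic. Second, because $f$ is a bijection, $f(B)$ and $f(X \setminus B)$ partition $Y$: their union is $f(X) = Y$ by surjectivity and they are disjoint by injectivity, so $Y \setminus f(B) = f(X \setminus B)$. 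As $X \setminus B$ is Borel, the same argument shows $Y \setminus f(B)$ is analytic, i.e.\ $f(B)$ is coanalytic. By Souslin's theorem, a subset of a Polish space is Borel if and only if it is both analytic and coanalytic, so $f(B)$ is Borel, as required.

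The only substantive input is Souslin's theorem, whose heart is the Lusin separation theorem: any two disjoint analytic subsets of a Polish space can be separated by a Borel set. Granting separation, Souslin's theorem is immediate, for if $A$ and $Y \setminus A$ are both analytic then a Borel set $C$ with $A \subseteq C$ and $C \cap (Y \setminus A) = \varnothing$ forces $A = C$. The main obstacle is thus the separation theorem itself, and the remaining details (verifying the graph is Borel, and that projections of Borel sets are analytic) are routine. I would prove separation by the classical scheme argument: represent the two disjoint analytic sets as continuous images of the Baire space, assume toward a contradiction that they are Borel-inseparable, and observe that inseparability propagates down the defining trees (if $\bigcup_m A_m$ and $\bigcup_n A'_n$ were inseparable then some pair $A_m, A'_n$ is inseparable, using that a finite or countable union of pairwise-separable pairs is separable). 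Running this descent produces a branch in each tree whose images converge, by completeness, to a common point lying in both analytic sets, contradicting disjointness. This combinatorial descent is the genuinely delicate step; alternatively one may simply cite Corollary~15.2 and the separation theorem from~\cite{Kechris1995aa}.
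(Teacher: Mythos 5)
Your proof is correct, and there is nothing in the paper to compare it against: the paper states this proposition with the citation ``cf.\ Corollary 15.2 in~\cite{Kechris1995aa}'' and gives no proof of its own, so you have supplied the argument the paper outsources to the literature. What you give is in fact the classical route for the bijective case: the graph identity $\operatorname{graph}(f) = \bigcap_n \bigl[ (f^{-1}(V_n) \times V_n) \cup ((X \setminus f^{-1}(V_n)) \times (Y \setminus V_n)) \bigr]$ is verified correctly (Hausdorffness of $Y$ gives the reverse inclusion), projections of Borel sets are analytic, and bijectivity gives $Y \setminus f(B) = f(X \setminus B)$, so Souslin's theorem applies. Note that your bijection hypothesis buys a genuinely shorter proof than what Kechris actually proves: his Theorem 15.1 handles merely \emph{injective} Borel maps, where the image need not have analytic complement and a more delicate argument is needed; Corollary 15.2 then follows. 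Two cosmetic remarks on your sketch of the separation theorem: the shrinking of the images $f(N_{x|k})$, $g(N_{y|k})$ toward $f(x)$ and $g(y)$ is a consequence of \emph{continuity} of the representing maps on Baire space, not of completeness; and the contradiction runs by first ruling out $f(x) \neq g(y)$ (disjoint open neighbourhoods would Borel-separate the small pieces at a finite stage), whence $f(x) = g(y)$ is the common point violating disjointness. Neither quibble affects correctness.
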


\begin{proof}[Proof of \cref{lem-luke-0}]
  \textbf{(\ref{lem-luke-0-i}).}
  We denote the topology on $T$ by $\T'$. We will show that $\phi$ is
  continuous
  at an arbitrary $s\in S$. Suppose that $U\in \T'$ is an open neighbourhood
  of $(s)\phi$. By property \textbf{X}, there are $f_s, g_s\in S$ and $t_s\in
    A$ such that $s = f_st_sg_s$.
  Since $\phi$ is a homomorphism,
  $(s)\phi=(f_st_sg_s)\phi=(f_s)\phi\ (t_s)\phi\ (g_s)\phi$ and so  $(t_s)\phi
    \in V$
  where
  \[V = (U)(\lambda_{(f_s)\phi} \circ \rho_{(g_s)\phi})^{-1}.\]
  In particular, since $T$ is semitopological, $V$ is an open
  neighbourhood of $(t_s)\phi$ in $\T'$ and
  \[((f_s)\phi) V((g_s)\phi) \subseteq U.\]
  Since $\phi|_A$ is continuous, $(V)\phi^{-1}\cap A$ is open in the subspace
  topology on $A$ induced by $\T$. Hence $(V)\phi^{-1}\cap A=W\cap A$ for some
  $W \in \T$.
  By property \textbf{X}, there exists an open neighbourhood $B$ of
  $s$ such that $B \subseteq f_s(W\cap A)g_s$. Then
  \begin{align*}
    (B)\phi & \subseteq \left(f_s(W\cap A)g_s)\right) \phi
    =\left(f_s((V)\phi^{-1}\cap A)g_s\right)\phi
    \subseteq \left(f_s ((V)\phi^{-1}) g_s\right)\phi\subseteq ((f_s)\phi) V
    ((g_s)\phi)\subseteq U,
  \end{align*}
  and so $\phi$ is continuous at $s$. \medskip

  \noindent\textbf{(\ref{lem-luke-0-ii}).}
  Let $\T'$ be a semitopological semigroup topology for $S$ that induces the
  same subspace topology as $\T$ on $A$. Then the restriction of the identity
  homomorphism $\id:(S,\T) \rightarrow (S,\T')$ to $A$ is
  continuous.
  Thus $\id:(S,\T) \rightarrow
    (S,\T')$ is continuous by part \eqref{lem-luke-0-i} and so $\T'\subseteq \T$, as required.
  \medskip

  \noindent\textbf{(\ref{lem-luke-0-iii}).}
  Suppose that $\T'$ is a Polish semitopological semigroup topology on $S$ and
  that $\T\subseteq \T'$. We will show that $\T'\subseteq \T$, and so $\T$ is
  maximal. As in the previous part, it suffices to show that the restriction
  $\id|_A$ of the identity function $\id: (S,\T)  \to (S,\T')$ is continuous.

  Since $A$ is a Polish subspace of $S$ with respect to $\T$, it follows that
  $A$ is $G_{\delta}$ in $\T$, and so $A$ is $G_{\delta}$ in $\T'$ also.  Hence
  $A$ is a Polish subspace of $S$ with respect to $\T'$.  Since $\id|_A^{-1}$
  is a Borel measurable bijection between Polish spaces, it follows from
  \cref{prop-borel-measurable} that $\id|_{A}$ is a Borel measurable
  function between $(S, \T)$ and $(S, \T')$.  Therefore, by
  \cref{prop-borel-imples-cts}, $\id|_A$ is continuous.
  \medskip

  \noindent\textbf{(\ref{lem-luke-0-iv}).}
  Suppose that $T$ is a topological semigroup belonging to the class
  $\mathcal{C}$ and that $\phi: S \to T$ is a homomorphism. It follows that
  $\phi|_{A}$ is a homomorphism from $A$ to $T$ which is therefore continuous
  by automatic continuity. It follows from part \eqref{lem-luke-0-i} that $\phi$ is continuous.
\end{proof}

We end this section by observing that ``having property \textbf{X} with respect
to a subset'' is a transitive relation.

\begin{lem}\label{lem-prop-X-transitive}
  Let $S$ be a topological semigroup and let \(A\leq T\leq S\). If \(S\) has
  property \textbf{X} with respect to \(T\) and \(T\) has property \textbf{X}
  with respect to \(A\), then \(S\) has property \textbf{X} with respect to
  \(A\).
\end{lem}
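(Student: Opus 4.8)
The plan is to unpack the two hypotheses using the definition of property \textbf{X} and show that the resulting factorisations compose. Recall what must be proved: for every $s \in S$ we must produce $F_s, G_s \in S$ and a point $a_s \in A$ with $s = F_s a_s G_s$, such that for every neighbourhood $B$ of $a_s$, the set $F_s(B \cap A)G_s$ is a neighbourhood of $s$.

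\textbf{First I would} apply property \textbf{X} for $S$ with respect to $T$ to the given $s$. This yields $f_s, g_s \in S$ and $t_s \in T$ with $s = f_s t_s g_s$, and with the neighbourhood-witnessing condition: for every neighbourhood $B'$ of $t_s$ (in $S$), the set $f_s(B' \cap T)g_s$ is a neighbourhood of $s$. \textbf{Next I would} apply property \textbf{X} for $T$ (as a semitopological semigroup in its subspace topology) with respect to $A$ to the element $t_s \in T$. This yields $h, k \in T$ and $a_s \in A$ with $t_s = h\, a_s\, k$, and such that for every neighbourhood $C$ of $a_s$ in $T$, the set $h(C \cap A)k$ is a neighbourhood of $t_s$ in $T$. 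The natural candidates for the $S$-witnesses are then
\[
  F_s := f_s h, \qquad G_s := k g_s, \qquad a_s \in A,
\]
which give $F_s a_s G_s = f_s h\, a_s\, k\, g_s = f_s t_s g_s = s$, as required for the factorisation.

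\textbf{The remaining step, which I expect to be the main obstacle,} is the neighbourhood condition: given any neighbourhood $B$ of $a_s$ in $S$, I must show $F_s(B \cap A)G_s = f_s h (B \cap A) k g_s$ is a neighbourhood of $s$. The subtlety is matching up the two witnessing properties, which are phrased relative to different ambient spaces (one in the subspace topology on $T$, the other in $S$). The idea is: let $B$ be a neighbourhood of $a_s$ in $S$; then $C := B \cap T$ is a neighbourhood of $a_s$ in $T$, and since $a_s \in A \subseteq T$ we have $C \cap A = B \cap A$. By property \textbf{X} for $T$, the set $h(C \cap A)k = h(B \cap A)k$ is a neighbourhood of $t_s$ \emph{in $T$}; say it contains $V \cap T$ for some open $V$ of $S$ containing $t_s$. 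Now I feed this back into property \textbf{X} for $S$: since $V$ is a neighbourhood of $t_s$ in $S$, the set $f_s(V \cap T)g_s$ is a neighbourhood of $s$.

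\textbf{To finish}, I would verify the containment $f_s(V \cap T)g_s \subseteq f_s h(B \cap A) k g_s = F_s(B \cap A)G_s$, so that the latter, containing a neighbourhood of $s$, is itself a neighbourhood of $s$. This containment follows because $V \cap T \subseteq h(B \cap A)k$ by the choice of $V$, whence applying $f_s(\cdot)g_s$ preserves the inclusion; note that $h(B \cap A)k \subseteq T$ since $h, k \in T$ and $A \subseteq T$, so $f_s(V \cap T)g_s \subseteq f_s\big(h(B\cap A)k\big)g_s = F_s(B \cap A)G_s$. The one point requiring care is the clause ``it contains $V \cap T$ for some open $V$'': a neighbourhood of $t_s$ in the subspace $T$ is by definition the intersection of $T$ with a neighbourhood of $t_s$ in $S$, so such a $V$ exists, and this is exactly where the interplay between the subspace and ambient topologies is resolved. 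With that, property \textbf{X} for $S$ with respect to $A$ is established.
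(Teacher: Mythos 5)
Your proof is correct and takes essentially the same route as the paper's: compose the two factorisations (taking $F_s = f_s h$ and $G_s = k g_s$) and bridge the subspace/ambient mismatch by realising the neighbourhood $h(B\cap A)k$ of $t_s$ in $T$ via an ambient open set $V$ of $S$. The only cosmetic difference is that the paper chooses an ambient neighbourhood $B'$ with $B'\cap T$ \emph{equal} to the witnessing set, whereas you use a containment $V\cap T \subseteq h(B\cap A)k$ together with the fact that any superset of a neighbourhood of $s$ is a neighbourhood of $s$ --- both are equally valid.
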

\begin{proof}
  Let \(s\in S\). Since $S$ has property \textbf{X} with respect to $T$, there
  exist \(f_s, g_s\in S\) and \(t_s\in T\) such that $f_s t_s g_s = s$ and
  \(f_s(B'\cap T)g_s\) is a neighbourhood of \(s\) for every neighbourhood
  \(B'\) of \(t_s\).  Similarly, there exist \(t_{t_s}\in A\) and \(f_{t_s},
  g_{t_s}\in T\) such that $t_s = f_{t_s} t_{t_s} g_{t_s}$ and \(f_{t_s}(B\cap
  A)g_{t_s}\) is a neighbourhood of \(t_s\) in \(T\) for every neighbourhood
  \(B\) of \(t_{t_s}\).

  If \(B\) is any neighbourhood of \(t_{t_s}\) in \(S\), then \(f_{t_s}(B\cap
  A)g_{t_s}\) is a neighbourhood of \(t_s\) in \(T\). Hence there is a
  neighbourhood \(B'\) of \(t_s\) in \(S\) such that \(B'\cap T=f_{t_s}(B\cap
  A)g_{t_s}\). In particular, \(f_sf_{t_s}(B\cap A)g_{t_s}g_s=f_s(B'\cap
  T)g_s\) is a neighbourhood of \(s\) and \(s = f_sf_{t_s} t_{t_s}
  g_{t_s}g_s\), and $S$ has property \textbf{X} with respect to $A$.
\end{proof}

\section{Automatic continuity and the small index property}
\label{subsection-auto-cont}

The notions of automatic continuity for semigroups and groups, as defined in
the
introduction, are superficially different. The former is defined with respect
to a
class of topological semigroups, and the latter with respect to a class of
topological groups.  Of course, a class of topological groups is also a class
of
topological semigroups. We will use the automatic continuity of the symmetric
group, and the homeomorphisms of the Cantor space, with respect to the class of
second countable semigroups in Sections~\ref{section-classical-m}
and~\ref{section-continuous}, and so we require the following proposition.

\begin{prop}\label{lem-group-auto-cont-is-semigroup}
  Let $G$ be a semitopological group. Then $G$ has automatic continuity with
  respect to the class of second countable topological semigroups if
  and only if $G$ has automatic continuity with respect to the class of second
  countable topological groups.
\end{prop}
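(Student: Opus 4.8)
The forward implication is immediate: every (second countable) topological group is in particular a (second countable) topological semigroup, so automatic continuity with respect to the larger class of second countable topological semigroups trivially yields automatic continuity with respect to its subclass of second countable topological groups. The content is in the converse.

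For the converse, suppose $G$ has automatic continuity with respect to second countable topological groups, and let $\phi : G \to S$ be a homomorphism into a second countable topological semigroup $S$; I must show that $\phi$ is continuous. The naive attempt is to note that $\im \phi$ is abstractly a group and to apply the group hypothesis to the corestriction of $\phi$. This fails, and indeed this is exactly the main obstacle: $\im \phi$ carries only the subspace topology inherited from $S$, under which it is a topological semigroup but need \emph{not} be a topological group, since inversion on $\im \phi$ need not be continuous. The plan is therefore to circumvent this by a ``doubling'' construction that builds continuous inversion in by hand.

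Let $S^{\mathrm{op}}$ denote $S$ equipped with the reversed multiplication and the same topology; this is again a second countable topological semigroup, and hence so is the product $S \times S^{\mathrm{op}}$. Define $\Phi : G \to S \times S^{\mathrm{op}}$ by $(g)\Phi = ((g)\phi,\ (g^{-1})\phi)$. Using $(gh)^{-1} = h^{-1}g^{-1}$, one checks that $\Phi$ is a homomorphism: the first coordinate of $(gh)\Phi$ is $(g)\phi\,(h)\phi$, while the second is $(h^{-1})\phi\,(g^{-1})\phi$, and the reversal of order in the second factor is precisely what the opposite multiplication of $S^{\mathrm{op}}$ compensates for, so that $(g)\Phi\,(h)\Phi = (gh)\Phi$. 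Consequently $H := \im \Phi$ is a subsemigroup of $S \times S^{\mathrm{op}}$ which, being a homomorphic image of the group $G$, is abstractly a group with identity $((1_G)\phi,\ (1_G)\phi)$ and with $((g)\Phi)^{-1} = (g^{-1})\Phi = ((g^{-1})\phi,\ (g)\phi)$.

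The payoff is that inversion on $H$ is now continuous, as it is the restriction to $H$ of the coordinate-swap map $(x,y)\mapsto(y,x)$ on $S \times S^{\mathrm{op}}$, which is a homeomorphism. Thus $H$, with the subspace topology, is a topological group, and it is second countable as a subspace of the second countable space $S \times S^{\mathrm{op}}$. Hence $\Phi$ is a homomorphism from $G$ into a second countable topological group, so $\Phi$ is continuous by the group automatic-continuity hypothesis. Finally $\phi$ is the composite of $\Phi$ with the continuous projection $S \times S^{\mathrm{op}} \to S$ onto the first coordinate, and is therefore continuous, completing the proof.
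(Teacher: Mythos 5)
Your proof is correct, and it reaches the same topological group as the paper's proof by a genuinely different construction. The paper argues intrinsically: it sets $H := (G)\phi \subseteq S$ with the subspace topology $\T$, observes that $(H,\T)$ is a second countable paratopological group, invokes \thref{prop-anti-automorph} to see that $\T^{-1} = \set{U^{-1}}{U\in\T}$ is again a second countable semigroup topology, and passes to the topology $\T'$ generated by $\T$ and $\T^{-1}$, which has an inverse-closed subbasis and is therefore a second countable group topology on $H$; group automatic continuity applied to $\phi : G \to (H,\T')$ together with $\T \subseteq \T'$ finishes the argument. Your doubling into $S \times S^{\mathrm{op}}$ realises exactly this object extrinsically: the map $(g)\phi \mapsto ((g)\phi,\ (g^{-1})\phi)$ is well defined because inverses in the abstract group $(G)\phi$ are unique, and it is a topological isomorphism from $(H,\T')$ onto $\im\Phi$ with the subspace topology, since a subbasic open set $(U\times V)\cap \im\Phi$ pulls back precisely to a set of the form $U \cap V^{-1}$ generating $\T'$. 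What your route buys is self-containment: you need no lemma on transporting topologies along anti-automorphisms and no appeal to the fact (recorded in the paper's preliminaries) that the topology generated by two semigroup topologies is again a semigroup topology — everything reduces to checking that $\Phi$ is a homomorphism into $S\times S^{\mathrm{op}}$ and that the coordinate swap is a homeomorphism restricting to inversion on the swap-invariant set $\im\Phi$, both of which you verify correctly (including the key point that multiplication on $\im\Phi$ is continuous simply because it is a subsemigroup of a topological semigroup). What the paper's route buys is economy within its own framework: it stays inside $S$, reuses \thref{prop-anti-automorph}, which is needed elsewhere anyway, and avoids introducing the auxiliary semigroup $S\times S^{\mathrm{op}}$.
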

\begin{proof}
  ($\Leftarrow$)
  Let $S$ be a second countable topological semigroup and let $\phi: G \to S$
  be a homomorphism.  Then $H:=(G)\phi$ is a subgroup of $S$ and the
  induced topology on $H$ is a second countable paratopological group
  topology for $H$; we denote this topology by $\T$. If
  $\mathcal{T}^{-1} := \{U^{-1}: U\in \mathcal{T}\}$, then, by
  \cref{prop-anti-automorph},	$\mathcal{T}^{-1}$
  is a semigroup topology on $H$ and, since $\T ^ {-1}$ is homeomorphic to
  $\T$, it is second countable.
  It then follows that the topology $\mathcal{T}'$ generated by $\mathcal{T}$
  and $\mathcal{T}^{-1}$ is also a second countable paratopological group
  topology for $H$. Since the topology $\mathcal{T}'$ is generated by an
  inverse closed collection of sets, it follows that the inverse operation of
  $H$ is continuous under this topology, and thus $(H, \mathcal{T}')$ is a
  second countable topological group. Since
  $\phi$ is a homomorphism, $\phi$ is continuous with respect to $\T'$ and the
  topology on $G$.
  If $U\in \mathcal{T}$, then $U\in \T '$ and so
  $(U)\phi^{-1}$ is open in $G$ by the continuity of $\phi:G
    \to (H, \mathcal{T}')$.

  ($\Rightarrow$) This is immediate as second countable
  topological groups are second countable topological semigroups.
\end{proof}

\begin{cor}
  The symmetric group $\Sym(\N)$ and the group $H(2 ^ \N)$ of homeomorphisms of
  the Cantor space $2 ^ {\N}$ have automatic continuity with respect to the class
  of second countable topological semigroups.
\end{cor}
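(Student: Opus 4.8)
The plan is to read this off from \thref{lem-group-auto-cont-is-semigroup} together with automatic continuity facts for these two groups that are already available in the literature. That proposition says that, for a \emph{semitopological group} $G$, automatic continuity with respect to the class of second countable topological semigroups is equivalent to automatic continuity with respect to the (a priori much smaller) class of second countable topological groups. So the corollary reduces to two things: checking that $\Sym(\N)$ and $H(2^\N)$ are semitopological groups, and checking that each has automatic continuity with respect to second countable topological groups.

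First I would equip each group with its natural topology: $\Sym(\N)$ with the pointwise topology and $H(2^\N)$ with its usual (uniform, equivalently compact-open) topology. Under these topologies both are Polish topological groups, and in particular both are semitopological groups, so \thref{lem-group-auto-cont-is-semigroup} is applicable to each. Then I would invoke the group-level automatic continuity results recalled in the introduction: $\Sym(\N)$ has ample generics, and hence, by the Kechris--Rosendal theorem, has automatic continuity with respect to the class of separable topological groups; the group $H(2^\N)$ likewise has automatic continuity with respect to separable topological groups. Since every second countable space is separable, the class of second countable topological groups is contained in the class of separable topological groups, so each of $\Sym(\N)$ and $H(2^\N)$ in particular has automatic continuity with respect to second countable topological groups.

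With these two inputs in hand, applying \thref{lem-group-auto-cont-is-semigroup} to each group upgrades automatic continuity from the class of second countable topological groups to the class of second countable topological semigroups, which is exactly the assertion of the corollary. I do not expect a real obstacle: the substantive work (ample generics and the consequent group automatic continuity) is imported wholesale from the literature, and the genuinely new step, the passage from groups to semigroups, is precisely the content of the preceding proposition. The only point that warrants a line of care is the bookkeeping of test classes---confirming that the ``separable group'' formulation in which the cited results are stated subsumes the ``second countable topological group'' formulation demanded by \thref{lem-group-auto-cont-is-semigroup}, which it does because second countability implies separability.
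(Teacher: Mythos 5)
Your proposal is correct and follows essentially the same route as the paper: the paper's proof likewise imports the known automatic continuity of $\Sym(\N)$ and $H(2^\N)$ with respect to second countable topological groups from the literature (\cite{Kechris2007aa}, \cite{Rosendal2007ac}) and then applies \thref{lem-group-auto-cont-is-semigroup}. Your extra bookkeeping---that the ``separable groups'' formulation of the cited results subsumes the second countable case---is a sound and harmless refinement of the same argument.
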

\begin{proof}
  It is known that the given groups both have automatic continuity with respect
  to the class of second countable topological groups (see~\cite{Kechris2007aa}
  and~\cite{Rosendal2007ac}), and so the result follows immediately by
  \cref{lem-group-auto-cont-is-semigroup}.
\end{proof}

A topological group $G$ has the \textit{small
  index property} if every subgroup of index at most $\aleph_0$ is open; some
authors assume this condition for subgroups of index less than $2 ^
    {\aleph_0}$.
A topological group $G$ has the small index property if and only
if every homomorphism from $G$ to $\Sym(\N)$ with the pointwise topology is
continuous.
However, having the small index property is strictly weaker than having
automatic continuity with respect to the class of the second countable
topological groups.
In particular, in \cref{example-small-index-reals}, we give an example of
a group satisfying the small index property but that does not have automatic
continuity with respect to the class of second countable topological groups.

We define a notion analogous to small index property for topological
semigroups.
A topological semigroup \(S\) is said to have the \textit{right small index
  property} if
every right congruence on \(S\) with countably many classes is open in \(S
\times S\) with the product topology. Similarly,
\(S\) is said to have the \textit{left small index property} if every left
congruence on
\(S\) with countably many classes is open. If $G$ is a topological group, then
$G$ has the left small index property if and only if $G$ has the right small
index property.

In \cref{left-not-right}, we show that the notions of left and right
small index property are distinct, by exhibiting an example which has one
property but not the other. If a topological semigroup $S$ has automatic
continuity, then $S$ has both left and right small index property
(\cref{AC-implies-SIP}).  In
\cref{prop-right-small-index}, we will show that a topological
monoid $S$ has the right small index property if and only if every homomorphism
from $S$ to $\N ^ \N$ with the topology of pointwise convergence is continuous.

\begin{prop}\label{prop-right-small-index}
  A topological monoid \(M\) has the right small index property if and only if
  every homomorphism from \(M\) to \(\N^\N\) with the pointwise topology is
  continuous.
\end{prop}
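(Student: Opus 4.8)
The plan is to set up a dictionary between right congruences on $M$ with countably many classes and homomorphisms from $M$ into $\N^\N$, and then to observe that under this dictionary ``$\rho$ is open in $M\times M$'' matches ``the associated homomorphism is continuous''. Throughout I would keep in mind that functions are written on the right and composed left to right, that the pointwise topology on $\N^\N$ has subbasis $\set{f\in\N^\N}{(n)f=k}$ for $n,k\in\N$, and that the copy of $\N$ in which the functions take values is discrete. The one place the \emph{monoid} hypothesis will matter is in recovering a congruence class $[m]_\rho$ from an action via the basepoint $[1]_\rho$.

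Given a right congruence $\rho$ on $M$ with countably many classes, I would first build a homomorphism $\theta_\rho\colon M\to\N^\N$. Since $M/\rho$ is countable, fix an injection of $M/\rho$ into $\N$ and, for $m\in M$, let $(m)\theta_\rho$ send the point coding $[x]_\rho$ to the point coding $[xm]_\rho$, fixing every point of $\N$ outside the image of $M/\rho$. Well-definedness uses exactly that $\rho$ is a \emph{right} congruence, and a routine check (padding finite quotients with fixed points) gives $(m)\theta_\rho\,(n)\theta_\rho=(mn)\theta_\rho$ and $(1)\theta_\rho=\id$, so $\theta_\rho$ is a monoid homomorphism. For the ($\Leftarrow$) direction I would then invoke continuity of $\theta_\rho$: letting $n_0$ code $[1]_\rho$, the identity $([1]_\rho)(m)\theta_\rho=[m]_\rho$ shows $a\,\rho\,b$ iff $(n_0)((a)\theta_\rho)=(n_0)((b)\theta_\rho)$. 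Since $a\mapsto(n_0)((a)\theta_\rho)$ is $\theta_\rho$ followed by evaluation at $n_0$, it is continuous into discrete $\N$; hence $F\colon M\times M\to\N\times\N$ recording both values is continuous, and $\rho=(\Delta)F^{-1}$ is open, $\Delta$ being open in the discrete space $\N\times\N$.

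For the ($\Rightarrow$) direction, assume $M$ has the right small index property and let $\phi\colon M\to\N^\N$ be a homomorphism. For each $n\in\N$ I would define $a\,\rho_n\,b$ iff $(n)((a)\phi)=(n)((b)\phi)$; using $(n)((ac)\phi)=((n)((a)\phi))(c)\phi$ one verifies $\rho_n$ is a right congruence, and its classes are indexed by the values in $\N$ it attains, so there are only countably many. By the right small index property each $\rho_n$ is open in $M\times M$. Fixing $a$, the slice $\set{b\in M}{(a,b)\in\rho_n}=[a]_{\rho_n}$ is the preimage of the open set $\rho_n$ under the continuous map $b\mapsto(a,b)$, hence open; since $[a]_{\rho_n}=\set{b\in M}{(n)((b)\phi)=(n)((a)\phi)}$, every subbasic preimage $\set{m\in M}{(n)((m)\phi)=k}$ is open (it is such a slice when $k$ is attained, and empty otherwise). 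Thus $\phi$ is continuous.

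I expect no genuine obstacle here beyond bookkeeping: the argument is essentially mechanical once the correspondence $\rho\leftrightarrow\theta_\rho$ is in place. The two points deserving care are the verification that $\theta_\rho$ really is a monoid homomorphism into $\N^\N$ (including the finite-quotient case), and the small but crucial observation that a congruence which is open as a subset of $M\times M$ automatically has open classes, obtained by slicing along the continuous maps $b\mapsto(a,b)$. The left-to-right composition convention must be tracked consistently, but it introduces no difficulty.
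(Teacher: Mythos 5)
Your proposal is correct and matches the paper's proof in essence: the paper likewise pulls back the ``agree at coordinate $i$'' right congruences on $\N^\N$ to get continuity from the right small index property, and conversely uses the right-regular action $(g/\rho)\,(f)\phi = gf/\rho$ on $(M/\rho)^{(M/\rho)}$, which is exactly your $\theta_\rho$ up to coding the quotient into $\N$. If anything, your version is slightly more careful: your padding with fixed points handles the case where $M/\rho$ is finite (the paper asserts $(M/\rho)^{(M/\rho)}$ is topologically isomorphic to $\N^\N$, which literally requires the quotient to be countably infinite), and your diagonal-preimage argument makes the ``open relation iff open classes'' bookkeeping explicit where the paper only states it.
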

\begin{proof}
  First note that an equivalence relation on a topological space is open if and
  only
  if all of its equivalence classes are open.

  We start by assuming that \(M\) has the right small index property. Let
  \(\phi: M \to
  \N^\N\) be a semigroup homomorphism. Recall that the pointwise topology on
  $\N^\N$ has a subbasis consisting of the sets $U_{i,j}=\set{f\in
      \N^\N}{(i)f=j}$ over all $i,j\in \N$.
  Note that $U_{i,j}$ is an equivalence class under the right congruence
  \[\rho := \{(f, g)\in \N^\N: (i)f= (i)g\}.\]
  Since $\phi$ is a homomorphism, the relation \(\rho':=\{(f, g)\in M:
  (f\phi,g\phi)\in \rho\}\)
  on \(M\) is a right congruence on the monoid \(M\) and $\rho'$ has at most as
  many classes
  as \(\rho\). Since $M$ has the right small index property, \(\rho'\) is open.
  The preimage of $U_{i,j}$ under \(\phi\) is an equivalence
  class of \(\rho'\) and hence open. Since $U_{i,j}$ is an arbitrary subbasic
  open set, it follows that \(\phi\) is continuous.

  Now suppose that every semigroup homomorphism \(\phi: M \to \N^\N\) is
  continuous. Let \(\rho\) be a right congruence on \(M\) with countably many
  classes. We define a homomorphism \(\phi:M \to (M/\rho)^{(M/\rho)}\) by
  \[(g/\rho)((f)\phi) = gf/\rho.\]
  The semigroup \((M/\rho)^{(M/\rho)}\), together with the pointwise topology,
  is topologically isomorphic
  to \(\N^\N\) and thus \(\phi\) is continuous with respect to these
  topologies. Let
  \(m/\rho\) be an arbitrary equivalence class of \(\rho\). It suffices to show
  that
  \(m/\rho\) is an open subset of \(M\). The set $\{(f)\phi\in
    (M)\phi:(1_M/\rho)(f)\phi = m/\rho\}$ is open in $(M)\phi$ under the subspace
  topology inherited from ${M/\rho}^{M/\rho}$ and
  \[\{(f)\phi \in (M)\phi:(1_M/\rho)((f)\phi) = m/\rho\}\phi^{-1}=\{(f)\phi \in
    (M)\phi:f/\rho= m/\rho\}\phi^{-1}=m/\rho.\]
  Since \(\phi\) is continuous, it follows that \(m/\rho\) is indeed open as
  required.
\end{proof}
By symmetry, we obtain the following corollary to
\cref{prop-right-small-index}.
\begin{cor}\label{corollary-left-congruence}
  A topological monoid \(M\) has the left small index property if and only if
  every homomorphism from \(M\) to \(\N^\N\) with left actions is continuous.
\end{cor}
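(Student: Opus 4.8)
The plan is to deduce the corollary from \thref{prop-right-small-index} by passing to the opposite monoid, which is the precise content of the appeal to ``symmetry''. Given a topological monoid $M$, write $M^{\mathrm{op}}$ for the set $M$ equipped with the same topology but with the reversed multiplication $a \ast b = ba$. First I would check that $M^{\mathrm{op}}$ is again a topological monoid: its multiplication is the composite of the coordinate-swap homeomorphism $M \times M \to M \times M$ with the original continuous multiplication, hence continuous, and $M$ and $M^{\mathrm{op}}$ share the identity $1_M$. So \thref{prop-right-small-index} applies verbatim to $M^{\mathrm{op}}$.

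Next I would match up the two sides of the equivalence under this duality. A left congruence on $M$ is precisely a right congruence on $M^{\mathrm{op}}$: both are equivalence relations $\rho$ on the common underlying set satisfying that $(a,b)\in\rho$ implies $(ca, cb)\in\rho$ for all $c$. Since the topology on $M^{\mathrm{op}} \times M^{\mathrm{op}}$ is literally the product topology on $M \times M$, a left congruence of $M$ is open exactly when the corresponding right congruence of $M^{\mathrm{op}}$ is open, and it has the same number of classes. Hence $M$ has the left small index property if and only if $M^{\mathrm{op}}$ has the right small index property. On the analytic side, a homomorphism from $M$ to $\N^\N$ ``with left actions'' is the same data as a homomorphism $M^{\mathrm{op}} \to \N^\N$ for the usual right-action multiplication on $\N^\N$; reversing the order of composition on the target converts one into the other, and since the pointwise topology on $\N^\N$ is the product topology, which does not depend on how the multiplication is organised, the map is continuous in one description if and only if it is continuous in the other.

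Combining these identifications, $M$ has the left small index property if and only if $M^{\mathrm{op}}$ has the right small index property, if and only if (by \thref{prop-right-small-index} applied to $M^{\mathrm{op}}$) every homomorphism $M^{\mathrm{op}} \to \N^\N$ is continuous, if and only if every homomorphism from $M$ to $\N^\N$ with left actions is continuous. The only point requiring care, and the step I would double-check most carefully, is the bookkeeping in this dictionary between left and right: that a left congruence of $M$ really is a right congruence of $M^{\mathrm{op}}$, and that a homomorphism with left actions really corresponds to an ordinary homomorphism out of $M^{\mathrm{op}}$, with the pointwise topology left unchanged. Once that dictionary is pinned down no new analysis is needed and the corollary follows formally. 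Alternatively, one could simply transcribe the proof of \thref{prop-right-small-index}, replacing right congruences, right actions, and the subbasic sets $U_{i,j}$ by their left-handed analogues throughout.
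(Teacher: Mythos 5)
Your proposal is correct and is essentially the paper's own argument: the paper derives the corollary from \thref{prop-right-small-index} simply ``by symmetry,'' and your opposite-monoid dictionary (left congruences of $M$ being right congruences of $M^{\mathrm{op}}$, homomorphisms to $\N^\N$ with left actions being homomorphisms $M^{\mathrm{op}} \to \N^\N$, with topologies unchanged) is exactly the precise content of that symmetry. The bookkeeping you flag is carried out correctly, so nothing further is needed.
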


The following corollary follows straight from the definition of automatic
continuity, \cref{prop-right-small-index}, and
\cref{corollary-left-congruence}.

\begin{cor}\label{AC-implies-SIP}
  If a topological monoid has automatic continuity with respect to the class of
  second countable topological semigroups, then it has both the left and right
  small index properties.
\end{cor}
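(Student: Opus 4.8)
The plan is to deduce both conclusions directly from the two characterizations already established, namely \thref{prop-right-small-index} and \thref{corollary-left-congruence}, together with the single observation that the relevant target semigroups belong to the class of second countable topological semigroups to which the automatic continuity hypothesis applies. Essentially all of the real work has been front-loaded into those two results, so what remains is a short verification.

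First I would handle the right small index property. The monoid $\N^\N$ equipped with the pointwise topology is a topological semigroup, and it is second countable since a countable subbasis is given by the sets $U_{i,j} = \set{f\in \N^\N}{(i)f = j}$ ranging over $i, j\in \N$. Hence, by the hypothesis of automatic continuity with respect to the class of second countable topological semigroups, every homomorphism from $M$ to $\N^\N$ with the pointwise topology is continuous. By \thref{prop-right-small-index}, this is precisely the statement that $M$ has the right small index property.

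For the left small index property I would apply the same reasoning to the opposite semigroup. Writing functions on the right as in the paper, the ``homomorphisms from $M$ to $\N^\N$ with left actions'' appearing in \thref{corollary-left-congruence} are exactly the homomorphisms from $M$ into the opposite semigroup of $\N^\N$. Carrying the same pointwise topology, this opposite semigroup is again a second countable topological semigroup: it is anti-isomorphic to $\N^\N$, and by \thref{prop-anti-automorph} the image of the pointwise topology under this anti-automorphism is a semigroup topology that is homeomorphic to the original, hence second countable. Automatic continuity therefore forces every such homomorphism to be continuous, and \thref{corollary-left-congruence} then yields the left small index property.

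I do not expect any genuine obstacle here, since the substantive content lives in \thref{prop-right-small-index} and \thref{corollary-left-congruence}. The only point deserving a moment's care is confirming that the opposite of $\N^\N$ still qualifies as a second countable topological semigroup, so that the automatic continuity hypothesis can legitimately be invoked in the left-handed case as well as the right-handed one; this is immediate from \thref{prop-anti-automorph}.
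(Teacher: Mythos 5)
Your proof is correct and is essentially the paper's own argument: the paper derives the corollary directly from the definition of automatic continuity together with \thref{prop-right-small-index} and \thref{corollary-left-congruence}, exactly as you do. Your extra care in checking that the dual of $\N^\N$ (for the left-handed case) is still a second countable topological semigroup, via \thref{prop-anti-automorph}, simply makes explicit a detail the paper leaves implicit.
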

The converse of \cref{AC-implies-SIP} is not true. We give an example which
demonstrates that the small index property is strictly weaker than automatic
continuity even in the case of abelian Hausdorff topological groups.

\begin{ex}\label{example-small-index-reals}
  Define the topology $\T$ on the group of real numbers $\R$ under addition by
  choosing the cosets of all countable index subgroups as a subbasis.
  This subbasis is actually a basis as the intersection of finitely many
  subgroups of countable index is another subgroup of countable index. Moreover,
  $\T$ is a group topology since for every countable index subgroup \(G\) of $\R$
  we have \((G+x)+(G+y)\subseteq (G+x+y)\) and \(-(G+x)=(G-x)\) for all \(x,y\in
  \R\). By its definition, $\T$ clearly gives $(\R, +)$ the small index property.

  Since $\T$ is a group topology, to show that $\T$ is Hausdorff, it suffices
  to show that $\T$ is $T_0$. We will show that for all \(x\in \mathbb{R}
  \backslash \{0\}\) there is a countable index subgroup of \(\mathbb{R}\) which
  does not contain \(x\). Let \(x\in \mathbb{R} \backslash \{0\}\). By Zorn's
  Lemma we can extend the set \(\{x\}\) to a basis \(\B\) for \(\mathbb{R}\) as a
  vector space over \(\mathbb{Q}\). Let \(G\) be the subspace spanned by \(\B
  \backslash \{x\}\). As \(\B\) is linearly independent, \(x\not\in G\). But as
  \(G\) is a codimension \(1\) subspace of a vector space over a countable field,
  it follows that \(G\) has countable index as required.

  To see that $(\R, +)$ under $\T$ does not have automatic continuity, we will
  show that the identity map from $\R$ under $\T$ to $\R$ with the standard
  topology is not continuous. The interval $(-1,1)$ is not open in $\T$ since all
  non-empty open sets in $\T$ contain a translation of a non-trivial group, and
  are thus unbounded.
\end{ex}

In \cref{left-not-right} we give an example of a topological
semigroup which has the right small index property but not the left small index
property, so in particular it also does not have automatic continuity.
Moreover, this shows that the right and left small  index properties are not
equivalent in general.

Although \cref{example-small-index-reals} shows that the small index property
is strictly weaker than automatic continuity for topological groups in general,
we ask if these notions are equivalent for Polish groups.

\begin{question}
  If $G$ is a Polish group and G has the small index property, then does $G$
  have automatic continuity with respect to the class of second countable groups?
\end{question}

\section{Classical monoids}
\label{section-classical-m}
In this section, we consider several monoids that have been extensively studied
in the literature which we refer to collectively as \textit{classical monoids}.

Let $X$ be any set.  A \textit{binary relation} on $X$ is just a subset of $X
  \times X$.  We will denote the set of all binary relations on $X$ by $B_X$.
If
$f,g \in B_X$, then their composition $f\circ g$ is defined by $$(x,y) \in
  f\circ g\quad \text{if } (x,z) \in f \text{ and } (z,y) \in g \text{ for some
  }
  z\in X.$$ The set $B_X$ with composition of binary relations is the
\textit{full binary relation monoid} on $X$.
If $f\in B_X$, then we define the \textit{inverse} of $f$ to be
$f^{-1}=\set{(y,x)}{(x,y) \in f}$. The relation $f^{-1}$ is not always
an inverse for $f$, in the sense of inverse semigroups or groups, since
$f \subseteq f f^{-1} f$, and this containment may be strict.

If $\Sigma \subseteq X$ and $f\in B_X$, then the \textit{image of
  $\Sigma$ under $f$} is the set
$$(\Sigma)f=\set{y\in X}{(x,y) \in f \text{ for some } x \in \Sigma}.$$
The \textit{domain} and \textit{image} of $f$ are $\dom(f)=(X)f^{-1}$ and
$\im(f)=(X)f$.

We will also consider the following natural
subsemigroups of $B_{X}$: the \emph{partial transformation monoid}
\[
  P_X = \set{f\in B_{X}}{|(\{x\})f| \leq 1\ \text{for all }x\in X};
\]
the \textit{full transformation monoid}
\[
  X ^ X = \set{f\in P_{X}}{|(\{x\})f| = 1\ \text{for all }x\in
    X};
\]
the \textit{symmetric inverse monoid}
\[
  I_X = \set{f\in P_X}{
    |(\{x\})f ^ {-1}| \leq 1\ \text{for all } x\in X};
\]
the \textit{monoid of injective functions}
\[
  \Inj(X) = \set{f\in X ^ X}{|(\{x\})f ^ {-1}| \leq 1\ \text{for
      all } x\in X};
\]
the \textit{monoid of surjective functions}
\[
  \Surj(X) = \set{f\in X ^ X}{(X)f = X}.
\]
These monoids have been extensively studied in the literature
in both the finite and infinite cases; see, for example, \cite{East2014,
  Higginsaa, Mesyan2007, Mesyan2011, Mesyan2012, Mitchell2011cd} and the
references therein. Of course, (partial) functions are sets of pairs, and so we may write $(x,y) \in f\in P_X$ to denote that $x\in \dom(f)$ and $(x)f=y$. Similarly, 
we may write that $f\subseteq g$ where $f, g\in P_X$ and $g|_{\dom(f)} = f$.

In \cref{subsection-diagram}, we will also briefly consider certain,
so-called, diagram monoids. The definition of these monoids is more involved,
and as it turns out, studying them as topological monoids is less fruitful than
the monoids defined above. For these reasons we delay the definitions of these
monoids to \cref{subsection-diagram}.

\subsection{The full transformation monoid}
\label{subsection-full-transf}
Recall that a subbasis for the pointwise topology on $X ^ X$, where $X$ is any
infinite set,  consists of the
sets
\[
  U_{x, y} = \set{f\in X ^ X}{(x)f = y}
\]
for all $x,y\in X$. This semigroup topology on $X ^ X$ is essentially the only
such
topology considered in the literature. While it is possible to define further
semigroup topologies on $X ^ X$, we will show that every such semigroup
topology on $X ^ X$ either fails to be $T_1$ or contains the pointwise
topology. If $X$ is countable, then we will show that the pointwise topology is
the unique $T_1$ second countable semigroup topology on $X ^ X$; this may
account, at least in part, for the absence of alternative topologies in the
literature. Furthermore, $X ^ X$ has automatic continuity with respect to the
pointwise topology and the class of second countable topological semigroups. We
show that the pointwise topology coincides with the Zariski topology and
characterise those topological semigroups that topologically embed into $X ^ X$
when $X$ is countable.

We begin this subsection with a technical lemma that we will use repeatedly
later on.

\begin{lem}
  \label{thm-elliott-1}
  Let $X$ be an infinite set, and let $S$ be a subsemigroup of $P_X$ such that
  $S$ contains all of the constant  transformations (defined everywhere on
  $X$), and for every $x\in X$
  there exists $f_{x}\in S$ such that $(x) f_{x} ^ {-1} = \{x\}$ and $(X)f_{x}$
  is finite.  If $\T$ is a topology which is semitopological for
  $S$, then the following are equivalent:
  \begin{enumerate}[\rm (i)]
    \item \label{thm-elliott-1-i} 
    $\T$ is Hausdorff;
    \item \label{thm-elliott-1-ii} 
    $\T$ is $T_1$;
    \item \label{thm-elliott-1-iii} 
    $\{f\in S: (y, z) \in f\}$ and $\{f \in S :
            t\not\in\dom(f)\}$ are open with respect to
          $\T$ for all $y, z, t \in X$;
    \item \label{thm-elliott-1-iv} 
    $\{f\in S: (y,z)\in f\}$ and $\{f \in S :
            t\not\in\dom(f)\}$ are closed with respect to
          $\T$ for all $y, z, t \in X$.
  \end{enumerate}
\end{lem}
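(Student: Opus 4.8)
=== PROOF PROPOSAL ===

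The plan is to prove the chain of implications (i) $\Rightarrow$ (ii) $\Rightarrow$ (iii) $\Rightarrow$ (iv) $\Rightarrow$ (i), exploiting the two hypotheses on $S$: that $S$ contains all constant transformations, and that for each $x$ there is an $f_x\in S$ with $(x)f_x^{-1}=\{x\}$ and finite image. The implications (i) $\Rightarrow$ (ii) and (iii) $\Leftrightarrow$ (iv) are essentially formal: Hausdorff always implies $T_1$, and for a fixed pair of points/coordinate the complements of the sets in (iii) are described by finitely many ``wrong value'' or ``defined at $t$'' conditions which, using the constants and the separating maps $f_x$, can be written as finite unions/intersections of sets of the same shape — so openness of all such sets is equivalent to closedness of all such sets. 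I would handle this equivalence first to reduce the real work to two implications.

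The substantive direction is (ii) $\Rightarrow$ (iii). Assuming $\T$ is $T_1$, singletons are closed, and since $S$ is semitopological each $\lambda_s$ and $\rho_s$ is continuous, so preimages of closed sets under these maps are closed. The strategy is to realize each set in (iii) as a preimage of a singleton (or of a cofinite/closed set) under a suitable composition of left and right translations built from the constant maps and the $f_x$. Concretely, to show $\{f\in S:(y,z)\in f\}$ is open, I would find elements $a,b\in S$ so that composing with them sends exactly the relations containing $(y,z)$ to a single prescribed value and everything else away from it; the separating function $f_x$ (with $(x)f_x^{-1}=\{x\}$ and finite image) is precisely the tool that lets a translation ``read off'' whether $f$ has a particular value at a particular point, because its finite image keeps the bookkeeping finite. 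The condition $t\notin\dom(f)$ should be detected similarly by precomposing with $f_t$ (so that only the behaviour at $t$ survives) and then postcomposing to test for ``undefined,'' again using constants to normalize. The finiteness of $(X)f_x$ is what guarantees that the relevant translated sets are genuinely expressible as preimages of closed singletons rather than of infinite unions, so $T_1$-ness (closed points) suffices.

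For (iv) $\Rightarrow$ (i), I would show that once the sets $\{f:(y,z)\in f\}$ and $\{f:t\notin\dom(f)\}$ and their complements are available as open sets, one can separate any two distinct $f,g\in S$ by disjoint open sets. If $f\ne g$ as partial functions on $X$, they differ either in domain or in value at some point: there is a point $t$ lying in the domain of one but not the other, or a point $y$ with $(y)f=z\ne z'=(y)g$. In the first case the open set $\{h:t\notin\dom(h)\}$ and its open complement separate them; in the second case the sets $\{h:(y,z)\in h\}$ and $\{h:(y,z)\notin h\}$ (the latter open by the equivalence of (iii) and (iv)) do the job, and these are disjoint. Thus $\T$ is Hausdorff, closing the cycle. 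The main obstacle I anticipate is the explicit construction in (ii) $\Rightarrow$ (iii): choosing the precise auxiliary elements $a,b$ (combinations of constants and the $f_x$) so that left/right translation converts the set-membership condition $(y,z)\in f$ into a clean preimage of a closed singleton, while correctly handling the partiality (domain conditions) using the finiteness of the images $(X)f_x$. Everything else is either formal or a routine separation argument.
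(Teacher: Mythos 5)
Your overall strategy and your sketch of the substantive implication (ii) $\Rightarrow$ (iii) match the paper: the paper takes $h$ the constant map with value $y$, and observes that $(y,z)\notin f$ (or $y\notin\dom(f)$) exactly when $hff_z$ lies in the \emph{finite} set consisting of the empty function and the constants with value in $(X)f_z\setminus\{z\}$; since finite sets are closed in a $T_1$ topology and $\lambda_h\circ\rho_{f_z}$ is continuous, the complements of the sets in (iii) are closed. One small correction to your description: the relevant sets are preimages of finite sets, not of singletons — the preimage of the singleton $\{c_z\}$ (the constant with value $z$) would show $\{f\in S:(y,z)\in f\}$ is \emph{closed}, which is (iv), not (iii); it is precisely the finiteness of $(X)f_z$ that lets one express the \emph{complement} as the preimage of a finite (hence closed) set, which is where $T_1$ is used.

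The genuine gap is your claim that (iii) $\Leftrightarrow$ (iv) is ``essentially formal'' via finitely many conditions, and the resulting step (iv) $\Rightarrow$ (i). The complement of $\{f\in S:(y,z)\in f\}$ is
\[
\bigcup_{t\in X\setminus\{z\}}\{f\in S:(y,t)\in f\}\;\cup\;\{f\in S: y\notin\dom(f)\},
\]
an \emph{infinite} union, since $X$ is infinite; likewise $S\setminus\{f\in S:t\notin\dom(f)\}=\bigcup_{z\in X}\{f\in S:(t,z)\in f\}$. So (iii) $\Rightarrow$ (iv) holds (infinite unions of open sets are open, so the complements are open and the sets closed), but (iv) $\Rightarrow$ (iii) does not follow formally: an infinite union of closed sets need not be closed. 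Your separation argument in (iv) $\Rightarrow$ (i) silently uses (iii): to separate $f$ and $g$ with $(y)f=z\neq z'=(y)g$ you need the positive set $\{h\in S:(y,z)\in h\}$ to be open, and (iv) only gives you openness of the complements $\{h:(y,z)\notin h\}$ — and the two open sets $\{h:(y,z')\notin h\}$ and $\{h:(y,z)\notin h\}$ that (iv) does provide are not disjoint (any $h$ undefined at $y$ lies in both). The paper avoids this by closing the cycle differently: from (iv) it deduces (ii) by writing each singleton as
\[
\{g\}=\bigcap_{y\in\dom(g)}\{f\in S:(y,(y)g)\in f\}\;\cap\;\bigcap_{y\in X\setminus\dom(g)}\{f\in S:y\notin\dom(f)\},
\]
an (arbitrary) intersection of sets closed by (iv), and then re-enters the already-proved implications (ii) $\Rightarrow$ (iii) $\Rightarrow$ (i). Replacing your (iv) $\Rightarrow$ (i) step with this (iv) $\Rightarrow$ (ii) argument repairs the proof.
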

\begin{proof}
  \textbf{(\ref{thm-elliott-1-i}) $\Rightarrow$ (\ref{thm-elliott-1-ii}).} This follows since every Hausdorff space is
  \(T_1\).
  \medskip

  \noindent\textbf{(\ref{thm-elliott-1-ii}) $\Rightarrow$ (\ref{thm-elliott-1-iii}).}  
  Suppose that $y, z\in
    X$, and $g, h \in S$ are such that $h$ is the constant transformation
  with value $y$, and $g$ is arbitrary. Then $y\notin \dom(g)$ or $(y)g \not =
    z$ if
  and only if $hgf_{z}$ is $\varnothing$ or a constant with value belonging to
  $(X)f_{z}\setminus
    \{z\}$.
  Since $(X)f_{z}$ is finite by assumption, it follows that $\set{f\in
      S}{(y,z)\not \in f}$ is the preimage under
  left multiplication by $h$ and right
  multiplication by $f_{z}$ of the finite set consisting of the empty function
  and those constant
  transformations whose image belongs to $(X)f_{z}\setminus \{z\}$. Hence
  $\set{f\in S}{(y,z)\not \in f}$ is closed.

  Similarly, $y\in\dom(g)$ if and only if $hgf_y$ is a constant with the value
  belonging to $(X)f_y$. In this case, we obtain \(\{f\in S\colon y\in
  \dom(f)\}\) as the preimage under left multiplication by $h$ and right
  multiplication by $f_{y}$ of the finite set consisting of those constant
  transformations whose image belongs to $(X)f_{y}$.

  In the two cases above, we showed that the sets $\{f\in S\colon (y,z) \not \in f \}$ and $\{f \in S \colon t\in\dom(f)\}$ are
  closed, and so their complements are open, as required.
  \medskip

  \noindent\textbf{(\ref{thm-elliott-1-iii}) $\Rightarrow$ (\ref{thm-elliott-1-i}).}
  If \(f, g\in S\) are such that
  \(f\neq g\), then there is \(y\in X\) with \((y)f\neq (y)g\) or \(\dom(f) \neq
  \dom(g)\). In the first case, \(\{s\in S\colon (y)s=(y)f\}\) and \(\{s\in
  S:(y)s=(y)g\}\) are the required disjoint neighbourhoods of \(f\) and \(g\),
  respectively. In the second case, we may assume that there is \(y \in \dom(f)
  \setminus \dom(g)\), and so the sets \(\{s \in S \colon y \in \dom(s)\} =
  \bigcup_{z \in X} \{s \in S \colon (y,z) \in f\}\) and \(\{ s \in S
  \colon y \notin \dom(s)\}\) are disjoint neighbourhoods of \(f\) and \(g\).
  \medskip

  \noindent \textbf{(\ref{thm-elliott-1-iii}) $\Rightarrow$ (\ref{thm-elliott-1-iv})} This follows from the equalities
  \[
    S\backslash \{f\in S\colon (y, z)\in f\} = \bigcup_{t\in
      X\backslash \{z\}} \{f\in S:(y, t)\in f\} \cup \{f\in S: y\notin
    \dom(f)\}
 \]
  and
  \[S\backslash \{f\in S:y\notin \dom(f)\} = \bigcup_{t\in X} \{f\in S:(y, t)\in
    f\}.\]
  \medskip

  \noindent \textbf{(\ref{thm-elliott-1-iv}) $\Rightarrow$ (\ref{thm-elliott-1-ii}).} Let \(g\in S\) be arbitrary. From
  the equality
  \[\{g\}= \bigcap_{y\in \dom(g)}\{f\in S:(y,(y)g) \in f\} \cap
    \bigcap_{y\in X\backslash \dom(g)} \{f\in S: y\notin \dom(f)\}.\]
  it follows that the singleton sets are closed in \(\T\).
\end{proof}

\begin{cor}
  \label{cor-elliott-1.012}
  Let $X$ be an infinite set, and let $S$ be a subsemigroup of $X ^ X$ such
  that
  $S$ contains all of the constant  transformations, and for every
  $x\in X$ there exists $f_{x}\in S$ such that $(x) f_{x} ^ {-1} = \{x\}$ and
  $(X)f_{x}$
  is finite. Then the pointwise topology coincides with the Hausdorff-Markov,
  Fr\'{e}chet-Markov, and Zariski topologies for \(S\).
\end{cor}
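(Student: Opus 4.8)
The plan is to sandwich each of the four topologies between the pointwise topology and itself, using \thref{thm-elliott-1} to supply the non-trivial inclusions. Throughout, write $h_w\in S$ for the constant transformation with value $w$, and recall that $X^X$—and hence the subspace $S$—is a Hausdorff topological semigroup under the pointwise topology. Consequently the pointwise topology is one of the Hausdorff semigroup topologies, and in particular one of the $T_1$ semigroup topologies, on $S$. Since the Hausdorff-Markov and Fr\'echet-Markov topologies are by definition intersections of such topologies, they are each contained in the pointwise topology; and by \thref{prop-luke-1} the Zariski topology is contained in every Hausdorff semigroup topology, hence in the Hausdorff-Markov topology, hence in the pointwise topology. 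It therefore remains to establish the reverse inclusions.

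The engine is the observation that \emph{every} $T_1$ topology $\T$ which is semitopological for $S$ contains the pointwise topology. Indeed, since $S\subseteq X^X$ every $f\in S$ is total, so the sets $\set{f\in S}{t\notin\dom(f)}$ are empty, whereas $\set{f\in S}{(y,z)\in f}=U_{y,z}$. As $S$ meets the hypotheses of \thref{thm-elliott-1}, the implication (ii)$\Rightarrow$(iii) applied to $\T$ shows that each $U_{y,z}$ is $\T$-open, which says exactly that the pointwise topology is contained in $\T$. Applying this to each $T_1$ semigroup topology yields that the pointwise topology lies inside their intersection, the Fr\'echet-Markov topology; applying it to each Hausdorff semigroup topology does the same for the Hausdorff-Markov topology. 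Combined with the first paragraph, both the Fr\'echet-Markov and Hausdorff-Markov topologies coincide with the pointwise topology.

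For the Zariski topology the same engine applies, provided we verify that it is $T_1$ and semitopological. It is semitopological by \thref{prop-luke-2}. To see that it is $T_1$, note that $h_y f$ is the constant map with value $(y)f$, so $U_{y,z}=\set{f\in S}{h_y f=h_z}$; this exhibits $U_{y,z}$ as the elementary algebraic set on which the term $h_y s$ agrees with the constant $h_z$—the same shape as the sets $W_{a,b}$ appearing in \thref{ex-hausdorff}—and hence $U_{y,z}$ is Zariski-closed. Since every $g\in S$ is total, the singleton $\{g\}=\bigcap_{y\in X}U_{y,(y)g}$ is an intersection of closed sets and so is closed, giving that the Zariski topology is $T_1$. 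The key observation of the previous paragraph now yields that the pointwise topology is contained in the Zariski topology, and with the first paragraph we conclude that the Zariski topology also coincides with the pointwise topology.

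The step I expect to be most delicate is the application of \thref{thm-elliott-1} in the second paragraph: the argument hinges on the fact that membership in $X^X$ simultaneously trivialises the domain-condition sets of condition (iii) and collapses the remaining sets to the subbasis $U_{y,z}$ of the pointwise topology, so that condition (iii) becomes \emph{precisely} the inclusion of the pointwise topology. The identification $U_{y,z}=\set{f\in S}{h_y f=h_z}$ then does double duty, certifying that the Zariski topology is $T_1$ and thereby letting the same engine drive the final inclusion.
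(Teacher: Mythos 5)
Your proof is correct and follows essentially the same sandwich as the paper: the containments \emph{in} the pointwise topology come from the fact that the pointwise topology is a Hausdorff semigroup topology on $S$ (together with \thref{prop-luke-1} for the Zariski case), and the reverse containments come from \thref{thm-elliott-1}, whose condition (iii) collapses, for total transformations, to exactly the statement that every $T_1$ semitopological topology on $S$ contains the pointwise topology. The one point where you diverge is the Zariski direction: the paper outsources the containment of the least $T_1$ semitopological topology in the Zariski topology to Figure~\ref{figure-the-only}, whereas you inline the needed verification that the Zariski topology on this particular $S$ is $T_1$, via the identity $U_{y,z}=\set{f\in S}{h_yf=h_z}$ (where $h_w$ is the constant transformation with value $w$) and the observation that singletons are intersections of the sets $U_{y,(y)g}$. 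Be aware that this verification treats the constant map $f\mapsto h_z$ as an admissible term in the Zariski subbasis; the literal definition requires each term to be of the form $t_0st_1s\cdots t_{k-1}s$ with $k\geq 1$, i.e.\ to end in $s$, and under that strict reading every admissible word evaluates to $h_c$ at any constant transformation $h_c$, so no subbasic open set could contain a constant and the $T_1$ claim would fail. However, the paper itself uses the same liberty (the sets $W_{a,b}$ in \thref{ex-hausdorff}, and words with trailing coefficients in \thref{lem-zariski-pointwise}), and indeed its own route through Figure~\ref{figure-the-only} tacitly requires the Zariski topology to be $T_1$, which likewise relies on constant terms being allowed; so your step is exactly as well justified as the paper's, and your explicit $T_1$ check is, if anything, a more self-contained account of the figure's unlabelled hypothesis.
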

\begin{proof}
  Since the pointwise topology is a Hausdorff semigroup topology for $S$, the
  Hausdorff-Markov topology is contained in the pointwise topology.
  By \cref{figure-the-only}, the Zariski and Fr\'echet-Markov topologies
  are contained in the Hausdorff-Markov topology, and hence in the pointwise
  topology.

  On the other hand,
  \cref{thm-elliott-1} implies that the minimum \(T_1\) topology that is
  semitopological for $S$ contains the pointwise topology, and hence, by
  \cref{figure-the-only}, the Hausdorff-Markov, Fr\'echet-Markov, and
  Zariski topologies contain the pointwise topology too.
\end{proof}

\begin{lem}\label{lem-zariski-pointwise}
  Let \(X\) be an infinite set and let \(S\) be a subsemigroup of \(X^X\) such
  that for every \(x \in X\) there exist \( a, b, c_{0}, \ldots, c_{n-1} \in S \)
  for some $n\in \N$ such that the following hold:
  \begin{enumerate}[\rm (i)]
    \item\label{lem-zariski-pointwise-i} 
    \((y)a = (y)b\) if and only if \(y \neq x\);

    \item\label{lem-zariski-pointwise-ii} 
          \(x \in \im(c_{i})\) for all \(i\);

    \item\label{lem-zariski-pointwise-iii} 
          for every \(s \in S\) and every \(y \in X\setminus \{(x)s\}\) there
          is \(i \in \{0, \ldots, n-1\}\) so that \( \im(c_{i}) \cap (y)s^{-1} =
          \varnothing\).
  \end{enumerate}
  Then the Zariski topology of \(S\) is the pointwise topology.
\end{lem}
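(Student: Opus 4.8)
The plan is to prove the two inclusions separately. Since $S$ is a subsemigroup of $X^X$, the pointwise topology restricts to a Hausdorff semigroup topology on $S$, so \thref{prop-luke-1} immediately gives that the Zariski topology is contained in the pointwise topology. The substance of the lemma is the reverse inclusion, and since $\{U_{x,y} : x,y \in X\}$, where $U_{x,y} = \{f \in S : (x)f = y\}$, is a subbasis for the pointwise topology, it suffices to show that each $U_{x,y}$ is Zariski-open.

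The first step is to distil hypotheses (ii) and (iii) into a single clean statement: for every $f \in S$ one has $\bigcap_{i<n} \im(c_i f) = \{(x)f\}$. Indeed, (ii) gives $x \in \im(c_i)$, hence $(x)f \in (\im c_i)f = \im(c_i f)$ for every $i$; and if $y' \neq (x)f$ then, applying (iii) with $s = f$, some $c_i$ satisfies $\im(c_i) \cap (y')f^{-1} = \varnothing$, which says exactly that $y' \notin \im(c_i f)$. Consequently $(x)f = y$ if and only if $y \in \im(c_i f)$ for every $i < n$, so that $U_{x,y} = \bigcap_{i<n}\{f \in S : y \in \im(c_i f)\}$, a finite intersection. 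It therefore suffices to show that each set $\{f : y \in \im(c_i f)\}$ is Zariski-open.

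Next I would use hypothesis (i), applied to the point $y$ rather than to $x$, to detect image membership. This yields $a_y, b_y \in S$ with $(w)a_y = (w)b_y$ precisely for $w \neq y$. For any $g \in S$ we then have $g a_y = g b_y$ if and only if $((w)g)a_y = ((w)g)b_y$ for all $w$, i.e. if and only if $(w)g \neq y$ for all $w$, i.e. if and only if $y \notin \im(g)$. Taking $g = c_i f$ gives the reformulation $\{f : y \in \im(c_i f)\} = \{f : c_i f a_y \neq c_i f b_y\}$, and hence $U_{x,y} = \bigcap_{i<n}\{f : c_i f a_y \neq c_i f b_y\}$.

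The remaining, and hardest, step is to verify that a set of the form $\{f : c_i f a_y \neq c_i f b_y\}$ is genuinely Zariski-open. The difficulty is purely syntactic but real: subbasic Zariski-open sets compare words $t_0 f t_1 f \cdots t_{k-1} f$ that terminate in the variable $f$, whereas $c_i f a_y$ terminates in a constant, so the displayed set is not literally subbasic. I would attack this using \thref{prop-luke-2} (the Zariski topology is semitopological, so each left translation $\lambda_{c_i}$ is continuous), reducing to showing that $O_y := \{f : y \in \im f\} = \{f : f a_y \neq f b_y\}$ is Zariski-open, and then realising $O_y$ through honest, variable-terminated words such as the \textit{sandwich} $\{f : f a_y f \neq f b_y f\}$. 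The main obstacle is that re-inserting the variable at the end introduces a spurious separation condition — one computes $\{f : f a_y f \neq f b_y f\} = \{f : y \in \im f \text{ and } ((y)a_y)f \neq ((y)b_y)f\}$ — so a single sandwich word captures strictly less than $O_y$. The crux of the argument, which I expect to require the full strength of the hypotheses (varying the auxiliary multipliers, for instance replacing $a_y, b_y$ by $a_y e, b_y e$ for suitable $e \in S$, and recombining the resulting subbasic sets over $i$), is to absorb or eliminate this extra separation condition and thereby exhibit $U_{x,y}$ as a finite Boolean combination of subbasic Zariski-open sets.
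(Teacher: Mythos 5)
Your argument is, in substance, the paper's own proof: the paper likewise gets the containment of the Zariski topology in the pointwise topology from \thref{prop-luke-1}, shows $\{f\in S : y \in \im(f)\}$ is Zariski-open because its complement is the elementary algebraic set $\{f \in S: fa_y = fb_y\}$ with $a_y, b_y$ supplied by hypothesis (i) at the point $y$, pulls this back along $\lambda_{c_i}$ (continuous by \thref{prop-luke-2}), and identifies $\bigcap_{i<n}\{f \in S: y\in\im(c_if)\}$ with $U_{x,y}$ via (ii) and (iii) exactly as you do. The only place you diverge --- your final paragraph --- is chasing a phantom difficulty. The syntactic obstacle you raise would, if genuine, sink the paper's proof as well, since the word $fa_y$ also terminates in a constant. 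In fact the notion of elementary algebraic set as actually used in the paper allows arbitrary words containing at least one occurrence of the variable, i.e.\ $(s)\phi = t_0 s t_1 s \cdots t_{k-1} s t_k$ with $t_0, \ldots, t_k \in S^1$ (so a trailing coefficient may be absorbed or not, since the $t_i$ may equal the adjoined identity); this is visible both in the proof of the present lemma and in the proof of \thref{prop-all-topo}, where the image of an elementary algebraic set under an anti-automorphism is computed to be a set cut out by constant-terminated words and is declared elementary algebraic. Under that reading, $\{f\in S : c_i f a_y \neq c_i f b_y\}$ is literally a subbasic Zariski-open set (take $k = 1$, $t_0 = c_i$, and $t_1 = a_y$, respectively $b_y$), so your proof is already complete at the end of your third paragraph --- indeed your combined word even renders the appeal to \thref{prop-luke-2} unnecessary --- and the sandwich-word machinery, whose computation $\{f : fa_yf \neq fb_yf\} = \{f : y\in\im(f)\ \text{and}\ ((y)a_y)f \neq ((y)b_y)f\}$ is incidentally correct, can simply be discarded.
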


\begin{proof}
  The pointwise topology is Hausdorff, and so it contains the Zariski topology
  by \cref{prop-luke-1}. Hence it remains to show that the Zariski topology
  contains the pointwise topology.

  We begin by observing that if $x\in X$ is arbitrary, then there exist $a,
    b\in S$ satisfying part \eqref{lem-zariski-pointwise-i} of the hypothesis and so
  \[
    S \setminus \{f \in S \colon x\in \im(f)\} = \{f \in S \colon  x \notin
    \im(f) \} =  \{f \in S \colon f a = f b\}.
  \]
  Hence \(\{f \in S \colon x\in \im(f)\}\) is open in the Zariski topology.

  If $x \in X$ is arbitrary, then  there exist $c_0, \ldots, c_{n-1}\in S$
  satisfying  part \eqref{lem-zariski-pointwise-ii} of the assumption of the lemma.
  Since the Zariski topology is semitopological, by \cref{prop-luke-2},  the
  map \(\lambda_{c_{i}}\) is continuous for every \(i \in \{0, \ldots, n-1\}\).
  If $y\in X$ is arbitrary, then
  the set
  \[
    U_i := \{f \in S \colon \text{ there is } w \in \im(c_{i}) \text{ with }
    (w)f = y\} =  \{f\in S \colon y \in \im(f)\} \lambda_{c_{i}}^{-1}
  \]
  is also open in Zariski topology for every \(i \in \{0, \ldots, n-1\}\). It
  suffices to show that \(\bigcap_{i = 0}^{n-1} U_i\) is equal to the subbasic
  open set \(\{f\in S \colon (x)f=y\}\) in the pointwise topology.

  By definition,
  \[
    \bigcap_{i=0}^{n-1} U_i = \{f \in S \colon \text{ there exist } w_i \in
    \im(c_{ i}) \text{ with } (w_0)f=\cdots =(w_{n-1})f=y\}.
  \]
  Let \(f \in  \bigcap_{i=0}^{n-1} U_i\), and let \(w_0, \ldots, w_{n - 1}\) be
  such that \( (w_0)f = \ldots = (w_{n - 1})f = y\) where \(w_i \in \im(c_{i})\)
  for all \(i\). Then \( \im(c_{i}) \cap (y)f^{-1} \neq \varnothing\) for every
  \(i \in \{0, \ldots, n-1\}\), and so by the assumption of the lemma, it follows
  that \(y = (x)f\). Hence \(\bigcap_{i=0}^{n-1} U_i \subseteq \{f \in S \colon
  (x)f = y\}\). The converse follows immediately from the fact that \(x \in
  \bigcap_{i = 0}^{n -1} \im(c_{i})\).
\end{proof}

In the next theorem, we show that the pointwise topology on $X ^ X$ is the
unique Polish (even $T_1$ second countable) semigroup topology on $X ^ X$; and
$X ^ X$ with pointwise topology
has automatic continuity with respect to the class of second countable
topological semigroups.

\begin{theorem}\label{propx-fulltrans}
  If $X$ is an infinite set, then the following hold:
  \begin{enumerate}[\rm (i)]
    \item \label{propx-fulltrans_i}
          $X^X$ with the pointwise topology has property \textbf{X} with
          respect to $\Sym(X)$;
    \item\label{propx-fulltrans_ii}
          the pointwise topology is the only $T_1$
          semigroup topology on $X^X$ which induces the pointwise topology on
          $\Sym(X)$;
    \item\label{propx-fulltrans_iii}  the pointwise topology coincides with the
          Hausdorff-Markov, Fr\'echet-Markov, and Zariski topologies for $X^X$;
    \item\label{propx-fulltrans_iv} if $X$ is countable, then $X^X$ with the
          pointwise topology
          has automatic continuity with respect to the class of second
          countable topological semigroups;
    \item\label{propx-fulltrans_v} if $X$ is countable, then the pointwise
          topology on $X ^ X$ is the unique
          $T_1$ second countable semigroup topology.
  \end{enumerate}
\end{theorem}
\begin{proof}
  \noindent\textbf{(\ref{propx-fulltrans_i}).}
  Let $\psi: X \to X \times X$ be a bijection, and let $\pi_1: X\times X \to X$
  be defined by $(x, y)\pi_1 = y$.  We define $f, g\in X^X$ by \[(x)f
    = (x, x)\psi^{-1}\quad\text{and}\quad (x)g = (x)\psi\pi_1.\]
  Let $s\in X^X$
  and let $t\in \Sym(X \times X)$ be any permutation such that \[(x, x)t = (x,
    (x)s)\] for all $x\in X$.  We then define $t_s\in X^X$ to be $\psi
    t\psi^{-1}$. From the definitions of $f$, $g$, and $t_s$,
  \[
    (x)ft_sg = (x, x)\psi^{-1}\psi t\psi^{-1} \psi \pi_1
    = (x, x) t\pi_1
    = (x, (x)s) \pi_1
    = (x)s
  \]
  for all $x\in X$.
  Let $B$ be a basic open neighbourhood of $t_s$. Then there exist $x_0, x_1,
    \ldots, x_n \in X$ such that $t_s\in \bigcap_{i = 0} ^ {n - 1} \{f\in
    X^X:(x_i)f=(x_i)t_{s}\}
    = B$.
  If
  \[
    V:= \{k\in X^X: (x)k = (x)s\ \text{for all } x \text{ such that } (x,
    x)\in \{(x_0)\psi, (x_1)\psi, \ldots, (x_{n-1})\psi\}\},\]
  then $V$ is an open neighbourhood of $s$ and so it suffices to show that
  $V\subseteq f(B\cap \Sym(X))g$.
  For every $x\in X$, we choose distinct $z_x\in X$ which is also distinct from
  the first coordinate of every element of $\{(x_0)\psi t, (x_1)\psi t, \ldots,
    (x_{n-1})\psi t\}$. Let $k\in V$. Then there exists $p\in \Sym(X \times X)$
  such that
  \[(x_i)\psi p = (x_i)\psi t\]
  for all $i$,	and
  \[(x, x)p = (z_x, (x)k),\]
  for all $x\in X$ for which \((x, x)\notin \{(x_0)\psi, (x_1)\psi, \ldots,
  (x_{n-1})\psi\}\).
  We then define $t_k$ to be $\psi p\psi^{-1}$. By the choice of
  $p$, it follows that $(x_i)t_k = (x_i)\psi p\psi ^ {-1} = (x_i)\psi t\psi ^
    {-1} = (x_i)t_s$ for every $i$, and so $t_k\in B \cap \Sym(X)$. Thus
  \[
    (x)ft_kg = (x, x)\psi^{-1}\psi p\psi^{-1} \psi \pi_1 = (x, x) p\pi_1 =
    \begin{cases}
      (x, x)t \pi_1     & \text{if } (x, x)\in \{(x_0)\psi, (x_1)\psi, \ldots,
      (x_{n-1})\psi\}                                                          \\
      (z_x, (x)k) \pi_1 & \text{otherwise.}
    \end{cases}
  \]
  In the first case, $(x)ft_kg = (x, (x)s)\pi_1= (x)s = (x)k$ and in the second
  case,
  $(x)ft_kg = (z_x, (x)k) \pi_1   = (x)k$ also.
  Hence $k= ft_kg\in f(B\cap \Sym(X))g$ and so, since $k$ was arbitrary,
  $V \subseteq	f(B\cap \Sym(X))g$, as required.
  \medskip

  \noindent\textbf{(\ref{propx-fulltrans_ii}).}
  By \cref{thm-elliott-1} every $T_1$ topology on $X^X$ contains the
  pointwise topology. By part (\ref{propx-fulltrans_i}) together with
  \cref{lem-luke-0}\eqref{lem-luke-0-ii}, this is also the largest topology which induces
  the pointwise topology on $\Sym(X)$.
  \medskip

  \noindent\textbf{(\ref{propx-fulltrans_iii}).}
  This follows immediately from \cref{cor-elliott-1.012}.
  \medskip

  \noindent\textbf{(\ref{propx-fulltrans_iv}).}
  This follows from part \eqref{propx-fulltrans_i}, \cref{lem-luke-0}\eqref{lem-luke-0-iv},
  and the automatic continuity of $\Sym(X)$ when $X$ is countable.
  \medskip

  \noindent\textbf{(\ref{propx-fulltrans_v}).}
  By part (\ref{propx-fulltrans_iii}), every second countable semigroup
  topology
  for $X^X$ is contained in the pointwise topology. By
  \cref{thm-elliott-1} every $T_1$ topology on $\N^\N$
  contains the pointwise topology.
\end{proof}

In some sense, every group is a group of permutations, since every group can be
embedded into the symmetric group $\Sym(X)$ for some set $X$. Similarly, every
semigroup embeds as a subsemigroup of $X ^ X$ for some set $X$. Given that $X ^
  X$ has a canonical semigroup topology, it is natural to ask which topological
semigroups embed in $X ^ X$ with the pointwise topology.

It is well-known that a $T_1$ topological group $G$ is topologically isomorphic
to a subgroup of $\Sym(\N)$ if and only if $G$ has a countable neighbourhood
basis of the identity $1_G$ consisting of countable index open subgroups. This
latter condition is referred to as $G$ being \textit{non-archimedean} by some
authors; see, for example,~\cite[Theorem 1]{Bodirsky2017ab}.
In the next theorem, we prove an analogue of this result
for $T_0$ left semitopological semigroups. In~\cite[Theorem 2]{Bodirsky2017ab},
another analogous characterisation is given of those topological monoids that
are topologically isomorphic to closed submonoids of $\N ^ \N$.
There are many topological monoids that are topologically isomorphic to
non-closed submonoids of $\N ^ \N$. For example,
if $D$ is a countable dense subset of $\N ^ \N$, then the least submonoid
$\genset{D}$ of $\N ^ \N$ containing $D$ is countable and dense also. It
follows that
$\genset{D}$ is not $G_{\delta}$, and hence not Polish. It follows that
$\genset{D}$ is not topologically isomorphic to any closed submonoid of $\N ^
  \N$, but it is obviously (topologically isomorphic to) a submonoid of $\N ^
  \N$.
Of course, not every Polish semigroup is topologically isomorphic to a
subsemigroup of $\N ^ \N$. For example, the reals $\R$
under addition form a connected Polish group, but $\N ^ \N$ is totally
disconnected, and so $(\R, +)$ cannot be topology embedded in $\N ^ \N$.

Even though the statement of \cref{theorem-luke-1} is superficially
different from that of~\cite[Theorem 2]{Bodirsky2017ab}, the proof of
\cref{theorem-luke-1} is essentially contained in the proof
of~\cite[Theorem 2]{Bodirsky2017ab}. We include \cref{theorem-luke-1}
because we will use it in \cref{subsection-inj-surj} and we include the
proof for the sake of completeness.

\begin{theorem}[cf. Theorem 2 in \cite{Bodirsky2017ab}]
  \label{theorem-luke-1}
  If $S$ is a $T_0$ left semitopological semigroup,
  then the following are equivalent:
  \begin{enumerate}[\rm (i)]
    \item \label{theorem-luke-1-i}
          there is a sequence $\set{\rho_i}{i\in \N}$ of right congruences of
          $S$,
          each having countably many classes, such that $\set{m/\rho_i}{m\in
              S,\
              i\in \N}$ is a subbasis for $S$;
    \item \label{theorem-luke-1-ii}
          there is a sequence $\set{\sigma_i}{i\in \N}$ of right congruences of
          $S$,
          each having countably many classes, such that $\set{m/\sigma_i}{m\in
              S,\
              i\in \N}$ is a basis for $S$;
    \item \label{theorem-luke-1-iii}
          $S$ is topologically isomorphic to a subsemigroup of $\N ^ \N$ (with
          the
          pointwise topology and right actions).
  \end{enumerate}
\end{theorem}
\begin{proof}
  We will show that \eqref{theorem-luke-1-i} 
  $\Rightarrow$ \eqref{theorem-luke-1-iii}
  $\Rightarrow$ \eqref{theorem-luke-1-ii}
  $\Rightarrow$ \eqref{theorem-luke-1-i}.\medskip

  \noindent\textbf{(\ref{theorem-luke-1-ii}) $\Rightarrow$ (\ref{theorem-luke-1-i})}. 
    This implication follows immediately from the definitions of bases and subbases for topological spaces.\medskip

  \noindent\textbf{(\ref{theorem-luke-1-iii}) $\Rightarrow$ (\ref{theorem-luke-1-ii})}. Assume without loss of generality that
  $S$ is a subsemigroup of $\N ^ \N$. If we define $\rho_i = \set{(f, g)}{(i)f
      = (i)g}$ for every $i\in \N$, then clearly every $\rho_i$ is a right
  congruence
  with the properties given in \eqref{theorem-luke-1-i}.
      If $\sigma_n =\bigcap_{i\leq n}\rho_i$, for every $n\in\N$, then $\set{x/\sigma_n}{x\in S, n\in\N}$ is a basis for the topology on $S$ with the required properties.\medskip

  \noindent\textbf{(\ref{theorem-luke-1-i}) $\Rightarrow$ (\ref{theorem-luke-1-iii})}. We give $S^1$ the disjoint union topology of
  $S$ and $\{1\}$. If $\rho$ is any right congruence on $S$, then $\rho \cup
    \{(1, 1)\}$ is a right congruence on $S^1$. It follows that $S^1$ satisfies
  the
  hypothesis of \eqref{theorem-luke-1-i}. Let $\set{\rho_i}{i\in \N}$ be a sequence of right
  congruences of $S^1$, each having countably many classes, such that
  $\set{m/\rho_i}{m\in S^1,\ i\in \N}$ is a subbasis for $S^1$.

  By assumption, $X = \set{m/\rho_i}{m\in S^1,\ i\in \N}$ is countable,
  and we will show that $S^1$ is topologically isomorphic to a subsemigroup of
  $X ^ X$ with the pointwise topology, from which it will follow that
  $S$ is also topologically isomorphic to a subsemigroup of $\N^{\N}$.
  When $i\neq j$, we will consider classes of $\rho_i$ and classes of $\rho_j$
  as different elements of $X$, even if they should happen to be the same subset
  of $S$.
  We define $\phi: S^1 \to X ^ X$ such that $(m)\phi \in X ^
    X$ is defined by
  \[
    (n/\rho_i)(m)\phi = (nm)/\rho_i.
  \]
  It is sufficient to show that $\phi$ is a well-defined, injective,
  continuous, homomorphism such that if $U$ is open in $S^1$, then $(U)\phi$ is
  open in $(S^1)\phi$ with the subspace topology.

  If $(a, b)\in \rho_i$ for some $i\in \N$ and $m\in S^1$, then
  $(a/\rho_i)(m)\phi = (am)/\rho_i = (bm)/\rho_i = (b/\rho_i)(m)\phi$, since
  $\rho_i$ is a right congruence. It follows that $\phi$ is well-defined.

  Suppose that $m, n\in S^1$ are such that $(m)\phi = (n)\phi$. It follows that
  $m/\rho_i = (1 /\rho_i)(m)\phi = (1/\rho_i)(n)\phi = n/\rho_i$
  for all $i\in \N$. In other words, $(m, n)\in \rho_i$ for all $i\in \N$. But
  since $\set{m/\rho_i}{m\in S^1,\ i\in \N}$ is a subbasis for $S^1$, and $S^1$
  is
  $T_0$ it follows that $\bigcap_{i\in \N}\rho_i = \set{(s, s)}{s\in S ^ 1}$.
  Hence $m = n$
  and $\phi$ is injective.

  Let $a, m, n\in S^1$ and $i\in \N$ be arbitrary. Then
  \[
    ((a/\rho_i)(m)\phi)(n)\phi = (amn)/\rho_i = (a/\rho_i )(mn)\phi
  \]
  and so $\phi$ is a homomorphism.

  To show that $\phi$ is continuous it is sufficient to show that the preimage
  under $\phi$ of every subbasic open set in $X ^ X$ is open in $S^1$.
  Let $\mathcal{S}$ be the subbasis for $X ^ X$ consisting of the
  sets
  \[[\alpha, \beta] = \set{f\in X ^ X}{(\alpha)f = \beta}
  \]
  where $\alpha, \beta\in X$. Suppose that $[\alpha, \beta]\in
    \mathcal{S}$. Then $\alpha = a/\rho_i$ and $\beta = b/\rho_j$ for some $a,
    b\in S^1$ and $i, j\in \N$. If $[a/\rho_i, b/\rho_j] \cap (S^1)\phi
    \not=\varnothing$, then $i = j$ by the definition of $\phi$. Hence we may
  suppose that without loss of generality that $\alpha = a/\rho_i$ and $\beta =
    b/\rho_i$.	It follows that
  \begin{eqnarray*}
    [\alpha, \beta] \phi ^ {-1} & = & \set{m\in S^1}{(\alpha)(m)\phi = \beta}\\
    & = & \set{m\in S^1}{(a/\rho_i)(m)\phi =
      b/\rho_i}\\
    & = & \set{m\in S^1}{(am)/\rho_i = b/\rho_i} \\
    & = & \set{m\in S^1}{am \in b/\rho_i} \\
    & = & (b / \rho_i) \lambda_a ^ {-1}.
  \end{eqnarray*}
  Since $S^1$ is left semitopological, $\lambda_a$ is continuous and since $b /
    \rho_i$ is open, so too is $[\alpha, \beta] \phi ^ {-1}$. Hence $\phi$ is
  continuous.

  If $m \in S^1$ and $i\in \N$ are arbitrary, then
  \begin{eqnarray*}
    (m/\rho_i)\phi & = & \set{(n)\phi}{n\in S^1,\ m\rho_i = n\rho_i} \\
    & = & \set{(n)\phi}{n\in S^1,\ (1 / \rho_i)(n)\phi =
      m\rho_i}\\
    & = & [1 / \rho_i, m\rho_i].
  \end{eqnarray*}
  Hence every subbasic open set in $S^1$ is mapped to a subbasic open set in
  $(S^1)\phi$ and so $\phi$ is open.
\end{proof}

We prove an analogue of \cref{theorem-luke-1} for semitopological inverse
monoids
and the symmetric inverse monoid in \cref{thm-inverse-submonoids}.

Uspenski\u{\i}'s Theorem~\cite[Theorem
  9.18]{Kechris1995aa} and~\cite{Uspenskii1986aa} states that every Polish
group
is isomorphic to a (necessarily closed) subgroup of the group $H([0, 1] ^ \N)$
of
homeomorphisms of the Hilbert cube $[0, 1] ^ \N$.
A similar result holds for separable metrizable compactifiable semigroups,
every such semigroup is topologically isomorphic to a subsemigroup of $C([0, 1]
  ^ \N)$;
see~\cite[Theorem 5.2]{Megrelishvili2007aa}.

\begin{question}
  Is every countable Polish semigroup
  topologically isomorphic to a subsemigroup of $\N ^ \N$?
  \footnote{Addendum: S. Bardyla, L. Elliott, J. D. Mitchell, and Y. P\'eresse recently showed that the answer to this question is no, see~\cite{Bardyla2023}.}
\end{question}


\subsection{The full binary relation monoid}\label{subsection-binary}

Unlike $X ^ X$ which is probably better known as a topological space than as a
semigroup,  there is no obvious candidate for a semigroup topology on the full
binary relation monoid $B_X$.
In this section, we prove that a dichotomy exists for semigroup topologies on $B_X$:
either such a topology is so coarse that it is not $T_1$, or it is so fine that
it is not second countable.
In particular, $B_X$ possesses no Polish semigroup topologies.
We exhibit two semigroup topologies on $B_X$ which are, in some sense,
canonical for the poles of the dichotomy.

The natural starting point is to consider a topology which induces the
pointwise topology on $X ^ X$. This is the subject of the next theorem.

\begin{theorem} \label{finest_inducer_relO}
  Let $X$ be an infinite set and let
  \[
    U_{x, y} = \set{f \in B_X}{(x,y) \in f}
  \]
  for all $x, y\in X$. If $\mathcal{B}_1$ is the topology on $B_X$ with
  subbasis
  $\set{U_{x, y}}{x,y\in X}$, then the following hold:
  \begin{enumerate}[\rm (i)]

    \item\label{thm-item-1}
          $\mathcal{B}_1$ is a semigroup topology for $B_X$ and inversion of
          binary relations $f
            \mapsto f^{-1}$ is continuous;

    \item\label{finest_inducer_rel0-ii}
          $\mathcal{B}_1$ is $T_0$ but not $T_1$, and
          the subspace topology induced by $\mathcal{B}_1$ on the symmetric
          inverse
          monoid $I_X$ is not $T_1$;

    \item\label{finest_inducer_rel0-iii}
          the topological semigroup $B_X$ with the topology $\mathcal{B}_1$ has
          property \textbf{X}
          with respect to $\Sym(X)$;

    \item\label{finest_inducer_rel0-iv}
          every topology that is semitopological for $B_X$ and
          that induces the pointwise topology on $\Sym(X)$ is contained in
          $\mathcal{B}_1$;

    \item\label{finest_inducer_rel0-v}
          if $X$ is countable, then $B_X$ with the topology $\mathcal{B}_1$ has
          automatic continuity with respect to
          the class of second countable topological semigroups.
  \end{enumerate}
\end{theorem}
\begin{proof}
  Note that a basis for $\mathcal{B}_1$ is given by the collection
  of sets $\set{h\in B_X}{f \subseteq h}$ where $f\in B_X$ is finite, since
  such sets are precisely the finite intersections of subbasic sets in
  $\mathcal{B}_1$.
  \medskip

  \noindent
  \textbf{(\ref{thm-item-1}).} Let $x,y \in X$ and $f,g \in B_X$ be such that
  $fg\in
    U_{x,y}$, i.e.  $(x,y) \in fg$. Then there exists $z \in X$ such that $(x,
    z) \in f$ and $(z, y) \in g$. Conversely, for every $f',g'\in B_X$ with
  $(x,z) \in f'$ and $(z,y) \in g'$ we have that $(x,y) \in f'g'$. In other
  words, $f \in U_{x,z}$, $g\in U_{z,y}$ and
  $$U_{x,z}U_{z,y} \subseteq U_{x,y}.$$
  Hence multiplication is continuous.

  The map $f \mapsto f^{-1}$ is a homeomorphism since
  $\left(U_{x,y}\right)^{-1}=U_{y,x}$.
  \medskip

  \noindent\textbf{(\ref{finest_inducer_rel0-ii}).} 
  If $f, g\in B_X$ are distinct, then either $f
    \not \subseteq g$ or $g \not \subseteq f$. Without loss of generality
  assume that
  $f \not \subseteq g$. Then there exists $(x,y) \in X \times X$ such that
  $(x,y) \in f \setminus g$ and so $f \in U_{x,y}$ but $g \not \in
    U_{x,y}$. Hence $\mathcal{B}_1$ is $T_0$.

  On the other hand, suppose that $f,g \in I_X$ and $g \subsetneq f$. Then
  every subbasic open set $U_{x,y}$ containing $g$ also contains $f$. Thus
  every open set containing $g$ contains $f$ and so $\mathcal{B}_1$ is not
  $T_1$ (even
  when restricted to $I_X$).\medskip

  \noindent\textbf{(\ref{finest_inducer_rel0-iii}).} 
  Let $Y \subseteq X$ be such that $|Y|=|X \setminus Y|=|X|$.  Enumerate $X$ as
  $X=\set{x_i}{i\in |X|}$ and let $\set{X_i}{i\in |X|}$ be a partition of $X
    \setminus Y$ such that $|X_i|=|X|$ for every $i \in |X|$.  Define $f\in
    B_X$
  by
  $$f=\set{(x_i, y)\in X \times X}{i \in |X| \text{ and }y \in X_i}.$$
  Let $s\in B_X$ be arbitrary. A binary relation $t\in B_X$ satisfies
  $ftf^{-1}=s$ if and only if $t$ has the following property:
  \begin{equation}\label{relO_generator}
    \text{for all }i,j \in |X|: \quad \left(X_i \times X_j\right) \cap t \neq
    \varnothing \quad \text{if and only if}\quad (x_i,x_j)\in s.
  \end{equation}
  Since $|Y|=|X_i|=|X|$, there exists $t_s\in \Sym(X)$ satisfying
  \eqref{relO_generator}.

  Suppose that $V$ is a basic open neighbourhood of $t_s$. Then there exist
  finite $k\in B_X$ such that $V = \set{h \in B_X}{k \subseteq h}$.
  If $U := \set{h \in B_X}{fkf^{-1} \subseteq h}$, then $U$ is open since
  $fkf^{-1}$ is finite. We show that $s\in U \subseteq fVf^{-1}$. As
  $k\subseteq t_s$ and $ft_sf^{-1}=s$ it is clear that $s\in U$. It
  remains to show that
  \[U\subseteq fVf^{-1}.\]
  Let $u\in U$ be arbitrary. As in \eqref{relO_generator} we need only find
  $t\in V \cap \Sym(X)$ with
  \begin{equation*}
    \text{for all }i,j \in |X|: \quad \left(X_i \times X_j\right) \cap t \neq
    \varnothing \quad \text{if and only if}\quad (x_i,x_j)\in u.
  \end{equation*}
  If $i, j\in |X|$ and $(X_i\times X_j)\cap k \neq \varnothing$, then
  $(x_i, x_j)\in fkf^{-1}\subseteq u$. Therefore, as $k$ is finite, we can
  extend $k$ to an element $t$ of $V\cap \Sym(X)$ with the desired
  property. So $u=ftf^{-1}\in f(V\cap \Sym(X))f^{-1}$ and
  \[U\subseteq	f(V\cap \Sym(X))f^{-1},\]
  as required.
  \medskip

  \noindent\textbf{(\ref{finest_inducer_rel0-iv}).} 
  The subbasis for $\mathcal{B}_1$ induces the usual subbasis
  for the
  pointwise topology on $\Sym(X)$, and so the topology on $\Sym(X)$ induced by
  $\mathcal{B}_1$ is the pointwise topology. Since $B_X$ has property
  \textbf{X} with
  respect to $\Sym(X)$, by part \eqref{finest_inducer_rel0-iii}, it follows from
  \cref{lem-luke-0}\eqref{lem-luke-0-ii} that if $\T$ is a topology that is semitopological
  for $B_X$, then $\T$ is contained in $\mathcal{B}_1$.
  \medskip

  \noindent\textbf{(\ref{finest_inducer_rel0-v}).} 
  This follows from part \eqref{finest_inducer_rel0-iii}, \cref{lem-luke-0}\eqref{lem-luke-0-iv}, and
  the automatic continuity of $\Sym(X)$.
\end{proof}

As an immediate consequence of \cref{finest_inducer_relO}\eqref{finest_inducer_rel0-iii}, there is no
$T_1$ topology that is semitopological for $B_X$ and that induces the pointwise
topology on $\Sym(X)$.
If instead of trying to extend the pointwise topology, as we did in
\cref{finest_inducer_relO}, we look for the weakest
$T_1$ topology that is semitopological for $B_X$, then we obtain the following
theorem.

\begin{theorem}\label{coarsest_T1_relO}
  Let $X$ be an infinite set and let $\mathcal{B}_2$ be the topology on $B_X$
  generated by
  the sets
  \[
    U_{x, y} = \set{h\in B_X}{(x,y) \in h} \quad \text{and}\quad
    V_{Y, Z} = \set{h \in B_X}{(Y)h \subseteq Z}
  \]
  for all $x,y\in X$ and $Y, Z\subseteq X$. Then the following hold:
  \begin{enumerate}[\rm (i)]

    \item\label{coarsest_T1_rel0-i}
          $\mathcal{B}_2$ is a Hausdorff semigroup topology for $B_X$ and
          inversion
          of binary relations $f \mapsto f^{-1}$ is continuous;

    \item\label{coarsest_T1_rel0-ii}
          every $T_1$ topology that is semitopological for $B_X$ contains
          $\mathcal{B}_2$;

    \item\label{coarsest_T1_rel0-iii}
          $\mathcal{B}_2$ is not contained in any second countable topology;

    \item\label{coarsest_T1_rel0-iv}
          $\mathcal{B}_2$ strictly contains $\mathcal{B}_1$;

    \item\label{coarsest_T1_rel0-v}
          if $X$ is countable, then $\mathcal{B}_2$ coincides with the
          Fr\'echet-Markov,
          Hausdorff-Markov, and Zariski topologies for $B_X$.
  \end{enumerate}
\end{theorem}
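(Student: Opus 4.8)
The plan is to establish the four structural facts about $\mathcal{B}_2$ and then deduce (v) by squeezing $\mathcal{B}_2$ between inclusions already in hand. For (i), continuity of multiplication on the $U_{x,y}$ is inherited from \thref{finest_inducer_relO}(\ref{thm-item-1}), so I only handle the new subbasic sets: if $fg\in V_{Y,Z}$, set $W=(Y)f$; then $f\in V_{Y,W}$, $g\in V_{W,Z}$, and $V_{Y,W}\cdot V_{W,Z}\subseteq V_{Y,Z}$, giving continuity. Continuity of inversion reduces to subbasic preimages: $U_{x,y}$ pulls back to $U_{y,x}$, and $f^{-1}\in V_{Y,Z}$ is equivalent to $(X\setminus Z)f\subseteq X\setminus Y$, so $V_{Y,Z}$ pulls back to $V_{X\setminus Z,\,X\setminus Y}$; both are open, and inversion is an involution, hence a homeomorphism. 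For Hausdorffness, given $f\neq g$ with, say, $(x,y)\in f\setminus g$, the sets $U_{x,y}\ni f$ and $V_{\{x\},X\setminus\{y\}}\ni g$ are disjoint.

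For (ii) the key point is that for fixed $w,v\in X$ the map $f\mapsto \{(w,x)\}\,f\,\{(y,v)\}$ is a composite of a left and a right translation, hence continuous for any topology that is semitopological for $B_X$, and it takes only the two values $\{(w,v)\}$ (when $(x,y)\in f$) and $\varnothing$ (otherwise); likewise $f\mapsto(\{w\}\times Y)\,f\,((X\setminus Z)\times\{v\})$ equals $\{(w,v)\}$ exactly when $(Y)f\not\subseteq Z$ and $\varnothing$ otherwise. In a $T_1$ topology both target singletons are closed, so each two-valued map makes the corresponding $U_{x,y}$ and $V_{Y,Z}$ clopen; hence every $T_1$ semitopological topology contains $\mathcal{B}_2$.

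Part (iii) I would prove by a cardinality argument. Fix $x_0\in X$, choose an almost disjoint family $\{A_i\}_{i\in I}$ of infinite subsets of a countably infinite subset of $X$ with $|I|>\aleph_0$ (so $\{A_i\}$ is an antichain under inclusion), and put $f_i=\{x_0\}\times A_i$. Then $f_i\in V_{\{x_0\},A_i}$ while $f_j\notin V_{\{x_0\},A_i}$ for $j\neq i$, since $A_j\not\subseteq A_i$. If some second countable $\mathcal{T}\supseteq\mathcal{B}_2$ had a countable base $\mathcal{C}$, then for each $i$ there is $C_i\in\mathcal{C}$ with $f_i\in C_i\subseteq V_{\{x_0\},A_i}$; as $f_j\notin C_i$ for $j\neq i$, the map $i\mapsto C_i$ is injective, contradicting countability of $\mathcal{C}$. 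Part (iv) is then immediate: $\mathcal{B}_1\subseteq\mathcal{B}_2$ because the subbasis of $\mathcal{B}_1$ is part of that of $\mathcal{B}_2$, and the inclusion is strict because $\mathcal{B}_1$ is not $T_1$ by \thref{finest_inducer_relO}(ii) whereas $\mathcal{B}_2$ is Hausdorff by (i).

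Finally, for (v), parts (i) and (ii) already pin down the Markov topologies: $\mathcal{B}_2$ is a Hausdorff semigroup topology, so the Hausdorff-Markov topology is contained in it, while (ii) shows $\mathcal{B}_2$ sits inside every $T_1$ semitopological (in particular every $T_1$ or Hausdorff semigroup) topology, so $\mathcal{B}_2$ lies in both the Fr\'echet-Markov and Hausdorff-Markov topologies; hence all three coincide with $\mathcal{B}_2$. For the Zariski topology $\mathcal{Z}$, \thref{prop-luke-1} gives $\mathcal{Z}\subseteq\mathcal{B}_2$, and for the reverse inclusion I would realise each subbasic set as Zariski-open: taking $\phi_1(f)=\varnothing f=\varnothing$ and $\phi_0(f)=\{(w,x)\}\,f\,\{(y,x)\}\,f$, one checks $\phi_0(f)\neq\phi_1(f)$ exactly when $(x,y)\in f$, so every $U_{x,y}$ is Zariski-open. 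The main obstacle is to show the sets $V_{Y,Z}$ are Zariski-open: the condition $(Y)f\subseteq Z$ is universal rather than existential, and the empty relation lies in every $V_{Y,Z}$ yet, since $(s)\phi_0=(s)\phi_1=\varnothing$ whenever $s=\varnothing$, lies in no proper subbasic Zariski-open set. This degenerate behaviour of $\varnothing$ is precisely the step I expect to require the most care, and it is where the countability hypothesis and a separate treatment of the empty relation must enter.
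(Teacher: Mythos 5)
Parts (i)--(iv) of your proposal are correct, and in places cleaner than the paper's own proof. Your (i) is essentially the paper's argument (your witness $W=(Y)f$ and the identity $(V_{Y,Z})^{-1}=V_{X\setminus Z, X\setminus Y}$ are exactly theirs). In (ii) you sandwich with the rank-one translates $\{(w,x)\}f\{(y,v)\}$ and $(\{w\}\times Y)f((X\setminus Z)\times\{v\})$ where the paper uses $Y\times Y$ and $(X\setminus Z)\times(X\setminus Z)$; the two-valued-map idea is the same and your version works. Your (iii) replaces the paper's two-case analysis (an uncountable pairwise disjoint open family when $X$ is uncountable, and a $2^{\aleph_0}$-sized inclusion-antichain of images when $X$ is countable) by a single almost-disjoint-family argument that handles every infinite $X$ uniformly; this is a genuine, if small, improvement. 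Your (iv), getting strictness from $\mathcal{B}_1$ failing $T_1$ on $I_X$ while $\mathcal{B}_2$ is Hausdorff, is shorter than the paper's route via the induced topology on $\Sym(X)$. The Fr\'echet-Markov and Hausdorff-Markov halves of (v) are exactly the paper's squeeze and are complete.

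The gap is precisely where you flag it: the inclusion $\mathcal{B}_2\subseteq\mathcal{Z}$ in (v), and the way out is \emph{not} to realise the sets $V_{Y,Z}$ as subbasic Zariski-open. Indeed they are elementary algebraic, hence Zariski-\emph{closed}: fixing $w\in X$ and setting $A=\{w\}\times Y$, $B=(X\setminus Z)\times Y$, one computes $AfBf=\{w\}\times (Y)f$ when $(Y)f\not\subseteq Z$ and $AfBf=\varnothing$ otherwise, so $V_{Y,Z}=\set{f\in B_X}{AfBf=\varnothing f}$; and your observation at $\varnothing$ correctly shows the direct route cannot succeed. Instead, apply your own part (ii) to $\mathcal{Z}$: the Zariski topology is semitopological by \thref{prop-luke-2}, and it is $T_1$ because for each $t\in B_X$ the set $\set{x\in B_X}{x\neq t}$ is subbasic Zariski-open, obtained from the word $(s)\phi_0=s$ against the \emph{constant} word $(s)\phi_1=t$. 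This requires reading the definition as permitting a constant (an $s$-free word) on one side, which is the reading the paper itself uses: the elementary algebraic sets $W_{a,b}=\{x\in X:\max\{x,a\}=b\}$ in \thref{ex-hausdorff} and the set $\set{s\in I_X}{\{(x,x)\}s^2\neq\{(x,x)\}}$ in \thref{prop-inverse-not-equal} both have a constant side. With that settled, (ii) gives $\mathcal{B}_2\subseteq\mathcal{Z}$, which together with your \thref{prop-luke-1} inclusion yields $\mathcal{Z}=\mathcal{B}_2$; this sandwich (least $T_1$ semitopological $\subseteq$ Zariski $\subseteq$ Hausdorff-Markov, as in Figure~\ref{figure-the-only}) is exactly the paper's proof of (v). Note finally that under your stricter reading, in which both words must contain an occurrence of $s$, your remark about $\varnothing$ is fatal rather than merely delicate: since $\varnothing$ is a two-sided zero of $B_X$, no subbasic Zariski-open set would contain it, so $\mathcal{Z}$ would fail to be $T_1$ and the claimed equality would be false. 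Adopting the constant-permitting reading is therefore forced by the statement itself.
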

\begin{proof}
  \textbf{(\ref{coarsest_T1_rel0-i}).}
  If $f,g \in B_X$ are distinct, then without loss of generality, there exist
  $x,y \in X$ such that $(x,y)\in f$ but $(x,y)\not \in g$. Then $f\in
    U_{x,y}$ and $g\in \mft{\{x\}}{X \setminus \{y\}}$. Since these two open
  sets
  are disjoint, $\mathcal{B}_2$ is Hausdorff.

  The topology generated by the sets $U_{x,y}$ is just $\mathcal{B}_1$ as
  defined in
  \cref{finest_inducer_relO}. Since $\mathcal{B}_1$ is a semigroup topology,
  it
  suffices to show that the pre-images under multiplication and inversion of
  the subbasic sets $\mft{Y}{Z}$ are open.

  For any $Y \subseteq X$ and $f,g \in B_X$, we clearly have $f \in
    \mft{Y}{(Y)f}$ and $g \in \mft{(Y)f}{(Y)fg}$. Moreover, if $f'\in
    \mft{Y}{(Y)f}$ and $g' \in \mft{(Y)f}{(Y)fg}$, then $(Y)f'g' \subseteq
    (Y)fg'\subseteq (Y)fg$. Thus if $fg \in \mft{Y}{Z}$, then $f'g' \in
    \mft{Y}{Z}$ and so multiplication is continuous.

  Since
  $$\left(\mft{Y}{Z}\right)^{-1}=\mft{X\setminus Z}{X \setminus Y}$$
  it follows that $f \mapsto f^{-1}$ is also continuous under $\mathcal{B}_2$.\medskip

  \noindent\textbf{(\ref{coarsest_T1_rel0-ii}).}
  Suppose that $\T$ is a $T_1$ topology under which $B_X$ is
  semitopological. We will show that the subbasic open sets of $\mathcal{B}_2$
  are open in $\T$ and so, in particular, $\mathcal{B}_2\subseteq
    \T$.

  For any $Y, Z \subseteq X$ and $f\in B_X$ consider the product
  $$(Y \times Y)\circ f\circ ((X \setminus Z) \times (X \setminus Z))=g.$$
  Either $(Y)f \subseteq Z$ and $g = \varnothing$, or
  $(Y)f \not \subseteq Z$ and $g=Y \times (X \setminus Z)$.
  Thus
  $$\left(\{Y \times (X \setminus Z)\}\right)\left(\lambda_{Y\times Y} \circ
    \rho_{(X \setminus Z)\times (X \setminus Z)}\right)^{-1}=\set{f \in
      B_X}{(Y)f
      \not \subseteq  Z} = B_X\setminus V_{Y, Z}$$
  and
  $$\left(\{\varnothing\}\right)\left(\lambda_{Y\times Y} \circ \rho_{(X
      \setminus Z)\times (X \setminus Z)}\right)^{-1}=\set{f \in B_X}{(Y)f
      \subseteq Z}=\mft{Y}{Z}$$
  as the continuous pre-images of finite sets, are closed in $\T$.
  Hence
  $V_{Y, Z}$ and $B_X\setminus V_{Y, Z}$ are open in $\T$.
  If $x,y \in X$, we may let $Y=\{x\}$ and $Z=X
    \setminus \{y\}$, and then
  $$B_X\setminus V_{Y, Z} = \set{f \in B_X}{(Y)f \not \subseteq  Z}=\set{f\in
      B_X}{(x,y)\in f}=U_{x,y}$$
  and so $U_{x,y}$ is clopen in $\T$ also.\medskip

  \noindent\textbf{(\ref{coarsest_T1_rel0-iii}).}
  If $X$ is uncountable, then the collection
  $$\set{\mft{X}{\{x\}}\cap U_{x,x}}{x\in X} \subseteq \mathcal{B}_2$$
  consists of pairwise disjoint open sets and is uncountable. Hence, if $X$ is
  uncountable, then $\mathcal{B}_2$ is not contained in any second countable
  topology.

  Suppose that $X$ is countable and that $\set{X_i}{i\in I}$ is a family of
  subsets of $X$ with cardinality $2 ^ {\aleph_0}$ such that $X_i\not\subseteq
    X_j$ whenever $i \neq j$.  For every $i\in I$, choose $f_i\in B_X$ such
  that
  $(X)f_i=X_i$. If $\mathcal{U}$ is a basis of any topology containing
  $\mathcal{B}_2$, then, for every $i\in I$, there exists $U_i \in \mathcal{U}$
  such that $f_i \in U_i \subseteq \mft{X}{X_i}$. If $f_i \in U_j\subseteq
    \mft{X}{X_j}$, then $X_i=(X)f_i \subseteq X_j$ and so $i=j$.  In other
  words,
  $i\neq j$ implies that $f_i \not \in U_j$. Thus $|\mathcal{U}|\geq
    |I|=2^{\aleph_0}$ and so no topology containing $\mathcal{B}_2$ is second
  countable. \medskip

  \noindent\textbf{(\ref{coarsest_T1_rel0-iv}).}
  The subspace topology on $\Sym(X)$ induced by $\mathcal{B}_1$ is the
  pointwise topology. On the other hand, if $Y$ and $Z$ are two infinite
  sets with infinite complements, then $V_{Y, Z}\cap \Sym(X)$ is not open in
  the pointwise
  topology on $\Sym(X)$. \medskip

  \noindent\textbf{(\ref{coarsest_T1_rel0-v}).}
  Since the minimal $T_1$ topology that
  is semitopological for $B_X$ coincides with the Hausdorff-Markov topology, by
  parts \eqref{coarsest_T1_rel0-i} and \eqref{coarsest_T1_rel0-ii}, it follows that the Fr\'echet-Markov, Hausdorff-Markov
  and Zariski topologies all equal $\mathcal{B}_2$; see
  \cref{figure-the-only}.
\end{proof}

We obtain the following corollary to \cref{coarsest_T1_relO}\eqref{coarsest_T1_rel0-iii}.

\begin{cor}\label{cor-no-polish-binrel}
  Let $X$ be an infinite set. Then no second countable $T_1$ topology is
  semitopological for $B_X$.  In particular, $B_X$ possesses no Polish
  semigroup topologies.
\end{cor}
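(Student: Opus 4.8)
The plan is to derive this immediately from \thref{coarsest_T1_relO}, which has already done all of the substantive work. First I would suppose, for contradiction, that $\T$ is a second countable $T_1$ topology that is semitopological for $B_X$. By \thref{coarsest_T1_relO}(ii), every $T_1$ topology that is semitopological for $B_X$ contains $\mathcal{B}_2$, and hence in particular $\mathcal{B}_2 \subseteq \T$. But \thref{coarsest_T1_relO}(iii) asserts that $\mathcal{B}_2$ is not contained in any second countable topology, which contradicts the assumed second countability of $\T$. Therefore no second countable $T_1$ topology can be semitopological for $B_X$.

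For the final assertion, I would observe that a Polish topology is completely metrizable and separable, hence second countable, and is in particular Hausdorff and so $T_1$; moreover, as recorded in the preliminaries, every semigroup topology is semitopological. Consequently, a Polish semigroup topology on $B_X$ would be a second countable $T_1$ topology that is semitopological for $B_X$, and this is exactly what the first part of the corollary forbids. Thus $B_X$ admits no Polish semigroup topology.

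I do not expect any genuine obstacle here: the entire force of the argument is supplied by \thref{coarsest_T1_relO}, where the containment in (ii) pins any candidate topology above $\mathcal{B}_2$ while (iii) shows that $\mathcal{B}_2$ is already too fine to be refined into a second countable topology (using the uncountable disjoint family when $X$ is uncountable, and the $2^{\aleph_0}$-sized obstruction when $X$ is countable). The only points requiring a brief remark are the elementary facts that a Polish space is second countable and that a semigroup topology is automatically semitopological, both of which are immediate and need no separate proof.
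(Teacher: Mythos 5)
Your proof is correct and is essentially the paper's own argument: the paper presents this as an immediate corollary of \thref{coarsest_T1_relO}, combining part (ii) (every $T_1$ semitopological topology contains $\mathcal{B}_2$) with part (iii) ($\mathcal{B}_2$ is not contained in any second countable topology), exactly as you do. Your closing remarks that a Polish topology is second countable and $T_1$, and that a semigroup topology is semitopological, are the same routine observations the paper leaves implicit.
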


\subsection{The partial transformation monoid}
\label{subsection-partial}

A natural way of defining a semigroup topology on the partial transformation
monoid $P_X$, where $X$ is an arbitrary set, is to embed $P_X$ into the
full transformation monoid $X ^ X$, and use the subspace topology induced by
the pointwise topology on $X ^ X$.
We will show that, when $X$ is infinite, this topology is simultaneously
the weakest $T_1$ semigroup topology on $P_X$ and the finest extension of the
pointwise topology of $\Sym(X)$ to $P_X$.

Roughly speaking, the natural way of embedding $P_X$ into $X ^ X$
is to add a new element $\na$ to $X$ that will represent ``not
defined". More precisely, if $X$ is a set, $\na \not \in X$, and $Y=X \cup
  \{\na\}$, then the function $\phi : P_X \to Y ^ Y$ defined by
\begin{equation}\label{natural_embedding}
  (x)((f)\phi) =
  \begin{cases}
    (x)f & \text{if } x \in \dom(f)       \\
    \na  & \text {if } x \not \in \dom(f)
  \end{cases}
\end{equation}
is an embedding.  Note that, in particular, if $g = (f)\phi$, then $(\na)g=
  \na$.  We will refer to $\phi$ as the \emph{natural embedding} of $P_X$ into
$Y
  ^ Y$.

\begin{theorem}\label{rdom_theorem}
  Let $X$ be an infinite set and let $\mathcal{P}$ be the topology on $P_X$
  generated by the sets
  \[
    U_{x, y} = \set{h\in P_X}{(x,y)\in h}
    \quad\text{and}\quad
    W_x = \set{h \in P_X}{x \not \in \dom(h)}
  \]
  for all $x,y \in X$. Then the following hold:
  \begin{enumerate}[\rm (i)]

    \item\label{rdom_theorem_i}
          the topology $\mathcal{P}$ is the subspace topology on $P_X$ induced
          by the
          pointwise topology on ${Y}^{Y}$ and the natural embedding $\phi:
            P_X\to Y
            ^ Y$ defined in \eqref{natural_embedding};

    \item\label{rdom_theorem_ii}
          $\mathcal{P}$ is a Hausdorff semigroup topology for $P_X$;

    \item\label{rdom_theorem_iv}
          $\mathcal{P}$ coincides with the Hausdorff-Markov, Fr\'echet-Markov, and Zariski
          topologies for $P_X$;

    \item\label{rdom_theorem_v}
          $P_X$ has property \textbf{X} with respect to $\mathcal{P}$ and
          $\Sym(X)$;

    \item\label{rdom_theorem_vi}
          if $\T$ is a topology that is semitopological for $P_X$ and $\T$
          induces
          the pointwise topology on $\Sym(X)$, then $\T$ is contained in
          $\mathcal{P}$;

    \item\label{rdom_theorem_viii}
          the topology $\mathcal{P}$ is the unique $T_1$ topology that induces
          the
          pointwise topology on $\Sym(X)$ and that is semitopological for
          $P_X$;

    \item\label{rdom_theorem_vii}
          if $X$ is countable, then $P_X$ with the topology $\P$ has automatic
          continuity with
          respect to the class of second countable topological semigroups;

    \item\label{rdom_theorem_ix}
          if $X$ is countable, then $\mathcal{P}$ is the unique Polish topology
          that is semitopological for $P_X$;

    \item\label{rdom_theorem_x}
          if $X$ is countable, then $\mathcal{P}$ is the unique $T_1$ second
          countable semigroup topology for $P_X$.
  \end{enumerate}
\end{theorem}

\begin{proof}
  \textbf{(\ref{rdom_theorem_i}).}
  The image of $P_X$ under the natural embedding $\phi$ defined in
  \eqref{natural_embedding} is the set
  $$\set{h\in Y^{Y}}{(\na)h=\na}.$$
  The pointwise topology on $Y^{Y}$ is generated by the sets $A_{x,y} =\set{f
      \in Y^{Y}}{(x,y)\in f}$ for all $x,y\in Y$.  Hence the topology induced
  on
  $(P_X)\phi$ is generated by the sets $A_{x,y} \cap (P_X)\phi$.  Note that
  $A_{\na, \na}\cap (P_X)\phi=(P_X)\phi$ and $A_{\na, y}\cap
    (P_X)\phi=\varnothing$ for all $y\in X = Y\setminus\{\na\}$. If $x, y \in
    X$,
  then
  \[
    A_{x,y}\cap (P_X)\phi=\set{h \in (P_X)\phi}{(x,y) \in h} = (U_{x, y})\phi
  \]
  and
  \[
    A_{x,\na}\cap (P_X)\phi=\set{h \in (P_X)\phi}{(x,\na) \in h} = (W_x)\phi.
  \]
  Hence $\phi$ is a homeomorphism between $\mathcal{P}$ and the topology
  generated by $A_{x, y} \cap (P_X)\phi$.
  \medskip

  \noindent
  \textbf{(\ref{rdom_theorem_ii}).}
  Since $Y ^ Y$ is a Hausdorff topological semigroup under the pointwise
  topology, it follows from part \eqref{rdom_theorem_i} that $P_X$ is a
  Hausdorff topological
  semigroup under $\mathcal{P}$.
  \medskip

  \noindent
  \textbf{(\ref{rdom_theorem_iv}).}
  The proof of this part is similar to the proof of \cref{cor-elliott-1.012},
  it is included for the sake of completeness.
  By part \eqref{rdom_theorem_ii}, $\P$ is a Hausdorff semigroup topology for
  $P_X$. Hence the Hausdorff-Markov topology is contained in $\P$.
  By \cref{figure-the-only}, the Zariski and Fr\'echet-Markov topologies
  are contained in the Hausdorff-Markov topology, and hence in $\P$.

  On the other hand,
  \cref{thm-elliott-1} implies that every \(T_1\) topology that is
  semitopological for $P_X$ contains $\P$, and such topologies include the
  Hausdorff-Markov, Fr\'echet-Markov, and Zariski topologies.
  \medskip

  \noindent
  \textbf{(\ref{rdom_theorem_v}).}
  We will show that \(P_X\) has property \textbf{X} with respect to \(X^X\),
  from which it will follow, by \cref{lem-prop-X-transitive} and
  \cref{propx-fulltrans}(\ref{propx-fulltrans_i}),
  that \(P_X\) has property \textbf{X} with respect to \(\Sym(X)\).
  Let \(b\in X\) be fixed and let \(\psi: X\to X\backslash\{b\}\) be a
  bijection.
  If \(s\in P_X\) is arbitrary, then we define \(f=\psi\), \(g= \psi^{-1}\),
  and
  define \(t_s\in X^X\) by
  \[(x)t_s=\begin{cases}
      b                 & \text{if }(x)\psi^{-1}\notin \dom(s) \text{ or }x=b
      \\
      (x)\psi^{-1}s\psi & \text{if }(x)\psi^{-1}\in \dom(s).
    \end{cases}\]
  It follows, from the definitions, that \(s=ft_sg\) for every $s\in P_X$.

  Suppose that $s\in P_X$ is arbitrary but fixed for the remainder of the
  proof.
  Let $B$ be an arbitrary basic open neighbourhood of $t_s$. Then there exist
  \(x_0, x_1, \ldots, x_{n-1}\in X\), $n\geq 1$, such that
  \[B=\bigcap_{i=0}^{n-1} \big\{t\in P_X: (x_i)t=(x_i)t_s\big\}.\]
  Let $V$ be the finite intersection of all of the following subbasic open
  sets:
  \begin{enumerate}[(a)]
    \item
          $U_{(x_i)\psi^{-1}, (x_i)t_s\psi^{-1}}$ for every $x_i\in \{x_0, x_1,
            \ldots
            , x_{n-1}\}\backslash\{b\}$ such that $(x_i)t_s\in \im(\psi)$;
          and
    \item
          $W_{(x_i)\psi ^ {-1}}$ for every $x_i \in \{x_0, x_1, \ldots,
            x_{n-1}\}\backslash\{b\}$ such that $(x_i)t_s = b$.
  \end{enumerate}
  Then $V$ is open and we will show that \(s \in V\).
  If \(x_i \in \{x_0, x_1, \ldots, x_{n-1}\}\backslash\{b\}\) and \((x_i)t_s\in
  \im(\psi)\), then \((x_i)\psi^{-1}\in \dom(s)\) and \(((x_i)\psi^{-1})s=
  ((x_i)t_s)\psi^{-1}\). This implies that $s \in U_{(x_i)\psi^{-1},
        (x_i)t_s\psi^{-1}}$.
  Similarly, if \(x_i \in \{x_0, x_1, \ldots, x_{n-1}\}\backslash\{b\}\) and
  \((x_i)t_s= b\), then \((x_i)\psi^{-1} \notin \dom(s)\).
  Hence $s\in W_{(x_i)\psi ^ {-1}}$ for such $x_i$.
  We conclude that $s\in V$.

  It remains to show that \(V \subseteq f_s (B \cap X^X) g_s\).
  Let \(k \in V\) be arbitrary. We will show that \(t_s\) and
  \(t_k\) agree on \(\{x_0, x_1, \ldots, x_{n-1}\}\), or, in other
  words, \(t_k \in B\). By the definitions of $t_s$  and $t_k$ \( (b)t_s = b =
  (b)t_k\). Suppose
  that \(x_i \in \{x_0, x_1, \ldots, x_{n-1}\}\setminus \{b\}\). If \((x_i)t_s
  = b\), then \((x_i)\psi^{-1} \notin
  \dom(s)\) and, since \(k \in W_{(x_i)\psi^{-1}}\), \((x_i)\psi^{-1} \notin
  \dom(k)\). Hence
  \((x_i)t_s = b = (x_i)t_k\). On the other hand, if \((x_i)t_s \neq b\), then
  \( (x_i)t_s \in \im(\psi)\). It follows from
  the fact that \(k \in U_{(x_i)\psi^{-1},
      (x_i)t_s\psi^{-1}}\) that  \((x_i)\psi^{-1}k =
  (x_i)t_s\psi^{-1}\), and so \( (x_i)t_s =
  (x_i)\psi^{-1}k\psi = (x_i)t_k\) and so $t_k\in B$. Since $t_k\in X^ X$ by
  definition, and so $k = f t_k g \in f(B\cap X^X)g$, as required.
  \medskip

  \noindent
  \textbf{(\ref{rdom_theorem_vi}).}
  Suppose that $\T$ is a topology that is
  semitopological for $P_X$ and that $\T$ induces the pointwise topology on
  $\Sym(X)$.
  Since $W_x \cap X ^ X = \set{h\in P_X}{x\not \in \dom(h)}\cap X ^
    X=\varnothing$, the topology induced by $\mathcal{P}$ on $\Sym(X)$ is just
  the pointwise topology. Hence $\T$ and $\mathcal{P}$ induce the same topology
  on $\Sym(X)$. Since $P_X$ has property \textbf{X} with respect to
  $\mathcal{P}$ and $\Sym(X)$, it follows that $\T\subseteq \mathcal{P}$ by
  \cref{lem-luke-0}\eqref{lem-luke-0-ii}.  \medskip

  \noindent
  \textbf{(\ref{rdom_theorem_viii}).}
  This follows from \cref{thm-elliott-1} and part \eqref{rdom_theorem_vi}.
  \medskip

  \noindent
  \textbf{(\ref{rdom_theorem_vii}).}
  This follows immediately from part \eqref{rdom_theorem_v},
  \cref{lem-luke-0}\eqref{lem-luke-0-iv} and the automatic continuity of $\Sym(X)$.\medskip

  \noindent
  \textbf{(\ref{rdom_theorem_ix}).}
  Let $X$ be countable. By part \eqref{rdom_theorem_i}, $P_X$ is
  homeomorphic to its image under $\phi$.  It is easy to see that $(P_X)\phi$
  is a closed subset of $Y^{Y}$ under the pointwise topology. Thus $P_X$ under
  $\mathcal{P}$ is homeomorphic to a closed subspace of a Polish space and is
  hence Polish.

  Suppose that $\T$ is a Polish topology that is semitopological for $P_X$.
  Then $\T$ is $T_1$ and so $\mathcal{P} \subseteq \T$ by
  \cref{thm-elliott-1}. Since
  $\mathcal{P}$ is Polish and $\Sym(X)$ is a Polish subgroup of $\P_X$, it
  follows from \cref{lem-luke-0}\eqref{lem-luke-0-iii} that $\T\subseteq \P$. Hence $\T =
    \mathcal{P}$, as required.
  \medskip

  \noindent
  \textbf{(\ref{rdom_theorem_x}).}
  Suppose that $\T$ is a $T_1$ second countable semigroup topology for $P_X$.
  By \cref{thm-elliott-1}, $\mathcal{P}$ is contained in $\T$.
  Applying \eqref{rdom_theorem_vii} to the identity function from
  $P_X$ with $\mathcal{P}$ to $P_X$  with $\T$ shows that $\T$ is contained in
  $\mathcal{P}$ also.
\end{proof}

Since $P_X \subseteq B_X$, the topologies $\mathcal{B}_1$ and $\mathcal{B}_2$
on $B_X$,
given in \cref{finest_inducer_relO} and
\cref{coarsest_T1_relO}, induce semigroup topologies on $P_X$.
It is natural to ask how the topology $\P$ from \cref{rdom_theorem}
relates to the semigroup topologies on $P_X$ induced by $\mathcal{B}_1$ and
$\mathcal{B}_2$.

\begin{prop}
  If $X$ is an infinite set,
  then $\P$ strictly contains the subspace topology on $P_X$ induced by
  $\mathcal{B}_1$ and $\P$ is strictly contained in the subspace topology on
  $P_X$ induced by $\mathcal{B}_2$.
\end{prop}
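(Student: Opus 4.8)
The plan is to treat the two inclusions separately; in each case the relevant containment is immediate from a comparison of subbases, and strictness follows by exhibiting a single witnessing set. Throughout I write $\mathcal{B}_1|_{P_X}$ and $\mathcal{B}_2|_{P_X}$ for the two subspace topologies on $P_X$. The one structural fact I would use repeatedly is that a basic $\P$-neighbourhood of a point $h\in P_X$ constrains $h$ at only finitely many points: it has the form $\set{g\in P_X}{f\subseteq g \text{ and } x_1,\ldots,x_k\notin\dom(g)}$ for some finite partial function $f\subseteq h$ and finitely many $x_1,\ldots,x_k\notin\dom(h)$.

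For the first inclusion, the subbasic sets $U_{x,y}\cap P_X$ generating $\mathcal{B}_1|_{P_X}$ are exactly the sets $U_{x,y}$ in the subbasis of $\P$, so $\mathcal{B}_1|_{P_X}\subseteq\P$ is immediate. To see this is strict I would show that $W_{x_0}\in\P$ is not open in $\mathcal{B}_1|_{P_X}$ for a fixed $x_0\in X$. A basic $\mathcal{B}_1|_{P_X}$-neighbourhood of a point is $\set{g\in P_X}{f\subseteq g}$ for a finite $f$; evaluating at $\varnothing$ forces $f=\varnothing$ (indeed $\varnothing\notin U_{x,y}$ for any $x,y$), so the only basic neighbourhood of $\varnothing$ is $P_X$ itself, which is not contained in $W_{x_0}$. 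Hence $W_{x_0}$ is open in $\P$ but not in $\mathcal{B}_1|_{P_X}$.

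For the second inclusion, I would first check that the subbasis of $\P$ lies inside $\mathcal{B}_2|_{P_X}$: the sets $U_{x,y}$ are already subbasic for $\mathcal{B}_2$, and the identity $W_x=\mft{\{x\}}{\varnothing}\cap P_X$ holds because for a partial function $h$ one has $(\{x\})h\subseteq\varnothing$ if and only if $x\notin\dom(h)$. This gives $\P\subseteq\mathcal{B}_2|_{P_X}$. For strictness I would fix $w_0\in X$ and consider $\mft{X}{X\setminus\{w_0\}}\cap P_X=\set{h\in P_X}{w_0\notin\im(h)}$, which is subbasic open in $\mathcal{B}_2|_{P_X}$. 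To show it is not $\P$-open, evaluate at $\varnothing$: every basic $\P$-neighbourhood of $\varnothing$ has the form $\set{g\in P_X}{x_1,\ldots,x_k\notin\dom(g)}$ (only the subbasic factors $W_{x_i}$ can contain $\varnothing$), and since $X$ is infinite I may choose $y\in X\setminus\{x_1,\ldots,x_k\}$ and set $g=\{(y,w_0)\}$; then $g$ lies in the neighbourhood but $w_0\in\im(g)$, so the neighbourhood is not contained in the set.

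The verifications themselves are short, so there is no serious technical obstacle. The only point requiring care is identifying the correct witnesses and recognising the conceptual crux: a $\P$-neighbourhood imposes only finitely many pointwise constraints, and this is precisely what prevents a global image-type condition such as ``$w_0\notin\im(h)$'' from being $\P$-open, while it is directly encoded by the sets $\mft{Y}{Z}$ defining $\mathcal{B}_2$.
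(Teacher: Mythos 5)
Your proof is correct, and it takes a genuinely different route from the paper's. The paper handles both strictness claims ``softly'', by invoking structural results established earlier: for the first containment it notes that $\P$ is $T_1$ while $\mathcal{B}_1$ fails to be $T_1$ even on $I_X\subseteq P_X$ (Theorem~\ref{finest_inducer_relO}(ii)), so the two topologies cannot agree on $P_X$; for the second it observes that $\P$ induces the pointwise topology on $\Sym(X)$, whereas $\mathcal{B}_2$ cannot --- otherwise Theorem~\ref{finest_inducer_relO}(iv) would give $\mathcal{B}_2\subseteq\mathcal{B}_1$, contradicting the strict containment $\mathcal{B}_1\subsetneq\mathcal{B}_2$ of Theorem~\ref{coarsest_T1_relO}(iv). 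You instead argue directly from the subbases, exhibiting explicit witnessing sets, and in both cases testing them at the single point $\varnothing$: the subbasic $\P$-set $W_{x_0}$ fails to be open in $\mathcal{B}_1|_{P_X}$ because the only $\mathcal{B}_1|_{P_X}$-open set containing $\varnothing$ is all of $P_X$ (any $U_{x,y}$ excludes $\varnothing$); and the subbasic $\mathcal{B}_2$-set $\set{h\in P_X}{w_0\notin\im(h)}$ fails to be $\P$-open because basic $\P$-neighbourhoods of $\varnothing$ impose only finitely many domain constraints, which a single pair $\{(y,w_0)\}$ with $y$ outside those constraints evades. What the paper's argument buys is brevity given the machinery already in place, and it keeps the result aligned with the paper's recurring theme of comparing topologies via what they induce on $\Sym(X)$. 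What your argument buys is self-containedness and finer information: you show that already a \emph{subbasic} set of the larger topology is not open in the smaller one, and you isolate the conceptual mechanism --- $\P$-neighbourhoods are finitary pointwise constraints, so they cannot capture the global image condition encoded by the sets $V_{Y,Z}$ --- which is in fact the same phenomenon underlying the paper's $\Sym(X)$-based contradiction, made concrete. Your second witness also silently reproves that $\P\neq\mathcal{B}_2|_{P_X}$ without needing the full strength of Theorem~\ref{coarsest_T1_relO}(iv).
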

\begin{proof}
  Clearly from the definitions of $\mathcal{B}_1$ and $\P$,
  the subspace topology induced by $\mathcal{B}_1$ on $P_X$ is contained in
  $\P$. This containment is strict because $\P$ is $T_1$ but $\mathcal{B}_1$ is
  not $T_1$ on $I_X\subseteq P_X$ by \cref{finest_inducer_relO}\eqref{finest_inducer_rel0-ii}.

  The second containment follows since
  \[
    \set{h\in P_X}{x \not \in \dom(h)}=\set{h\in B_X}{(\{x\})h\subseteq
      \varnothing} \cap P_X\quad\text{and}\quad \set{h\in
      B_X}{(\{x\})h\subseteq
      \varnothing}\in \mathcal{B}_2
  \]
  for all $x\in X$. The topology $\P$ induces the pointwise topology on
  $\Sym(X)$. On the other hand, if $\mathcal{B}_2$ induced the pointwise
  topology on $\Sym(X)$, then $\mathcal{B}_2\subseteq \mathcal{B}_1$, by
  \cref{finest_inducer_relO}\eqref{finest_inducer_rel0-iv}, contradicting \cref{coarsest_T1_relO}\eqref{coarsest_T1_rel0-iv}.
\end{proof}

\subsection{The symmetric inverse monoid}
\label{subsection-sym-inv}

In this section we consider semigroup and inverse topologies on the symmetric
inverse monoid $I_X$ where $X$ is any infinite set. One natural way to obtain a
Hausdorff semigroup topology on $I_X$ is to consider it as a subspace of $P_X$
with the topology defined in \cref{subsection-partial}. In
\cref{minimal_semigroup_topologies}, we will show that this topology is
minimal among the $T_1$ semigroup topologies on $I_X$ but that inversion is not
continuous. If $U$ is open in the subspace topology, then the homeomorphic
topology generated by the sets $U ^ {-1} = \set{f\in I_X}{f ^ {-1}\in U}$ is
also minimal among the $T_1$ semigroup topologies on $I_X$ and again inversion
is not continuous. Moreover, every $T_1$ semigroup topology on $I_X$ contains
one or the other of these two topologies. This contrasts with the results of
Sections~\ref{subsection-full-transf}, \ref{subsection-binary}, and
\ref{subsection-partial} where it was shown that each of
the full transformation monoid, the full binary monoid, and the partial
transformation monoid have unique minimal $T_1$ semigroup topologies.
On the other hand, $I_X$ has a minimum inverse semigroup topology, the topology
generated by the two minimal $T_1$ semigroup topologies (see
\cref{minimal_inverse_semigroup_topology}). Arguably, this topology is the
best candidate, among those we exhibit, for a canonical topology on $I_X$. With
this viewpoint in mind, we conclude this section by proving an analogue of
\cref{theorem-luke-1} which characterises those topological inverse semigroups
that are topologically isomorphic to inverse subsemigroups of $I_{X}$, when $X$
is countable.

We begin by constructing the least $T_1$ topology that is semitopological for
$I_X$.

\begin{theorem}\label{invO-semitop}
  Let $X$ be an infinite set and let $\mathcal{I}_1$ denote the topology
  on $I_X$ generated by the sets
  \[U_{x, y} = \set{h\in I_X}{(x,y)\in h}\quad\text{and}\quad V_{x, y} =
    \set{h\in I_X}{(x,y)\not \in h}\]
  for all $x,y \in X$.	Then the following hold:
  \begin{enumerate}[\rm (i)]

    \item\label{invO-semitop-i}
          the topology $\mathcal{I}_1$ is compact, Hausdorff, and
          semitopological for $I_X$ and
          inversion is continuous;

    \item\label{invO-semitop-ii}
          $\mathcal{I}_1$ is the least $T_1$ topology that is semitopological
          for $I_X$;

    \item\label{invO-semitop-iii}
          if $X$ is countable, then $\mathcal{I}_1$ is Polish.
  \end{enumerate}
\end{theorem}
\begin{proof}
  \noindent\textbf{(\ref{invO-semitop-i}).}
  The sets $U_{x, y}$ generate the subspace topology on $I_X$ induced by the
  topology $\mathcal{B}_1$ on $B_X$ defined in
  \cref{finest_inducer_relO}. Hence, by
  \cref{finest_inducer_relO}\eqref{thm-item-1}, the subspace topology induced
  by
  $\mathcal{B}_1$ on $I_X$ is a semigroup topology where inversion is
  continuous also. Hence to prove that $I_X$ is semitopological and inversion
  is continuous with respect to $\mathcal{I}_1$, it suffices consider the
  subbasic open sets  $V_{x,
        y}$, $x, y\in X$.

  Suppose that $x, y\in X$ are fixed. If $\iota: I_X \to I_X$ is defined by
  $(f)\iota = f ^ {-1}$, then $(V_{x, y})\iota =\set{h\in I_X}{(y,x)\not \in h}
    =
    V_{y, x}$ is open in $\mathcal{I}_1$, and hence $\iota$ is continuous.
  Suppose that $f\in I_X$ is arbitrary.  If $x\not \in \dom(f)$, then $(x,y)
    \not \in fg$ for all $g\in I_X$. Hence $(V_{x, y})\lambda_f ^ {-1} = I_X$
  is
  open. If $x\in \dom(f)$ and $fg\in V_{x, y}$ for some $g\in I_X$.  Then
  $((x)f,y) \not \in g$. Thus $g \in V_{(x)f, y}\subseteq (V_{x,
      y})\lambda_f^{-1}$ and so $(V_{x, y})\lambda_f^{-1}$ is open. Hence
  $\lambda_f$ is continuous for every $f\in I_X$.
  Since $\rho_f = \iota \lambda_{f ^ {-1}} \iota$ is a composition of
  continuous functions, $\rho_f$ is continuous also.

  It remains to show that $I_X$ is Hausdorff and compact with respect to
  $\mathcal{I}_1$.
  We do this by identifying $I_X$ with a subset of $\{0, 1\} ^ {X \times X}$
  with the product topology. It will follow that \(I_X\) is homeomorphic to a
  closed subspace
  of the compact Hausdorff space  \(\{0, 1\}^{X\times X}\) and is thus compact
  and
  Hausdorff.
  If \(f\in I_X\) is arbitrary, then we identify $f$ with the function from \(X
  \times X\) to \(\{0, 1\}\) defined by
  \[(a,b) \mapsto
    \begin{cases}
      1 & \text{if }(a, b)\in f      \\
      0 & \text{if }(a, b) \notin f,
    \end{cases}\]
  for all $(a,b) \in X \times X$. We may, therefore, identify $I_X$ with a
  subset of $\{0, 1\} ^ {X \times X}$.
  Viewed in this way, the topology $\mathcal{I}_1$ is precisely the subspace
  topology inherited from the product space \(\{0, 1\}^{X\times X}\). The
  complement of \(I_X\) in \(\{0, 1\}^{X\times X}\) is the union of the open sets
  \[
    \left\{f\in \{0, 1\}^{X\times X}: (a, b)f = (a, c)f = 1\right\}
  \]
  and
  \[ \left\{f\in \{0, 1\}^{X\times X}: (b, a)f = (c, a)f = 1\right\}\]
  for all $a, b, c \in X$ with $b\neq c$.
  \medskip

  \noindent\textbf{(\ref{invO-semitop-ii}).}
  Let $\T$ be a $T_1$ topology that is semitopological for $I_X$.
  Let $x,y\in X$  and let $f\in I_X$ be arbitrary. Then
  \[
    \{(x,x)\}\circ f\circ \{(y,y)\} = \varnothing
    \quad\text{if and only if}\quad f\in V_{x, y}
  \]
  and
  \[
    \{(x,x)\}\circ f\circ \{(y,y)\} = \{(x,y)\}
    \quad\text{if and only if}\quad f\in U_{x, y}.
  \]
  Since $\T$ is $T_1$, the singletons $\{\varnothing\}$ and $\{\{(x,y)\}\}$ are
  closed in $\T$. Thus their respective pre-images $V_{x, y}$ and $U_{x,y}$
  under $\lambda_{\{(x,x)\}} \circ \rho_{\{(y,y)\}}$ are
  closed in $\S$. Hence $U_{x, y}$ and $V_{x, y}$ are both open in $\T$, as
  they are mutual
  complements. Thus $\mathcal{I}_1 \subseteq \T$, as required.	\medskip

  \noindent\textbf{(\ref{invO-semitop-iii}).}
  In the proof of part \eqref{invO-semitop-i} we showed that \(I_X\) is homeomorphic to a closed
  subspace of the Cantor space $\{0, 1\} ^ {X \times X}$. It follows that \(I_X\)
  is a compact metrisable space and is thus Polish.

  An explicit complete metric on $I_X$ can be obtained by
  choosing any metric compatible with the topology on \(\{0, 1\}^{X\times X}\)
  and translating it to \(I_X\) via the homeomorphism given in part \eqref{invO-semitop-i}. If we
  choose \(X= \N\), then one such example is given by
  $$d(f,g)= \frac{1}{m+1} $$
  where $m=\min\set{n\in \N}{(n\times n) \cap f \neq (n\times n) \cap g}$.
\end{proof}

We will show that there are precisely two minimal $T_1$ semigroup topologies on
$I_X$. The first is just the topology induced by the minimal $T_1$ semigroup
topology on $P_X$ (see \cref{rdom_theorem}) and the second consists of
the inverses $U^{-1}=\set{f^{-1}}{f \in U}$ of the open sets $U$ of the first
one. In \cref{prop-inverse-not-equal},  we will show that the two topologies
introduced in the next theorem do not coincide with $\mathcal{I}_1$ from the
previous theorem. It will follow that the topology $\mathcal{I}_1$ from
\cref{invO-semitop} is not a semigroup topology on $I_X$.

\begin{theorem}\label{minimal_semigroup_topologies}
  Let $X$ be an infinite set, let $\mathcal{I}_2$ be the topology on the
  symmetric
  inverse monoid $I_X$ generated by the collection of sets
  \[
    U_{x, y} = \set{h\in I_X}{(x,y)\in h}\quad\text{and}\quad
    W_{x}    = \set{h\in I_X}{x\not\in\dom(h)}
  \]
  and let $\mathcal{I}_3$ be the topology on $I_X$ generated by the sets
  \[
    U_{x, y}	 = \set{h\in I_X}{(x,y)\in h}\quad\text{and}\quad
    W_{x} ^ {-1} = \set{h\in I_X}{x\not\in\im(h)}
  \]
  for all $x,y \in X$.
  Then the following hold:
  \begin{enumerate}[\rm (i)]
    \item\label{minimal_semigroup_topologies_0}
          \(\mathcal{I}_2\) and \(\mathcal{I}_3\) are distinct and both contain
          the topology $\mathcal{I}_1$ defined in \cref{invO-semitop};

    \item\label{minimal_semigroup_topologies_i}
          $I_X$ with $\mathcal{I}_2$ and $I_X$ with $\mathcal{I}_3$ are
          homeomorphic Hausdorff topological semigroups;

    \item\label{minimal_semigroup_topologies_ii}
          every $T_1$ semigroup topology for $I_X$ contains $\mathcal{I}_2$ or
          $\mathcal{I}_3$;

    \item\label{minimal_semigroup_topologies_iii}
          $\mathcal{I}_2 \cap \mathcal{I}_3$ coincides with the Hausdorff-Markov and
          Fr\'echet-Markov topologies for
          $I_X$;

    \item\label{minimal_semigroup_topologies_iv}
          if $X$ is countable, then $\mathcal{I}_2$ and $\mathcal{I}_3$ are
          Polish.
  \end{enumerate}
\end{theorem}
\begin{proof}
  \noindent\textbf{(\ref{minimal_semigroup_topologies_0}).}
  Let \(x\in X\) be fixed. We will show that the set \(W_x^{-1}\) is not open
  in \(\mathcal{I}_2\). Seeking a contradiction, suppose that \(W_x^{-1}\) is open in
  \(\mathcal{I}_2\). It follows that there are finite \(f\in I_X\) and
  \(Y\subseteq X\) such that
  \[\varnothing\in \left(\bigcap_{(a,b)\in f}U_{a, b}\right)\cap
    \left(\bigcap_{y\in Y} W_y\right)\subseteq W_x^{-1}\]
  and so \(f=\varnothing\). Furthermore, if \(b\in X\), is distinct from every
  element of \(Y\), then
  \[\{(b,x)\}\in \left(\bigcap_{(a,b)\in f}U_{a, b}\right)\cap
    \left(\bigcap_{y\in Y} W_y\right),\]
  but \(\{(b, x)\}\notin W_x^{-1}\), a contradiction.

  The fact that \(\mathcal{I}_1\) is contained in both \(\mathcal{I}_2\) and
  \(\mathcal{I}_3\) follows since \[V_{x,y}=\left(\bigcup_{z\neq x}
    U_{z,y}\right) \cup W_x=\left(\bigcup_{z\neq y} U_{x,z}\right) \cup W_y^{-1}.\]
  \medskip

  \noindent\textbf{(\ref{minimal_semigroup_topologies_i}).}
  The Hausdorff semigroup topology $\P$ on $P_X$ defined in
  \cref{rdom_theorem} induces $\mathcal{I}_2$ on $I_X$.
  Hence $I_X$ is a Hausdorff topological semigroup under $\mathcal{I}_2$.

  The map $f \mapsto f^{-1}$ defines an anti-automorphism of $I_X$. The images
  of the subbasic open sets for $\mathcal{I}_2$ under inversion give the subbasis for
  $\mathcal{I}_3$.  By \cref{prop-anti-automorph}, it follows that
  $(I_X, \mathcal{I}_3)$ is a topological semigroup homeomorphic to $(I_X,
    \mathcal{I}_2)$.
  \medskip

  \noindent\textbf{(\ref{minimal_semigroup_topologies_ii}).}
  Let $\T$ be any $T_1$ semigroup topology for $I_X$. By
  \cref{invO-semitop}\eqref{invO-semitop-ii}, $\T$ contains the topology
  $\mathcal{I}_1$
  with subbasis
  $$U_{x, y} = \set{h\in I_X}{(x,y)\in h} \text{ and}\quad
    V_{x, y} =\set{h\in I_X}{(x,y)\not \in h}$$
  for all $x,y \in X$.
  It remains to show that either $\set{h \in I_X}{x \not \in \dom(x)} \in \T$
  for all $x\in X$ or $\set{h\in I_X}{y \not \in \im(h)}\in \T$ for all
  $y\in X$.

  For every $x\in X$, the set $V_{x, x}=\set{h\in I_X}{(x,x)\not \in h}$ is an
  open neighbourhood of $\varnothing$ in $\T$.	Since $\varnothing \circ
    \varnothing = \varnothing \in V_{x, x}$ and $\T$ is a semigroup topology,
  there exists an open neighbourhood $U$ of $\varnothing$ such that $U \circ U
    \subseteq V_{x, x}$. In other words, $(x,x) \not \in uv$ for any $u,v \in
    U$.
  If $z\in X$ is arbitrary such that $z\not\in \set{y\in X}{(x,y) \not \in u
      \text{
        for all }u \in U}$ and $z\not\in \set{y\in X}{(y,x) \not \in u \text{
        for
        all }u \in U}$, then there exist $u, v\in U$ such that $(x, z)\in u$
  and $(z,
    x) \in v$, and so $(x, x) \in uv$, a contradiction. Hence every $z\in X$
  belongs to one of the sets:
  \[\set{y\in X}{(x,y) \not \in u\ \text{for all }u \in U}\text{ or }
    \set{y\in X}{(y,x) \not \in u\ \text{for all }u \in U}
  \]
  and so one of these two sets has cardinality $|X|$.
  \medskip

  Suppose that $|\set{y\in X}{(x,y) \not \in u\ \text{for all }u \in U}|=|X|$. It
  follows that there exists $Y \subseteq X$
  with $|Y|=|X \setminus Y|$ such that	$(x,y) \not \in u$ for all $y \in Y$
  and $u\in U$. Let $p\in \Sym(X)$ be any involution such that $(Y)p = X
    \setminus Y$, and so $(X\setminus Y)p = Y$. Then for any $u\in U$ and any
  $y
    \in X \setminus Y$ since $(y)p\in Y$ it follows that $(x, (y)p)\not\in u$
  and
  so $(x, y)\not\in up$.  Let $V=U \cap Up$. Then $V$ is an open neighbourhood
  of $\varnothing$ and $x\not \in \dom(f)$ for all $f \in V$.  Let $g\in I_X$
  be arbitrary. If $x \in \dom(g)$, then $x \in \dom(\{(x,x)\}\circ g)$ and so
  $\{(x,x)\} \circ g \not \in V$. On the other hand, if $x \not \in \dom(g)$,
  then $\{(x,x)\}\circ g=\varnothing \in V$. Thus
  $$(V)\lambda_{\{(x,x)\}}^{-1}=\set{g\in I_X}{\{(x,x)\}\circ g \in V}=\set{g
      \in I_X}{x \not \in \dom(g)}$$
  is open in $\T$. Since $x\in X$ was arbitrary, it
  follows that $\mathcal{I}_2 \subseteq \T$.

  If $|\set{y\in X}{(y,x) \not \in u \text{ for all }u
      \in U}|=|X|$, then $\mathcal{I}_3 \subseteq \T$ by an analogous argument.
  \medskip

  \noindent\textbf{(\ref{minimal_semigroup_topologies_iii}).}
  These follow directly from parts \eqref{minimal_semigroup_topologies_i},
  \eqref{minimal_semigroup_topologies_ii}, and \cref{figure-the-only}.
  \medskip

  \noindent\textbf{(\ref{minimal_semigroup_topologies_iv}).}
  It is shown in \cref{rdom_theorem}\eqref{rdom_theorem_ii}
  and~\eqref{rdom_theorem_ix} that the partial transformation
  monoid $P_{X}$ forms a Polish semigroup with the topology $\mathcal{P}$
  defined in that theorem.  The topology induced by $\mathcal{P}$ on $I_X$ is
  $\mathcal{I}_2$.  It is routine to verify that $I_X$ is a closed subset of
  $P_{X}$ under the topology $\mathcal{P}$, and so $\mathcal{I}_2$ is Polish
  also.  Thus $\mathcal{I}_3$ is Polish as it is homeomorphic to
  $\mathcal{I}_2$.
\end{proof}

We remark that complete metrics on $I_{\N}$, that induce
$\mathcal{I}_2$ and $\mathcal{I}_3$ from
\cref{minimal_semigroup_topologies}, can be defined
using the natural embedding $\phi$ defined in \eqref{natural_embedding} of
\cref{subsection-partial}:
\begin{equation}\label{d_dom}
  d_{1}(f,g)=
  \begin{cases}
    \; 0          & \text{ if } f=g       \\
    \frac{1}{m+1} & \text{ if } f\not = g
  \end{cases}
  \qquad \text{ where }m=\text{min}\set{x \in \N}{(x)(f)\phi\neq (x)(g)\phi}
\end{equation}
and
\begin{equation}\label{d_im}
  d_{2}(f,g)=
  \begin{cases}
    \; 0          & \text{ if } f=g       \\
    \frac{1}{m+1} & \text{ if } f\not = g
  \end{cases}
  \qquad \text{ where }m=\text{min}\set{x \in
    \N}{(x)\left(f^{-1}\right)\phi\neq
    (x)\left(g^{-1}\right)\phi}.
\end{equation}

Next, we show that the Hausdorff-Markov and Fr\'echet-Markov topology
$\mathcal{I}_2\cap \mathcal{I}_3$ is not equal to the least $T_1$
semitopological semigroup topology $\mathcal{I}_1$.

\begin{prop}\label{prop-inverse-not-equal}
  If $\mathcal{Z}$ is the Zariski topology on the symmetric inverse monoid
  $I_X$, then
  $\mathcal{Z}$ properly contains $\mathcal{I}_1$ and so $\mathcal{I}_1$ is
  properly contained in $\mathcal{I}_2 \cap \mathcal{I}_3$.
\end{prop}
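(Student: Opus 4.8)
The plan is to squeeze $\mathcal{I}_1$ strictly inside $\mathcal{I}_2\cap\mathcal{I}_3$ by routing through the Zariski topology: I would prove $\mathcal{I}_1\subseteq\mathcal{Z}\subseteq\mathcal{I}_2\cap\mathcal{I}_3$ and then exhibit a single Zariski-open set that fails to be $\mathcal{I}_1$-open. The upper containment is cheap: by \thref{minimal_semigroup_topologies} the topologies $\mathcal{I}_2$ and $\mathcal{I}_3$ are Hausdorff semigroup topologies, so \thref{prop-luke-1} gives $\mathcal{Z}\subseteq\mathcal{I}_2$ and $\mathcal{Z}\subseteq\mathcal{I}_3$, whence $\mathcal{Z}\subseteq\mathcal{I}_2\cap\mathcal{I}_3$. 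For $\mathcal{I}_1\subseteq\mathcal{Z}$ I would use that $\mathcal{I}_1$ is the least $T_1$ topology semitopological for $I_X$ (\thref{invO-semitop}) together with the containment of that topology in the Zariski topology recorded in Figure~\ref{figure-the-only}. Granting these, a single set lying in $\mathcal{Z}\setminus\mathcal{I}_1$ forces $\mathcal{I}_1\subsetneq\mathcal{Z}$, and then $\mathcal{I}_1\subsetneq\mathcal{Z}\subseteq\mathcal{I}_2\cap\mathcal{I}_3$ delivers the final clause.

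The key observation is that $\mathcal{I}_1$ is only semitopological and is \emph{not} a semigroup topology, so a word map $f\mapsto (f)w$ need not be $\mathcal{I}_1$-continuous, and an elementary algebraic set $\{f:(f)\phi_0=(f)\phi_1\}$, although Zariski-closed by definition, need not be $\mathcal{I}_1$-closed. Taking $X=\N$, I would compare the two words
\[
  (f)\phi_0 = \{(0,0)\}\, f\, \{(5,0)\}, \qquad (f)\phi_1 = \{(0,8)\}\, f\, f\, \{(7,0)\}.
\]
Both values lie in $\{\varnothing,\{(0,0)\}\}$: one checks $(f)\phi_0=\{(0,0)\}$ exactly when $(0,5)\in f$, and $(f)\phi_1=\{(0,0)\}$ exactly when $(8)f^2=7$. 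Hence $E:=\{f:(f)\phi_0=(f)\phi_1\}$ is the elementary algebraic set $\{f:(0,5)\in f \Leftrightarrow (8)f^2=7\}$, and $I_X\setminus E$ is Zariski-open.

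To see that $E$ is not $\mathcal{I}_1$-closed I would take $f_n=\{(0,5),(8,p_n),(p_n,7)\}$ with the $p_n$ distinct, $p_n\to\infty$, and $p_n\notin\{0,5,7,8\}$. Each $f_n$ is a partial bijection lying in $E$, since $(0,5)\in f_n$ and $(8)f_n^2=7$ make both sides of the biconditional true; yet in $\mathcal{I}_1$, which is the product topology on $I_X\subseteq\{0,1\}^{X\times X}$ from the proof of \thref{invO-semitop}, the coordinates $(8,p_n),(p_n,7)$ escape every finite window, so $f_n\to f^*=\{(0,5)\}$. Here $(0,5)\in f^*$ but $8\notin\dom(f^*)$, so $(8)(f^*)^2\neq 7$ and $f^*\notin E$; thus $f^*\in\overline{E}\setminus E$, giving $I_X\setminus E\in\mathcal{Z}\setminus\mathcal{I}_1$ as needed. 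The main obstacle, and the reason for the particular words, is the injectivity of partial bijections: if the two detection paths for the common output coordinate $(0,0)$ ended with an $f$-step into the \emph{same} point, injectivity would force the escaping path to reuse a fixed coordinate and $E$ would collapse to an $\mathcal{I}_1$-closed set. Decoupling the paths so that their final $f$-steps land in distinct points ($5$ for $\phi_0$, $7$ for $\phi_1$) is precisely what frees the length-two path of $\phi_1$ to run off to infinity while $\phi_0$ stays pinned, and arranging this decoupling is the heart of the argument.
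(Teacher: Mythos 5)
Your proof is correct, and its skeleton is the same as the paper's: both arguments sandwich $\mathcal{I}_1\subseteq\mathcal{Z}\subseteq\mathcal{I}_2\cap\mathcal{I}_3$ (you get the upper containment by applying \thref{prop-luke-1} to $\mathcal{I}_2$ and $\mathcal{I}_3$ separately; the paper instead cites \thref{minimal_semigroup_topologies}\eqref{minimal_semigroup_topologies_iii} and Figure~\ref{figure-the-only} — same content) and then exhibit a single Zariski-open set that is not $\mathcal{I}_1$-open. The difference is in the witness. The paper uses the one-word comparison $U=\set{s\in I_X}{\{(x,x)\}s^2\neq \{(x,x)\}}$ and argues at the point $\varnothing\in U$: every basic $\mathcal{I}_1$-neighbourhood $\set{s}{h\cap s=\varnothing}$ of $\varnothing$, $h$ finite, contains a $2$-cycle $\{(x,y),(y,x)\}$ with $y$ outside the support of $h$, which lies outside $U$. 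Your witness is a biconditional built from two word maps and a three-point chain $\{(0,5),(8,p_n),(p_n,7)\}$ escaping to infinity, phrased as a sequence in the elementary algebraic set $E$ converging (in the product topology realisation of $\mathcal{I}_1$ from \thref{invO-semitop}) to $\{(0,5)\}\notin E$. Both exploit exactly the same phenomenon — finite partial bijections with far-away support converge in $\mathcal{I}_1$ to a restriction that leaves the algebraic set — so yours is essentially the paper's idea executed with a somewhat heavier witness; your explicit analysis of why injectivity forces the two detection paths to be decoupled is a nice piece of insight that the paper's $s^2$-with-idempotent trick sidesteps. Two minor points, neither a gap: your words end in a constant, which is not literally of the template $t_0st_1s\cdots t_{k-1}s$ in the paper's definition of the semigroup Zariski topology, but the paper itself already works with the broader class of words with coefficients in $S^1$ on both ends (its own $\phi_1$ in this proof is constant, and the proof of \thref{prop-all-topo} treats reversed words ending in constants as elementary algebraic), so this matches the operative convention; and you fix $X=\N$ whereas the statement concerns an arbitrary infinite $X$, but your argument generalises verbatim by taking any four distinct points in place of $0,5,7,8$ and countably many distinct $p_n$, since convergence in $\{0,1\}^{X\times X}$ only requires eventual agreement in each fixed coordinate.
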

\begin{proof}
  Since $\mathcal{I}_1$ is the least $T_1$ topology which is semitopological
  for $I_X$ (\cref{invO-semitop}\eqref{invO-semitop-ii}) and $\mathcal{I}_2\cap
    \mathcal{I}_3$ is the Hausdorff-Markov topology
  (\cref{minimal_semigroup_topologies}\eqref{minimal_semigroup_topologies_iii}),
  it follows from \cref{figure-the-only} that
  \[\mathcal{I}_1\subseteq \mathcal{Z} \subseteq \mathcal{I}_2 \cap
    \mathcal{I}_3.\] It therefore suffices to show that $\mathcal{Z} \not \subseteq
    \mathcal{I}_1$.
  Let $x\in X$ be arbitrary and consider the set
  $$U=\set{s\in I_X}{\{(x,x)\}s^2\neq \{(x,x)\}} \in \mathcal{Z}.$$
  We will show that the element $\varnothing \in U$ does not have an open
  neighbourhood in $\mathcal{I}_1$ which is contained in $U$. If $V$ is a basic
  open neighbourhood of $\varnothing$ in $\mathcal{I}_1$, then $V$ is of the form
  $V=\set{s\in I_X}{h\cap s=\varnothing}$ for some finite $h\in I_X$. Since $h$
  is finite, there exists $y\in X$ such that $y\not \in \dom(h) \cup \im(h)$.
  Then $\{(x,y),(y,x)\} \in V \setminus U$ and so $V \not \subseteq U$, as
  required.
\end{proof}

In the next theorem we consider the topology generated by the union of the two
minimal
$T_1$ semigroup topologies $\mathcal{I}_2$ and $\mathcal{I}_3$ on $I_X$.

\begin{theorem}\label{minimal_inverse_semigroup_topology}
  Let $X$ be an infinite set and let $\mathcal{I}_4$ be the topology on the
  symmetric
  inverse monoid $I_X$ generated by the collection of sets
  \[
    U_{x, y} = \set{h\in I_X}{(x,y)\in h},\
    W_{x}    = \set{h\in I_X}{x\not\in\dom(h)},\
    \text{and}\
    W_{x} ^ {-1} = \set{h\in I_X}{x\not\in\im(h)}.
  \]
  Then the following hold:
  \begin{enumerate}[\rm (i)]

    \item
          \label{minimal_inverse_semigroup_topology_i}
          the topology $\mathcal{I}_4$ is a Hausdorff inverse semigroup
          topology
          for $I_X$;

    \item
          \label{minimal_inverse_semigroup_topology_iii}
          $\mathcal{I}_4$ is the Hausdorff-Markov inverse and the
          Fr\'echet-Markov inverse topology for $I_X$;

    \item
          \label{minimal_inverse_semigroup_topology_v}
          the inverse Zariski topology for $I_X$ is $\mathcal{I}_4$;

    \item
          \label{minimal_inverse_semigroup_topology_vi}
          $I_X$ has property \textbf{X} with respect to $\mathcal{I}_4$
          and $\Sym(X)$;

    \item
          \label{minimal_inverse_semigroup_topology_vii}
          if $\T$ is a topology that is semitopological for $I_X$ and $\T$
          induces
          the pointwise topology on $\Sym(X)$, then $\T$ is contained in
          $\mathcal{I}_4$;

    \item
          \label{minimal_inverse_semigroup_topology_viii}
          $\mathcal{I}_4$ is the unique $T_1$ inverse semigroup topology on
          $I_X$
          inducing the pointwise topology on $\Sym(X)$;

    \item
          \label{minimal_inverse_semigroup_topology_ix}
          if $X$ is countable, then $I_X$ with the topology $\mathcal{I}_4$ has
          automatic continuity with respect to the class of second countable topological
          semigroups;

    \item
          \label{minimal_inverse_semigroup_topology_x}
          if $X$ is countable, then $\mathcal{I}_4$
          is the unique $T_1$ second countable inverse semigroup topology on
          $I_X$;

    \item
          \label{minimal_inverse_semigroup_topology_xi}
          if $X$ is countable, then $\mathcal{I}_4$ is the unique Polish
          inverse
          semigroup topology on $I_X$.
  \end{enumerate}
\end{theorem}
\begin{proof}
  The topology $\mathcal{I}_4$ is the topology generated by the union
  $\mathcal{I}_2 \cup \mathcal{I}_3$ defined in
  \cref{minimal_semigroup_topologies}.
  \medskip

  \noindent\textbf{(\ref{minimal_inverse_semigroup_topology_i}).}
  Since $\mathcal{I}_4$ is generated by the Hausdorff semigroup topologies
  $\mathcal{I}_2$ and $\mathcal{I}_3$ (by
  \cref{minimal_semigroup_topologies}\eqref{minimal_semigroup_topologies_i}),
  it follows that
  $\mathcal{I}_4$ is a Hausdorff semigroup topology for $I_X$.	If $U$ is any
  of the subbasic open sets defining $\mathcal{I}_4$, then
  $U^{-1}=\set{f^{-1}}{f \in U}$ is also a subbasic open set, and so
  $\mathcal{I}_4$ is an inverse semigroup topology.
  \medskip

  \noindent\textbf{(\ref{minimal_inverse_semigroup_topology_iii}).}
  The topology $\mathcal{I}_4$ is Hausdorff
  by~\eqref{minimal_inverse_semigroup_topology_i}. Thus both the
  Fr\'{e}chet-Markov inverse semigroup topology, and the inverse Hausdorff-Markov
  topology for $I_X$ are contained in \(\mathcal{I}_4\).

  For the converse,
  suppose that $\T$ is any $T_1$ inverse semigroup topology for $I_X$. By
  \cref{minimal_semigroup_topologies}\eqref{minimal_semigroup_topologies_ii},
  $\T$ contains either
  $\mathcal{I}_2$ or $\mathcal{I}_3$. Since inversion is continuous with
  respect to $\T$ and $\set{U ^ {-1}}{U\in \mathcal{I}_2}=\mathcal{I}_3$, it
  follows that $\T$ contains $\mathcal{I}_4$. Thus the Fr\'{e}chet-Markov
  inverse semigroup topology, and the inverse Hausdorff-Markov topology for $I_X$
  are contained in \(\mathcal{I}_4\).\medskip

  \noindent\textbf{(\ref{minimal_inverse_semigroup_topology_v}).}
  By \cref{figure-the-only} and part
  \eqref{minimal_inverse_semigroup_topology_iii}, the inverse Zariski topology is
  contained in $\mathcal{I}_4$. On the other hand, the subbasic open sets of
  $\mathcal{I}_4$ of the form $U_{x,y}$ are open in the least $T_1$ topology
  $\mathcal{I}_1$ that is semitopological for $I_X$, as shown in
  \cref{invO-semitop}\eqref{invO-semitop-ii}. Since $\mathcal{I}_1$ is contained
  in the inverse Zariski topology by \cref{figure-the-only}, it only
  remains to show that the subbasic open sets of the form $W_x$ and $W_x^{-1}$ of
  $\mathcal{I}_4$ are open in the inverse Zariski topology.

  Let $x\in X$ and $s\in I_X$ be arbitrary. Then $\{(x,x)\}ss^{-1}=\{(x,x)\}$
  if $x\in \dom(s)$ and $\{(x,x)\}ss^{-1}=\varnothing$ otherwise. Hence
  $$W_x=\set{s\in I_X}{x\not \in \dom(s)}=\set{s\in I_X}{\{(x,x)\}ss^{-1}\neq
    \{(x,x)\}}$$ is open in the inverse Zariski topology. Similarly,
  $W_x^{-1}=\set{s\in I_X}{s^{-1}s\{(x,x)\}\neq \{x,x\}}$ and so the inverse
  Zariski topology coincides with $\mathcal{I}_4$, as required.
  \medskip

  \noindent\textbf{(\ref{minimal_inverse_semigroup_topology_vi}).}
  Suppose that $s\in S$ is arbitrary.
  Let $f\in I_X$ be any element satisfying $\dom(f)=X$ and $|X \setminus
    \im(f)|=|X|$. Then $t\in \Sym(X)$
  satisfies $ftf^{-1}=s$ if and only if
  \begin{equation*}
    (x, (x)s)\in ftf^{-1} \text{ for all } x\in \dom(s) \text{ and } x \not\in
    \dom(ftf^{-1}) \text{ for all } x\in X \setminus \dom(s),
  \end{equation*}
  which is equivalent to
  \begin{equation}\label{invO_gen_condition}
    (x)ft=(x)sf \text{ for all } x\in \dom(s) \text{ and } (x)ft \in X
    \setminus \im(f) \text{ for all } x\in X \setminus \dom(s).
  \end{equation}
  If $\phi_0:(X\backslash \dom(s))f \to X\backslash \im(f)$ is any injection
  such that
  $|(X\backslash \im(f))\backslash \im(\phi_0)| = |X|$, and
  $\phi_1: X\backslash \im(f) \to (X\backslash \im(sf))\backslash \im(\phi_0)$
  is a bijection, then $t: X \to X$ defined by
  \[(x)t=
    \begin{cases}
      (x)f^{-1}sf & \text{ if } x\in (\dom(s))f             \\
      (x)\phi_0   & \text{ if } x\in (X\backslash \dom(s))f \\
      (x)\phi_1   & \text{ if } x\not\in \im(f)
    \end{cases}
  \]
  belongs to $\Sym(X)$ and, by \eqref{invO_gen_condition},  $ftf ^ {-1} = s$.

  We will define a basis $\B$ for $\mathcal{I}_4$ and then show that
  if $U \in \B$ contains $t$, then $f(U \cap \Sym(X))f ^ {-1}\in \B$, and so
  property \textbf{X} holds.
  If $k\in I_{X}$ is finite, and $Y$ and $Z$ are finite subsets of $X$, then we
  define
  \[
    R_{k, Y, Z}
    = \set{h\in I_X}{k\subseteq h\ \text{and }Y\cap \dom(h)=Z\cap
      \im(h)=\varnothing}.
  \]
  Since
  \[
    R_{k, Y, Z} = \bigcap_{x\in \dom(k)} U_{x, (x)k} \cap \bigcap_{y\in Y}
    W_{y} \cap
    \bigcap_{z\in Z} W_z ^ {-1}
  \]
  the sets $R_{k, Y, Z}$  are open in $\mathcal{I}_4$. On the other hand, any
  finite intersection of subbasic open sets for $\mathcal{I}_4$ is of the form
  $R_{k, Y, Z}$ for some finite $k$, $Y$, and $Z$.
  Hence the collection $\B$ of all such sets $R_{k, Y, Z}$ forms a basis for
  $\mathcal{I}_4$. Let $U \in \B$ be any such basic open neighbourhood of $t$.
  By definition, $U$ is one of the sets of the form $R_{k, Y, Z}$ but
  since $t\in \Sym(X)$, $\dom(t) = \im(t) = X$, it follows that
  $U = R_{k, \varnothing, \varnothing}$ for some finite $k$.
  Hence
  $$f(U\cap \Sym(X))f^{-1}=\set{fgf^{-1}}{g \in \Sym(X) \text{ and }k \subseteq
      g}.$$
  It suffices to show that
  \[ f(U \cap \Sym(X))f^{-1} = R_{fkf ^ {-1}, Y, Z} \]
  where
  $$Y=\set{(y)f^{-1}}{y \in \dom(k)\ \text{and }(y)k \not\in \dom(f ^ {-1})}$$
  and
  $$Z=\set{(z)f^{-1}}{z \in \im(k)\ \text{and }(z)k^{-1}\not\in
    \dom(f^{-1})}.$$

  If $fgf ^{-1}\in f(U\cap \Sym(X))f^{-1}$, then $fkf^{-1}\subseteq fg f ^{-1}$
  since $g\in U = R_{k, \varnothing, \varnothing}$.
  If  $(y)f^ {-1} \in Y$ is arbitrary, then
  $y\in \dom(k)$ and $(y)k \not\in \dom(f ^ {-1})$. Hence $(y)f ^ {-1}f k f ^
      {-1} = (y)kf ^
      {-1}$  is not defined and so $(y)f ^ {-1}f g f ^ {-1} = (y)g f ^ {-1} =
    (y)k f ^ {-1}$ is not
  defined.  In other words, $(y)f^{-1} \not \in \dom(fgf ^ {-1})$ and so $Y\cap
    \dom (fgf ^
    {-1}) = \varnothing$. Similarly, if $(z)f ^ {-1} \in Z$ is arbitrary, then
  $z\in \im(k)$ and $(z)k^{-1} \not\in \dom (f^{-1})$.	Hence
  $(z)f^{-1}fk^{-1}f^{-1} = (z)k^{-1}f^{-1}$ is not defined, or, in other words,
  \((z)f^{-1}\notin \dom(fg^{-1}f^{-1}) = \im(fgf^{-1})\). It follows that
  \(Z\cap \im(fgf^{-1})=\varnothing\).
  Hence every $fgf ^ {-1} \in f(U\cap \Sym(X))f^{-1}$ belongs to $R_{fkf ^
          {-1}, Y, Z}$, and so
  $f(U\cap \Sym(X))f^{-1} \subseteq R_{fkf ^ {-1}, Y, Z}$.

  Conversely, suppose that $u\in R_{fkf ^ {-1}, Y, Z}$ is arbitrary.
  We define $\phi_2:(X\backslash \dom(u))f \to (X\backslash \im(f))\backslash
    \{(x)k:x\in X\backslash \dom(f)\}$ is an injection which agrees with $k$ when
  they are both defined and $|(X\backslash \im(f))\backslash \im(\phi_2)| = |X|$,
  and $\phi_3: X\backslash \im(f) \to (X\backslash \im(uf))\backslash
    \im(\phi_2)$ is a bijection which agrees with $k$ when they are both defined.
  We define $g : X \to X$ by
  \[(x)g=
    \begin{cases}
      (x)f^{-1}uf & \text{ if } x\in (\dom(u))f             \\
      (x)\phi_2   & \text{ if } x\in (X\backslash \dom(u))f \\
      (x)\phi_3   & \text{ if } x\not\in \im(f).
    \end{cases}
  \]
  Then $g\in \Sym(X)$ and $k\subseteq g$. Hence $g\in U$. Finally,
  $g$ satisfies condition
  \eqref{invO_gen_condition} (where $s$ and $t$ are replaced by $u$ and $g$,
  respectively), and so $u = f g f ^ {-1}\in f (U \cap \Sym(X)) f ^ {-1}$, as
  required.
  \medskip

  \noindent\textbf{(\ref{minimal_inverse_semigroup_topology_vii}).}
  This follows from \eqref{minimal_inverse_semigroup_topology_vi} together with
  \cref{lem-luke-0}\eqref{lem-luke-0-ii}.
  \medskip

  \noindent\textbf{(\ref{minimal_inverse_semigroup_topology_viii}).}
  This follows straight from \eqref{minimal_inverse_semigroup_topology_iii} and
  \eqref{minimal_inverse_semigroup_topology_vii}.
  \medskip

  \noindent\textbf{(\ref{minimal_inverse_semigroup_topology_ix}).}
  This follows from part \eqref{minimal_inverse_semigroup_topology_vi},
  \cref{lem-luke-0}\eqref{lem-luke-0-iv} and the automatic
  continuity of $\Sym(X)$.
  \medskip

  \noindent\textbf{(\ref{minimal_inverse_semigroup_topology_x}).}
  If $X$ is countable, then the subbasis for $\mathcal{I}_4$ is countable also.
  Thus $\mathcal{I}_4$ is second countable.
  On the other hand, if $\T$ is any $T_1$  second
  countable, inverse semigroup topology for $I_X$, then,
  by part (\ref{minimal_inverse_semigroup_topology_iii}), $\mathcal{I}_4
    \subseteq \T$. By part (\ref{minimal_inverse_semigroup_topology_viii}), we also
  have $\T
    \subseteq \mathcal{I}_4$ and so $\T =\mathcal{I}_4$, as required.
  \medskip

  \noindent\textbf{(\ref{minimal_inverse_semigroup_topology_xi}).}
  Suppose that $X = \N$. We only need to show that $\mathcal{I}_4$ is
  completely
  metrizable, since separability and uniqueness then follow from part
  (\ref{minimal_inverse_semigroup_topology_x}).  Let
  $\phi$ be the natural embedding defined in \eqref{natural_embedding} of
  $I_{\N}$
  into $\N\cup \{\na\}^{\N\cup \{\na\}}$. We define the metric $d$ on $I_{\N}$
  by
  $d(f,f)=0$ and
  if $f\not =g$, then $d(f,g)=\frac{1}{m+1}$ where $$m=\min\set{y \in
    \N}{(y)((f)\phi)\neq (y)((g)\phi) \text{ or } (y)((f^{-1})\phi)\neq
    (y)((g^{-1})\phi)}.$$ It is routine to show that $d$ is a metric on
  $I_{\N}$. (In fact, $d$ is the maximum of the metrics $d_1$ and $d_2$ defined
  in
  \eqref{d_dom} and \eqref{d_im}.)

  We will now show that the topology induced by $d$ is $\mathcal{I}_4$.
  As in the proof of part \eqref{minimal_inverse_semigroup_topology_vi}, the
  sets
  \[R_{k, Y, Z} = \set{f\in I_{\N}}{k \subseteq f \text{ and } Y\cap
      \dom(f)=Z\cap
      \im(f)=\varnothing}\]
  where $f\in I_{\N}$ is finite and $Y$ and $Z$ are finite subsets of $X$, form
  a
  basis $\B$ for $\mathcal{I}_4$.

  For any $f\in I_{\N}$ and $m\in \N$, we have that
  \begin{align*}
    B\left(f,\frac{1}{m+1}\right)
     & =\left\{g\in I_{\N}\ :\ (x)((f)\phi)=(x)((g)\phi) \text{ and
    }(x)((f^{-1})\phi)=(x)((g^{-1})\phi)\text{ for all }x\in
    m\right\}                                                       \\
     & =R_{k, Y, Z}
  \end{align*}
  where $k = (f\cap(m\times \N)) \cup (f\cap (\N\times m))$, $Y = m \setminus
    \dom(f)$, and $Z = m \setminus \im(f)$.
  Hence every open ball under $d$ is open in $\mathcal{I}_4$ and so the
  topology
  induced by $d$ is contained in $\mathcal{I}_4$.

  Suppose that $f\in I_{\N}$ is finite, $Y, Z$ are finite subsets of $X$, and
  $$M=\max\left(\dom(f) \cup \im(f) \cup Y \cup Z\right)\in \N.$$
  If $g\in R_{f, Y, Z}$, then $B(g,1/M) \subseteq R_{f, Y, Z}$ and so
  $R_{f,Y,Z}$
  is open in the topology induced by $d$. We have shown that $\mathcal{I}_4$
  coincides with the topology induced by $d$.

  To show that $d$ is complete, suppose that $f_0, f_1, f_2, \ldots\in I_{\N}$
  is a Cauchy
  sequence. For every $m\in \N$, there exists $M\in \N$ such that $i,j\geq M$
  implies that $d(f_i,f_j)<1/m$. So if $x\leq m$, then
  $(x)((f_i)\phi)=(x)((f_j)\phi)$ and
  $(x)((f_i^{-1})\phi)=(x)((f_j^{-1})\phi)$ for all $i,j\geq M$. In
  particular, the sequences $((x)((f_0)\phi), (x)((f_1)\phi), (x)((f_2)\phi),
    \dots)$ and
  $((x)((f_0^{-1})\phi), (x)((f_1^{-1})\phi), (x)((f_2^{-1})\phi), \dots )$ are
  eventually
  constant with values $(x)F$ and $(x)F^{-1}$, respectively. This defines
  $F\in I_{\N}$ and the sequence $f_0, f_1, f_2, \ldots$ converges to $F$.
  Therefore
  the metric $d$ is complete.
\end{proof}

\begin{cor}\label{cor-zariski-inverse}
  The semigroup and inverse semigroup Zariski topologies on the symmetric
  inverse monoid $I_{\N}$
  are distinct.
\end{cor}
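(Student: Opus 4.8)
The plan is to prove that the semigroup Zariski topology $\mathcal{Z}$ and the inverse semigroup Zariski topology $\mathcal{Z}_{\mathrm{inv}}$ on $I_{\N}$ are distinct by showing they sit on opposite sides of a strict containment. From Figure~\ref{figure-the-only}, the semigroup Zariski topology is contained in the Hausdorff-Markov topology, which by \thref{minimal_semigroup_topologies}\eqref{minimal_semigroup_topologies_iii} equals $\mathcal{I}_2\cap\mathcal{I}_3$. On the other hand, \thref{minimal_inverse_semigroup_topology}\eqref{minimal_inverse_semigroup_topology_v} identifies the inverse Zariski topology as $\mathcal{I}_4$, the topology \emph{generated} by $\mathcal{I}_2\cup\mathcal{I}_3$. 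Thus it suffices to exhibit a subbasic open set of $\mathcal{I}_4$ that does not lie in $\mathcal{I}_2\cap\mathcal{I}_3$, for then $\mathcal{Z}\subseteq\mathcal{I}_2\cap\mathcal{I}_3\subsetneq\mathcal{I}_4=\mathcal{Z}_{\mathrm{inv}}$.

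\begin{proof}
  By Figure~\ref{figure-the-only}, the semigroup Zariski topology $\mathcal{Z}$
  on $I_{\N}$ is contained in the Hausdorff-Markov topology, which by
  \thref{minimal_semigroup_topologies}\eqref{minimal_semigroup_topologies_iii}
  equals $\mathcal{I}_2\cap \mathcal{I}_3$. By
  \thref{minimal_inverse_semigroup_topology}\eqref{minimal_inverse_semigroup_topology_v},
  the inverse semigroup Zariski topology on $I_{\N}$ equals $\mathcal{I}_4$,
  the topology generated by $\mathcal{I}_2\cup \mathcal{I}_3$. Hence it
  suffices to show that $\mathcal{I}_2\cap \mathcal{I}_3$ is properly contained
  in $\mathcal{I}_4$, since then
  \[
    \mathcal{Z}\subseteq \mathcal{I}_2\cap \mathcal{I}_3 \subsetneq
    \mathcal{I}_4 = \mathcal{Z}_{\mathrm{inv}}.
  \]

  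Fix $x\in \N$ and consider $W_x = \set{h\in I_{\N}}{x\notin \dom(h)}$, which
  is a subbasic open set of $\mathcal{I}_2$, and hence of $\mathcal{I}_4$. We
  claim that $W_x\notin \mathcal{I}_3$. A basic open set of $\mathcal{I}_3$
  containing $\varnothing$ is a finite intersection of sets $U_{a,b}$ and
  $W_y^{-1}$; since $\varnothing$ lies in no $U_{a,b}$, such a neighbourhood of
  $\varnothing$ has the form $\bigcap_{y\in Z} W_y^{-1}$ for some finite
  $Z\subseteq \N$. As $Z$ is finite and $\N$ is infinite, there is $c\in \N$
  with $c\notin Z$. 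Then the element $\{(x,c)\}\in I_{\N}$ satisfies
  $\{(x,c)\}\in \bigcap_{y\in Z} W_y^{-1}$ because $\im(\{(x,c)\})=\{c\}$ is
  disjoint from $Z$, yet $\{(x,c)\}\notin W_x$ since $x\in \dom(\{(x,c)\})$.
  Hence no $\mathcal{I}_3$-open neighbourhood of $\varnothing$ is contained in
  $W_x$, so $W_x\notin \mathcal{I}_3$. In particular $W_x\notin
  \mathcal{I}_2\cap \mathcal{I}_3$, proving the containment is proper.
\end{proof}

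**The main obstacle** is simply locating the correct witnessing set and computing where it does and does not belong among the four topologies; once one recognizes from the preceding theorems that everything reduces to the strict inclusion $\mathcal{I}_2\cap\mathcal{I}_3\subsetneq\mathcal{I}_4$, the argument is a short separation check. (The work \thref{prop-inverse-not-equal} already established $\mathcal{I}_1\subsetneq\mathcal{I}_2\cap\mathcal{I}_3$, so the corollary is really extracting the top half of that chain.) I would double-check that $W_x$ genuinely fails to be $\mathcal{I}_3$-open rather than merely failing to be a \emph{subbasic} $\mathcal{I}_3$-set — the neighbourhood-of-$\varnothing$ computation above handles exactly this point.
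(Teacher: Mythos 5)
Your proof is correct and takes essentially the same route as the paper: both arguments reduce the corollary to the chain $\mathcal{Z}\subseteq\mathcal{I}_2\cap\mathcal{I}_3\subsetneq\mathcal{I}_4=\mathcal{Z}_{\mathrm{inv}}$, using \thref{minimal_inverse_semigroup_topology}\eqref{minimal_inverse_semigroup_topology_v} for the identification of the inverse Zariski topology and the Hausdorff--Markov bound for the semigroup Zariski topology. The only cosmetic difference is that where the paper cites \thref{minimal_semigroup_topologies}\eqref{minimal_semigroup_topologies_0} (whose proof shows $W_x^{-1}\notin\mathcal{I}_2$) to get the strictness $\mathcal{I}_2\cap\mathcal{I}_3\subsetneq\mathcal{I}_4$, you re-derive the dual fact $W_x\notin\mathcal{I}_3$ inline via the mirror-image neighbourhood-of-$\varnothing$ computation, which is equally valid.
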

\begin{proof}
  By
  \cref{minimal_inverse_semigroup_topology}(\ref{minimal_inverse_semigroup_topology_v}), 
  the inverse semigroup Zariski topology on $I_{\N}$ is $\mathcal{I}_4$.
  On the other hand, the semigroup Zariski topology is contained in
  $\mathcal{I}_2 \cap \mathcal{I}_3$ by \cref{prop-inverse-not-equal}.
  \cref{minimal_semigroup_topologies}(\ref{minimal_semigroup_topologies_0})
  implies that $\mathcal{I}_2 \cap \mathcal{I}_3$ is strictly contained in
  \(\mathcal{I}_4\), and so the semigroup and inverse semigroup Zariski
  topologies must be distinct.
\end{proof}

By \cref{minimal_semigroup_topologies}\eqref{minimal_semigroup_topologies_ii}, every Polish semigroup
topology on $I_{\N}$ contains $\mathcal{I}_2$ or $\mathcal{I}_3$ and, by
\cref{minimal_inverse_semigroup_topology}\eqref{minimal_inverse_semigroup_topology_vi}, is contained in
$\mathcal{I}_4$.

\begin{question}
  Are $\mathcal{I}_2,\mathcal{I}_3$ and $\mathcal{I}_4$ the only Polish
  semigroup topologies on $I_{\N}$?\footnote{Addendum: S. Bardyla, L. Elliott, J. D. Mitchell, and Y. P\'eresse recently showed that the answer to this question is no, and there are infinitely many Polish semigroup topologies on $I_{\N}$.}
\end{question}

We can now give an example to demonstrate that the right small index property
is really distinct from the left small index property.
\begin{prop}\label{left-not-right}
  The topological semigroup \(I_{\N}\) with \(\mathcal{I}_2\) has the right
  small index
  property but not the left small index property.
\end{prop}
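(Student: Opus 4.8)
The plan is to prove the two halves separately, using the congruence characterisations of the two small index properties. For the right small index property I will invoke \thref{prop-right-small-index}: the claim is equivalent to showing that every homomorphism $\psi\colon I_{\N}\to\N^{\N}$ is $\mathcal{I}_2$-continuous, or equivalently (since an equivalence relation is open exactly when all its classes are) that every right congruence $\rho$ on $I_{\N}$ with countably many classes is $\mathcal{I}_2$-open. For the failure of the left small index property I will exhibit a single left congruence with countably many classes that is not $\mathcal{I}_2$-open.

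For the positive half, the first step is to restrict $\rho$ to the subgroup $\Sym(\N)$. There $\rho\cap(\Sym(\N)\times\Sym(\N))$ is a group right congruence, hence partitions $\Sym(\N)$ into the cosets of $H=\set{\sigma\in\Sym(\N)}{(\sigma,\id_{\N})\in\rho}$, and $H$ has countable index because $\rho$ has countably many classes. Since $\Sym(\N)$ has the small index property, $H$ is open in the pointwise topology, so it contains the pointwise stabiliser $\Sym_{(F)}$ of some finite $F\subseteq\N$; consequently if $\sigma,\tau\in\Sym(\N)$ agree on $F$ then $\sigma\tau^{-1}$ fixes $F$ pointwise, lies in $H$, and so $(\sigma,\tau)\in\rho$. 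Thus the $\rho$-class of a permutation is determined by its restriction to the finite set $F$. The second step is to transfer this finite determination to all of $I_{\N}$: given arbitrary $s\in I_{\N}$ I must produce finite $k\subseteq s$ and a finite $Y$ disjoint from $\dom(s)$ so that every $h\in R_{k,Y,\varnothing}$ satisfies $(h,s)\in\rho$, which is precisely the assertion that the basic $\mathcal{I}_2$-open set $R_{k,Y,\varnothing}$ is contained in the class of $s$. The underlying reason this should succeed is that a right congruence can only separate members along domain-side data: the conditions ``$x\in\dom(h)$'' and ``$(x)h=y$'' are $\mathcal{I}_2$-open, whereas the image-side condition ``$y\in\im(h)$'' is not right-congruence invariant, since $y\in\im(fh)$ holds iff $y\in\im(h)$ and the single point $(y)h^{-1}$ lies in $\im(f)$, which is not determined by whether $y\in\im(f)$. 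I expect this transfer to be the main obstacle: right multiplication can neither enlarge the domain of $s$ nor turn $s$ into a permutation, so the argument cannot simply conjugate $s$ into $\Sym(\N)$, and the finite determination coming from $F$ must be propagated coordinate by coordinate through the right action of $I_{\N}$ on $\N$ that $\psi$ induces, controlling the finitely many indices governed by $F$ together with finitely many domain constraints.

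For the negative half, fix $y\in\N$ and set
\[
  \mu_{y}=\set{(f,g)\in I_{\N}\times I_{\N}}{(y)f^{-1}=(y)g^{-1}},
\]
where the equality is read as holding also when both preimages are undefined. That $\mu_{y}$ is a left congruence follows at once from applying the inversion map $f\mapsto f^{-1}$, which is an anti-automorphism carrying right congruences to left congruences, to the coordinate right congruence $\set{(f,g)}{(y)f=(y)g}$ from the proof of \thref{prop-right-small-index}. Its classes are indexed by the preimage $(y)f^{-1}\in\N\cup\{\text{undefined}\}$, so there are at most $\aleph_{0}$ of them. The class of those $f$ for which $y$ has no preimage is exactly $W_{y}^{-1}=\set{f\in I_{\N}}{y\notin\im(f)}$, and this set is not $\mathcal{I}_2$-open: the empty map lies in it, yet every basic $\mathcal{I}_2$-neighbourhood of the empty map has the form $\set{h\in I_{\N}}{Y\cap\dom(h)=\varnothing}$ for a finite $Y$, and for any $z\notin Y$ the map $\{(z,y)\}$ belongs to this neighbourhood while having $y\in\im$. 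Hence $\mu_{y}$ is a left congruence with countably many classes that is not open, so $I_{\N}$ with $\mathcal{I}_2$ fails the left small index property; in particular, by \thref{AC-implies-SIP}, it cannot have automatic continuity. Together with the positive half this establishes the proposition.
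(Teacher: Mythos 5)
Your negative half is correct and is essentially the paper's own argument: the paper uses the same left congruence (written there as $\sigma$, with classes the sets $U_{y,x}$ for $y\in\N$ together with $W_x^{-1}$) and cites the non-openness of $W_x^{-1}$ in $\mathcal{I}_2$ already established in the proof of \thref{minimal_semigroup_topologies}\eqref{minimal_semigroup_topologies_0}; your direct verification of that non-openness (empty map versus $\{(z,y)\}$ for $z$ outside the finite forbidden set) is the same computation, and your observation that $\mu_y$ is the inverse image of a coordinate right congruence under inversion is a clean way to see it is a left congruence.

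The positive half, however, has a genuine gap, and you flag it yourself: the ``transfer'' from finite determination on $\Sym(\N)$ to $\mathcal{I}_2$-openness of arbitrary $\rho$-classes is precisely the hard step, and your write-up stops at announcing it. Knowing that $(\sigma,\tau)\in\rho$ whenever two \emph{permutations} agree on a finite set $F$ gives no control over the class of a non-invertible element -- nothing in your argument constrains, say, the class of the empty map -- and there is no evident coordinate-by-coordinate propagation. The paper bridges this by two moves absent from your sketch. First, it obtains openness of $\rho$ in the \emph{finer} topology $\mathcal{I}_4$ for free, from automatic continuity of $(I_{\N},\mathcal{I}_4)$ (\thref{minimal_inverse_semigroup_topology}\eqref{minimal_inverse_semigroup_topology_ix}) combined with \thref{AC-implies-SIP}; this yields, for each $f$, a basic $\mathcal{I}_4$-neighbourhood $V=\set{g\in I_{\N}}{h\subseteq g,\ X\cap\dom(g)=\varnothing,\ Y\cap\im(g)=\varnothing}$ with $V\subseteq f/\rho$ -- finite determination on \emph{all} of $I_{\N}$, image constraints included, not merely on $\Sym(\N)$. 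Second -- and this is the key idea your proposal is missing -- it strips the image-side constraints by a cancellation trick: choose $k\in I_{\N}$ with $\dom(k)=\N$, $k|_{\im(h)}$ the identity, and $\im(k)\subseteq\N\setminus Y$; then $kk^{-1}=1_{I_{\N}}$, and for every $g$ in the $\mathcal{I}_2$-open set $U=\set{g\in I_{\N}}{h\subseteq g,\ X\cap\dom(g)=\varnothing}$ both $fk$ and $gk$ land in $V\subseteq f/\rho$, whence
\[
  g/\rho = gkk^{-1}/\rho = fkk^{-1}/\rho = f/\rho
\]
because $\rho$ is a right congruence, so $U\subseteq f/\rho$. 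Until you supply an argument of this kind, your appeal to the small index property of $\Sym(\N)$ alone does not establish the right small index property; completing your $\Sym(\N)$-based route would in effect require reproving the $\mathcal{I}_4$-openness that the paper gets from automatic continuity.
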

\begin{proof}
  We start by showing that \((I_\N,\mathcal{I}_2)\) does not have the left
  small
  index property.
  Let \(x\in \N\) be fixed. We define a left congruence on \(I_{\N}\) by
  \[\sigma:=\{(f, g)\in I_{\N}: (x\notin \im(f)\cup \im(g))\text{ or
    }\left(x\in \im(f)\cap
    \im(g) \text{ and }(x)f^{-1} = (x)g^{-1}\right)\}.\]
  The classes of this left congruence are the sets \(U_{y, x}\) for \(y\in \N\)
  and \(W_x^{-1}\). In the proof of
  \cref{minimal_semigroup_topologies}\eqref{minimal_semigroup_topologies_0}, it
  was shown that the set \(W_x^{-1}\) is not open in \(\mathcal{I}_2\). Hence
  $I_{\N}$ with the topology $\mathcal{I}_2$
  does not have the left small index property.

  We next show that $I_\N$ with $\mathcal{I}_2$ has the right small index
  property.  Let \(\rho\) be a right congruence on \(I_{\N}\) with countably
  many classes.  By
  \cref{minimal_inverse_semigroup_topology}(\ref{minimal_inverse_semigroup_topology_ix}) 
  together with \cref{AC-implies-SIP},
  \(\rho\) is open with respect to \(\mathcal{I}_4\).
  It suffices to show that $f/\rho$ is open in $\mathcal{I}_2$ for an arbitrary
  $f\in I_\N$. In particular, we need only find
  an open neighbourhood $U$ of $f$ in $\mathcal{I}_2$ such that \(U\subseteq
  f/\rho\). Let
  \[V= \{g\in I_\N: h\subseteq g, X\cap \dom(g) = \varnothing, Y\cap \im(g) =
    \varnothing\},\]
  where \(h\subseteq \N\times\N\) and \(X, Y\subseteq \N\) are finite, be a
  basic open neighbourhood
  of \(f\) in \(\mathcal{I}_4\) such that \(V\subseteq f/\rho\). We define
  \[U:=\{g\in I_\N: h\subseteq g, X\cap \dom(g) = \varnothing\}.\]
  The set \(U\) is an open neighbourhood of \(f\) in \(\mathcal{I}_2\). It
  therefore suffices
  to show that \(U\subseteq f/\rho\). Let \(g\in U\) be arbitrary. Let \(k\in
  \mathcal{I}_\N\)
  be such that \(k|_{\im(h)}\) is the identity function, \(\dom(k) = \N\), and
  \(\im(k) \subseteq \N\backslash Y\). Since $\dom(k)=\N$, $kk^{-1}$ is the
  identity of $I_{\N}$. Furthermore,
  \(fk\in V\subseteq f/\rho\) and also \(gk\in V \subseteq
  f/\rho\). It follows that \(gk/\rho = f/\rho = fk/\rho\), and so
  $g/\rho =gkk ^ {-1}/\rho = fk k ^ {-1}/\rho = f/\rho$, as required.
\end{proof}

The symmetric inverse monoid $I_{\N}$ is the natural analogue of the full
transformation monoid $\N ^ \N$, in that every countable inverse monoid can
be embedded in $I_{\N}$. We will now prove the inverse monoid analogue of
\cref{theorem-luke-1}, characterising
those inverse monoids that embed as topological inverse submonoids of $I_{\N}$
with its unique Polish inverse semigroup topology $\mathcal{I}_4$.

If $S$ is an inverse monoid with identity $1_S$ and $\rho$ is a right
congruence, then we will say that $\rho$ is \textit{Vagner-Preston} if for
every $s\in S$ either:	$t\in s/\rho$ implies $tt ^ {-1} / \rho = 1_S / \rho$;
or $st/\rho = s/\rho$ for all $t\in S$.

\begin{lem}\label{lem-the-wee-lemma}
  Let $\rho$ be a Vagner-Preston right congruence on an inverse monoid $S$ with
  identity $1_S$. If $f,g\in S$ satisfy $ff^{-1}/\rho\neq 1_S/\rho$ and
  $gg^{-1}/\rho \neq 1_S/\rho$, then $f/\rho=g/\rho$.
\end{lem}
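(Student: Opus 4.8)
The plan is to use the Vagner--Preston hypothesis to show that the $\rho$-classes of both $f$ and $g$ are \emph{right absorbing}, and then to exhibit a single element lying in both classes. First I would apply the defining dichotomy of a Vagner--Preston right congruence to $s = f$. Since $f \in f/\rho$ by reflexivity, the first alternative would force $ff^{-1}/\rho = 1_S/\rho$, contradicting the hypothesis $ff^{-1}/\rho \neq 1_S/\rho$. Hence the second alternative must hold, giving $ft/\rho = f/\rho$ for every $t \in S$. The identical argument applied to $s = g$ yields $gt/\rho = g/\rho$ for every $t \in S$.

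The key step is then to write down an element that is visibly of the form $ft$ and also of the form $gt'$. I would take $h = ff^{-1}g$. On the one hand $h = f(f^{-1}g)$, so $h/\rho = f/\rho$ by the first displayed property. On the other hand, since idempotents commute in any inverse semigroup, the idempotents $gg^{-1}$ and $ff^{-1}$ commute, so
\[
  g(g^{-1}ff^{-1}g) = gg^{-1}ff^{-1}g = ff^{-1}gg^{-1}g = ff^{-1}g = h,
\]
using $gg^{-1}g = g$. Thus $h$ is also of the form $gt'$ with $t' = g^{-1}ff^{-1}g$, and the second property gives $h/\rho = g/\rho$. Combining the two identifications yields $f/\rho = h/\rho = g/\rho$, as required.

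The only genuinely non-routine ingredient is the choice of the witness $h = ff^{-1}g$ together with the observation, via the commutativity of idempotents in an inverse semigroup, that $h$ simultaneously lies in the right translate $fS$ and in $gS$; everything else is a direct unwinding of the definitions. Consequently I expect the main (albeit small) obstacle to be spotting this common element rather than any technical difficulty in the verification.
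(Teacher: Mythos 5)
Your proof is correct and follows essentially the same route as the paper: both resolve the Vagner--Preston dichotomy for $f$ and $g$ exactly as you do, and both then use commutativity of idempotents to produce a common element of $f/\rho$ and $g/\rho$. The only (cosmetic) difference is your choice of witness $h = ff^{-1}g$ in place of the paper's $ff^{-1}gg^{-1} = gg^{-1}ff^{-1}$, which differ by the trailing factor $g^{-1}g$.
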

\begin{proof}
  Since $\rho$ is Vagner-Preston, $ff^{-1}/\rho\neq 1_S/\rho_i \neq
    gg^{-1}/\rho_i$ implies that $fs/\rho_i=f/\rho_i$ and $gs/\rho_i=g/\rho_i$ for
  all $s\in S$. Hence
  $$f/\rho=ff^{-1}gg^{-1}/\rho=gg^{-1}ff^{-1}/\rho=g/\rho_i$$
  since $ff ^ {-1}, gg ^ {-1}\in S$ are idempotents, and idempotents commute in
  inverse semigroups.
\end{proof}

\begin{lem}\label{lem-Vagner-Preston}
  Let $S$ be a semitopological inverse monoid and let $\rho$ be an open
  Vagner-Preston right congruence on $S$. If $a\in S$ satisfies $aa^{-1}/\rho =
    1_S/\rho$, then
  $\set{f\in S}{a/\rho=aff^{-1}/\rho}$ is clopen.
\end{lem}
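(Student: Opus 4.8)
The plan is to first reduce membership in $A := \set{f \in S}{a/\rho = aff^{-1}/\rho}$ to a condition involving only a single product, and then to confine all the ``bad'' behaviour to one $\rho$-class by means of \thref{lem-the-wee-lemma}. Throughout I would write $e_f := (af)(af)^{-1} = aff^{-1}a^{-1}$, which is an idempotent. Since idempotents commute in an inverse semigroup and $aa^{-1}a = a$, a short calculation gives the two identities $e_f a = aff^{-1}$ and $e_f aa^{-1} = e_f$. Using that $\rho$ is a right congruence together with the hypothesis $aa^{-1}/\rho = 1_S/\rho$, I would then establish
\[ f \in A \iff e_f/\rho = 1_S/\rho. \]
Indeed, if $e_f/\rho = 1_S/\rho$ then right-multiplying by $a$ gives $aff^{-1} = e_f a \,\rho\, a$; conversely, if $aff^{-1} = e_f a \,\rho\, a$ then right-multiplying by $a^{-1}$ gives $e_f = e_f aa^{-1} \,\rho\, aa^{-1} \,\rho\, 1_S$. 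Writing $G := \set{s \in S}{ss^{-1}/\rho = 1_S/\rho}$, this says precisely that $A = (G)\lambda_a^{-1}$.

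It then suffices to understand the preimage $(G)\lambda_a^{-1}$. If $S \setminus G = \varnothing$ then $G = S$ and $A = S$ is clopen, so assume $S \setminus G \neq \varnothing$. By \thref{lem-the-wee-lemma}, any two elements $s$ with $ss^{-1}/\rho \neq 1_S/\rho$ are $\rho$-equivalent, so there is a single $\rho$-class $C = c/\rho$ with $S \setminus G \subseteq C$, equivalently $S \setminus C \subseteq G$. Because $G$ itself need not be clopen, the key point is to control the behaviour of $\lambda_a^{-1}$ over the class $C$, and this is where the Vagner--Preston hypothesis enters.

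The crux is the following. Suppose $f$ satisfies $af \in C$. Then $c \in af/\rho$ and $cc^{-1}/\rho \neq 1_S/\rho$, so the first Vagner--Preston alternative fails for the element $af$; hence the second holds, giving $(af)t/\rho = af/\rho$ for all $t \in S$. Taking $t = f^{-1}$ yields $aff^{-1} \,\rho\, af$, so that $aff^{-1}/\rho = a/\rho$ if and only if $af/\rho = a/\rho$, i.e.\ if and only if $a \in C$ (since $af \in C$). Thus, for every $f$ with $af \in C$, membership of $f$ in $A$ is governed by the single condition $a \in C$, independent of $f$. Combined with $S \setminus C \subseteq G$ (so that $af \notin C$ forces $f \in A$), this gives $A = S$ when $a \in C$, and $A = (S \setminus C)\lambda_a^{-1}$ when $a \notin C$. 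In either case $A$ is clopen: as $\rho$ is open, each of its classes, and hence $C$ and its complement, is clopen, while $\lambda_a$ is continuous because $S$ is semitopological, so preimages of clopen sets under $\lambda_a$ are clopen.

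The main obstacle, and the reason the Vagner--Preston condition is needed rather than mere openness of $\rho$, is exactly that $G$ (equivalently the set $S \setminus G$ of bad elements) is not itself clopen in general: \thref{lem-the-wee-lemma} confines the bad elements to a single class $C$ but does not force $S \setminus G$ to equal $C$. The Vagner--Preston condition resolves this by making the whole class $C$ behave uniformly under right multiplication, which collapses the preimage $(G)\lambda_a^{-1}$ into either all of $S$ or the clopen set $(S\setminus C)\lambda_a^{-1}$.
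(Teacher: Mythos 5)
Your argument is correct, and its first half coincides with the paper's: both reduce the set $A=\set{f\in S}{a/\rho=aff^{-1}/\rho}$ to $(G)\lambda_a^{-1}$, where $G=\set{s\in S}{ss^{-1}/\rho=1_S/\rho}$ (the paper's manipulations with $aa^{-1}aff^{-1}$ and $aff^{-1}a^{-1}$ are your two idempotent identities in disguise). The second halves genuinely differ. The paper shows directly that $G$ is a union of $\rho$-classes: for $f\in G$, the Vagner--Preston alternative applied to $f$ itself gives either that every $t\in f/\rho$ satisfies $tt^{-1}/\rho=1_S/\rho$, or that $ft/\rho=f/\rho$ for all $t\in S$, in which case $t\in f/\rho$ yields $tt^{-1}/\rho=ft^{-1}/\rho=f/\rho=ff^{-1}/\rho=1_S/\rho$; so $G$ is clopen (being a union of classes of the open relation $\rho$, as is its complement) and $A$ is its preimage under the continuous map $\lambda_a$. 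You instead route through \thref{lem-the-wee-lemma} to confine $S\setminus G$ to a single class $C$, apply Vagner--Preston to the elements $af$ lying in $C$ (legitimately: the bad element of $C$ kills the first alternative for $af$), and conclude $A=S$ when $a\in C$ and $A=(S\setminus C)\lambda_a^{-1}$ when $a\notin C$. Each step checks out, and the two cases $af\in C$, $af\notin C$ do cover all $f$. The paper's route is shorter and applies the Vagner--Preston property once; yours buys an explicit description of $A$ as either all of $S$ or the $\lambda_a$-preimage of the complement of a single $\rho$-class, at the cost of the case analysis.

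One remark in your closing paragraph is, however, false. You assert that Vagner--Preston is needed ``exactly'' because $G$ need not be clopen and $S\setminus G$ need not equal $C$. Under the hypotheses of the lemma this is not so: the argument just sketched shows that $G$ is always saturated under $\rho$ (a union of classes), hence clopen; and consequently $S\setminus G$, when nonempty, is precisely $C$ --- if $C$ contained an element $g\in G$, then $C=g/\rho\subseteq G$, contradicting $\varnothing\neq S\setminus G\subseteq C$. So the correct moral is not that one must control $\lambda_a^{-1}$ over $C$ because $G$ is badly behaved, but that the Vagner--Preston condition is what forces $G$ to be a union of $\rho$-classes in the first place; your proof uses that same property, merely channelled through \thref{lem-the-wee-lemma} and the elements $af$ rather than applied to $f$ directly. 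This misdiagnosis does not affect the validity of your proof.
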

\begin{proof}
  Throughout this proof, let $a\in S$ satisfy $aa^{-1}/\rho=1_S/\rho$. If $f\in
    S$ is arbitrary, then
  $$a/\rho=aff^{-1}/\rho \Rightarrow
    1_S/\rho=aa^{-1}/\rho=aff^{-1}a^{-1}/\rho=(af)(af)^{-1}/\rho$$
  and
  $$1_S/\rho=(af)(af)^{-1}/\rho\Rightarrow a/\rho =
    aff^{-1}a^{-1}a/\rho=aa^{-1}aff^{-1}/\rho=aff^{-1}/\rho.$$
  Hence $a/\rho=aff^{-1}/\rho$ if and only if $1_S/\rho=(af)(af)^{-1}/\rho$ and
  so
  \begin{eqnarray*}
    \set{f\in S}{a/\rho =aff^{-1}/\rho} & = & \set{f\in S}{(af)(af)^{-1}/\rho
      =  1_S/\rho} \\
    & = & \set{f\in S}{(af)(af)^{-1} \in 1_S/\rho}\\
    &= & \set{f \in S}{ff^{-1}\in 1_S/\rho}\lambda_a^{-1}\\
    & = &\set{f \in S}{ff^{-1}/\rho=1_S/\rho}\lambda_a^{-1}.
  \end{eqnarray*}
  Since $S$ is semitopological, it now suffices to show that $U=\set{f \in
      S}{ff^{-1}/\rho= 1_S/\rho}$ is clopen. We will do so by showing that $U$ is a
  union of classes of the open equivalence relation $\rho$.
  Let $f\in U$, i.e.  $ff^{-1}/\rho=1_S/\rho$. Since $\rho$ is Vagner-Preston,
  either $tt^{-1}/\rho=1_S/\rho$ for all $t\in f/\rho$ or $ft/\rho=f/\rho$ for
  all $t\in S$. In the first case, clearly $f/\rho \subseteq U$. In the second
  case $t\in f/\rho$ implies that
  $$tt^{-1}/\rho=ft^{-1}/\rho=f/\rho_i=ff^{-1}/\rho=1_S/\rho$$
  and so, again, $f/\rho\subseteq U$, as required.
\end{proof}

\begin{theorem}\label{thm-inverse-submonoids}
  If $S$ is a $T_0$ semitopological inverse monoid,
  then the following are equivalent:
  \begin{enumerate}[\rm (i)]
    \item\label{thm-inverse-submonoids-i}
          there is a sequence $\set{\rho_i}{i\in \N}$ of right Vagner-Preston
          congruences on $S$, each with countably many classes, such that
          the set $\set{s/\rho_i, (s/\rho_i) ^ {-1}}{s\in S,\ i\in \N}$ is a
          subbasis\footnote{The term ``subbasis'' cannot be replaced with ``basis'' here, unlike in \cref{theorem-luke-1}.} for the topology on $S$;

    \item\label{thm-inverse-submonoids-ii} $S$ is topologically isomorphic to an inverse subsemigroup of
          $I_{\N}$ with the topology $\mathcal{I}_4$;

    \item\label{thm-inverse-submonoids-iii} $S$ is topologically isomorphic to an inverse submonoid of $I_{\N}$
          with the topology $\mathcal{I}_4$.
  \end{enumerate}
\end{theorem}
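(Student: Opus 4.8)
The plan is to follow the template of \thref{theorem-luke-1}, replacing the total functions used there by the Vagner--Preston partial action and using the two preparatory lemmas to read the topology off the congruences. The implication (iii)$\Rightarrow$(ii) is immediate. For (ii)$\Rightarrow$(i) I would assume $S$ is an inverse subsemigroup of $I_{\N}$ carrying the subspace topology of $\mathcal{I}_4$; since $S$ is a monoid its identity is a partial identity $\id_Y$, so in fact $S$ is an inverse submonoid of $I_Y$ with $Y\subseteq\N$ countable, and I may reindex $Y$ by $\N$. For each $i\in\N$ set
\[
 \rho_i=\{(f,g)\in S\times S : i\notin\dom(f)\cup\dom(g)\ \text{or}\ (i\in\dom(f)\cap\dom(g)\ \text{and}\ (i)f=(i)g)\}.
\]
A direct check shows that each $\rho_i$ is a right congruence whose classes are the sets $\{f\in S : i\notin\dom(f)\}$ and $\{f\in S : (i)f=j\}$ for $j\in\N$, so there are countably many, and that it is Vagner--Preston (for $s$ with $i\in\dom(s)$ the first alternative holds, for $s$ with $i\notin\dom(s)$ the second). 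Finally $s/\rho_i$ runs through the sets $W_i$ and $U_{i,j}$, while $(s/\rho_i)^{-1}$ runs through $W_i^{-1}$ and $U_{j,i}$, so $\{s/\rho_i,(s/\rho_i)^{-1}\}$ is exactly the subbasis of $\mathcal{I}_4$ from \thref{minimal_inverse_semigroup_topology}; this gives (i).

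For (i)$\Rightarrow$(iii) let $X=\{s/\rho_i : s\in S,\ i\in\N\}$, a countable set, where classes of distinct $\rho_i$ are counted as distinct elements of $X$ as in \thref{theorem-luke-1}; after padding I may assume $X$ is infinite, so $I_X\cong I_{\N}$. Define $\phi\colon S\to I_X$ by declaring $\phi(a)$ to be defined at $t/\rho_i$ precisely when $t a a^{-1}\,\rho_i\,t$, with value $(t/\rho_i)\phi(a)=(ta)/\rho_i$. Because each $\rho_i$ is a right congruence this is well defined, and a short computation with the inverse-semigroup identities (not using Vagner--Preston) shows that $\phi(a)$ is injective with two-sided inverse $\phi(a^{-1})$, that $\phi(ab)=\phi(a)\phi(b)$, and that $\phi(1_S)=\id_X$; thus $\phi$ is an inverse-monoid homomorphism into $I_X$.

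Injectivity and continuity are where the hypotheses on $\rho_i$ enter. Note that $1_S/\rho_i\in\dom\phi(a)$ iff $aa^{-1}\,\rho_i\,1_S$, in which case $(1_S/\rho_i)\phi(a)=a/\rho_i$; so if $\phi(a)=\phi(b)$ then, whenever $aa^{-1}\,\rho_i\,1_S$, the equal domains and values give $a/\rho_i=b/\rho_i$, while if $aa^{-1}/\rho_i\neq 1_S/\rho_i$ the same equality holds by \thref{lem-the-wee-lemma} (both $a,b$ then lie in the single ``zero'' class). Hence $a\,\rho_i\,b$ for all $i$; applying this to $\phi(a^{-1})=\phi(a)^{-1}$ and $\phi(b^{-1})=\phi(b)^{-1}$ yields $a^{-1}\,\rho_i\,b^{-1}$ for all $i$ as well, and since $\{s/\rho_i,(s/\rho_i)^{-1}\}$ is a subbasis of the $T_0$ space $S$ this forces $a=b$. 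A useful observation here is that, because this subbasis is inverse closed, inversion on $S$ is automatically continuous although it was not assumed so. For continuity of $\phi$ I would pull back the subbasis of $\mathcal{I}_4$: since $\phi(a)$ preserves the index $i$, one finds $\phi^{-1}(U_{s/\rho_i,\,u/\rho_i})=\{a\in S : saa^{-1}\,\rho_i\,s\}\cap (u/\rho_i)\lambda_s^{-1}$, while $\phi^{-1}(W_{s/\rho_i})$ and $\phi^{-1}(W_{s/\rho_i}^{-1})$ are governed by the sets $\{a : saa^{-1}\,\rho_i\,s\}$ and $\{a : sa^{-1}a\,\rho_i\,s\}$. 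The crucial claim is that $\{a : saa^{-1}\,\rho_i\,s\}$ is clopen: this is all of $S$ when $s/\rho_i$ is a zero class (Vagner--Preston, second alternative), and otherwise follows from \thref{lem-Vagner-Preston} together with the openness of $\rho_i$ (its classes are subbasic open); the second set is the image of the first under the continuous inversion. Continuity of $\lambda_s$ then gives continuity of $\phi$.

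Finally, $\phi$ is open onto its image: evaluating at the distinguished point $1_S/\rho_i$ shows $\phi(s/\rho_i)=\phi(S)\cap U_{1_S/\rho_i,\,s/\rho_i}$ when $s/\rho_i$ is a non-zero class and $\phi(s/\rho_i)=\phi(S)\cap W_{1_S/\rho_i}$ when it is the zero class, and then $\phi((s/\rho_i)^{-1})=(\phi(s/\rho_i))^{-1}$ is relatively open because inversion is a homeomorphism of $(I_X,\mathcal{I}_4)$ by \thref{minimal_inverse_semigroup_topology}. Thus $\phi$ is a topological isomorphism of $S$ onto the inverse submonoid $\phi(S)$ of $I_X\cong I_{\N}$, establishing (iii). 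I expect the main obstacle to be pinning down the correct partial-action definition of $\phi$ and verifying it is a well-defined injective inverse homomorphism; the algebra is routine but fiddly, and, more importantly, everything topological hinges on recognising that the Vagner--Preston condition, through \thref{lem-the-wee-lemma} and \thref{lem-Vagner-Preston}, is exactly what collapses the zero classes and makes the domain-membership sets $\{a : saa^{-1}\,\rho_i\,s\}$ clopen, which is what lets continuity and openness be read directly off the subbasis.
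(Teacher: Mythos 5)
Your proposal is correct, and for (i)$\Rightarrow$(iii) it takes a genuinely different route from the paper. The paper splits that direction in two: it proves (i)$\Rightarrow$(ii) with the map $\phi$ whose domain condition is the conjunction $aa^{-1}/\rho_i=1_S/\rho_i$ \emph{and} $a/\rho_i=aff^{-1}/\rho_i$, so that $(1_S)\phi$ is only a partial identity (undefined on the zero classes) and the image is merely an inverse subsemigroup; it then needs a separate (ii)$\Rightarrow$(iii) normalisation step, restricting to $I_X$ with $X=\dom(1_S)$ and re-embedding into $I_{\N}$ with a finite/infinite case split. You instead drop the first condition and declare $\phi(a)$ defined at $t/\rho_i$ exactly when $taa^{-1}\,\rho_i\,t$. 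By the Vagner--Preston dichotomy, any class with $tt^{-1}/\rho_i\neq 1_S/\rho_i$ satisfies $tu/\rho_i=t/\rho_i$ for all $u$, so every zero class lies in the domain of every $\phi(a)$ and is fixed by it; consequently $\phi(1_S)=\id_X$ and the image is an honest inverse submonoid of $I_X\cong I_{\N}$, eliminating the paper's (ii)$\Rightarrow$(iii) argument entirely. I verified the points your modification makes nontrivial: the domain condition is class-invariant and $\dom(\phi(a)\phi(b))=\dom(\phi(ab))$ by commuting idempotents ($bb^{-1}a^{-1}a=a^{-1}abb^{-1}$ together with $aa^{-1}a=a$); $\dom\phi(a^{-1})=\im\phi(a)$ because $ua^{-1}a\,\rho_i\,u$ produces the literal fixed point $t=ua^{-1}$ of $x\mapsto xaa^{-1}$; your clopenness claim for $\{a:saa^{-1}\,\rho_i\,s\}$ is exactly \thref{lem-Vagner-Preston} in the non-zero case and all of $S$ in the zero case; and your openness formulas $\phi(s/\rho_i)=\phi(S)\cap U_{1_S/\rho_i,\,s/\rho_i}$ (non-zero class) and $\phi(s/\rho_i)=\phi(S)\cap W_{1_S/\rho_i}$ (zero class, via \thref{lem-the-wee-lemma}) are the same dichotomy the paper exploits. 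Your aside that the inverse-closed subbasis forces continuity of inversion on $S$ is also needed in the paper's proof and is correct. What your route buys is a shorter total proof and a cleaner $\phi$, whose well-definedness and injectivity become one-line right-congruence computations; what the paper's split buys is a reusable standalone fact (any inverse subsemigroup of $I_{\N}$ that happens to be a monoid is topologically isomorphic to an inverse submonoid of $I_{\N}$) and a $\phi$ closer to the classical Wagner--Preston representation, at the cost of the case analysis on $|\dom(1_S)|$.

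Two small patches are needed, neither a genuine gap. In (ii)$\Rightarrow$(i) you cannot ``reindex $Y$ by $\N$'' when $Y=\dom(1_S)$ is finite; but the detour through $I_Y$ is unnecessary: keep the congruences indexed by all $i\in\N$ as the paper does (for $i\notin\dom(1_S)$ the congruence $\rho_i$ has the single class $S$, is trivially Vagner--Preston via the second alternative, and the classes and their inverses are still exactly the traces of the $\mathcal{I}_4$-subbasis on $S$), or pad with full congruences. In (i)$\Rightarrow$(iii) no padding is needed at all: since classes of distinct $\rho_i$ are counted as distinct elements of $X$ and each $\rho_i$ has at least one class, $X$ is automatically countably infinite.
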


\begin{proof}
  Obviously \eqref{thm-inverse-submonoids-iii} implies \eqref{thm-inverse-submonoids-ii} . So it suffices to show that \eqref{thm-inverse-submonoids-i}  and \eqref{thm-inverse-submonoids-ii}  are
  equivalent, and that \eqref{thm-inverse-submonoids-ii} implies \eqref{thm-inverse-submonoids-iii}.
  \vspace{\baselineskip}

  \noindent\textbf{(\ref{thm-inverse-submonoids-ii}) $\Rightarrow$ (\ref{thm-inverse-submonoids-i})}. Assume without loss of generality that
  $S$ is an inverse subsemigroup of $I_{\N}$. We will show that $S$ with the
  subspace topology induced by $\mathcal{I}_4$ satisfies the conditions in \eqref{thm-inverse-submonoids-i}.
  For every $i\in \N$, we define $\rho_i \subseteq S \times S$ so that
  \[
    f / \rho_i =  \set{g}{ i\in \dom(g),\ (i)f = (i)g } = U_{i, (i)f}
  \]
  if $i\in \dom(f)$ and
  \[
    f / \rho_i = \set{g}{ i\not\in \dom(g)} = W_{i}
  \]
  if $i\not\in \dom(f)$, where $U_{i, (i)f}$ and $W_i$ are the subbasic open
  sets for $\mathcal{I}_4$ defined in \cref{minimal_inverse_semigroup_topology}.
  It is routine to verify that $\rho_i$ is a right congruence for every $i\in
    \N$.
  Clearly,
  \[(f / \rho_i) ^ {-1} = \set{g}{ i\in \im(g),\ (i)f ^ {-1} = (i)g ^ {-1} } =
    U_{(i)f ^ {-1}, i}\]
  if $i\in \im(f)$ and
  \[
    (f / \rho_i) ^ {-1} = \set{g}{ i\not\in \im(g)}=W_{i}^{-1}
  \]
  if $i\not\in \im(f)$.

  It remains to prove that the right congruence $\rho_i$, for every $i\in \N$,
is Vagner-Preston.  If $1_S$ is the identity of $S$, then $1_S$ is not
necessarily $1_{\N}$.  However, $1_S$ is an idempotent satisfying $\dom(s),
\im(s) \subseteq \dom(1_S)$ for all $s\in S$.  Suppose that $s\in S$ and $i\in
\N$ are arbitrary.  If $i\in \dom(s)$ and $t\in s/\rho_i$, then $i\in \dom(t)$
and $(i) tt ^ {-1} = i$. Hence $t t ^ {-1}/\rho_i = 1_S / \rho_i$.  If $i \not
\in \dom(s)$ and $t\in S$ is arbitrary, then $i\not\in \dom(st)$ and so
$st/\rho_i = s/\rho_i$. Hence every $\rho_i$ is Vagner-Preston, as required.
  \vspace{\baselineskip}

  \noindent {\bf \textbf{(\ref{thm-inverse-submonoids-i})} $\Rightarrow$ \textbf{(\ref{thm-inverse-submonoids-ii})}.} Let $X=\set{a/\rho_i}{i\in \N, a\in
      S}$. When $i\neq j$, we will consider classes of $\rho_i$ and classes of
  $\rho_j$ as different elements of $X$, even if they should happen to be the
  same subset of $S$.

  Since $X$ is countable, it suffices to find a topological isomorphism $\phi$
  from $S$ to an inverse subsemigroup of $I_X$ with the topology $\mathcal{I}_4$.
  This $\phi: S \to I_X$ is defined by
  $$(a/\rho_i)(f)\phi=
    \begin{cases}
      af/\rho_i          & \text{ if }aa^{-1}/\rho_i=1_S/\rho_i \text{ and }
      a/\rho_i=aff^{-1}/\rho_i;                                              \\
      \text{ undefined } & \text{ otherwise.}
    \end{cases}
  $$
  We need to show that the partial function $(f)\phi \in I_X$ is well-defined
  and injective, that $\phi$ is a continuous homomorphism, and that if $U$ is
  open in $S$, then $(U)\phi$ is open in $(S)\phi$ with subspace topology.

  To see that $(f)\phi$ is a well-defined partial function from $X$ to $X$, let
  $a,b\in S$ such that $a/\rho_i=b/\rho_i$ for some $i\in \N$. Suppose that $a a
      ^ {-1}/\rho_i = 1_S/\rho_i$ and
  $a/\rho_i = aff ^ {-1}/\rho_i$. Since $\rho_i$ is a right congruence and
  $a/\rho_i = b/\rho_i$, $b/\rho_i = bff ^ {-1}/\rho_i$.
  By the assumption of the theorem, either $bb^{-1}/\rho_i=1_S/\rho_i$ or
  $bs/\rho_i=b/\rho_i$ for all $s\in S$.
  In the latter case, $as/\rho_i = bs/\rho_i = b/\rho_i = a/\rho_i$ and
  so taking $s = a ^ {-1}$, it follows that $a/\rho_i =
    aa^{-1}/\rho_i=1_S/\rho_i$. Hence, right multiplying $1_S$ and $a$ by $bb ^
      {-1}$, we obtain
  $bb^{-1}/\rho_i=abb^{-1}/\rho_i$
  and applying $as/\rho_i = a/ \rho_i$ when $s = bb ^ {-1}$ yields
  $$bb^{-1}/\rho_i=abb^{-1}/\rho_i = a/\rho_i = 1_S/\rho_i.$$
  So in either case, $b b^ {-1}/\rho_i = 1_S/\rho_i$.
  We have shown that $a/\rho_i = b/\rho_i$ and $(a/\rho_i)(f)\phi$ being defined
  implies that  $(b/\rho_i)(f)\phi$ is defined also. That $\phi$ is well-defined
  follows from $(a/\rho)(f)\phi = af/\rho_i = bf /\rho_i = (b/\rho)(f)\phi$.

  To see that $(f)\phi$ is injective, let $a/\rho_i, b/\rho_j \in
    \dom((f)\phi)$ such that $(a/\rho_i)(f)\phi=(b/\rho_j)(f)\phi$. By the
  definition of $(f)\phi$, it follows that $i=j$, $aff^{-1}/\rho_i=a/\rho_i$ and
  $bff^{-1}/\rho_i=b/\rho_i$. Hence
  \begin{equation*}
    (a/\rho_i)(f)\phi=(b/\rho_i)(f)\phi \
    \Rightarrow\ af/\rho_i=bf/\rho_i
    \ \Rightarrow\ aff^{-1}/\rho_i=bff^{-1}/\rho_i
    \ \Rightarrow\ a/\rho_i=b/\rho_i.
  \end{equation*}
  We conclude that $(f)\phi$ is injective and hence $(f)\phi \in I_X$.

  To prove that $\phi$ is injective, suppose that $(f)\phi=(g)\phi$ for some
  $f,g\in S$. We will show that $f/\rho_i=g/\rho_i$ for all $i\in \N$. If
  $f/\rho_i\neq 1_S/\rho_i$ and $g/\rho_i\neq 1_S/\rho_i$, then $f/ \rho_i= g /
    \rho_i$, by \cref{lem-the-wee-lemma}. So, without loss of generality,
  assume that $ff^{-1}/\rho_i=1_S/\rho_i$. Then $1_S/\rho_i=ff^{-1}/\rho_i \in
    \dom((f)\phi)=\dom((g)\phi)$ and so
  $f/\rho_i=(1_S/\rho_i)(f)\phi=(1_S/\rho_i)(g)(\phi)=g/\rho_i$.
  We have shown that $f/\rho_i=g/\rho_i$ for all $i\in \N$. It follows that
  $(f/\rho_i) ^ {-1} = (g/\rho_i) ^ {-1}$, and
  since $\set{a/\rho_i, (a/\rho_i) ^ {-1}}{a\in S,\ i\in \N}$ is a subbasis for
  a $T_0$ topology on $S$, we conclude that $f=g$, and so $\phi$ is injective.

  To prove that $\phi$ is a homomorphism, let $f,g \in S$ be arbitrary. The key
  step in showing that $(fg)\phi=(f)\phi\, (g)\phi$ is to show that
  $\dom((fg)\phi)=\dom((f)\phi\,(g)\phi)$. By definition, $a/\rho_i \in
    \dom((fg)\phi)$ if and only if
  \begin{equation}\label{equation_SIM_homomorphism_1}
    aa^{-1}/\rho_i=1_S /\rho_i \quad \text{ and } \quad
    a/\rho_i=a(fg)(fg)^{-1}/\rho_i.
  \end{equation}
  On the other hand, $a/\rho_i \in \dom((f)\phi\, (g)\phi)$ if and only if
  $a/\rho_i \in \dom((f)\phi)$ and $af/\rho_i \in \dom((g)\phi)$ if and only if
  \begin{equation}\label{equation_SIM_homomorphism_2}
    aa^{-1}/\rho_i=1_S /\rho_i=(af)(af)^{-1}/\rho_i, \quad
    aff^{-1}/\rho_i=a/\rho_i, \quad
    \text{ and } \quad af/\rho_i=afgg^{-1}/\rho_i.
  \end{equation}
  To show that \eqref{equation_SIM_homomorphism_1} and
  \eqref{equation_SIM_homomorphism_2} are equivalent, first assume that $a\in S$
  satisfies \eqref{equation_SIM_homomorphism_1}. Then $aa^{-1}/\rho_i=1_S/\rho_i$
  and
  \begin{eqnarray*}
    aff^{-1}/\rho_i & = &
    (a(fg)(fg)^{-1})ff^{-1}/\rho_i=afgg^{-1}f^{-1}ff^{-1}/\rho_i=afgg^{-1}f^{-1}/
    \rho_i\\
    & = & a(fg)(fg)^{-1}/\rho_i=a/\rho_i.
  \end{eqnarray*}
  Hence
  $$1_S/\rho_i=aa^{-1}/\rho_i=(aff^{-1})a^{-1}/\rho_i=(af)(af)^{-1}/\rho_i,$$
  and

  $$af/\rho_i=(a(fg)(fg)^{-1})f/\rho_i=afgg^{-1}f^{-1}f/\rho_i=aff^{-1}fgg^{-1}/\
    rho_i=afgg^{-1}/\rho_i.$$
  Thus \eqref{equation_SIM_homomorphism_1} implies
  \eqref{equation_SIM_homomorphism_2}. Now assume that  $a\in S$ satisfies
  \eqref{equation_SIM_homomorphism_2}. Then $aa^{-1}/\rho_i=1_S/\rho_i$ and
  $$a/\rho_i=aff^{-1}/\rho_i=(afgg^{-1})f^{-1}/\rho_i=a(fg)(fg)^{-1}/\rho_i.$$
  Hence \eqref{equation_SIM_homomorphism_2} implies
  \eqref{equation_SIM_homomorphism_1} and so the domains of $(fg)\phi$ and
  $(f)\phi\, (g)\phi$ coincide. If $ a/\rho_i\in \dom((fg)\phi)=\dom((f)\phi\,
    (g)\phi)$, then
  $$(a/\rho_i)(f)\phi\,
    (g)\phi=(af/\rho_i)(g)\phi=afg/\rho_i=(a/\rho_i)(fg)\phi.$$
  It follows that $(fg)\phi=(f)\phi\, (g)\phi$ and so $\phi$ is a homomorphism.

  To show that $\phi$ is continuous we need to prove that the preimages of the
  subbasic open sets $U_{a/\rho_i, b/\rho_j}$, $W_{a/\rho_i}$ and
  $W_{a/\rho_i}^{-1}$ of $\mathcal{I}_4$ are open in $S$.

  If $U_{a/\rho_i, b/\rho_j}\cap (S)\phi \neq \varnothing$, then $i=j$ and
  $aa^{-1}/\rho_i=1_S/\rho_i$. Thus
  \begin{align*}
    \left(U_{a/\rho_i, b/\rho_i}\right)\phi^{-1}
     & =\set{f\in S}{(a/\rho_i, b/\rho_i) \in (f)\phi}
    \\
     & =\set{f\in S}{a/\rho_i \in \dom((f)\phi) \text{ and }af/\rho_i=b/\rho_i}
    \\
     & =\set{f\in S}{a/\rho_i \in \dom((f)\phi)}\cap \set{f\in
    S}{af/\rho_i=b/\rho_i}                                                      \\
     & =\set{f\in S}{a/\rho_i =aff^{-1}/\rho_i}\cap \set{f\in S}{af \in
      b/\rho_i}.
  \end{align*}
  The set $\set{f\in S}{a/\rho_i =aff^{-1}/\rho_i}$ is clopen by
  \cref{lem-Vagner-Preston} and $\set{f\in S}{af \in
      b/\rho_i}=(b/\rho_i)\lambda_a^{-1}$ is open since $b/\rho_i$ is open and $S$ is
  semitopological. It follows that  $\left(U_{a/\rho_i,
      b/\rho_i}\right)\phi^{-1}$ is open.

  Now consider $(W_{a/\rho_i})\phi^{-1}=\set{f\in S}{a/\rho_i \not \in
      \dom((f)\phi)}$. If $aa^{-1}/\rho_i \neq 1_S/\rho_i$, then
  $(W_{a/\rho_i})\phi^{-1}=S$. If $aa^{-1}/\rho_i=1_S/\rho_i$,  then
  \[
    (W_{a/\rho_i})\phi^{-1}
    =\set{f\in S}{a/\rho_i \neq aff^{-1}/\rho_i}\\
    =S \setminus \set{f\in S}{a/\rho_i=aff^{-1}/\rho_i}.
  \]
  Hence $(W_{a/\rho_i})\phi^{-1}$ is open as $\set{f\in
    S}{a/\rho_i=aff^{-1}/\rho_i}$ is clopen (again by \cref{lem-Vagner-Preston}).
  Since $\phi$ is a homomorphism
  $$({W_{a/\rho_i}}^{-1})\phi^{-1}=\left(
    (W_{a/\rho_i})\phi^{-1}\right)^{-1},$$
  which is open since inversion is continuous in $S$.

  It only remains to prove that if $U$ is open in $S$, then $(U)\phi$ is open
  in $(S)\phi$.
  Since $\phi:S \rightarrow (S)\phi$ is a bijection, it is enough to show that
  the images of the subbasic open sets $a/\rho_i$ and $(a/\rho_i) ^ {-1}$ are
  open. Let $(f)\phi\in (a/\rho_i)\phi = \set{(g)\phi \in I_X}{g\in a/\rho_i}$ be
  arbitrary. We need to find an open neighbourhood $U$ of $(f)\phi$ such that
  $U\cap (S)\phi \subseteq (a/\rho_i)\phi$. Since $\phi$ is injective, $f\in
    a/\rho_i$ and so $(a/\rho_i)\phi=(f/\rho_i)\phi=\set{(g)\phi\in
      I_X}{g/\rho_i=f/\rho_i}$. It therefore suffices to show that there exists an
  open set $U$ in $\mathcal{I}_4$ such that
  \begin{equation}\label{equation_open_function_condition}
    (f)\phi\in U \text{ and } U \cap (S)\phi \subseteq \set{(g)\phi \in
      I_X}{g/\rho_i=f/\rho_i}.
  \end{equation}

  If $ff^{-1}/\rho_i=1_S/\rho_i$, then $1_S/\rho_i \in \dom((f)\phi)$ and we
  let $U=U_{1_S/\rho_i, f/\rho_i}$. Then $(f)\phi\in U$ and if $(g)\phi\in U\cap
    (S)\phi$, then $g/\rho_i=(1_S/\rho_i)(g)\phi=f/\rho_i$. Hence $U$ satisfies
  \eqref{equation_open_function_condition}.

  If $ff^{-1}/\rho_i =1_Sff^{-1}/\rho_i \neq 1_S/\rho_i$, then $1_S/\rho_i \not
    \in \dom((f)\phi)$. In this case, let $U=W_{1_S/\rho_i}$. Then $(f)\phi\in U$
  and if $(g)\phi \in U\cap (S)\phi$, then $1_S/\rho_i \not \in \dom((g)\phi)$.
  Hence $gg^{-1}/\rho_i \neq 1_S/\rho_i$, and so, by \cref{lem-the-wee-lemma},
  $f/\rho_i=g/\rho_i$.
  It follows that $U$ satisfies \eqref{equation_open_function_condition}. We
  have shown that $(a/\rho_i)\phi$ is open in $(S)\phi$ for all $i\in \N$ and
  $a\in S$.

  Since $\phi$ is a homomorphism and inversion is continuous (and hence open)
  in $S$ and in $I_X$, it follows that
  $$((a/\rho_i)^{-1})\phi=\left((a/\rho_i)\phi\right)^{-1}$$
  is open in $(S)\phi$. We have shown that $\phi$ is open, which concludes the
  proof of this part of the theorem.
  \vspace{\baselineskip}

  \noindent {\bf \textbf{(\ref{thm-inverse-submonoids-ii})} $\Rightarrow$ \textbf{(\ref{thm-inverse-submonoids-iii})}.}
  Without loss of generality, we assume that the inverse monoid $S$ is a
  subsemigroup of $I_{\N}$ with the topology $\mathcal{I}_4$, and the subspace
  topology.  We define $X$ to be $\dom(1_S)$ and note that:
  $$X = \dom(1_S)=\bigcup_{s\in S}\dom(s)=\bigcup_{s\in S}\im(s)=\im(1_S).$$
  It is clear that $S$ is a submonoid of $I_X$, the symmetric inverse monoid on
  the set $X$.
  There are two cases to consider: when $X$ is finite, and when $X$ is
  infinite.

  Suppose that $X$ is infinite.
  In this case, since $X$ is countable, $I_X$ is isomorphic to $I_{\N}$. It is
  also the case that $I_{\N}$ and $I_X$ with the subspace topology induced by
  $\mathcal{I}_4$ (on $I_{\N}$) are topologically isomorphic. Indeed, the
  subbasis for $\mathcal{I}_4$ on $I_{\N}$ given in
  \cref{minimal_inverse_semigroup_topology}, when restricted to $I_X$ gives the
  subbasis for $\mathcal{I}_4$ on $I_X$. Alternatively, the unique $T_1$ second
  countable inverse semigroup topology on $I_{\N}$, and hence $I_X$, is
  $\mathcal{I}_4$
  (\cref{minimal_inverse_semigroup_topology}\eqref{minimal_inverse_semigroup_topology_x}). 
  Hence, by either route, $S$ is topologically isomorphic to an
  inverse submonoid of $I_{\N}$.

  If $X$ is finite, then $S$ is finite and hence discrete, since
  $\mathcal{I}_4$ is $T_1$. The map
  $$s \mapsto s \cup \set{(x,x)}{x \in \N \setminus X}$$
  embeds $S$ as an inverse submonoid of $I_{\N}$. The embedding is topological,
  since the domain and range are both finite and hence discrete.
\end{proof}

\subsection{Injective and surjective transformations}
\label{subsection-inj-surj}

In this section we consider the submonoids $\Inj(X)$, of injective, and
$\Surj(X)$, of surjective,
transformations in $X ^ X$. If $X$ is countable, then
$\Inj(X)$ and $\Surj(X)$ are $G_{\delta}$ subsets of $X ^ X$ with the pointwise
topology, and so these submonoids are Polish semigroups with the subspace
topology.
However, in contrast to the monoids considered up to this point in the paper,
we will show that both of $\Inj(X)$ and $\Surj(X)$ have infinitely many
distinct Polish semigroup topologies when $X$ is countably infinite. In the
case of $\Inj(X)$, when $X$ is countably infinite,  we show that the pointwise
topology is the minimum Polish semigroup topology on $\Inj(X)$, and we show
that there is a maximum Polish semigroup topology on $\Inj(X)$ given by an
explicit subbasis.

Since $\Inj(X)$ is a closed submonoid of the symmetric inverse monoid $I_X$
with its unique Polish inverse semigroup topology $\mathcal{I}_4$ (see
\cref{minimal_inverse_semigroup_topology}), one natural Polish topology on
$\Inj(X)$ is the subspace topology induced by $\mathcal{I}_4$. A subbasis for
this topology comprises the subbasic open sets
\begin{equation}\label{eq-jay-1}
  \set{f\in \Inj(X)}{(x)f = y}\quad \text{and}\quad
  \set{f\in \Inj(X)}{y \not \in \im(f)}\text{ for all }x, y \in X.
\end{equation}
Although natural, this topology does not play a central role in this section.
We will show that this topology is one of infinitely many Polish semigroup
topologies on $\Inj(X)$ between the minimum and maximum Polish semigroup
topologies on $\Inj(X)$.
The topology with subbasis given in \eqref{eq-jay-1} coincides with the
subspace topology induced by the topology $\mathcal{I}_3$ on $I_X$ from
\cref{minimal_semigroup_topologies}. The topology on $\Inj(X)$ induced by
$\mathcal{I}_2$ from \cref{minimal_semigroup_topologies}, is the pointwise
topology.

\begin{theorem}
  \label{theorem-injective}
  Let $X$ be an infinite set and let $\mathcal{J}$ be the topology on $\Inj(X)$
  generated by the pointwise
  topology and the sets
  $$\set{f\in \Inj(X)}{x \not \in \im(f)}\quad \text{and}\quad
    \set{f\in \Inj(X)}{|X\setminus \im(f)|=n}$$
  for every $x\in X$ and for every cardinal $n \leq |X|$.
  Then the following hold:
  \begin{enumerate}[\rm (i)]

    \item\label{theorem-injective-i}
          there are infinitely many distinct
          semigroup topologies on $\Inj(X)$ containing the pointwise topology
          and
          contained in $\mathcal{J}$ including $\mathcal{J}$ itself;
          
    \item\label{theorem-injective-ii}
          $\Inj(X)$ has property \textbf{X} with respect to $\mathcal{J}$
          and $\Sym(X)$;

    \item\label{theorem-injective-iii}
          if $\T$ is any semigroup topology on $\Inj(X)$ and $\T$ induces
          the pointwise topology on $\Sym(X)$, then $\T$ is contained in
          $\mathcal{J}$;

    \item\label{theorem-injective-v}
          the Zariski topology for $\Inj(X)$ is the pointwise topology;
          
    \item\label{theorem-injective-iv}
          if $X$ is countable, then $\Inj(X)$ has automatic continuity with
          respect
          to $\mathcal{J}$ and the class of second countable topological
          semigroups;
    
    \item\label{theorem-injective-vi} 
    if $X$ is countable, then there are 
          infinitely many distinct Polish
          semigroup topologies on $\Inj(X)$ containing the pointwise topology
          and
          contained in $\mathcal{J}$ including $\mathcal{J}$ itself;
          
    \item\label{theorem-injective-vii} 
          if X is countable, then the pointwise topology is the minimum, and $\mathcal{J}$ is the maximum, Polish semigroup topology on $\Inj(X)$.
  \end{enumerate}
\end{theorem}
\begin{proof}
  \textbf{(\ref{theorem-injective-i}).}
  If $\kappa$ is any cardinal such that $\kappa\leq |X|$, then we define
  \[
    F_{\kappa} = \set{f\in \Inj(X)}{|X\setminus \im(f)|=\kappa}.
  \]
  Let $\S_0$ be the subspace topology on $\Inj(X)$ induced by the topology
  $\mathcal{I}_4$ on $I_X$.
  A subbasis for $\S_0$ is given in \eqref{eq-jay-1}.
  If $n\in \N$ and $n > 0$, then we define $\S_n$ to be the topology generated
  by $\S_{n -1}$ and $F_{n - 1}$. It follows that $\S_n$ is the least topology
  containing $\S_0$ and every set $F_m$ where $m< n$.
  The topology $\mathcal{J}$ is the topology generated by $\S_0$ and the
  collection of all $F_{\kappa}$ where $\kappa\leq |X|$.

  We begin by showing that $\mathcal{J}$ and every $\S_n$, $n\in \N$, is a
  semigroup topology on $\Inj(X)$.
  It follows from the definition of $\S_0$ that $\S_0$ is a semigroup topology.
  Suppose that $\T$ is	 $\mathcal{J}$ or  $\S_n$ where $n\in \N$. To show that
  $\T$ is a semigroup topology,
  it suffices to show that if $F_{\kappa} \in \T$,
  then	the preimage of $F_{\kappa}$ under
  the multiplication function is open in $\Inj(X)\times \Inj(X)$ with the
  product topology induced by $\T$.
  Note that  $$|X\setminus \im(fg)|=|X \setminus \im(f)|+|X \setminus \im(g)|$$
  for all
  $f,g\in \Inj(X)$.
  Suppose that $F_{\kappa}\in \T$ for some $\kappa$ such that $\kappa\leq |X|$.
  If $f,g \in \Inj(X)$ are such that $fg \in F_{\kappa}$, then	$|X \setminus
    \im(f)|=\alpha$ and $|X \setminus \im(g)|=\beta$ for some cardinals
  $\alpha,\beta
    \leq \kappa$ such
  that $\alpha+\beta=\kappa$.  Thus $f\in F_{\alpha}$,
  $g\in F_{\beta}$, and $F_{\alpha}F_{\beta}\subseteq F_{\kappa}$, and so $\T$
  is a semigroup topology.

  Next, we will show that the topologies $\S_0, \S_1, \ldots$ and $\mathcal{J}$
  are distinct.
  Suppose that $n\in \N$ is arbitrary. If $f\in F_n$ is arbitrary and $B$ is
  any basic open neighbourhood of
  $f$ in $\S_n$, then $f\not\in F_m$ for every $m \not= n$, and so $B$ is a
  basic open set for $\S_0$. In other words,
  \[
    B = \set{g\in \Inj(\N)}{h\subsetneq g\ \text{and}\ \im(g) \cap Y =
      \varnothing}
  \]
  for finite $h\in I_{\N}$ and finite $Y\subseteq \N$.	It is routine to verify
  that there exists $g\in B$ such that $g\not\in F_n$, and hence $F_n$ is not
  open in $\S_n$ for every $n\in \N$. It follows that the topologies $\S_0,
    \S_1, \ldots$ are distinct. Clearly, $\mathcal{J}$ contains the strictly
  ascending chain of topologies $\S_0, \S_1, \ldots$ and hence
  $\mathcal{J}$ is distinct from every $\S_n$, $n\in \N$.\medskip

  \noindent\textbf{(\ref{theorem-injective-ii}).}
  Let $s\in \Inj(X)$ be arbitrary. We show that there exist
  $f_s, g_s\in \Inj(X)$ and $a_s\in \Sym(X)$ such that $s = f_s a_s g_s$ and
  for every basic open neighbourhood $B$ of $a_s$ there exists an open
  neighbourhood $U$ of $s$ such that $U \subseteq f_s (B\cap \Sym(X))g_s$.

  Let $f_s= s$, $a_s= 1_{\Inj(X)}$, and $g_s= 1_{\Inj(X)}$ and let $B$ be a
  basic open neighbourhood of $a_s$. As $a_s= 1_{\Inj(X)}$ there is some
  finite $Y\subseteq X$ such that
  \[B\cap \Sym(X) = \{h\in \Sym(X): (y)h = y \text{ for all }y\in Y\}.\]
  Then $h \in \Inj(X)$ is an element of $f_s (B\cap
    \Sym(X))g_s$ if and only if
  \begin{enumerate}
    \item  $y\notin \im(h)$ for all $y\in Y \backslash \im(f_s)$;

    \item $((y)s^{-1},y)\in h$	for all $y\in Y \cap \im(f_s)$;

    \item $|X \backslash \im(s)|= |X \backslash \im(h)|$.
  \end{enumerate}
  So $f_s (B\cap \Sym(X))g_s$ is an open neighbourhood of $s$ as required.
  \medskip

  \noindent\textbf{(\ref{theorem-injective-iii}).}
  This follows by part \eqref{theorem-injective-ii} together with
  \cref{lem-luke-0}\eqref{lem-luke-0-ii}. \medskip

  \noindent\textbf{(\ref{theorem-injective-v}).}
  This part of the theorem follows from
  \cref{lem-zariski-pointwise}. \medskip
  
  \noindent\textbf{(\ref{theorem-injective-iv}).}
  This follows by part \eqref{theorem-injective-ii} together with
  \cref{lem-luke-0}\eqref{lem-luke-0-iv} and
  the automatic continuity of $\Sym(X)$. \medskip
  
  \noindent\textbf{(\ref{theorem-injective-vi}).}
  We will show that, when $X$ is countable, the topologies $\S_0, \S_1,\ldots$ and
  $\mathcal{J}$ from the proof of part \eqref{theorem-injective-i} are Polish.
  As already discussed at the start of this section, $\S_0$ is Polish. To show
  that $\S_n$ is Polish we proceed by induction. Given that $\S_{n-1}$ is Polish,
  for some $n > 0$,
  it suffices by~\cite[Lemma 13.2]{Kechris1995aa} to show that $F_n$ is closed
  in $\S_n$.
  The complement of $F_n$  in $\Inj(X)$ is given by
  \begin{eqnarray*}
    \Inj(X) \setminus F_{n}
    & = & \set{f\in \Inj(X)}{|X\setminus \im(f)| \neq n} \\
    & = &  \set{f\in \Inj(X)}{|X\setminus \im(f)| > n}\cup
    \set{f\in \Inj(X)}{|X\setminus \im(f)| < n}.
  \end{eqnarray*}
  The set $\set{f\in \Inj(X)}{|X\setminus \im(f)| > n}$ is open in
  $\S_0$ and hence is open in $\S_{n}$. The set
  \[
    \set{f\in \Inj(X)}{|X\setminus \im(f)| < n} = \bigcup_{m = 0} ^ {n - 1}
    F_{m}
  \]
  is also open by assumption.  Hence the complement of $F_n$ is open, and so
  $F_n$ is closed.  Therefore $\S_n$ is Polish also.
  Hence the topology generated by $\bigcup_{n = 0}
    ^ {\infty} \S_n$ is Polish by~\cite[Lemma 13.3]{Kechris1995aa}. Finally, by
  a
  similar argument, $F_{\aleph_0}$ is closed in the topology generated by
  $\bigcup_{n = 0} ^ {\infty} \S_n$, so by applying~\cite[Lemma
    13.2]{Kechris1995aa} again, $\mathcal{J}$ is Polish.

  Although it is not strictly necessary, we
  note that a complete metric on $\Inj(\N)$ that induces $\mathcal{J}$ is:
  $$d(f,g)=
    \begin{cases}
      0             & \text{if }f=g;
      \\
      1             & \text{if }|\N \setminus \im(f)|\neq |\N \setminus \im(g)|
      \\
      \frac{1}{m+1} & \text{otherwise};
    \end{cases}$$
  where $m=\min\set{n\in \N}{(n\times n) \cap f \neq (n\times n)\cap g}$.\medskip
  
  \noindent\textbf{(\ref{theorem-injective-vii}).}
  This part of the theorem follows immediately from parts \eqref{theorem-injective-v} and \eqref{theorem-injective-iv}, respectively.
\end{proof}


\begin{theorem}\label{theorem-surjective}
  Let $X$ be an infinite set,
  let $\S_1$ denote the topology on $\Surj(X)$ generated by the pointwise topology and the
  set $\Sym(X)$, and let $\S_2$ be the topology generated by
  the pointwise topology together with the collection of sets
  \[U_{\kappa, x}:= \{f\in \Surj(X): |(x)f^{-1}| = \kappa\}\] for all $x\in
    X$ and cardinals $\kappa \leq |X|$.
  Then the following hold:
  \begin{enumerate}[\rm (i)]
    \item\label{theorem-surjective-i}
          the pointwise topology and $\S_1$ are distinct semigroup topologies
          on $\Surj(X)$;
          
    \item\label{theorem-surjective-iv}
          if $X$ is countable, then $\S_1$ and $\S_2$ are Polish semigroup topologies;
          
    \item\label{theorem-surjective-ii}
    if $X$ is countable, then 
           there are infinitely many distinct Polish
          semigroup topologies on $\Surj(X)$ containing the pointwise topology and
          contained in $\S_2$;

    \item\label{theorem-surjective-iii}
          if $X$ is countable, then $\Surj(X)$ with the topology $\S_1$ embeds
          into
          $\N^\N$ with the pointwise topology.
  \end{enumerate}
\end{theorem}
\begin{proof}
  \textbf{(\ref{theorem-surjective-i}).}
  Since the pointwise topology is a semigroup topology for $X^X$, it is also a
  semigroup topology for $\Surj(X)$. If $X$ is countable, then $\Surj(X)$ is
  $G_{\delta}$ in $X ^ X$, and so it follows that the pointwise topology is a
  Polish semigroup topology on $\Surj(X)$.

  Since $\Inj(X)$ is closed in $X ^ X$, it follows that $\Sym(X) = \Inj(X) \cap
    \Surj(X)$ is closed in $\Surj(X)$ with respect to the pointwise topology.
  To show that $\S_1$ is compatible with the multiplication
  in $\Surj(X)$, it suffices to show that $\set{(f, g)}{fg\in \Sym(X)}$ is
  open in $\Surj(X)\times \Surj(X)$. It is routine to verify that
  $\Surj(X)\setminus \Sym(X)$ is an ideal in $\Surj(X)$. Hence if $f, g\in
    \Surj(X)$ are such that $fg\in \Sym(X)$, then $f, g\in \Sym(X)$. Hence
  $\Sym(X)\times \Sym(X) = \set{(f, g)}{fg\in \Sym(X)}$ is open, as
  required.\medskip

  \noindent
  \textbf{(\ref{theorem-surjective-iv}).}
  Since $\Sym(X)$ is closed in $\Surj(X)$ with respect to the pointwise topology,
  it follows by \cite[Lemma 13.2]{Kechris1995aa}, that $\S_1$ is Polish for countable $X$.
  
  It remains to show that $\S_2$ is a Polish semigroup topology when $|X|=
    \aleph_{0}$. We may assume without loss of generality that $X=\N$. First
  note that for all $f, g \in \Surj(\N)$ and $n\in \N$ we have that
  \[|(n)(fg)^{-1}|= \sum_{i\in (n)g^{-1}} |(i)f^{-1}|.\]
  If $fg\in U_{m, n}$, then there are three cases to consider: $m \in \N$;
  $m = \aleph_0 = |(n)g^{-1}|$; and $m = \aleph_0 \neq |(n)g^{-1}|$.
  \begin{enumerate}
    \item
          If $m\in \N$, then
          \[fg\in \left(\bigcap_{i\in (n)g^{-1}} U_{|(i)f^{-1}|, i}\right)\cdot
            \left(U_{|(n)g^{-1}|, n} \cap \bigcap_{i\in (n)g^{-1}}\{h\in
            \Surj(\N):
            (i)h=(i)g \}\right)\subseteq U_{m, n}.\]

    \item
          If $m=\aleph_0$ and $|(n)g^{-1}| = m$, then
          \[fg\in \Surj(X) \cdot U_{m, n} \subseteq U_{m, n}.\]

    \item
          If $m= \aleph_0$ and $|(n)g^{-1}| \neq \aleph_0$, then there is some
          $i\in (n)g^{-1}$ such that $|(i)f^{-1}|=\aleph_0$ and so
          \[fg\in U_{\aleph_0, i}\cdot \{h\in \Surj(\N): (i)h = n\} \subseteq U_{
                \aleph_0, n}.\]
  \end{enumerate}
  To show that $\S_2$ is Polish,  we define
  $\mathcal{T}_{m, n}$ to be the topology generated by the pointwise topology
  together with the sets $U_{i, n}$ for $i \leq m$ and for all $m, n \in \N$.
  We show by induction on $m$, that $\mathcal{T}_{m, n}$ is Polish for all $m,
    n \in \N$.

  By the definition of $\Surj(\N)$, $U_{0, n}= \varnothing$ for all $n\in
    \mathbb{N}$. It follows that, $\mathcal{T}_{0, n}$ is the pointwise
  topology, which is Polish, for all $n\in \mathbb{N}$. This establishes the
  base case of the induction.

  For the inductive step, suppose that $m>0$ and $\mathcal{T}_{m-1, n}$ is
  Polish for all $n\in \N$.
  The topology $\mathcal{T}_{m, n}$ is generated by
  $\mathcal{T}_{m - 1, n}$ and $U_{m, n}$. To apply \cite[Lemma
    13.2]{Kechris1995aa} and conclude that $\mathcal{T}_{m, n}$ is Polish,
  it suffices to show that $U_{m, n}$ is closed in $\mathcal{T}_{m - 1,
      n}$. By the definition of $U_{m, n}$,
  \[
    \Surj(\N) \backslash U_{m, n}  = \{f\in \Surj(\N): |(n)f^{-1}| < m\}
    \cup \{f\in \Surj(\N): |(n)f^{-1}| > m\}.
  \]
  The set $V = \{f\in \Surj(\N): |(n)f^{-1}| < m\}$ is the union of the open
  sets
  $\{f\in \Surj(\N): |(n)f^{-1}| = i\} = U_{i, n}$, for all $i < m$,
  and so $V$ is open in $\T_{m - 1, n}$. The set $W =  \{f\in \Surj(\N):
    |(n)f^{-1}| > m\}$ is the union of the open sets
  \[
    \{f\in \Surj(\N): (y)f = n \text{ for all }y\in Y\}
  \]
  in the pointwise topology, for all finite subsets $Y$ of $\N$ with at least
  $m + 1$ elements, and so $W$ too is open. Therefore $\Surj(\N) \backslash
    U_{m, n}$ is open and so $U_{m, n}$ is closed.

  We have shown that $\T_{m, n}$ is Polish for every $m, n\in \N$.
  For every $m\in \N$, we define $\mathcal{T}_m$ to be the least topology
  containing $\mathcal{T}_{m, n}$ for all $n \in \mathbb{N}$. By
  \cite[Lemma 13.3]{Kechris1995aa}, $\mathcal{T}_m$ is Polish for
  all $m \in \mathbb{N}$. By considering only case (1) in the proof that
  $\S_2$ is a semigroup topology, it follows that $\mathcal{T}_m$ is also a
  semigroup topology for all $m \in \mathbb{N}$.

  If $\mathcal{T}$ is the least topology containing
  $\mathcal{T}_m$ for all $m \in \N$, then, by \cite[Lemma
    13.3]{Kechris1995aa} again, $\mathcal{T}$ is Polish.
  If $n\in \N$ is arbitrary, then
  \[
    \Surj(\N)\setminus U_{\aleph_0, n}
    =  \{f\in \Surj(\N): |(n)f^{-1}| < \aleph_0\}
  \]
  which is the union of all of the open sets $U_{i, n}$, $i\in \N$, as in the
  inductive step above, is open in $\T$.
  Hence the topology generated by $\T$ and any of the sets
  $U_{\aleph_0, n}$, $n\in \N$,
  is a Polish topology. The topology generated by the (countable) union of all
  such topologies is $\S_2$,
  and so, by applying  \cite[Lemma 13.3]{Kechris1995aa} one last time, $\S_2$
  is Polish.
  \medskip

  \noindent
  \textbf{(\ref{theorem-surjective-ii}).}
  By the proof of part \eqref{theorem-surjective-iv}, it suffices to show that
  $\mathcal{T}_i$ and $\mathcal{T}_j$ are distinct for all $i\neq j$.
  Furthermore, since
  $\mathcal{T}_i \subseteq \mathcal{T}_j$ whenever $i \leq j$, it
  suffices to show that
  $\mathcal{T}_i \neq \mathcal{T}_{i+1}$ for every $i\in \N$.
  We show that $U_{i + 1, 0}$ is not open in $\mathcal{T}_i$. Suppose that
  $f\in U_{i + 1, 0}$ is arbitrary.
  Every basic open neighbourhoods of
  $f$ in $\mathcal{T}_i$ is of the form
  \[
    \bigcap_{(m, n) \in Z} U_{m, n} \cap \{g\in \Surj(\N): h\subseteq g\}
    = \{g\in \Surj(\N): h \subseteq g \text{ and }m = |(n)g^{-1}| \text{ for
      all
    }(m, n)\in Z\},\]
  where $h\in P_{\N}$  and $Z\subseteq \{1, 2
    \ldots, i\} \times \N$ are finite. In particular, every basic open
  neighbourhood of $f$ contains $g\in \Surj(\N)$ such that
  $|(0)g^{-1}| > i+1$ and hence no basic open neighbourhood of $f$ is
  contained in $U_{i+1, 0}$.
  \medskip

  \noindent
  \textbf{(\ref{theorem-surjective-iii}).}
  We will show that the topology $\S_1$ on $\Surj(\N)$ satisfies
  \cref{theorem-luke-1}\eqref{theorem-luke-1-i}, and so it is possible to topologically embed
  $\Surj(\N)$ with the topology $\S_1$ into $\N ^ \N$ with the pointwise
  topology. Suppose that $\sigma$ is the equivalence relation of $\Surj(\N)$
  with classes $\Sym(\N)$ and $\Surj(\N)\setminus\Sym(\N)$. Then $\sigma$ is a
  right congruence on $\Surj(\N)$ (even a two-sided congruence, but this is not
  important here).  If	$\set{\rho_i}{i\in \N}$ is a sequence of right
  congruences on $\Surj(\N)$ such that $\set{m/\rho_i}{m\in \Surj(\N)}$ is a
  subbasis for the pointwise topology, then $ \set{m/\rho_i}{m\in
      \Surj(\N)}\cup \{\sigma\}$ satisfies 
  \cref{theorem-luke-1}\eqref{theorem-luke-1-i}.
\end{proof}

\subsection{Diagram Monoids}\label{subsection-diagram}

In this section we consider topologies on the infinite partition monoid; see
\cite{east2014infinite,east2017infinite, fitzgerald2003presentation} for
previous literature on partition monoids. The so-called \textit{diagram
  monoids}, including the partition monoid, arose as the multiplicative monoids
of certain algebras in representation theory~\cite{HALVERSON2005869}. In some
sense, diagram monoids are also a generalisation of transformation monoids, in
a somewhat different direction than monoids of binary relations.
The results of this section are rather negative, in the sense that, we will
show that no second countable $T_1$ topology is semitopological for the
partition monoid.

For a set \(X\) we define the partition monoid \(\mathfrak{P}_X\) as follows.
The underlying set of \(\mathfrak{P}_X\) is the set of partitions of \(X \times
\{0, 1\} \). We will use partitions and the corresponding equivalence relations
interchangeably.
We define \(\operatorname{Diag}(s,t)\) to be the least equivalence relation on
\(X \times \{0, 1, 2\}\) containing
\[\{((x, a), (y, b))\in (X\times \{0,1,2\})^2:((x, a), (y, b))\in s \text{ or }
  ((x, a-1), (y, b-1))\in t\}.\]
The product of \(s\) and \(t\) is defined to be
\[\{((x,a),(y,b))\in (X\times \{0, 1\})^2: ((x,2a),(y,2b))\in
  \operatorname{Diag}(s,t)\}\]
and is denoted $st$. It is possible to show that this multiplication is
associative, and that the identity is
\[
  \set{\{(x, 0), (x, 1)\}}{x \in X}.
\]

To prove the main theorem in this section we require the following technical
lemma.

\begin{lem}\label{bad-semilattice}
  Let \(X\) be a set,  and let \(SL_X:=X\cup \{0\}\) where $0\not\in X$ be the
  semigroup with multiplication defined by
  \[
    st =
    \begin{cases}
      s & \text{if } s = t    \\
      0 & \text{if } s \not=t
    \end{cases}
  \]
  ($SL_X$ is a meet semilattice where every pair of non-zero elements is incomparable).
  If \(SL_X\) is a \(T_1\) semitopological semigroup, then
  the minimum size of a basis for $SL_X$ is at least $|X|$.
\end{lem}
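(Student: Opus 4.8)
The plan is to show that the semilattice structure forces every non-zero element of $SL_X$ to be an isolated point, and then to observe that an isolated point must lie in every basis; since there are $|X|$ such points, any basis has size at least $|X|$.

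First I would record that, since $SL_X$ is $T_1$, the singleton $\{0\}$ is closed. Fix $s \in X$ and consider the left translation $\lambda_s \colon SL_X \to SL_X$, $(x)\lambda_s = sx$, which is continuous because $SL_X$ is (left) semitopological. The key computation is the fibre of $\lambda_s$ over $0$: for $x \in X$ with $x \neq s$ we have $sx = 0$, while $ss = s \neq 0$ and $s0 = 0$ (as $s \neq 0$), so that
\[
(\{0\})\lambda_s^{-1} = \set{x \in SL_X}{sx = 0} = SL_X \setminus \{s\}.
\]
As $\{0\}$ is closed and $\lambda_s$ is continuous, $SL_X \setminus \{s\}$ is closed, and hence $\{s\}$ is open.

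Having established that each $\{s\}$ with $s \in X$ is a non-empty open set, I would finish with a purely topological observation: in any basis $\mathcal{B}$ the open set $\{s\}$ must be a union of members of $\mathcal{B}$, but the only non-empty subset of $\{s\}$ is $\{s\}$ itself, so $\{s\} \in \mathcal{B}$. The sets $\{s\}$ (for $s \in X$) are pairwise distinct, whence $|\mathcal{B}| \geq |X|$, as required.

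I do not anticipate any genuine obstacle: the argument reduces to the elementary identity $(\{0\})\lambda_s^{-1} = SL_X \setminus \{s\}$ together with the standard fact that an isolated point belongs to every basis. The one conceptual point is noticing that multiplication by a non-zero $s$ collapses every other element to the absorbing element $0$, which is precisely what isolates $s$; everything else is formal.
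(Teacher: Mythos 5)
Your proof is correct and follows essentially the same route as the paper: the paper computes $(\{0\})\rho_x^{-1} = SL_X \setminus \{x\}$ using right translation (which coincides with your left translation since the multiplication is commutative) to conclude each $\{x\}$ is open, and then appeals to the resulting discreteness of $X$ to bound the basis size. Your only addition is to spell out explicitly the standard fact that an isolated point must belong to every basis, which the paper leaves implicit.
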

\begin{proof}
  Suppose that $\T$ is a $T_1$ topology that is semitopological for $SL_X$.
  Let \(x\in X\). Then \(\{0\}\rho_x^{-1}= SL_X\backslash \{x\}\). As \(\T\) is
  \(T_1\), it follows that \(\{x\}\) is open. Thus the subspace topology on \(X\)
  is the discrete topology and so the minimum size of a basis is $|X|$.
\end{proof}

\begin{theorem}
  \label{thm-bipartition}
  Let $X$ be an infinite set. Then the following hold:
  \begin{enumerate}[\rm (i)]
    \item\label{thm-bipartition-i} if $X$ is countable, then \(\mathfrak{P}_X\) admits a second
          countable \(T_0\) semigroup topology;
    \item\label{thm-bipartition-ii}
          no second countable $T_1$ topology is semitopological for
          $\mathfrak{P}_X$.
  \end{enumerate}
\end{theorem}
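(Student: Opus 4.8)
The plan is to treat the two parts separately; part~(i) is a direct construction, and part~(ii) reduces, via \thref{bad-semilattice}, to embedding a very large flat semilattice into $\mathfrak{P}_X$.

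For part~(i) I would equip $\mathfrak{P}_X$ with the topology $\T_0$ whose subbasis consists of the sets $U_{p,q} = \set{s \in \mathfrak{P}_X}{(p,q) \in s}$ for all $p, q \in X \times \{0,1\}$, where a partition is identified with its equivalence relation. When $X$ is countable there are only countably many such sets, so $\T_0$ is second countable, and it is $T_0$ because two distinct partitions differ on some pair $(p,q)$. To see that multiplication is continuous, suppose $st \in U_{p,q}$ with $p = (x,a)$ and $q = (y,b)$. Then $(x,2a)$ and $(y,2b)$ lie in a common block of $\operatorname{Diag}(s,t)$, so they are joined by a finite path whose edges come from finitely many pairs of $s$ (on layers $0,1$) and finitely many pairs of $t$ (on layers $1,2$). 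Taking $V_s$ (resp.\ $V_t$) to be the finite intersection of the corresponding subbasic sets containing $s$ (resp.\ $t$), every $s' \in V_s$ and $t' \in V_t$ retains all of these edges, so the same path survives in $\operatorname{Diag}(s',t')$ and $s't' \in U_{p,q}$; hence $V_s V_t \subseteq U_{p,q}$, and $\T_0$ is a second countable $T_0$ semigroup topology.

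For part~(ii) the heart of the matter is a family of mutually orthogonal idempotents. For a partition $P$ of $X$ I would set $e_P$ to be the partition of $X \times \{0,1\}$ whose blocks are $B \times \{0,1\}$ as $B$ ranges over the blocks of $P$. A direct unwinding of $\operatorname{Diag}$ shows that the relation generated by $e_P$ (as $s$) and $e_Q$ (as $t$) restricts on layer $1$ to the join $P \vee Q$, that layer $0$ is attached to its $P$-class and layer $2$ to its $Q$-class, and consequently that $e_P e_Q = e_{P \vee Q}$; in particular each $e_P$ is idempotent and $P \mapsto e_P$ is an injective semilattice homomorphism from the join-semilattice of partitions of $X$ into $\mathfrak{P}_X$. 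It then remains to locate inside this semilattice a flat semilattice of uncountable index. Writing $\nabla$ for the one-block partition of $X$ and $z = e_\nabla$, I want a family $\set{P_i}{i \in I}$ with $|I| = 2^{\aleph_0}$, each $P_i \neq \nabla$, such that $P_i \vee P_j = \nabla$ for all $i \neq j$; then $\set{e_{P_i}}{i \in I} \cup \{z\}$ is a subsemigroup isomorphic to the flat semilattice $SL_I$ of \thref{bad-semilattice}. Fixing a countably infinite subset of $X$ and a classical independent family $\set{S_i}{i \in I}$ of its subsets with $|I| = 2^{\aleph_0}$, I would take $P_i$ to be the two-block partition $\{S_i,\ X \setminus S_i\}$. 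Independence makes all four Boolean cells of any two distinct $S_i, S_j$ nonempty, which is exactly the condition for these two-block partitions to join to $\nabla$, and the same nonemptiness forces the $P_i$ to be distinct and different from $\nabla$.

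Finally, suppose for contradiction that $\T$ is a second countable $T_1$ topology that is semitopological for $\mathfrak{P}_X$. Since $SL_I$ is a subsemigroup, left and right translations by its elements restrict to continuous self-maps of $SL_I$, so $SL_I$ with the subspace topology is a $T_1$ semitopological semigroup, and it is second countable as a subspace of a second countable space. But \thref{bad-semilattice}, applied with index set $I$, forces any basis of $SL_I$ to have at least $|I| = 2^{\aleph_0} > \aleph_0$ members, a contradiction. I expect the main obstacle to be the verification that $e_P e_Q = e_{P \vee Q}$ directly from the three-layer $\operatorname{Diag}$ definition, together with pinning down the independence condition as precisely ``all four cells nonempty $\Leftrightarrow$ join equals $\nabla$''; once these are in place the topological conclusion is immediate.
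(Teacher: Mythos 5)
Your proof is correct and takes essentially the same approach as the paper: part (i) is the paper's construction verbatim (subbasis $U_{p,q}$, with continuity via finite paths in $\operatorname{Diag}(s,t)$), and part (ii) is the paper's reduction to \thref{bad-semilattice} via the embedding $P \mapsto e_P$ of the join-semilattice of partitions of $X$ into $\mathfrak{P}_X$. The only divergence is that your independent family is unnecessary --- any two \emph{distinct} two-block partitions already join to $\nabla$ (e.g.\ if $A \subsetneq B$ the blocks $A$, $A^c$, $B$, $B^c$ still all merge), so the paper simply takes all $2^{|X|}$ of them to get $SL_{2^{|X|}}$; this also shows that your parenthetical claim that nonemptiness of all four Boolean cells is ``exactly'' the condition for the join to equal $\nabla$ holds only in the sufficient direction, though your proof uses only that direction, so nothing breaks.
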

\begin{proof}
\textbf{(\ref{thm-bipartition-i}).}
  We define $\T$ to be the topology on \(\mathfrak{P}_X\) with subbasis
  consisting of sets of the form:
  \[\{f\in \mathfrak{P}_X: (a, b)\in f\}\]
  for all $a, b\in X \times \{0, 1\}$.

  This topology is clearly \(T_0\). It is also second countable as it has been
  defined by a countable subbasis. Let \(f,g\in \mathfrak{P}_X\) and let \(U:=
  \{h\in \mathfrak{P}_X: (a, b)\in h\}\) be an arbitrary subbasic open
  neighbourhood of \(fg\). It suffices to find neighbourhoods \(U_f\) and \(U_g\)
  of $f$ and $g$, respectively, such that \(U_fU_g\subseteq U\).
  From the definition of the multiplication in \(\mathfrak{P}_{X}\) there is a
  finite path in $\operatorname{Diag}(f, g)$ \(f\) and \(g\) connecting \(a\) to
  \(b\). We can now choose \(U_f\) to be those elements of \(\mathfrak{P}_{X}\)
  which share all pairs belonging to \(f\) that are involved in this path. We may
  define \(U_g\) analogously.
  \medskip

 \textbf{(\ref{thm-bipartition-ii}).}
  For any partition \(P\) of \(X\), we define the partition \(P'\) of \(X
  \times \{0, 1\}\)
  to consist of the blocks $B\times \{0, 1\}$ for every block $B$ of $P$.
  If $S$ is the subsemigroup of $\mathfrak{P}_{X}$ consisting of those
  partitions $P'$ with at most two parts, then it is easy to verify that \(S\) is
  a semilattice isomorphic to \(SL_{2^{|X|}}\). Thus, by \cref{bad-semilattice},
  $\mathfrak{P}_X$ is not second countable.
\end{proof}

It might be worth noting that \cref{thm-bipartition}\eqref{thm-bipartition-ii} holds for the dual
symmetric inverse monoid and the factorisable dual symmetric inverse monoid,
the definitions of which are even longer than their names, and hence are
omitted.

\section{Monoids of continuous functions}\label{section-continuous}

If $X$ is a topological space, then we denote by $C(X)$ the monoid of
continuous functions from $X$ to $X$. The \textit{Cantor set} is $2 ^
  \N$ and the \textit{Hilbert cube} is $[0, 1] ^ \N$ where $[0, 1]$ is the
closed
unit interval in $\R$. In this section, we consider semigroup topologies on the
monoids $C([0, 1] ^ \N)$ and $C(2 ^ \N)$.

One standard topology on $C(X)$ is the compact-open topology, which we now
define.
If $X$ and $Y$ are topological spaces, then the \textit{compact-open topology}
on $C(X, Y)$ (the space of all continuous functions from $X$ to $Y$) is the
topology generated by the subbasis consisting of the sets:
\[
  [K, U] = \set{f\in C(X, Y)}{(K)f \subseteq U},
\]
where $K\subseteq X$ is compact, and $U\subseteq Y$ is open.

If $X$ is compact and metrizable with compatible metric $d$, then $C(X, X) =
  C(X)$ is separable with respect to the compact-open topology, and the
topology
is induced by
\begin{equation}
  \label{equation-infinity-metric}
  d_{\infty}(f, g) = \sup\set{d((x)f, (x)g)}{x\in X};
\end{equation}
see~\cite[Proposition~1.3.3]{Mill2001aa}. The space $X$ is complete since it is
compact, and $d_{\infty}$ is therefore complete also, meaning that $C(X)$ with
the compact-open topology is Polish.

\begin{theorem}\label{theorem-compact-open-hilbert}
  Let $X$ and $Y$ be compact metrizable spaces.  If $C(X,Y)$ is first countable
  and
  Hausdorff with respect to some topology $\T$, and $a : X
    \times C(X, Y) \to Y$ defined by
  \[
    (p, f)a = (p)f
  \]
  is continuous, then $\T$ contains the compact-open topology.
\end{theorem}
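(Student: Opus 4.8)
The plan is to reduce everything to showing that each subbasic set $[K,U] = \set{f \in C(X,Y)}{(K)f \subseteq U}$ of the compact-open topology lies in $\mathcal{T}$; since these sets form a subbasis, that is all that is required. So I would fix $f \in [K,U]$ and aim to produce a single $\mathcal{T}$-open neighbourhood of $f$ contained in $[K,U]$.

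The key observation is that the joint continuity of the evaluation map $a$ converts the pointwise condition ``$(p)f \in U$'' into rectangular neighbourhoods. Concretely, for each $p \in K$ we have $(p,f)a = (p)f \in U$, so $(p,f) \in (U)a^{-1}$, which is open in the product space $X \times C(X,Y)$ because $a$ is continuous with respect to $\mathcal{T}$ on the second factor. By the definition of the product topology there are open sets $V_p \ni p$ in $X$ and $W_p \ni f$ in $(C(X,Y), \mathcal{T})$ with $V_p \times W_p \subseteq (U)a^{-1}$; equivalently, $(q)g \in U$ for every $q \in V_p$ and every $g \in W_p$.

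Next I would use the compactness of $K$ to remove the dependence on $p$. The family $\set{V_p}{p \in K}$ is an open cover of $K$, so there are finitely many points $p_1, \ldots, p_n \in K$ with $K \subseteq \bigcup_{i=1}^{n} V_{p_i}$. Put $W = \bigcap_{i=1}^{n} W_{p_i}$, a finite intersection of $\mathcal{T}$-open sets, hence $\mathcal{T}$-open, with $f \in W$. For any $g \in W$ and any $q \in K$, choosing $i$ with $q \in V_{p_i}$ gives $(q,g) \in V_{p_i} \times W_{p_i} \subseteq (U)a^{-1}$, so $(q)g \in U$. Thus $(K)g \subseteq U$, i.e. $g \in [K,U]$, and therefore $W \subseteq [K,U]$. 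This exhibits $[K,U]$ as $\mathcal{T}$-open, completing the argument.

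I do not expect a genuine obstacle here: the only thing to get right is the passage from joint continuity of $a$ to the rectangular neighbourhoods $V_p \times W_p$, followed by the finite-subcover trick. In fact this direction uses nothing beyond the joint continuity of $a$ and the compactness of $K$ (which is already built into the subbasis), so the hypotheses that $\mathcal{T}$ is first countable and Hausdorff, and even the metrizability of $X$ and $Y$, are not needed for this containment. I expect those hypotheses are recorded because the companion results, namely that $\mathcal{T}$ in fact coincides with the compact-open topology and that the latter is the unique Polish semigroup topology, do rely on them.
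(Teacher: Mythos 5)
Your proof is correct, and it takes a genuinely different --- and in fact more economical --- route than the paper's. The paper argues via the complement: it shows $F = \set{f\in C(X,Y)}{(k)f \notin U \text{ for some } k\in K}$ is closed in $\T$ by taking a $\T$-convergent sequence $f_0, f_1, \ldots \in F$ with limit $f$, picking witnesses $x_i\in K$ with $(x_i)f_i\notin U$, extracting a convergent subsequence of $(x_i)$ using sequential compactness of the metrizable compactum $K$, and passing to the limit through the jointly continuous evaluation map $a$ to conclude $(x)f\in Y\setminus U$. That sequential argument consumes precisely the hypotheses you flagged as unnecessary: first countability of $\T$ is what licenses deducing closedness from sequential closedness, and metrizability of $X$ is what makes $K$ sequentially compact. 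Your tube-lemma argument --- rectangular neighbourhoods $V_p\times W_p\subseteq (U)a^{-1}$ supplied by joint continuity, a finite subcover of $K$, and the finite intersection $W=\bigcap_{i=1}^{n} W_{p_i}$ --- shows directly that each subbasic set $[K,U]$ is $\T$-open, using nothing beyond compactness of $K$ and joint continuity of $a$. So your closing observation is accurate: you have proved the classical, strictly more general fact that any topology on $C(X,Y)$ making evaluation jointly continuous (any admissible topology) contains the compact-open topology, for arbitrary spaces $X$ and $Y$; the theorem's hypotheses of first countability, Hausdorffness, and compact metrizability are needed only for the paper's chosen sequential proof and for the surrounding results in that section, not for this containment. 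What the paper's approach buys is merely brevity under its standing assumptions; what yours buys is generality and independence from countability axioms.
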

\begin{proof}
  It suffices to show that if $K$ is compact in $X$ and $U$ is open in $Y$,
  then the
  complement $F = \set{f\in C(X,Y)}{\exists k\in K,\
      f(k) \not\in U}$ of $[K, U]$ is closed in $\T$.

  Suppose that $f_0, f_1, \ldots \in F$ converges to
  $f\in C(X,Y)$ with respect to $\T$. For every $i\in \N$, there exists $x_i\in
    K$ such that $(x_i)f_i\not\in U$. Since $K$ is compact, it follows that
  $(x_i)_{i\in\N}$ contains a convergent subsequence $(x_{(i)y})_{i\in \N}$.
  If $x$ is the limit of this subsequence, then
  \[
    (x)f = (x, f) a
    = \left(\lim_{i\to \infty} x_{(i)y}, \lim_{i\to \infty}f_{(i)y}\right)a
    = \lim_{i\to \infty} (x_{(i)y}, f_{(i)y})a
    = \lim_{i\to \infty} (x_{(i)y})f_{(i)y} \in Y\setminus U
  \]
  and so $f\in F$, and $F$ is closed in $\T$, as required.
\end{proof}

\begin{lem}
  \label{lem-constants}
  Let $X$ be topological space and suppose that $C(X)$ is a first countable
  Hausdorff topological semigroup with respect to a topology $\T$. Then the set
  $F$ of constant functions in $C(X)$ is closed in $\T$.
\end{lem}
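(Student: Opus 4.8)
The plan is to reduce the claim to the standard fact (already used in the proof of \thref{prop-luke-1}) that in a Hausdorff space the set on which two continuous self-maps agree is closed. The key observation is a purely algebraic characterisation of the constant functions as a set of the form $\set{f}{cf = f}$ for a single fixed idempotent $c$, which exhibits $F$ as an equaliser of two continuous maps. Note that first countability is not actually needed for this lemma; only the Hausdorff and semigroup-topology hypotheses are used.

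If $X = \varnothing$ then $F = \varnothing$ is closed, so assume $X \neq \varnothing$ and fix a point $b \in X$. Every constant map is continuous (the preimage of any set is $\varnothing$ or $X$), so the constant function $c$ with $(x)c = b$ for all $x$ lies in $C(X)$. Using the convention that functions act on the right and compose left to right, for any $f \in C(X)$ we have $(x)(cf) = ((x)c)f = (b)f$ for all $x$; thus $cf$ is always the constant function with value $(b)f$. Consequently $cf = f$ holds if and only if $f$ itself is constant (if $f$ is constant with value $a$, then $(b)f = a$ and $cf = f$; conversely $cf$ is constant, so $cf = f$ forces $f$ to be constant). This gives the identity
\[
  F = \set{f \in C(X)}{(f)\lambda_c = f},
\]
where $\lambda_c \colon C(X) \to C(X)$ denotes left multiplication by $c$.

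Now $\lambda_c$ is continuous with respect to $\T$ because $\T$ is a semigroup topology, hence semitopological for $C(X)$, and the identity map on $C(X)$ is trivially continuous. Since $(C(X), \T)$ is Hausdorff, the set on which the two continuous maps $\lambda_c$ and $\id$ agree is closed (the argument recalled at the start of the proof of \thref{prop-luke-1}). Therefore $F$ is closed in $\T$, as required.

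I do not anticipate a genuine obstacle here: the only point requiring care is the composition convention, which must be tracked to confirm that $cf$ is constant (rather than $fc$), so that the chosen $c$ really is a \emph{left} multiplier realising $F$ as an equaliser. Everything else is an immediate application of continuity of $\lambda_c$ and the closed-equaliser fact for Hausdorff targets.
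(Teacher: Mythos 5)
Your proof is correct, and it takes a genuinely different route from the paper's. The paper characterises $F$ as the set of right zeros of $C(X)$ and then argues sequentially: given $k_i\in F$ with $k_i\to k$, continuity of multiplication yields $fk=\lim_{i\to\infty} fk_i=\lim_{i\to\infty} k_i=k$ for every $f$, so $k$ is again a right zero and hence constant; this is exactly where both hypotheses are spent, first countability to reduce closedness to sequential closedness and Hausdorffness for uniqueness of limits. You instead realise $F$ as an equaliser: for a fixed constant map $c$ with value $b$, the product $cf$ is always the constant with value $(b)f$, so $F=\set{f\in C(X)}{(f)\lambda_c=f}$ is the agreement set of the two continuous maps $\lambda_c$ and $\id$, which is closed because $\T$ is Hausdorff --- the same closed-equaliser fact the paper recalls at the start of the proof of \thref{prop-luke-1}. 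Your version buys strictly more than the paper's: first countability is genuinely not used, and only continuity of the single left translation $\lambda_c$ (i.e.\ $\T$ being left semitopological) is needed rather than joint continuity of multiplication, so the lemma holds for any Hausdorff topology that is merely semitopological for $C(X)$. The two points requiring care are both handled: the composition convention is tracked correctly (with functions acting on the right, $(x)(cf)=(b)f$, so it is \emph{left} multiplication by $c$ that constantifies, matching your use of $\lambda_c$), and the degenerate case $X=\varnothing$ is disposed of before fixing $b$.
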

\begin{proof}
  It is routine to verify that $F$ is the set of right zeros in $C(X)$. If
  $f\in C(X)$ and $(k_i)_{i\in \N}$ is a sequence in $F$ that converges to
  $k\in C(X)$, then $fk = \lim_{i \to \infty} fk_i = \lim_{i\to \infty} k_i =
    k$
  and so $k\in F$.
\end{proof}

\subsection{The Hilbert cube}
\label{subsection-hilbert}

In this section, we show that the compact-open topology is the unique Polish
semigroup topology on the monoid $C(Q)$ of continuous functions on the
Hilbert cube $Q = [0, 1] ^ {\N}$.
Since the compact-open topology is Polish on the group of homeomorphisms of any
compact metric space (see, for example, \cite[Proposition 1.3.3]{Mill2001aa}),
it follows that the group of homeomorphisms $H(Q)$ of the Hilbert cube is a
Polish subgroup of $C(Q)$ with the compact-open
topology. In fact, it was shown by Kallman~\cite{Kallman1986aa} that the
compact-open topology is the unique Polish topology on $H(Q)$.

We will use the following metric on $Q$:
\begin{equation}
  \label{equation-hilbert-metric}
  d((x_0, x_1, \ldots), (y_0, y_1, \ldots)) =
  \sum_{i = 0} ^ \infty \frac{|x_i - y_i|}{2 ^ {i + 1}},
\end{equation}
which is bounded above by $1$, and so the corresponding $d_{\infty}$ is also
bounded above by $1$.

We require Theorem 5.2.4 from~\cite{Mill2001aa}, which we state for the sake of
completeness.

\begin{prop}[Theorem 5.2.4 in~\cite{Mill2001aa}]
  \label{prop-mill}
  Let $K_1$ and $K_2$ be compact subsets of $(0, 1) ^ \N$ and let $k: K_1 \to
    K_2$ be a homeomorphism with $d_{\infty}(k, \id_{K_1}) < \varepsilon$. Then
  there exists a homeomorphism $h_k \in H(Q)$ such that $d_{\infty}(h_k, \id_Q)
    < \varepsilon$ and $h_k|_{K_1} = k$.
\end{prop}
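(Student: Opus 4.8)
The plan is to treat the conclusion as an instance of the estimated $Z$-set unknotting theorem for the Hilbert cube, and to organize the argument around three ingredients: that $K_1$ and $K_2$ are $Z$-sets, that $k$ is $\varepsilon$-homotopic to the inclusion, and the unknotting engine itself.

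First I would verify that every compact set $K \subseteq (0,1)^{\N}$ is a $Z$-set in $Q$. Recall that a closed $A \subseteq Q$ is a $Z$-set when $\id_Q$ admits $\eta$-approximations by maps $Q \to Q \setminus A$ for every $\eta > 0$. Given $\eta$, choose $N$ with $2^{-(N+1)} < \eta$; since $\pi_N(K)$ is a compact proper subset of $(0,1)$, pick $c \in (0,1) \setminus \pi_N(K)$ and let $f$ replace the $N$-th coordinate of every point by $c$. Then $(Q)f \cap K = \varnothing$, because the $N$-th coordinate of each point of $(Q)f$ avoids $\pi_N(K)$, while $d_{\infty}(f, \id_Q) \le 2^{-(N+1)} < \eta$. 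Hence $K_1$ and $K_2$ are $Z$-sets.

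Next I would manufacture a small self-map of $Q$ extending $k$, by extending the displacement rather than $k$ itself. Fix $\varepsilon'$ with $d_{\infty}(k, \iota) < \varepsilon' < \varepsilon$, where $\iota : K_1 \hookrightarrow Q$ is the inclusion, and define $v : K_1 \to \R^{\N}$ by $(y)v = (y)k - y$. Writing $\|w\| = \sum_i |w_i|/2^{i+1}$ for the weighted norm inducing $d$, each $(y)v$ lies in the convex set $B = \{w \in \R^{\N} : \|w\| < \varepsilon'\}$. Since $\R^{\N}$ is locally convex and $B$ is convex, Dugundji's extension theorem yields a continuous $\bar{v} : Q \to B$ (its values lie in the convex hull of $(K_1)v \subseteq B$) with $\bar{v}|_{K_1} = v$. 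Letting $(x)\bar{k}$ be the coordinatewise clamp of $x + (x)\bar{v}$ into $[0,1]^{\N}$ gives a continuous $\bar{k} : Q \to Q$ with $\bar{k}|_{K_1} = k$ (clamping is inactive on $K_1$ since $(K_1)k \subseteq (0,1)^{\N}$) and $d_{\infty}(\bar{k}, \id_Q) \le \sup_x \|(x)\bar{v}\| \le \varepsilon' < \varepsilon$. Restricting the homotopy $(y, t) \mapsto (1 - t)y + t\,(y)k$ to $K_1$ then exhibits $k$ as $\varepsilon$-homotopic to $\iota$ inside the convex set $Q$, which is exactly the homotopy hypothesis the unknotting theorem requires.

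Finally I would invoke the estimated $Z$-set unknotting theorem: a homeomorphism between $Z$-sets of $Q$ that is $\varepsilon$-homotopic to the inclusion extends to some $h_k \in H(Q)$ with $d_{\infty}(h_k, \id_Q) < \varepsilon$, and this $h_k$ is the desired homeomorphism. The real content, and the step I expect to be the main obstacle, is this engine, whose proof rests on the absorption and general-position machinery of Hilbert-cube topology (Anderson's inductive convergence criterion together with $Q \times Q \cong Q$ and the $Z$-set properties secured above). The slack $\varepsilon - \varepsilon'$ built in at the extension stage is what lets the final estimate land strictly below $\varepsilon$ rather than below some fixed multiple of it; keeping the bound exactly $\varepsilon$ through the inductive construction is the delicate bookkeeping, and is the reason the statement is quoted from~\cite{Mill2001aa} rather than reproved in full here.
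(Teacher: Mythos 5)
There is no in-paper proof to compare against: the paper imports \thref{prop-mill} verbatim from van Mill (``Theorem 5.4.2 in~\cite{Mill2001aa}, which we state for the sake of completeness''), so the honest comparison is with van Mill's proof, which your sketch frames correctly. The steps you actually carry out are sound: your coordinate-displacement argument that every compact $K \subseteq (0,1)^{\N}$ is a $Z$-set in $Q$ is correct (replacing the $N$th coordinate by some $c \notin \pi_N(K)$, with $\pi_N(K)$ a compact, hence proper, subset of $(0,1)$, moves $\id_Q$ by at most $2^{-(N+1)}$), and since $Q$ is convex, the straight-line homotopy already witnesses that $k$ is $\varepsilon'$-homotopic to the inclusion, with tracks of diameter at most $d_{\infty}(k, \id_{K_1}) < \varepsilon' < \varepsilon$. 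Note, however, that this makes your Dugundji/clamping paragraph logically redundant for the argument as you have structured it: the unknotting theorem you invoke takes as input the homeomorphism of $Z$-sets together with the small homotopy, and the extended map $\bar{k}$ is never used afterwards. (Constructing such a near-identity self-map of $Q$ extending $k$ is the first move in the standard proofs of the unknotting theorem itself, so the instinct is right, but in your layout it does no work.)

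The substantive caveat is that your ``estimated $Z$-set unknotting theorem'' with the sharp bound $\varepsilon$ is essentially the cited theorem itself: \thref{prop-mill} is precisely the special case of controlled $Z$-set unknotting for compacta in the pseudo-interior of $Q$. So your proposal is not an independent proof but a correct reduction plus a citation, which puts it on exactly the same footing as the paper's own treatment. You flag this yourself, and you correctly identify where the real content lies: the estimate must come out as $\varepsilon$ and not a fixed multiple of $\varepsilon$, and your built-in slack $\varepsilon - \varepsilon'$ cannot absorb a multiplicative constant, so a weaker version of the engine would not suffice. A self-contained proof would require reproducing van Mill's inductive push/absorption construction with its bookkeeping; short of that, what you have written is a correct and well-organized account of why the result is true and why the paper quotes it rather than proving it.
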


The next proposition provides a key step in the proof of the result of this
section.

\begin{prop}\label{prop-luke-3}
  Suppose that $C(Q)$ is a Polish semigroup with
  respect to some
  topology. Then the function $a : Q \times C(Q) \to Q$ defined by
  \[
    (p, f)a = (p)f
  \]
  is continuous.
\end{prop}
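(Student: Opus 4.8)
The plan is to reduce the joint continuity of $a$ to the continuity of a single map built from the constant functions, and then to prove that continuity using van Mill's extension theorem (\thref{prop-mill}). Write $\tau$ for the usual (compact-open) topology on $Q$ induced by $d$, and let $\T$ be the given Polish semigroup topology on $C(Q)$. For $p\in Q$ let $c_p\in C(Q)$ be the constant function with value $p$, and set $F=\set{c_p}{p\in Q}$. Under the right-action convention $(x)(fg)=((x)f)g$ one computes $c_p f=c_{(p)f}$ for all $p\in Q$ and $f\in C(Q)$; in particular $\rho_f(c_p)=c_{(p)f}\in F$, so each right translation carries $F$ into $F$, and $a(p,f)=\operatorname{val}(c_p f)$, where $\operatorname{val}\colon F\to Q$ sends $c_q\mapsto q$. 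Since $(C(Q),\T)$ is a first countable Hausdorff topological semigroup, \thref{lem-constants} shows $F$ is $\T$-closed, hence $(F,\T)$ is Polish. Let $\iota\colon Q\to F$ be the bijection $p\mapsto c_p$, so $\operatorname{val}=\iota^{-1}$. Because multiplication is $\T$-continuous, the map $(p,f)\mapsto c_p f$ is continuous into $(F,\T)$ as soon as $\iota$ is continuous; and if $\iota$ is continuous then, as $Q$ is compact and $C(Q)$ is Hausdorff, $\iota$ is a homeomorphism onto $F$, so $\operatorname{val}$ is continuous and $a(p,f)=\operatorname{val}(c_p f)$ is continuous, being the composite of $(p,f)\mapsto(c_p,f)$, multiplication, and $\operatorname{val}$. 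Thus it suffices to prove that $\iota\colon(Q,\tau)\to(C(Q),\T)$ is continuous.

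It helps to record a soft consequence of these identities. Let $\T'$ be the push-forward of $\T|_F$ to $Q$ along $\operatorname{val}$, a Polish topology on $Q$. For every $\tau$-continuous self-map $f$ of $Q$ the equality $\operatorname{val}(\rho_f(c_p))=(p)f$ shows that $f$ is also $\T'$-continuous; in particular every $h\in H(Q)$ is a $\T'$-homeomorphism. Since the Hilbert cube is topologically homogeneous, $H(Q)$ acts $\tau$-transitively on $Q$ while preserving $\T'$. Consequently it is enough to prove that $\iota$ is continuous at a single point $p_0\in(0,1)^{\N}$: if $q=(p_0)h$ for some $h\in H(Q)$ and $q_n\to q$ in $\tau$, then $p_n:=(q_n)h^{-1}\to p_0$, so continuity at $p_0$ gives $c_{p_n}\to c_{p_0}$ in $\T$, and applying the $\T$-continuous map $\rho_h$ yields $c_{q_n}=\rho_h(c_{p_n})\to\rho_h(c_{p_0})=c_q$. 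As $H(Q)$ is $\tau$-transitive, this propagates continuity of $\iota$ to every point of $Q$.

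To attack continuity at $p_0$, take $p_n\to p_0$ in $\tau$; after a harmless reduction using the homogeneity of $Q$ and the density of the pseudo-interior $(0,1)^{\N}$ we may arrange that $p_0,p_n\in(0,1)^{\N}$. The single-point homeomorphisms $\{p_0\}\to\{p_n\}$ sending $p_0\mapsto p_n$ satisfy $d_\infty(\,\cdot\,,\id_{\{p_0\}})=d(p_0,p_n)\to 0$, so \thref{prop-mill} supplies $h_n\in H(Q)$ with $(p_0)h_n=p_n$ and $d_\infty(h_n,\id_Q)\to 0$. Then $c_{p_n}=c_{(p_0)h_n}=c_{p_0}h_n=\rho_{h_n}(c_{p_0})$, and it remains to show that $\rho_{h_n}(c_{p_0})\to c_{p_0}$ in $\T$; that is, that homeomorphisms which are small in the compact-open metric act $\T$-continuously on $c_{p_0}$ near the identity.

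This last step is the heart of the matter and the main obstacle, since it demands transferring convergence $h_n\to\id_Q$ in the compact-open topology into convergence $c_{p_0}h_n\to c_{p_0}$ in the a priori unrelated topology $\T$. I would handle it by a Baire category argument: using that $(C(Q),\T)$ is Polish, hence a Baire space, together with the separate continuity of multiplication, one first locates a residual set of elements at which the relevant slice of multiplication is jointly continuous, and then spreads joint continuity to the identity by exploiting the abundance of small homeomorphisms furnished by \thref{prop-mill} and the homogeneity of $Q$. Once $\rho_{h_n}(c_{p_0})\to c_{p_0}$ is established, we obtain $c_{p_n}\to c_{p_0}$, so $\iota$ is continuous at $p_0$, and by the reduction above the continuity of $a$ follows. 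The two places requiring care are precisely this Baire-category transfer and the approximation step ensuring the base point and its approaching sequence may be taken in the pseudo-interior, where \thref{prop-mill} applies.
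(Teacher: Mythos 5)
Your reduction is sound as far as it goes: expressing $(p,f)a$ as $\operatorname{val}(c_pf)$, using \thref{lem-constants} to make $F$ Polish, deriving continuity of $\operatorname{val}$ from continuity of $\iota$ via compactness of $Q$, and propagating continuity of $\iota$ from one base point by homogeneity and the $\T$-continuity of the right translations $\rho_h$ are all correct. But the proof collapses exactly where you say "the heart of the matter" lies. The step you need --- that $d_\infty(h_n,\id_Q)\to 0$ forces $c_{p_0}h_n\to c_{p_0}$ in $\T$ --- \emph{is} the proposition: it asserts a comparison between the compact-open metric and the completely arbitrary Polish topology $\T$. Your proposed Baire-category argument cannot supply that comparison. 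Baireness of $(C(Q),\T)$ and (even joint) $\T$-continuity of multiplication only yield information internal to $\T$; the hypothesis $h_n\to\id_Q$ is a statement about $d_\infty$, and a priori the sequence $h_n$ need not converge to anything in $\T$, so no residual set of "joint continuity points" in $(C(Q),\T)$ can be brought to bear on it. Namioka-type arguments upgrade separate to joint continuity \emph{within fixed topologies}; they do not relate two unrelated Polish topologies. Worse, the natural map you would want to analyse, $h\mapsto c_{p_0}h$ from $(H(Q),d_\infty)$ to $(C(Q),\T)$, is precisely the map whose continuity at $\id_Q$ is in question, so "spreading joint continuity to the identity using small homeomorphisms from \thref{prop-mill}" is circular. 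What is actually needed is an automatic-continuity mechanism with a measurability foothold: the paper first proves $\operatorname{val}=\phi$ continuous \emph{directly} (via preimages of closed singletons under right translation by an explicitly constructed $f\in C(Q)$), deduces that $\phi^{-1}$ is Borel by \thref{prop-borel-measurable}, transports the abelian group structure of $\R^\N$ onto the constants over the pseudo-interior $(0,1)^\N$, checks that the translations extend to elements $\rho_g'\in C(Q)$ (hence are $\T$-continuous), and then invokes \thref{prop-borel-imples-cts} for Borel homomorphisms between Polish semitopological groups. Your architecture, by deriving $\operatorname{val}$'s continuity \emph{from} $\iota$'s, discards the only available Borel bridge between $\tau$ and $\T$.

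A secondary, smaller gap: the "harmless reduction" arranging $p_n\in(0,1)^\N$ is not harmless. Continuity at $p_0$ must be verified along \emph{all} sequences converging to $p_0$, and sequences from a dense subset do not suffice (a map can be continuous along a dense set and discontinuous at the point). Your use of \thref{prop-mill} genuinely requires the moving points to lie in compact subsets of $(0,1)^\N$, so this must be repaired; the paper does so with an explicit retraction, writing $\phi^{-1}=\overline{\theta}\,\phi^{-1}|_{(0,1)^\N}\,\rho_{\overline{\theta'}}$, where $\overline{\theta}$ squeezes $Q$ coordinatewise into $[1/4,3/4]^\N\subseteq(0,1)^\N$ and $\overline{\theta'}\in C(Q)$ extends the coordinatewise inverse, so that continuity of $\phi^{-1}$ on the pseudo-interior propagates to all of $Q$ through $\T$-continuous maps. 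You would need an analogous device even if the central step were fixed.
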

\begin{proof}
  Let $\T$ denote the given Polish semigroup topology on $C(Q)$.
  Suppose that $F$ denotes the set of constant functions in $C(Q)$ and
  that $\phi : F\to Q$ is the function that sends $f\in F$ to the
  unique value in $(Q)f$.

  We will show that $\phi$ is a homeomorphism, which
  will allow us to conclude the proof as follows. If $\gamma : Q\times C(Q)\to
    C(Q) \times C(Q)$ is defined by $(p, f)\gamma = (p \phi ^ {-1}, f)$, then
  $\gamma$ is continuous since it is continuous in each coordinate, and if $M:
    C(Q)\times C(Q) \to C(Q)$ is the multiplication function on $C(Q)$, then
  $M$
  is continuous also. Thus
  $(p, f)\gamma M \phi = (p\phi ^ {-1}, f) M \phi = (p\phi ^ {-1}\circ f)\phi$
  and $(p)\phi ^{-1}\in F$ is constant with value $p$, and so $p\phi ^
    {-1}\circ f$ is constant with value $(p)f$, and, finally, $(p\phi ^
    {-1}\circ
    f)\phi = (p)f = (p, f)a$. In other words, $a: Q \times C(Q) \to Q$ as a
  composite of the continuous functions $\gamma$, $M$, and $\phi$, is itself
  continuous.

  We start by showing that $\phi$ is continuous. If $U$ is an
  open set in $Q$, it suffices to show that $(U)\phi ^ {-1} = \set{f\in F}{(Q)f
      = \{q\},\ q \in U}$ is open in $F$.  If $f: Q\to Q$ is defined by
  \[
    (q)f = (d(q, Q\setminus U), d(q, Q\setminus U), \ldots),
  \]
  then since $f$ is continuous in every coordinate, $f$ is
  continuous. Since $C(Q)$ is $T_1$, $F$ is $T_1$, and so the singleton set
  containing the constant function $g: Q\to Q$ with value $(0, 0, \ldots)$ is
  closed. Hence $\{g\}\rho_f ^ {-1} \cap F$ is closed in $F$ (recall that $\rho_f: C(Q) \to C(Q)$  is defined by $(h)\rho_f = h \circ f$ for all $h\in C(Q)$), and so
  $F\setminus (\{g\}\rho_f ^ {-1})$ is open.
  If $h\in F\setminus (\{g\}\rho_f ^ {-1})$, then there exists $q\in Q$ such
  that
  \[
    (q)hf\not = (0, 0, \ldots)
  \]
  and so $(q)h\in U$ by the definition of $f$. On the other hand, if $h\in F$
  is such that $(q)h\in U$, then $(q)hf\not = (0, 0, \ldots)$, and so
  $F\setminus (\{g\}\rho_f ^ {-1}) = (U)\phi ^ {-1} = \set{f\in F}{(Q)f
      = \{q\},\ q \in U}$ is open, as required.

  Next, we show that $\phi ^ {-1}|_{(0, 1) ^ \N} : (0, 1) ^ \N \to F$ is
  continuous also. Since $F$ is closed in $C(Q)$ with respect to $\T$, by
  \cref{lem-constants}, it follows that $F$ is Polish. Since $\phi$ is
  continuous, $\phi$ is Borel measurable, and so by
  \cref{prop-borel-measurable}, $\phi ^ {-1}$ is Borel measurable also.
  It suffices, by \cref{prop-borel-imples-cts}, to endow
  $(0, 1) ^ \N$ and $((0, 1) ^ \N )\phi ^ {-1}\subseteq F$ with Polish
  semitopological group structures such that $\phi ^ {-1}|_{(0, 1) ^ \N} : (0,
    1) ^ \N \to ((0, 1) ^ \N )\phi ^ {-1}$ is a homomorphism. With the
  operation
  of component-wise addition,
  $\R ^ \N$ is an abelian topological group, and $\R ^ \N$ is homeomorphic to
  $(0, 1) ^ \N$. Therefore we may endow $(0, 1) ^ \N$ with the additive group
  structure of $\R ^ \N$ corresponding to an order-isomorphism between $\R$ and
  $(0, 1)$. We define $*$ on $((0, 1) ^ \N) \phi ^ {-1}$ by
  $x * y = ((x)\phi + (y)\phi)\phi ^ {-1}$. It remains to show that $((0, 1) ^
    \N) \phi ^ {-1}$ with $*$ is right topological. That $((0, 1) ^
    \N) \phi ^ {-1}$ with $*$ is left topological will then
  follow immediately since $\R ^ \N$ is abelian. Suppose that $g = (g_0, g_1,
    \ldots) \in (0, 1) ^ \N$ is arbitrary. Then $\rho_g: (0, 1) ^ \N \to (0, 1)
    ^
    \N$ is continuous since $(0,1) ^ \N$ is a topological group.  We extend
  $\rho_g$ to $\rho_g': Q \to Q$ so that the
  $i$th coordinate of $(x_0, x_1, \ldots)\rho_g'$ is $ x_i + g_i$ if $ x_i \in
    (0, 1)$ and $x_i$ if $x_i = 0$ or $x_i = 1$. Since $\rho_g'$ is an
  order-isomorphism onto $[0, 1]$ in every component, it follows that $\rho_g'$
  is continuous in every component, and is hence continuous, i.e.\ $\rho_g' \in
    C(Q)$. Thus $x \mapsto x \circ \rho_g'$, where $x\in C(Q)$ is continuous,
  and
  so, in particular, $x\mapsto x \circ \rho_g'$ restricted to $x\in ((0, 1) ^
    \N
    )\phi ^ {-1}$ is also continuous (on $((0, 1) ^ \N)\phi ^ {-1}$ with the
  subspace topology). If $x\in ((0, 1) ^ \N)\phi ^ {-1} \subseteq F$ is
  arbitrary, then $x$ is a constant function with value $(x_0, x_1, \ldots)\in
    (0, 1) ^ \N$ and so
  \[
    x * (g)\phi ^ {-1} = ((x)\phi + g)\phi ^ {-1}
    = (x)\phi \rho_g \phi ^ {-1}\]
  by the definition of $*$ and $\rho_g$. Recall that $(x)\phi = (x_0, x_1,
    \ldots) \in (0, 1) ^ \N$ by the definition of $\phi$.
  But
  \[(x)\phi \rho_g \phi ^ {-1} = (x_0, x_1, \ldots) \rho_g\phi ^ {-1}
    = (x_0 + g_0, x_1 + g_1, \ldots)\phi ^ {-1}
  \]
  and $(x_0 + g_0, x_1 + g_1, \ldots)\phi ^ {-1}$ is the constant function from
  $Q$ to $Q$ with value $(x_0 + g_0, x_1 + g_1, \ldots)$, which is equal to
  $(x) \circ \rho_g$. Therefore
  \[
    x * (g)\phi ^ {-1} = ((x)\phi + g)\phi ^ {-1}
    = (x)\phi \rho_g \phi ^ {-1}
    = x \circ \rho_g
    = x \circ \rho_g'
  \]
  and since $x \mapsto x \circ \rho_g'$ is continuous on $((0, 1) ^ \N)\phi ^
    {-1}$, it follows that $((0, 1) ^ \N) \phi ^ {-1}$ with $*$ is a right
  topological group.

  To conclude the proof, we must show that $\phi ^ {-1}: Q\to F$ is
  continuous. We define $\theta : [0, 1] \to [1/4, 3/4]$
  by $(x)\theta = x/2 + 1/ 4$ and let $\overline{\theta}\in C(Q)$ be the
  function
  that applies $\theta$ in every coordinate of $Q$. If $\theta': [0, 1]\to [0,
      1]$ is any continuous extension of $\theta ^ {-1}$, and
  $\overline{\theta'}
    \in C(Q)$ applies $\theta'$ in every coordinate, then $\theta\theta' =
    \id_Q$ and $\phi ^{-1} = \overline{\theta} \phi ^ {-1} |_{(0, 1) ^ \N}
    \rho_{\overline{\theta'}}$. Hence $\phi ^ {-1}$ being the composite of
  the continuous functions $\overline{\theta}$,  $\phi ^ {-1} |_{(0, 1) ^
    \N}$, and $\rho_{\overline{\theta'}}$ is itself continuous.
\end{proof}

\begin{theorem}\label{propx-hilbert}
  The monoid $C(Q)$ of continuous functions on the Hilbert cube $Q=[0,1]^{\N}$
  has the following properties:
  \begin{enumerate}[\rm (i)]
    \item\label{propx-hilbert_i} $C(Q)$ has property \textbf{X} with respect to
          its group of units $H(Q)$;
    \item\label{propx-hilbert_ii} the compact-open topology is the unique
          Polish semigroup topology on $C(X)$;
    \item\label{propx-hilbert_iii} if the group of homeomorphisms $H(Q)$ has
          automatic continuity with respect to a class of topological semigroups, then
          $C(Q)$ has automatic continuity with respect to the same class.
  \end{enumerate}
\end{theorem}
\begin{proof}
  \noindent\textbf{(\ref{propx-hilbert_i}).}
  If $S$ is $C(Q)$ with the compact-open topology, $\B$ is the basis
  for the compact-open topology on $C(Q)$ consisting of open balls with respect
  to the $d_{\infty}$ metric on $C(Q)$, and $T$ is $H(Q)$, then, we show that
  there
  exist $f, g\in C(Q)$ and $t_s\in H(Q)$ such that $s = ft_sg$ and for every
  ball $B =
    B_{d_{\infty}}(t_s, \varepsilon) \in \B$ there exists $\delta > 0$ such
  that
  $B_{d_{\infty}}(s, \delta) \subseteq f(B\cap H(Q))g$.

  Suppose that $s\in C(Q)$ is arbitrary. We define the required $f, g\in
    C(Q)$ via two continuous functions $f', g': [0, 1] \to [0, 1]$ that are
  defined by:
  \begin{equation*}
    (x) f' = \frac{2x + 1}{4}, \qquad (x)g' =
    \begin{cases}
      0                & \text{if } x \leq \frac{1}{4}                  \\
      \frac{4x - 1}{2} & \text{if } \frac{1}{4} \leq x \leq \frac{3}{4} \\
      1                & \text{if } x \geq \frac{3}{4}.
    \end{cases}
  \end{equation*}
  Then both $f'$ and $g'$ are continuous, and, moreover, $f'g'$ is the identity
  function $\id_{[0, 1]}$. We define $f, g, h \in C(Q)$ by applying $f'$ and
  $g'$ in
  certain coordinates, as follows:
  \begin{eqnarray*}
    (x_0, x_1, \ldots)f & = & ((x_0)f', (x_1)f', \ldots)\\
    (x_0, x_1, \ldots)g & = & ((x_1)g', (x_3)g', \ldots)\\
    (x_0, x_1, \ldots)h & = & ((x_0)g', (x_1)g', \ldots).
  \end{eqnarray*}
  Since $f'g' = \id_{[0, 1]}$, it follows that $fh = \id_Q$.
  Let $A = [1/4, 3/4] ^ \N$.  Of course, $A$ is homeomorphic to $Q$, and $A$ is
  a compact subset of $(0,
    1) ^ \N$.
  To find $t_s\in H(Q)$ such that $ft_sg = s$, we show that there exists a
  homeomorphism $t_s'$ from $A$ into a subspace of $A$, that can be extended
  to a homeomorphism of $Q$ by \cref{prop-mill}.  We denote by
  $\pi_i: Q\to [0, 1]$ the $i$th projection of $Q$, that is,
  \[
    (x_0, x_1, \ldots)\pi_i = x_i
  \]
  for any $(x_0, x_1, \ldots)\in Q$.  If $\vec{x} = (x_0, x_1, \ldots) \in A$
  and $k\in C(Q)$ is arbitrary,
  then we define $t_k': A \to A$ by
  \begin{equation}
    \label{equation-extendible}
    (\vec{x})t_k' = (x_0, x_1, \ldots)t_k'
    = (x_0, (\vec{x})hk\pi_0f', x_1, (\vec{x})hk\pi_1f', \ldots).
  \end{equation}
  From the definition of $t_k'$, if $\vec{x} \in Q$ is arbitrary, then
  \begin{eqnarray*}
    (\vec{x})ft_k'g & = & (x_0, x_1, \ldots)ft_s'g \\
    & = & ((x_0)f', (x_1)f', \ldots) t_s'g \\
    & = & ((x_0)f', (\vec{x})fhk\pi_0f', (x_1)f', (\vec{x})fhk\pi_1f', \ldots)g
    \\
    & = & ((x_0)f', (\vec{x})k\pi_0f', (x_1)f', (\vec{x})k\pi_1f', \ldots)g\\
    & = & ((\vec{x})k\pi_0f'g', (\vec{x})k\pi_1f'g', \ldots)\\
    & = & ((\vec{x})k\pi_0, (\vec{x})k\pi_1, \ldots)\\
    & = & (\vec{x})k
  \end{eqnarray*}
  and so $k = ft_k'g$.

  Since $t_k'$ is continuous in every component, $t_k'$ is continuous, and it
  is clearly a bijection onto its image. Since $A$ is compact, and every
  bijective continuous function between compact Hausdorff spaces is a
  homeomorphism, it follows that $t_k'$ is a homeomorphism for every $k\in
    C(Q)$. By our choice of metric for $Q$ in \eqref{equation-hilbert-metric},
  $d_{\infty}(t_s', \id_A) < 2$.  Hence by \cref{prop-mill}, we may
  extend $t_s'$ to $t_s\in H(Q)$.

  If $\varepsilon\in \R$, $\varepsilon > 0$, is arbitrary, then we will show
  that
  $
    B_{d_{\infty}}(s, \varepsilon) \subseteq f(B_{d_{\infty}}(t_s,
    \varepsilon)\cap H(Q))g$.

  If $a\in B_{d_{\infty}}(s, \varepsilon)$ and $t_a':A \to A$ is defined as in
  \eqref{equation-extendible}, then $ft_a'g = a$.
  Since $f': [0, 1]
    \to [0, 1]$ is a contraction and $d_{\infty}(a, s) < \varepsilon$, it
  follows that
  \begin{eqnarray*}
    d_{\infty}(t_a', t_s')
    & = & \sup\set{d(\vec{x}t_s', \vec{x}t_a')}{\vec{x}\in A} \\
    & = & \sup\left\{\sum_{i \in \N}
    \frac{|\vec{x}hs\pi_{i}f' - \vec{x}ha\pi_{i}f'|}{2 ^ {2i+2}} : \vec{x}\in
    A\right \} \\
    & \leq &\sup\left\{\sum_{i \in \N}
    \frac{|\vec{x}hs\pi_{i} - \vec{x}ha\pi_{i}|}{2 ^ {2i+2}} : \vec{x}\in
    A\right\} \\
    & = & \sup\left\{\sum_{i \in \N}
    \frac{|\vec{x}s\pi_{i} - \vec{x}a\pi_{i}|}{2 ^ {2i+2}} :
    \vec{x}\in (A)h = Q\right\}\\
    & \leq & \sup\left\{\sum_{i \in \N}
    \frac{|\vec{x}s\pi_{i} - \vec{x}a\pi_{i}|}{2 ^ {i+1}} :
    \vec{x}\in Q\right\} \\
    & = & d_{\infty}(a, s)  < \varepsilon.
  \end{eqnarray*}
  Then, since $(t_s') ^ {-1}: (A)t_s' \to A$ is a surjective function,
  \[
    d_{\infty}((t_s') ^ {-1} t_a', \id_{(A)t_a'}) =
    d_{\infty}((t_s') ^ {-1} t_a', (t_s') ^ {-1} t_s') = d_{\infty}(t_a', t_s')
    < \varepsilon,
  \]
  and so, by \cref{prop-mill},  there exists $\phi\in H(Q)$
  extending $(t_s') ^ {-1} t_a'$ such that $d_{\infty}(\phi, \id_{Q}) <
    \varepsilon$. It follows that $t_s\phi\in B_{d_{\infty}}(t_s, \varepsilon)$
  and $ft_s\phi g = ft_s'\phi g = ft_a'g = a$ and so
  $a \in f(B_{d_{\infty}}(t_s, \varepsilon)\cap H(Q))g$.
  \medskip

  \noindent\textbf{(\ref{propx-hilbert_ii}).}
  It follows immediately from
  \cref{theorem-compact-open-hilbert} and \cref{prop-luke-3}, that every
  Polish semigroup topology on $C(Q)$ contains the compact-open topology.
  It therefore suffices to show that no Polish semigroup
  topology for $C(Q)$ contains the compact-open topology properly. This follows
  from
  part \eqref{propx-hilbert_i} together with
  \cref{lem-luke-0}\eqref{lem-luke-0-iii}.
  \medskip

  \noindent\textbf{(\ref{propx-hilbert_iii}).}
  This follows immediately from part \eqref{propx-hilbert_i} and
  \cref{lem-luke-0}\eqref{lem-luke-0-iv}.
\end{proof}

\subsection{The Cantor space}
\label{subsection-cantor}

In this section, we show that the monoid of continuous functions $C(2 ^ \N)$ on
the Cantor space $2 ^ \N$ has a unique Polish semigroup topology -- the
compact-open topology. The proof is analogous to, but not the same as, the
proof of the uniqueness of the Polish semigroup topology on the Hilbert cube
given in the last section.

To prove that the compact-open topology is also a maximal Polish semigroup
topology on $2 ^ \N$ we require an analogue of \cref{prop-mill}.
We were not able to locate such an analogue in the literature,
and so we provide our own, the proof of which is similar to the proof of
\cref{prop-mill} given in~\cite{Mill2001aa}. In the following
proposition, we require the $d_{\infty}$ metric on $C(2 ^ \N)$ for which we
ought to fix a metric on $2 ^ \N$. Since $2 ^ \N$ is a subset of the Hilbert
cube, we can define the metric $d$ on $2 ^ \N$ to be the restriction of the
metric on the Hilbert cube defined in~\eqref{equation-hilbert-metric}. Then
\[
  d((x_0, x_1, \ldots), (y_0, y_1, \ldots)) =
  \sum_{i = 0} ^ \infty \frac{|x_i - y_i|}{2 ^ {i+1}},
\]
where $(x_0, x_1, \ldots), (y_0, y_1, \ldots) \in 2 ^ \N$.
We also require a metric on $2 ^ \N \times 2 ^ \N$ which we define by
\[
  \rho((a_0, a_1), (b_0, b_1)) =
  \max\{d(a_0, b_0), d(a_1, b_1)\}.
\]
Let the associated metric $\rho_{\infty}$ be defined as
in~\eqref{equation-infinity-metric}.
If $n > 0$, we denote the finite sequence of length $n$ consisting
solely of $0$
by $0 ^ n$, and we denote the infinite sequence
consisting solely of the value $0$ by $0 ^ {\infty}$.

\begin{prop}\label{prop-elliott-mill}
  Suppose that $A$ and $B$ are closed subsets of $2 ^ \N$ such that there
  exists
  a homeomorphism $\phi : A \to B$ such that $d_{\infty}(\phi, \id_A) <
    \varepsilon$.  Then there exists a homeomorphism $\phi': 2 ^ \N \times 2 ^
    \N
    \to 2 ^ \N \times 2 ^ \N$ such that $(a, 0 ^ {\infty}) \phi' = (a\phi, 0 ^
      {\infty})$ for all $a\in A$, and $\rho_{\infty}(\phi', \id_{2 ^ \N \times
      2 ^
      \N}) < \varepsilon$.
\end{prop}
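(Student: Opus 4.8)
The plan is to imitate van Mill's coordinatewise extension for the Hilbert cube, replacing the interpolation across coordinates by an interpolation across a refining tower of clopen partitions, which is the natural substitute in a zero-dimensional space. Write $X = 2^\N \times 2^\N$, with metrics $\rho$ and $\rho_\infty$ as defined above. First I would reduce to a uniform bound: since $A$ is compact and $\phi$ is continuous, $\varepsilon_0 := \max_{a\in A} d(a, a\phi)$ is attained and $\varepsilon_0 < \varepsilon$; if $\varepsilon_0 = 0$ then $A = B$ and $\phi = \id_A$, so $\phi' = \id_X$ works, and I may assume $0 < \varepsilon_0 < \varepsilon$. I will also record the two facts about $2^\N$ that drive everything: (1) $d$ measures agreement of prefixes, so each cylinder $\set{x\in 2^\N}{w \text{ is a prefix of } x}$ with $|w| = \ell$ is clopen of diameter $2^{-\ell}$, and the level-$\ell$ cylinders form a finite clopen partition; and (2) every nonempty clopen subset of $2^\N$ is homeomorphic to $2^\N$, so any bijection between two finite clopen partitions of $X$ with equally many pieces is realised by a self-homeomorphism, and every clopen subset of a closed $A\subseteq 2^\N$ extends to a clopen subset of $2^\N$ by zero-dimensionality.

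The core is to construct a refining tower of clopen partitions $\mathcal P_0 \prec \mathcal P_1 \prec \cdots$ of $X$, each refining the level-$k$ cylinder partition of $X$ (so the meshes tend to $0$), together with bijections $\theta_k : \mathcal P_k \to \mathcal Q_k$ onto clopen partitions $\mathcal Q_k$ of $X$ realised by homeomorphisms $h_k$, subject to three invariants. \emph{Marking:} if $P\in\mathcal P_k$ contains $(a, 0^\infty)$ for some $a\in A$, then $\theta_k(P)$ contains $(a\phi, 0^\infty)$; since the marked pieces shrink to points, the limit must send $(a,0^\infty)$ to $(a\phi,0^\infty)$ exactly. \emph{Movement:} every $P\in\mathcal P_k$ and its image $Q = \theta_k(P)$ satisfy $\operatorname{diam}(P\cup Q) < \varepsilon$ (diameter in $\rho$). \emph{Compatibility:} $\theta_{k+1}$ refines $\theta_k$, i.e. $P'\subseteq P$ implies $\theta_{k+1}(P')\subseteq\theta_k(P)$. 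A compatible refining tower of this kind makes $(h_k)$, together with the inverses $(h_k^{-1})$, uniformly Cauchy, so it converges to a continuous bijection $\phi'$ of $X$, which is a homeomorphism since $X$ is compact Hausdorff; Marking yields $(a,0^\infty)\phi' = (a\phi,0^\infty)$ and Movement yields $\rho_\infty(\phi', \id_X) < \varepsilon$ (strictness is preserved since only finitely much is decided at each stage).

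To produce the base step and each refinement I would invoke uniform continuity of $\phi$ and $\phi^{-1}$ to choose $\delta_k < (\varepsilon-\varepsilon_0)/2$ so small that a clopen partition of $A$ into pieces of diameter $<\delta_k$ is carried by $\phi$ to a clopen partition of $B$ whose pieces also have diameter $<\delta_k$; extending these to clopen partitions of $X$ of mesh $< (\varepsilon-\varepsilon_0)/2$ gives a $\phi$-induced bijection between the marked source pieces and the marked target pieces. The Movement bound then follows from the triangle inequality: for $x = (x_1,x_2) \in P$ and its image in $Q = \theta_k(P)$, the first coordinate moves at most $\operatorname{diam}(P) + \varepsilon_0 + \operatorname{diam}(Q) < \varepsilon$, and the second coordinate, staying within $0^\infty$ at both ends, moves at most $\operatorname{diam}(P)+\operatorname{diam}(Q) < \varepsilon$. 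Unmarked pieces are matched by the identity wherever possible, and otherwise only to pieces inside a common small region, so their contribution to the movement is also $<\varepsilon$.

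The main obstacle is the bookkeeping that makes each $\theta_k$ a genuine \emph{bijection} of clopen partitions while simultaneously respecting $\phi$ on the marked set, refining compatibly, and keeping matched pieces close enough for the Movement bound. The marked source and target families need not have their cardinalities reconciled by $\phi$ alone once one passes to ambient clopen pieces, and a careless refinement can destroy the earlier matching. This is exactly where the nowhere-denseness of $A \times \{0^\infty\}$ and $B \times \{0^\infty\}$ in $X$ is essential: the complements of these marked sets are dense unions of clopen pieces, each homeomorphic to $2^\N$, providing unlimited ``free'' clopen room in which to balance piece counts, to complete the $\phi$-induced partial correspondence to a full partition bijection, and to carry out the refinements, all while leaving the marked pieces, and hence the limiting values on $A \times \{0^\infty\}$, fully under control.
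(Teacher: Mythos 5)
Your proposal is correct in outline, but it proves the proposition by a genuinely different route from the paper. What you sketch is the controlled Knaster--Reichbach extension theorem for the Cantor set: any homeomorphism between closed \emph{nowhere dense} subsets of a Cantor space that moves points less than $\varepsilon$ extends to a self-homeomorphism moving points less than $\varepsilon$, proved by a back-and-forth tower of finite clopen partitions with your Marking/Movement/Compatibility invariants; the product structure $2^\N \times 2^\N$ enters only to guarantee that $A \times \{0^\infty\}$ and $B \times \{0^\infty\}$ are nowhere dense. The paper instead exploits that structure directly and non-inductively: it fixes a retraction $g_F \colon 2^\N \to F$ for each non-empty closed $F$, views $2^\N$ as the group $(\Z/2)^\N$ under coordinatewise addition, splits the second factor as $2^n \times 2^\N$ with the tail of weight less than $\varepsilon$, and writes $\phi'$ explicitly as a composite $\phi_0\phi_1\phi_2$ that stores a copy of the first coordinate $a$ in the tail ($c \mapsto c+a$), translates the first coordinate by the displacement $-(c)g_A + (c)g_A\phi$ of $\phi$ at the retracted stored value, and then erases the stored copy using $g_B$ and $\phi^{-1}$; the metric bound falls out because translations are isometries of $d$ and the first $n$ tail coordinates are untouched. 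The paper's argument buys a short, explicit formula with no limiting process; yours buys generality (it does not need the spare factor as scratch space, only nowhere density, and is the genuine zero-dimensional analogue of van Mill's Theorem 5.4.2, i.e.\ of \thref{prop-mill}) at the cost of the combinatorial bookkeeping you rightly identify as the crux. Two small points to tighten if you write it out: you need the meshes of the \emph{target} partitions $\mathcal{Q}_k$ to tend to $0$ as well (e.g.\ alternate the refinement demands between the two sides) so that both $(h_k)$ and $(h_k^{-1})$ are uniformly Cauchy; and note that your weak marking invariant self-improves to exact trace correspondence $(P \cap (A\times\{0^\infty\}))\,h = \theta_k(P) \cap (B \times \{0^\infty\})$, since $\theta_k$ is a bijection of partitions and the marked homeomorphism is onto, which is what makes the $\phi$-induced refinement of marked pieces at the next stage well defined. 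With those routine repairs your argument is complete, and your handling of strictness (the finite stage-zero maximum together with nestedness of $P_k(x) \cup \theta_k(P_k(x))$) is sound.
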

\begin{proof}
  It is well-known that if $F$ is a non-empty closed subset of the
  Cantor space, then there exists a continuous function $g_F: 2 ^ \N \to F$
  such
  that $(x)g_F = x$ for all $x\in F$ (see, for example, the proof
  of~\cite[Lemma
    3.59]{Aliprantis2006aa}).

  Let $\varepsilon\in \R$, $\varepsilon > 0$, be fixed, and let $n\in \N$ be
  such that $\sum_{i = n} ^ {\infty} \frac{1}{2 ^ {i+1}} < \varepsilon$.
  We will view $2 ^ \N \times 2 ^ \N$ as $2 ^ \N \times (2 ^ n \times 2 ^ \N)$
  in the obvious way, but continue using the metric $\rho$.
  Moreover, let $+$ and $-$ be componentwise addition and subtraction modulo
  $2$ on $2^\N$. Define $\phi_0, \phi_1, \phi_2 : 2 ^ \N \times 2 ^ \N \to 2 ^
    \N \times 2 ^ \N$ by
  \begin{eqnarray*}
    (a, (b, c)) \phi_0 & = & (a, (b, c + a)) \\
    (a, (b, c)) \phi_1 & = & (a - (c)g_A + (c)g_A\phi, (b, c)) \\
    (a, (b, c)) \phi_2 & = & (a, (b, c - (a)g_B\phi ^ {-1})).
  \end{eqnarray*}
  Then $\phi_0, \phi_1$, and $\phi_2$ are continuous,
  since they are continuous in each component, and $\phi_0 ^ {-1}, \phi_1
    ^ {-1}$, and $\phi_2 ^ {-1}$  are well-defined and continuous. Hence
  $\phi_0,
    \phi_1$, and $\phi_2$ are homeomorphisms.
  The required homeomorphism $\phi': 2 ^ \N \times 2 ^ \N
    \to 2 ^ \N \times 2 ^ \N$ is $\phi' = \phi_0\phi_1\phi_2$ since for $a \in
    A$
  \begin{eqnarray*}
    (a, 0 ^ {\infty}) \phi_0\phi_1\phi_2 & = & (a, (0 ^n,  a)) \phi_1\phi_2 \\
    & = & (a - (a)g_A + (a)g_A\phi, (0 ^n,  a))
    \phi_2 \\
    & = & ((a)\phi, (0 ^ n,  a))\phi_2 \\
    & = & ((a)\phi, (0 ^ n, a - ((a)\phi g_B
    \phi ^ {-1}))) \\
    & = & ((a)\phi, 0 ^ {\infty}).
  \end{eqnarray*}

  It remains to show that $\rho_{\infty}(\phi', \id_{2 ^ \N \times 2 ^ \N}) =
    \sup \set{\rho((x, y)\phi', (x, y))}{x, y \in 2 ^ \N}  <
    \varepsilon$. By definition, \[\rho((x, y)\phi', (x, y)) = \max\{d((x,
    y)\phi'\pi_0, x), d(((x, y)\phi'\pi_1, y)\},\] where $\pi_0, \pi_1: 2 ^ \N
    \times 2 ^ \N \to 2 ^ \N$ are the projection functions.
  By the assumption on $n$, and the fact that in the definitions of $\phi_0$,
  $\phi_1$, and $\phi_2$ the first $n$ terms in the second component are not
  altered, it follows that \[
    d((x, y)\phi'\pi_1, y) < \sum_{i = n} ^ {\infty} \frac{1}{2 ^ {i+1}}
    < \varepsilon.\]  Since $\phi_0$ and $\phi_2$ do not change the first
  coordinate of their arguments, it suffices to check that
  $d((a, (b, c))\phi_1\pi_0, a) < \varepsilon$
  for all $(a, (b, c)) \in 2 ^ \N\times 2 ^ \N$.
  If $(a, (b, c)) \in 2 ^ \N\times 2 ^ \N$ is arbitrary,
  then the map $x \mapsto (c)g_A - a + x$  is an isometry on $2^\N$, with
  respect to the metric $d$, and so
  \begin{eqnarray*}
    d(a, (a, (b, c))\phi_1\pi_0) & = & d(a, a - (c)g_A + (c)g_A\phi) \\
    & = & d(((c)g_A - a) + a,
    ((c)g_A - a) +  a - (c)g_A +
    (c)g_A\phi)\\
    & = & d((c)g_A, (c)g_A\phi) \\
    & < & \varepsilon,
  \end{eqnarray*}
  since $d_{\infty}(\phi, \id_A) < \varepsilon$.
\end{proof}

\begin{theorem}\label{propx-cantor}
  The monoid $C(2^\N)$ of continuous functions on the Cantor set together with
  the compact-open topology has property \textbf{X} with respect to its group of
  units $H(2^\N)$.
\end{theorem}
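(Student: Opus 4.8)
The plan is to mimic the proof of \thref{propx-hilbert}\eqref{propx-hilbert_i}, replacing the extension result \thref{prop-mill} by its Cantor-space analogue \thref{prop-elliott-mill}. Since $2^\N$ is homeomorphic to $2^\N\times 2^\N$, the monoid $C(2^\N)$ with the compact-open topology is topologically isomorphic (via conjugation by a fixed homeomorphism) to $C(2^\N\times 2^\N)$, and this isomorphism carries $H(2^\N)$ onto $H(2^\N\times 2^\N)$; as property \textbf{X} is clearly preserved by topological isomorphisms, it suffices to prove that $C(2^\N\times 2^\N)$ has property \textbf{X} with respect to $H(2^\N\times 2^\N)$. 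Throughout I would use the metrics $\rho$ on $2^\N\times 2^\N$ and $\rho_{\infty}$ on $C(2^\N\times 2^\N)$ from \thref{prop-elliott-mill}. The whole point of passing to the product is that \thref{prop-elliott-mill} only controls an extension on the slice $2^\N\times\{0^\infty\}$, so the second factor is exactly the ``room'' needed to turn a near-identity homeomorphism of a closed subset of the first factor into a genuine homeomorphism of the whole space.

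Next I would set up the encoding. Fix a homeomorphism $\iota\colon 2^\N\times 2^\N\to 2^\N$ and a splitting of the \emph{first} factor $2^\N\cong 2^\N\times 2^\N$ into an ``input'' slot $u$ (the even coordinates) and an ``output'' slot $v$ (the odd coordinates); so a point of $2^\N\times 2^\N$ is written $((u,v),w)$. Given $s\in C(2^\N\times 2^\N)$, put $\sigma_s=\iota\circ s\circ\iota^{-1}\in C(2^\N)$ and define
\[
  (P)f=((\iota(P),0^\infty),0^\infty),\qquad ((u,v),w)g=\iota^{-1}(v),
\]
both of which are continuous. Define $\phi_s$ on the closed set $A=\set{(u,0^\infty)}{u\in 2^\N}\subseteq 2^\N$ by $(u,0^\infty)\phi_s=(u,\sigma_s(u))$; this is a homeomorphism onto the closed set $B_s=\set{(u,\sigma_s(u))}{u\in 2^\N}$ precisely because the input coordinate $u$ is retained, and $d_{\infty}(\phi_s,\id_A)\le 1<2$. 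By \thref{prop-elliott-mill} it extends to $t_s\in H(2^\N\times 2^\N)$ with $((u,0^\infty),0^\infty)t_s=((u,\sigma_s(u)),0^\infty)$. A direct computation then gives $(P)ft_sg=\iota^{-1}(\sigma_s(\iota(P)))=s(P)$, so $s=ft_sg$ with $t_s\in H(2^\N\times 2^\N)$.

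The remaining, and most delicate, task is the neighbourhood condition. Fixing $\varepsilon>0$, I must produce $\delta>0$ so that every $a$ with $\rho_{\infty}(a,s)<\delta$ lies in $f\,(B_{\rho_{\infty}}(t_s,\varepsilon)\cap H(2^\N\times 2^\N))\,g$. As in the Hilbert-cube argument I would not build $t_a$ from scratch, but instead set $t_a=t_s\chi$ for a small correcting homeomorphism $\chi$. Concretely, let $\psi\colon B_s\to B_a$ be $(u,\sigma_s(u))\mapsto(u,\sigma_a(u))$ and let $\chi\in H(2^\N\times 2^\N)$ be its extension furnished by \thref{prop-elliott-mill}, so that $\chi$ sends $((u,\sigma_s(u)),0^\infty)$ to $((u,\sigma_a(u)),0^\infty)$. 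Then $ft_s$ lands in $\set{(c,0^\infty)}{c\in B_s}$, whence $ft_ag=ft_s\chi g=a$ by the computation of the previous paragraph.

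For the distance estimate I would invoke two metric facts. First, because the splitting places $v$ in the odd coordinates, the product metric downweights the output block, giving $d((u,v),(u,v'))\le d(v,v')$; hence
\[
  d_{\infty}(\psi,\id_{B_s})=\sup_{u}d\big((u,\sigma_a(u)),(u,\sigma_s(u))\big)\le \sup_u d(\sigma_a(u),\sigma_s(u)).
\]
Since $\sigma_a=\iota\circ a\circ\iota^{-1}$ and $\iota$ is uniformly continuous (being continuous on a compact space), the right-hand side tends to $0$ as $\rho_{\infty}(a,s)\to 0$, so a suitable $\delta$ forces $d_{\infty}(\psi,\id_{B_s})<\varepsilon$, and \thref{prop-elliott-mill} then yields $\rho_{\infty}(\chi,\id)<\varepsilon$. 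Second, since $t_s$ is a bijection, left-translation by it is $\rho_{\infty}$-isometric, so $\rho_{\infty}(t_a,t_s)=\rho_{\infty}(t_s\chi,t_s)=\rho_{\infty}(\chi,\id)<\varepsilon$, placing $t_a$ in the required ball. The main obstacle is exactly this bookkeeping: arranging the encoding so that $\phi_s$ and $\psi$ are honest homeomorphisms (handled by retaining the input coordinate) while simultaneously making the size of the correction $\chi$ controllable by $\rho_{\infty}(a,s)$, which is what dictates the choice of splitting that downweights the output slot together with the appeal to uniform continuity of $\iota$. Property \textbf{X} with respect to $H(2^\N\times 2^\N)$, and hence with respect to $H(2^\N)$, then follows.
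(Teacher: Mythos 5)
Your proposal is correct and follows essentially the same route as the paper's proof: both encode $s$ via its graph inside a product copy of the Cantor set, apply \thref{prop-elliott-mill} twice (once to build $t_s$, and once to build a small correcting homeomorphism carrying the graph of $s$ to the graph of a nearby $a$), and close the estimate using that pre-composition by the surjection $t_s$ is a $\rho_{\infty}$-isometry together with uniform continuity of the fixed homeomorphism between $2^\N$ and $2^\N \times 2^\N$. The differences are only organizational --- you transfer the whole problem to $C(2^\N\times 2^\N)$ at the outset and anchor the construction on the slice $2^\N\times\{0^{\infty}\}$, whereas the paper stays in $C(2^\N)$, conjugates by $\Phi$ at each step, and anchors on the diagonal, which is why the paper needs $\delta = r(r(\varepsilon))$ while your bookkeeping invokes uniform continuity only once.
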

\begin{proof}
  Similarly to the proof of \cref{propx-hilbert}\eqref{propx-hilbert_i}, we
  will show
  that for every $s\in C(2 ^ \N)$, there exist $f, g\in C(2 ^ \N)$ and
  $t_s\in H(2 ^ \N)$ such that $s = ft_sg$ and for every ball $B =
    B_{d_{\infty}}(t_s, \varepsilon)$ there exists $\delta > 0$ such that
  $B_{d_{\infty}}(s, \delta) \subseteq f(B\cap H(2 ^ \N))g$.

  Let $\Phi : 2 ^ \N \to 2 ^ \N \times 2 ^ \N$ be a homeomorphism. If
  $\varepsilon > 0$, then since $\Phi$ is homeomorphism between compact
  metric spaces, $\Phi$  and $\Phi ^ {-1}$ are uniformly continuous, and so
  there exists $r(\varepsilon) \in \R$
  such that $r(\varepsilon) > 0$ and the following conditions hold:
  \begin{enumerate}[(i)]
    \item\label{propx-cantor-proof-i}
          $d(a, b) < r(\varepsilon)$ implies that $\rho((a)\Phi, (b)\Phi) <
            \varepsilon$ for any $a, b\in 2 ^ \N$;
    \item\label{propx-cantor-proof-ii}
          $\rho(a, b) < r(\varepsilon)$ implies that $d((a)\Phi ^ {-1}, (b)
            \Phi ^
            {-1}) < \varepsilon$ for all $a, b \in 2 ^ \N \times 2 ^ \N$.
  \end{enumerate}
  Let $s\in C(2 ^ \N)$ be arbitrary, and let $f, g\in C(2 ^ \N)$ be defined by:
  \begin{eqnarray}
    \label{eq-f}
    (x)f & = & \left((x, x)\Phi ^ {-1}, 0 ^ {\infty}\right)\Phi ^{-1} \\
    \label{eq-g}
    (x)g & = & (x)\Phi \pi_0 \Phi \pi_0,
  \end{eqnarray}
  where, as defined in the proof of \cref{prop-elliott-mill}, $\pi_0: 2 ^ \N
    \times 2 ^ \N \to 2 ^ \N$ is the projection function.
  Note that $fg$ is the identity on $2 ^ \N$. Next, we will show that there
  exists $t_s\in H(2 ^ \N)$ such that $s = ft_sg$.

  Let $s', \Delta: 2 ^ \N \to 2 ^ \N \times 2 ^ \N$ be defined by
  \begin{equation}\label{eq-s-prime}
    (x)s' = ((x)s, x)
  \end{equation}
  and $(x)\Delta = (x, x)$. Clearly $\Delta$ is a homeomorphism from
  $2 ^ \N$ to $(2 ^ \N) \Delta$, and, $s'$ is injective and continuous, since
  it is continuous in each component, and the inverse of $s'$ is also
  continuous because $((x)s, x) \mapsto x$ is a projection. Hence $s'$ is a
  homeomorphism between $2 ^ \N$ and $(2 ^ \N) s'$. If $\phi = \Phi \Delta
    ^{-1} s' \Phi ^ {-1}$ and $x\in 2 ^ \N$ is arbitrary, then
  \begin{equation}
    \label{eq-phi}
    (x, x)\Phi ^ {-1} \phi = (x, x)\Phi ^ {-1} \Phi \Delta ^{-1} s' \Phi ^ {-1}
    = (x)s' \Phi ^ {-1}
    = ((x)s, x) \Phi ^ {-1}
  \end{equation}
  and so $\phi$ is a homeomorphism from $A = (2 ^ \N)\Delta \Phi ^ {-1} = \set{
      (x, x)\Phi ^ {-1} }{x \in 2 ^ \N}$ to $B = (A)\phi$.
  By \cref{prop-elliott-mill}, there exists a homeomorphism
  $\zeta: 2 ^ \N\times 2 ^ \N \to  2 ^ \N\times 2 ^ \N$ such that
  \begin{eqnarray}\label{eq-zeta}
    ((x, x)\Phi ^ {-1}, 0 ^ \infty) \zeta = ((x, x)\Phi ^ {-1}\phi, 0 ^
      {\infty})
  \end{eqnarray}
  for all $x\in 2 ^ \N$.

  We will show that the required homeomorphism of $2 ^ \N$ is
  \begin{equation}\label{eq-t-s}
    t_s = \Phi \zeta\Phi ^ {-1}.
  \end{equation}
  If $x\in 2 ^ \N$ is arbitrary, then
  \begin{equation*}
    \begin{array}{rclr}
      (x)ft_sg & =                         & ((x, x)\Phi^{-1}, 0 ^ {\infty})
      \Phi ^ {-1} t_s g
               & \text{by }\eqref{eq-f}
      \\
               & =                         & ((x, x)\Phi^{-1}, 0 ^ {\infty})
      \zeta \Phi^{-1} g
               & \text{by }\eqref{eq-t-s}
      \\
               & =                         & ((x, x)\Phi ^ {-1}\phi, 0 ^
        {\infty})\Phi ^{-1} g
               & \text{by }\eqref{eq-zeta}
      \\
               & =                         & ((x, x)\Phi ^ {-1}\phi, 0 ^
        {\infty})\Phi ^{-1} \Phi \pi_0
      \Phi \pi_0
               & \text{by }\eqref{eq-g}
      \\
               & =                         & (x, x)\Phi ^ {-1}\phi \Phi \pi_0
      \\
               & =                         & ((x)s, x) \Phi ^ {-1} \Phi \pi_0
               & \text{by }\eqref{eq-phi}
      \\
               & =                         & (x)s.
    \end{array}
  \end{equation*}

  Suppose that $\varepsilon > 0$ is arbitrary. It remains to show that
  there exists $\delta > 0$ such that $B_{d_{\infty}}(s, \delta) \subseteq
    f(B_{d_\infty}(t_s, \varepsilon) \cap H(2 ^ \N))g$. We set $\delta =
    r(r(\varepsilon))$. Let $u \in B_{d_{\infty}}(s, \delta)$. We define
  $u': 2 ^ \N \to 2 ^ \N \times 2 ^ \N$  by $(x)u' = ((x)u, x)$ for all $x\in 2
    ^ \N$, and, similar to the proof above for $s'$,  $u'$ is a homeomorphism
  from $2 ^ \N$ to $(2 ^ \N)u'$. Then $\rho_{\infty}(u', s') = d_{\infty}(u, s)
    < \delta$. Hence
  \[
    \rho_{\infty}(\id_{(2 ^ {\N})s'}, (s') ^ {-1} u')
    = \rho_{\infty}((s') ^ {-1} s' , (s') ^ {-1} u')
    = \rho_{\infty}(u', s') = d_{\infty}(u, s) < \delta
  \]
  since left multiplication by $(s') ^{-1}$ is an isometry. It follows that
  \[
    d_{\infty}(\Phi \id_{(2 ^ {\N})s'}\Phi ^ {-1},
    \Phi (s') ^ {-1} u'\Phi ^ {-1})
    < r(\varepsilon)
  \]
  by \eqref{propx-cantor-proof-i} and \eqref{propx-cantor-proof-ii}, from the second paragraph of this proof.

  By \cref{prop-elliott-mill} applied to the homeomorphism $\Phi
    (s') ^ {-1} u'\Phi ^ {-1} : (2 ^ \N)s'\Phi ^ {-1} \to (2 ^ \N) u'\Phi ^
    {-1}$, there exists $\gamma\in H(2 ^ \N \times 2 ^ \N)$ such that
  \begin{equation}
    \label{eq-gamma}
    (a, 0 ^ \infty) \gamma = (a\Phi (s') ^ {-1} u'\Phi ^ {-1}, 0 ^ \infty)
  \end{equation}
  and $\rho_{\infty}(\gamma, \id_{2 ^ \N \times 2 ^ \N}) < r(\varepsilon)$.
  Since left multiplication by $\Phi ^ {-1} t_s \Phi$ is an isometry of $C(2 ^
    \N \times 2 ^ \N)$,
  \[\rho_{\infty}(\Phi ^ {-1} t_s \Phi \gamma, \Phi ^ {-1} t_s \Phi)
    = \rho_{\infty}(\gamma, \id_{2 ^ \N \times 2 ^ \N})
    < r(\varepsilon)
  \]
  and so
  \[
    d_{\infty}(t_s \Phi  \gamma \Phi ^ {-1}, t_s)
    =
    d_{\infty}(\Phi \Phi ^ {-1} t_s \Phi \gamma\Phi ^ {-1},
    \Phi \Phi ^ {-1}t_s \Phi \Phi ^ {-1})
    < \varepsilon.
  \]
  Hence $f (t_s \Phi  \gamma \Phi ^ {-1}) g \in f(B_{d_{\infty}}(t_s,
    \varepsilon) \cap H(2 ^ \N))g$. We conclude the proof by showing that
  $f (t_s \Phi	\gamma \Phi ^ {-1}) g = u$. Suppose that $x\in 2 ^ \N$ is
  arbitrary. Then
  \begin{equation*}
    \begin{array}{rclr}
      (x) f (t_s \Phi  \gamma \Phi ^ {-1}) g
                    & =                            & ((x, x) \Phi ^ {-1}, 0 ^
        {\infty})\Phi ^ {-1}t_s \Phi  \gamma \Phi ^
      {-1} g        & \text{by }\eqref{eq-f}
      \\
                    & =                            & ((x, x) \Phi ^ {-1}, 0 ^
        {\infty}) \zeta\gamma\Phi ^ {-1} g
                    & \text{by }\eqref{eq-t-s}
      \\
                    & =                            & ((x, x) \Phi ^ {-1}\phi, 0
        ^ {\infty}) \gamma\Phi ^ {-1} g
                    & \text{by }\eqref{eq-zeta}
      \\
                    & =                            & ((x)s, x)\Phi ^ {-1}, 0 ^
        {\infty}) \gamma\Phi ^ {-1} g
                    & \text{by }\eqref{eq-phi}
      \\
                    & =                            & (((x)s, x)\Phi ^ {-1}\Phi
      (s') ^ {-1} u'\Phi ^ {-1}, 0 ^ {\infty})
      \Phi ^ {-1} g & \text{by }\eqref{eq-gamma}
      \\
                    & =                            & (((x)s, x) (s') ^ {-1}
      u'\Phi ^ {-1}, 0 ^ {\infty})
      \Phi ^ {-1} g
      \\
                    & =                            & ((x)u'\Phi ^ {-1}, 0 ^
        {\infty})\Phi ^ {-1} g
                    & \text{by }\eqref{eq-s-prime}
      \\
                    & =                            & (((x)u, x)\Phi ^ {-1}, 0 ^
      {\infty})\Phi ^ {-1} g                                                    \\
                    & =                            & (((x)u, x)\Phi ^ {-1}, 0 ^
        {\infty})\Phi ^ {-1}\Phi \pi_0\Phi\pi_0
                    & \text{by }\eqref{eq-g}
      \\
                    & =                            & ((x)u, x)\Phi ^ {-1}
      \Phi\pi_0                                                                 \\
                    & =                            & (x)u,
    \end{array}
  \end{equation*}
  which concludes the proof.
\end{proof}

\begin{theorem}\label{AC-cantor}
  The monoid of continuous functions on the Cantor set $2 ^ \N$ together with
  the compact-open topology has automatic continuity with respect to the class
  of second countable topological semigroups.
\end{theorem}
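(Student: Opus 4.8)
The plan is to obtain this as a one-line consequence of property \textbf{X} together with the transfer principle recorded in \thref{lem-luke-0}(\ref{lem-luke-0-iv}). The point is that all of the genuine work has already been done in establishing \thref{propx-cantor}; what remains is purely formal. Concretely, I would take $S = C(2^\N)$ equipped with the compact-open topology and $A = H(2^\N)$, its group of units. Since the compact-open topology is a semigroup topology on $C(2^\N)$, the monoid $C(2^\N)$ is in particular semitopological, so the standing hypotheses of \thref{lem-luke-0} are met.

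The first ingredient is that $C(2^\N)$ has property \textbf{X} with respect to $H(2^\N)$: this is exactly the content of \thref{propx-cantor}. The second ingredient is that $A = H(2^\N)$ itself has automatic continuity with respect to the class $\mathcal{C}$ of second countable topological semigroups. This is precisely the corollary to \thref{lem-group-auto-cont-is-semigroup}, which upgrades the known automatic continuity of $H(2^\N)$ with respect to second countable topological \emph{groups} (from~\cite{Rosendal2007ac}) to automatic continuity with respect to second countable topological semigroups via \thref{lem-group-auto-cont-is-semigroup}.

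With both ingredients in hand, I would simply invoke \thref{lem-luke-0}(\ref{lem-luke-0-iv}): since $A = H(2^\N)$ has automatic continuity with respect to $\mathcal{C}$, and $S = C(2^\N)$ has property \textbf{X} with respect to $A$, it follows that $C(2^\N)$ has automatic continuity with respect to $\mathcal{C}$, which is the assertion of the theorem. (Unwinding part (\ref{lem-luke-0-i}) of the same theorem, the mechanism is that any homomorphism $\phi\colon C(2^\N)\to T$ into a second countable $T\in\mathcal{C}$ restricts to a homomorphism $\phi|_{H(2^\N)}$, which is continuous by automatic continuity of $H(2^\N)$, and property \textbf{X} then propagates continuity to all of $C(2^\N)$.)

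There is no real obstacle left at this stage: the difficulty of the argument was front-loaded into \thref{propx-cantor}, whose proof required the Cantor-space analogue \thref{prop-elliott-mill} of the van Mill extension theorem to verify property \textbf{X}. This is the exact analogue of \thref{propx-hilbert}(\ref{propx-hilbert_iii}) for the Hilbert cube, with the crucial difference that for $2^\N$ the automatic continuity of the unit group $H(2^\N)$ is actually \emph{known}, so the conclusion is unconditional here whereas for $C(Q)$ it was only conditional on automatic continuity of $H(Q)$.
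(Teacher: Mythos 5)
Your proposal is correct and is essentially identical to the paper's own proof: the paper also deduces \thref{AC-cantor} directly from \thref{propx-cantor}, \thref{lem-luke-0}(\ref{lem-luke-0-iv}), and the automatic continuity of $H(2^\N)$ with respect to second countable topological semigroups (obtained via \thref{lem-group-auto-cont-is-semigroup}). Your framing of where the real work lies, namely in \thref{propx-cantor} via \thref{prop-elliott-mill}, and the contrast with the conditional statement \thref{propx-hilbert}(\ref{propx-hilbert_iii}) for the Hilbert cube, is also accurate.
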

\begin{proof}
  This follows from \cref{propx-cantor}, \cref{lem-luke-0}, and the
  automatic continuity of the group of homeomorphisms of the Cantor set.
\end{proof}

\begin{prop}\label{prop-phi-is-homeo}
Suppose that $\T$ is a second countable Hausdorff semigroup topology on $C(2^{\N})$. Let $F$ be the subspace of constant functions in $C(2^{\N})$ and let $\phi:F \to 2 ^ \N$ (with the usual topology) be the function   mapping every $f\in F$ to the unique value in $(2^{\N})f$. Then $\phi$ is a homeomorphism.
\end{prop}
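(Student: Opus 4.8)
The plan is to follow the blueprint of \thref{prop-luke-3}, using the zero-dimensionality of $2^\N$ to streamline the first two steps. Write $c_a\in F$ for the constant function with value $a\in 2^\N$, so that $\phi(c_a)=a$; since distinct constant functions have distinct values, $\phi$ is a bijection. First I would record the group structure that drives everything. Regard $2^\N$ as the compact abelian group $(\Z/2\Z)^{\N}$ under coordinatewise addition $+$, and define $c_a * c_b := c_{a+b}$. For fixed $g\in 2^\N$ the translation $\tau_g\colon x\mapsto x+g$ lies in $H(2^\N)\subseteq C(2^\N)$, and $c_a\tau_g=c_{a+g}$, so right translation by $g$ in $(F,*)$ is the restriction of $\rho_{\tau_g}$, which is continuous because $\T$ is a semigroup topology; as the group is abelian, left translations are continuous too. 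Hence $(F,*,\T|_F)$ is a semitopological group and $\phi\colon(F,*)\to(2^\N,+)$ is a group isomorphism.

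Next I would prove $\phi$ continuous. Since the clopen subsets of $2^\N$ form a basis, it suffices to show that $(V)\phi^{-1}$ is open for every clopen $V$. Fix distinct $p_0,p_1\in 2^\N$ and let $w\in C(2^\N)$ be the function, continuous because $V$ is clopen, with $(V)w=\{p_0\}$ and $(2^\N\setminus V)w=\{p_1\}$. For $c_a\in F$ one has $c_aw=c_{(a)w}$, which equals $c_{p_0}$ precisely when $a\in V$; thus $(V)\phi^{-1}=F\cap(\{c_{p_0}\})\rho_w^{-1}$. As $\T$ is $T_1$ the singleton $\{c_{p_0}\}$ is closed and $\rho_w$ is continuous, so this set is closed in $F$, and applying the same argument to the clopen set $2^\N\setminus V$ shows it is also open. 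Hence $\phi^{-1}(V)$ is clopen and $\phi$ is continuous. This is exactly the point at which zero-dimensionality replaces the delicate explicit functions needed for the Hilbert cube.

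The crux is the continuity of $\phi^{-1}$, equivalently the openness of $\phi$. Here I would argue as in \thref{prop-luke-3}: by \thref{lem-constants} the set $F$ of right zeros is closed in $C(2^\N)$, whence $F$ is a Polish semitopological group (granting complete metrizability of the ambient space, see below), while $(2^\N,+)$ is a compact, hence Polish, group. Since $\phi$ is then a continuous bijection between Polish spaces it is Borel measurable, so by \thref{prop-borel-measurable} its inverse $\phi^{-1}$ is Borel measurable, and by \thref{prop-borel-imples-cts}, applied to the semitopological groups $(F,*)$ and $(2^\N,+)$, the homomorphism $\phi^{-1}$ is continuous. Combining the two directions shows that $\phi$ is a homeomorphism.

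I expect the completeness input of the last paragraph to be the main obstacle. Note that any continuous map built out of the monoid operations and restricted to $F$ factors through the value $\phi$: right multiplication sends $c_a\mapsto c_{(a)w}$ and left multiplication fixes $F$ pointwise. Consequently no naive construction can separate two points of $F$ beyond their values, and continuity of $\phi^{-1}$ cannot be extracted directly; it must rest on a Baire-category or Borel-measurability argument, and hence on the fact that $(F,\T|_F)$ is completely metrizable. Verifying this completeness — via the closedness of $F$ in $C(2^\N)$ together with complete metrizability of the topology $\T$ restricted to the relevant closed subspace — is therefore the substantive step, and the remainder of the proof is formal.
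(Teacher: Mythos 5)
Your first half is fine: the bijectivity of $\phi$ and its continuity are correct, and your argument is essentially the paper's (the paper uses the map that is the identity on the clopen set $U$ and constant off it, you use a two-valued map $w$; both reduce to the closedness of a singleton in a $T_1$ topology and the continuity of $\rho_w$). The genuine gap is in the direction you yourself flag as the crux. You transplant the Borel-measurability scheme of \thref{prop-luke-3}, but both ingredients of that scheme, \thref{prop-borel-measurable} and \thref{prop-borel-imples-cts}, require the spaces involved to be \emph{Polish}. The present proposition assumes only that $\T$ is second countable and Hausdorff --- deliberately weaker than Polish, which is exactly what lets the paper conclude \thref{cor-cantor-unique} for all second countable Hausdorff semigroup topologies rather than merely Polish ones. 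A second countable Hausdorff topology need not be completely metrizable (it need not even be regular, hence not metrizable), so the ``substantive step'' you defer --- extracting complete metrizability of $\T|_F$ from the closedness of $F$ via \thref{lem-constants} --- cannot be carried out from the stated hypotheses. As written, your argument proves the proposition only under the additional assumption that $\T$ is Polish, i.e.\ it proves the Cantor analogue of \thref{prop-luke-3} but not the stated result.

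The missing idea is automatic continuity, which is why the paper places this proposition after \thref{AC-cantor}. By \thref{propx-cantor} together with \thref{lem-luke-0}, the monoid $C(2^\N)$ with the compact-open topology has automatic continuity with respect to the class of second countable topological semigroups (\thref{AC-cantor}); applying this to the identity homomorphism from $C(2^\N)$ with the compact-open topology to $(C(2^\N),\T)$ shows that $\T$ is \emph{contained} in the compact-open topology. Since the compact-open topology restricted to $F$ corresponds under $\phi$ exactly to the standard topology on $2^\N$ (for constants $c_a, c_b$ one has $d_{\infty}(c_a,c_b)=d(a,b)$), every $\T|_F$-open set is open in the compact-open subspace topology on $F$, hence has open image under $\phi$; that is, $\phi^{-1}$ is continuous. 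This one-line reduction replaces the entire Baire-category/Borel apparatus and requires no completeness whatsoever.
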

\begin{proof}
  Suppose that
  $\B$ is any basis for $2 ^ \N$ consisting of clopen sets and such that $2
    ^ \N\not\in \B$.

  To show that $\phi$ is continuous, suppose that $U\in\B$.
  It suffices to show that $(U)\phi ^ {-1} = \set{f\in
      F}{(2 ^ \N)f
      = \{q\},\ q \in U}$ is open in $F$.  If $y\not\in U$, then we define
  $f: 2 ^ \N\to 2 ^ \N$ such that
  \begin{equation*}
    (x)f =
    \begin{cases}
      x & \text{if } x \in U      \\
      y & \text{if } x \not\in U.
    \end{cases}
  \end{equation*}
  Since $U$ and $2 ^ \N \setminus U$ are open, and $f$ is continuous when
  restricted to either of the sets, it follows that $f$ is continuous.	Since
  $C(2 ^ \N)$
  is $T_1$, $F$ is $T_1$, and so the singleton set containing the constant
  function $g: 2 ^ \N\to 2 ^ \N$ with value $y$ is closed. Hence $\{g\}\rho_f ^
    {-1} \cap F$ is closed in $F$, and so $F\setminus (\{g\}\rho_f ^ {-1})$ is
  open in $F$.
  If $h\in F\setminus (\{g\}\rho_f ^ {-1})$, then there exists $x\in 2 ^ \N$
  such that
  \[
    (x)hf\not = y
  \]
  and so $(x)h\in U$ by the definition of $f$. On the other hand, if $h\in F$
  is such that $(x)h\in U$, then $(x)hf\not = y$, and so
  $F\setminus (\{g\}\rho_f ^ {-1}) = (U)\phi ^ {-1} = \set{f\in F}{(2 ^ \N)f
      = \{q\},\ q \in U}$ is open, as required.

  Next, we show that $\phi ^ {-1} :2 ^ \N \to F$ is continuous also. Note that
  the topology induced on $F$ by the compact-open topology is precisely the
  standard topology on $2^\N$ (when viewing an element of $F$ as its image
  under $\phi$). It therefore suffices to show that the topology induced on
  $F$ by $\mathcal{T}$ is contained in the topology induced on $F$ by the
  compact-open topology. This follows immediately from	 \cref{AC-cantor}.
\end{proof}

\begin{prop}\label{prop-cantor-action}
  Suppose that $C(2 ^ \N)$ is a second countable Hausdorff semigroup with
  respect to some topology. Then the function $a : 2 ^ \N \times C(2 ^ \N) \to
    2 ^ \N$ defined by
  \[
    (p, f)a = (p)f
  \]
  is continuous.
\end{prop}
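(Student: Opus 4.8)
The plan is to imitate the factorisation argument of \thref{prop-luke-3}, exploiting that the substantial analytic work has already been done in \thref{prop-phi-is-homeo}. Write $F$ for the subspace of constant functions in $C(2^\N)$, and let $\phi : F \to 2^\N$ be the homeomorphism supplied by \thref{prop-phi-is-homeo}, sending each $f \in F$ to its unique value. The idea is to realise the evaluation map $a$ as a composite of three continuous maps: first inject the point coordinate $p$ into $F$ as the constant function $p\phi^{-1}$, then multiply inside the semigroup, and finally read off the value of the resulting constant function via $\phi$.

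Concretely, first I would note that $\phi^{-1} : 2^\N \to C(2^\N)$ is continuous, since $\phi^{-1} : 2^\N \to F$ is a homeomorphism by \thref{prop-phi-is-homeo} and the inclusion of $F$ (with the subspace topology) into $C(2^\N)$ is continuous. Next define $\gamma : 2^\N \times C(2^\N) \to C(2^\N) \times C(2^\N)$ by $(p, f)\gamma = (p\phi^{-1}, f)$. A map into a product is continuous exactly when each of its coordinate functions is, and here the two coordinates are the composite of the first projection with $\phi^{-1}$, and the second projection, respectively; hence $\gamma$ is continuous. Let $M : C(2^\N) \times C(2^\N) \to C(2^\N)$ denote the semigroup multiplication, which is continuous because the ambient topology is a semigroup topology.

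The key computation is that $(p, f)\gamma M = p\phi^{-1}\, f$ is the constant function with value $(p)f$: indeed $p\phi^{-1}$ is constant with value $p$, so $(x)(p\phi^{-1}\, f) = (p)f$ for every $x \in 2^\N$. In particular the image of $\gamma M$ lies in $F$, so $\gamma M$ corestricts to a continuous map $2^\N \times C(2^\N) \to F$ (a continuous map whose image lies in a subspace is continuous into that subspace). Composing with $\phi$ then yields $(p, f)\gamma M \phi = (p)f = (p, f)a$, exhibiting $a = \gamma M \phi$ as a composite of continuous maps, and hence continuous.

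The main obstacle here is essentially not in this argument at all but upstream, in \thref{prop-phi-is-homeo}: the continuity of $\phi^{-1}$ is exactly the delicate point, and it rests in turn on the automatic continuity result \thref{AC-cantor}. Once that is granted, the only remaining points to verify are the two elementary topological facts invoked above (continuity of a map into a product via its coordinates, and corestriction of a continuous map to a subspace containing its image), neither of which presents any real difficulty.
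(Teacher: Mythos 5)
Your proof is correct and is essentially the paper's own argument: the paper likewise factors $a$ as $\gamma M \phi$ (citing the second paragraph of the proof of \thref{prop-luke-3} for this factorisation) and reduces everything to $\phi$ being a homeomorphism, which it obtains from \thref{prop-phi-is-homeo}. Your explicit verification of the corestriction to $F$ and the continuity of $\phi^{-1}$ into $C(2^\N)$ merely spells out details the paper leaves implicit.
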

\begin{proof}
  Let $\T$ denote the given second countable Hausdorff semigroup topology on $C(2 ^
    \N)$; $F$ denote the set of constant
  functions in $C(2 ^ \N)$; and $\phi : F\to 2 ^ \N$ be the function that sends
  $f\in F$ to the unique value in $(2 ^ \N)f$. By the same argument as given in the second paragraph of the proof of  \cref{prop-luke-3}, it suffices to show that $\phi$ is a homeomorphism, which follows by \cref{prop-phi-is-homeo}. 
\end{proof}

As immediate corollaries of the last proposition together with
\cref{theorem-compact-open-hilbert}, we obtain the following.

\begin{cor}\label{minimum-Polish-cantor}
  Every second countable Hausdorff semigroup topology on the monoid of
  continuous functions on the  Cantor space $2 ^ \N$ contains the compact-open
  topology.
\end{cor}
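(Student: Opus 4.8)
The plan is to recognise this corollary as the direct composition of the two results cited immediately before it, namely \thref{prop-cantor-action} and \thref{theorem-compact-open-hilbert}. The entire content is an exercise in checking that the hypotheses line up, since the Cantor space $2^\N$ is compact and metrizable and therefore slots in as both $X$ and $Y$ in \thref{theorem-compact-open-hilbert}. No new construction is required.

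First I would fix an arbitrary second countable Hausdorff semigroup topology $\T$ on $C(2^\N)$ and record the two elementary observations that make the machinery applicable: second countability implies first countability, so $(C(2^\N), \T)$ is first countable (and Hausdorff by assumption), and $2^\N$ is a compact metrizable space. Next I would invoke \thref{prop-cantor-action}: its hypothesis is precisely that $C(2^\N)$ carries a second countable Hausdorff semigroup topology, which is exactly our standing assumption on $\T$, and its conclusion is that the evaluation map $a : 2^\N \times C(2^\N) \to 2^\N$ given by $(p, f)a = (p)f$ is continuous with respect to $\T$.

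Finally I would apply \thref{theorem-compact-open-hilbert} with $X = Y = 2^\N$. All three of its hypotheses now hold: $X$ and $Y$ are compact metrizable, $C(2^\N, 2^\N) = C(2^\N)$ is first countable and Hausdorff under $\T$, and the evaluation map $a$ is continuous by the previous step. Its conclusion is that $\T$ contains the compact-open topology, which is precisely the assertion of the corollary. Since $\T$ was an arbitrary second countable Hausdorff semigroup topology, this completes the argument.

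There is no genuine obstacle here; the only point requiring any care is verifying that the hypotheses of the two cited results match the stated assumptions exactly (in particular that ``second countable'' delivers the ``first countable'' requirement of \thref{theorem-compact-open-hilbert}), which is why the authors can legitimately label it an immediate corollary.
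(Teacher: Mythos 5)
Your proposal is correct and matches the paper exactly: the corollary is derived by combining \thref{prop-cantor-action} (continuity of the evaluation map under any second countable Hausdorff semigroup topology) with \thref{theorem-compact-open-hilbert} applied with $X = Y = 2^\N$, noting that second countability supplies the first countability hypothesis. Your hypothesis-checking is precisely the paper's intended (unwritten) argument.
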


\begin{cor}\label{cor-cantor-unique}
  The compact-open topology is the unique second countable Hausdorff semigroup
  topology on the monoid of continuous functions on the Cantor set.
\end{cor}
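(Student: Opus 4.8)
The plan is to obtain the corollary by combining the two main results of this subsection, namely that the compact-open topology is contained in every second countable Hausdorff semigroup topology (\thref{minimum-Polish-cantor}) and that it has automatic continuity (\thref{AC-cantor}); once both inclusions are in hand, the topology is pinned down uniquely. Let me write $\mathcal{K}$ for the compact-open topology on $C(2^\N)$ and let $\T$ be an arbitrary second countable Hausdorff semigroup topology on $C(2^\N)$. The goal is to show $\T = \mathcal{K}$.

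First I would invoke \thref{minimum-Polish-cantor} directly to obtain one inclusion: every second countable Hausdorff semigroup topology contains the compact-open topology, so $\mathcal{K} \subseteq \T$. This is the substantive half, but all of its difficulty has already been absorbed into the earlier results, ultimately into \thref{prop-cantor-action} (continuity of the action map) together with \thref{theorem-compact-open-hilbert}.

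For the reverse inclusion, I would consider the identity map $\id \colon (C(2^\N), \mathcal{K}) \to (C(2^\N), \T)$. It is a semigroup homomorphism, being the identity, and its codomain $(C(2^\N), \T)$ is by hypothesis a second countable topological semigroup. Since $C(2^\N)$ with the compact-open topology has automatic continuity with respect to the class of second countable topological semigroups by \thref{AC-cantor}, the map $\id$ is continuous. Continuity of the identity from $\mathcal{K}$ to $\T$ says precisely that every $\T$-open set is $\mathcal{K}$-open, i.e.\ $\T \subseteq \mathcal{K}$. Combining this with $\mathcal{K}\subseteq\T$ yields $\T = \mathcal{K}$, as required.

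There is essentially no obstacle remaining at this stage; the genuine work has been done in establishing property \textbf{X} (\thref{propx-cantor}), hence automatic continuity, and in showing the compact-open topology is a lower bound. The only point that needs care is keeping the direction of the containment straight when translating continuity of the identity map into an inclusion of topologies.
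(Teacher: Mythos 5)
Your proof is correct and follows essentially the same route as the paper: one containment from \thref{minimum-Polish-cantor} and the other by applying the automatic continuity of \thref{AC-cantor} to the identity homomorphism. Your write-up merely makes explicit the identity-map argument that the paper leaves implicit, with the containment directions handled correctly.
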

\begin{proof}
  By \cref{minimum-Polish-cantor} every such topology contains the
  compact-open topology and by \cref{AC-cantor} every such topology is
  contained in the compact-open topology.
\end{proof}

If $S$ is a semigroup, then we denote by $S ^{\dagger}$ the \textit{dual
  semigroup} of $S$, that is, $S ^ {\dagger}$ is the set $S$ together with the
operation $\ast$ where $s \ast t$ is defined to be the product of $t$ and $s$
in $S$ for all $s, t\in S$. If $\T$ is topological for the semigroup $S$, then,
by \cref{prop-anti-automorph}, $\T$ is also topological for $S ^ {\dagger}$.

\begin{prop}\label{lem-anti-automatic-continuity}
  Let $\mathcal{C}$ be a class of topological semigroups, and let $\mathcal{C}
    ^ {\dagger}$ denote the class of dual semigroups of semigroups in
  $\mathcal{C}$.
  If $S$ is a topological semigroup, then the following are equivalent:
  \begin{enumerate}[\rm (i)]
    \item
          $S$ has automatic continuity with respect to $\mathcal{C}$;
    \item
          the dual semigroup $S ^ {\dagger}$ has automatic continuity with
          respect to $\mathcal{C} ^ {\dagger}$;
    \item every anti-homomorphism from $S$ to a semigroup in $\mathcal{C} ^
            {\dagger}$ is continuous.
  \end{enumerate}
\end{prop}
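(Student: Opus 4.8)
The plan is to reduce all three statements to a single assertion about homomorphisms out of $S^{\dagger}$, by assembling a short dictionary relating homomorphisms, anti-homomorphisms, and their duals, and then observing that continuity is insensitive to dualization.

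First I would record three elementary facts. \emph{(a)} A function $\phi \colon S \to T$ satisfies $(st)\phi = (s)\phi\,(t)\phi$ for all $s, t$ if and only if, regarded as a map $S^{\dagger} \to T^{\dagger}$, it satisfies $(s \ast t)\phi = (s)\phi \ast (t)\phi$; indeed $s \ast t = ts$ in $S^{\dagger}$ and $(ts)\phi = (t)\phi\,(s)\phi = (s)\phi \ast (t)\phi$ in $T^{\dagger}$. Thus, as functions, the homomorphisms $S \to T$ are exactly the homomorphisms $S^{\dagger} \to T^{\dagger}$. \emph{(b)} A function $\theta \colon S \to U$ is an anti-homomorphism if and only if it is a homomorphism $S^{\dagger} \to U$: unwinding, $(s \ast t)\theta = (ts)\theta = (s)\theta\,(t)\theta$ is precisely the homomorphism condition for $S^{\dagger} \to U$. \emph{(c)} Dualization leaves the underlying set and the topology of a semigroup unchanged, and by \thref{prop-anti-automorph} the dual of a topological semigroup is again a topological semigroup; in particular $\mathcal{C}^{\dagger}$ is a legitimate class of topological semigroups, and a map between two semigroups is continuous if and only if it is continuous between their duals, since continuity depends only on the topologies.

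With this dictionary in hand, the equivalences become bookkeeping. For (i) $\Leftrightarrow$ (ii), fact \emph{(a)} gives a bijection, implemented by the identity on underlying functions, between homomorphisms $S \to T$ and homomorphisms $S^{\dagger} \to T^{\dagger}$; as $T$ ranges over $\mathcal{C}$, the target $T^{\dagger}$ ranges over $\mathcal{C}^{\dagger}$, and by \emph{(c)} a member of one family is continuous exactly when the corresponding member of the other is. Hence every homomorphism from $S$ into a member of $\mathcal{C}$ is continuous if and only if every homomorphism from $S^{\dagger}$ into a member of $\mathcal{C}^{\dagger}$ is continuous. For (ii) $\Leftrightarrow$ (iii), apply fact \emph{(b)} with $U = T^{\dagger}$: the homomorphisms $S^{\dagger} \to T^{\dagger}$ are exactly the anti-homomorphisms $S \to T^{\dagger}$, and again $T^{\dagger}$ ranges over $\mathcal{C}^{\dagger}$ as $T$ ranges over $\mathcal{C}$. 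Since continuity is the same notion on both sides by \emph{(c)}, statement (ii) and statement (iii) are verbatim the same assertion.

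There is no serious obstacle here; the argument is pure definition-chasing. The one point that genuinely requires justification, rather than being a triviality, is that dualization does not disturb the topology -- so that continuity, and the very property of being a topological semigroup, transfer between $T$ and $T^{\dagger}$ -- and this is exactly what \thref{prop-anti-automorph} supplies. The remaining care is simply to keep careful track of which of $S$, $S^{\dagger}$, $T$, $T^{\dagger}$ plays the role of source and of target at each stage.
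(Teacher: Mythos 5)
Your proof is correct and is essentially the paper's argument spelled out in full: the paper's (one-line) proof works via the identity anti-isomorphism $\iota: T \to T^{\dagger}$ together with \thref{prop-anti-automorph}, which is exactly your dictionary identifying the same underlying function as a homomorphism or anti-homomorphism between the appropriately dualized semigroups, with continuity unaffected since dualization preserves the topology.
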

\begin{proof}
  The identity function $\iota : T \to T ^ {\dagger}$ is an anti-isomorphism
  from $T$ to its dual semigroup $T ^ {\dagger}$. The proof can be completed
  using \cref{prop-anti-automorph} and the fact that the composite of continuous
  functions is continuous.
\end{proof}

A \textit{Stone space} (also called a \textit{Boolean space}) is a compact
Hausdorff topological space with a basis of clopen sets. A classical result of
Stone~\cite{Stone1937} gives a correspondence between Stone spaces and Boolean
algebras. In modern terminology, this is an equivalence, called \textit{Stone
  duality}, between the category of Boolean algebras and homomorphisms, and the
dual category of Stone spaces and continuous functions; see, for
example,~\cite[Theorem 34]{givant2008introduction} for more details. Up to
isomorphism, there is only one countably infinite atomless Boolean algebra
which we denote by \(B_\infty\). This object is the Fra\"iss\'e limit of the
class of non-trivial finite Boolean algebras. The countably infinite atomless
Boolean algebra $B_{\infty}$ is also the Stone dual object of the Cantor space
$2 ^ \N$ and thus \(\End(B_{\infty})\) is anti-isomorphic to the monoid
\(C(2^\N)\).

\begin{theorem}\label{AC-Boolean}
  If $B_{\infty}$ is the countably infinite atomless Boolean algebra, then
  the following hold:
  \begin{enumerate}[\rm (i)]
    \item\label{AC-Boolean-i} the pointwise topology is the unique second countable Hausdorff
          semigroup topology on \(\End(B_{\infty})\);
    \item\label{AC-Boolean-ii} \(\End(B_{\infty})\)	has automatic continuity with respect to the
          pointwise topology and the class of second countable topological semigroups.
  \end{enumerate}
\end{theorem}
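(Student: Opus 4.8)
The plan is to transport the corresponding results for $C(2^\N)$ across Stone duality. As noted just before the statement, $\End(B_\infty)$ is anti-isomorphic to $C(2^\N)$; I fix such an anti-isomorphism $\Theta\colon \End(B_\infty)\to C(2^\N)$ and identify $B_\infty$ with the Boolean algebra of clopen subsets of $2^\N$. Writing $\hat e := (e)\Theta$, the defining property of the duality is that $\hat e$ acts on clopen sets by preimage, i.e.\ $(a)e = \hat e^{-1}(a)$ for every clopen $a\subseteq 2^\N$. The key step, and the one I expect to carry the real content, is to show that $\Theta$ is a homeomorphism from $\End(B_\infty)$ with the pointwise topology to $C(2^\N)$ with the compact-open topology. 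Once this is established, both parts follow formally from \thref{cor-cantor-unique}, \thref{AC-cantor}, and \thref{lem-anti-automatic-continuity}.

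To prove the homeomorphism claim, recall that the pointwise topology on $\End(B_\infty)$ has subbasis $\set{N_{a,b}}{a,b\in B_\infty}$ where $N_{a,b}=\set{e\in\End(B_\infty)}{(a)e=b}$. For the first inclusion, $(N_{a,b})\Theta = \set{\hat e}{\hat e^{-1}(a)=b}$ equals $[b,a]\cap[2^\N\setminus b,\,2^\N\setminus a]$; since $b$, $2^\N\setminus b$ are clopen (hence compact) and $a$, $2^\N\setminus a$ are open, this is open in the compact-open topology. For the reverse inclusion I would first check that, because $2^\N$ is compact with a basis of clopen sets, the sets $[K,U]$ with $K,U$ clopen form a subbasis for the compact-open topology: given $g\in[K,U]$, one covers $K$ by finitely many clopen sets $g^{-1}(W_{x_i})$ with $g(x_i)\in W_{x_i}\subseteq U$, and then $g\in[V,W]\subseteq[K,U]$ for the clopen sets $V=\bigcup_i g^{-1}(W_{x_i})$ and $W=\bigcup_i W_{x_i}$. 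For clopen $K,U$ one then computes $([K,U])\Theta^{-1}=\set{e}{K\subseteq \hat e^{-1}(U)} = \set{e}{K\subseteq (U)e} = \bigcup_{c\supseteq K} N_{U,c}$, the union over clopen $c$ containing $K$, which is open in the pointwise topology. Hence $\Theta$ carries one topology onto the other.

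For part (i), let $\T$ be a second countable Hausdorff semigroup topology on $\End(B_\infty)$ and let $\T'=(\T)\Theta$ be its image on $C(2^\N)$. Being a homeomorphic image, $\T'$ is second countable and Hausdorff, and since $\Theta$ is an isomorphism onto the dual semigroup $C(2^\N)^{\dagger}$, and a topology is a semigroup topology for a semigroup if and only if it is one for its dual (the multiplication of $C(2^\N)^{\dagger}$ being that of $C(2^\N)$ precomposed with the coordinate swap), $\T'$ is a semigroup topology on $C(2^\N)$. By \thref{cor-cantor-unique}, $\T'$ is the compact-open topology, so $\T=(\T')\Theta^{-1}$ is the pointwise topology by the homeomorphism claim; conversely the pointwise topology is itself such a topology, which gives uniqueness.

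For part (ii), let $\mathcal{C}$ be the class of second countable topological semigroups, and note $\mathcal{C}^{\dagger}=\mathcal{C}$, since reversing multiplication preserves both continuity of multiplication and second countability. By \thref{AC-cantor}, $C(2^\N)$ has automatic continuity with respect to $\mathcal{C}$, so by \thref{lem-anti-automatic-continuity}(iii) every anti-homomorphism from $C(2^\N)$ to a member of $\mathcal{C}^{\dagger}=\mathcal{C}$ is continuous. Now let $\phi\colon \End(B_\infty)\to T$ be a homomorphism with $T\in\mathcal{C}$; then $\Theta^{-1}\phi\colon C(2^\N)\to T$ is an anti-homomorphism and hence continuous, and since $\Theta$ is continuous by the homeomorphism claim, $\phi = \Theta\,(\Theta^{-1}\phi)$ is continuous. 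This establishes the automatic continuity of $\End(B_\infty)$ with the pointwise topology, completing the plan. The main obstacle throughout is the homeomorphism claim of the second paragraph; everything else is a formal transfer along $\Theta$.
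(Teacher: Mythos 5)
Your proof is correct, and its skeleton---transferring the results for $C(2^\N)$ along the Stone-duality anti-isomorphism using \thref{cor-cantor-unique}, \thref{AC-cantor}, and \thref{lem-anti-automatic-continuity}---is the same as the paper's. The genuine difference lies in how the pointwise topology gets identified with the transported compact-open topology. You prove this directly, via the subbasis computations $(N_{a,b})\Theta=[b,a]\cap[2^\N\setminus b,\,2^\N\setminus a]$ and $([K,U])\Theta^{-1}=\bigcup_{c\supseteq K}N_{U,c}$, together with the (correct) observation that the sets $[K,U]$ with $K,U$ clopen already generate the compact-open topology on $C(2^\N)$; all of these computations check out. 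The paper never does this: it transports uniqueness abstractly (by \thref{prop-anti-automorph} and \thref{cor-cantor-unique}, $\End(B_{\infty})$ has a unique second countable Hausdorff semigroup topology), observes that the pointwise topology is itself a second countable Hausdorff semigroup topology, and concludes it must be the unique one---so the homeomorphism you compute by hand falls out as a corollary of uniqueness rather than serving as an input. The paper's route is shorter, but yours is more self-contained: it makes explicit the continuity of the anti-isomorphism, on which the paper's two-line proof of part (ii) implicitly relies, and it independently verifies the pointwise/compact-open translation under Stone duality, which the paper only asserts without proof later, in Section~\ref{subsection-poly-clone}. Your part (ii), factoring $\phi=\Theta\,(\Theta^{-1}\phi)$ with $\Theta^{-1}\phi$ an anti-homomorphism handled by \thref{lem-anti-automatic-continuity}(iii), matches the paper's factorisation exactly.
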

\begin{proof}
  \noindent \textbf{(\ref{AC-Boolean-i}).}
  Since \(\End(B_{\infty})\) is anti-isomorphic to \(C(2^\N)\), from
  \cref{prop-anti-automorph} and \cref{cor-cantor-unique}, it follows that
  \(\End(B_{\infty})\) has a unique second countable Hausdorff semigroup
  topology. The pointwise topology is a second countable and Hausdorff semigroup
  topology on $\End(B_{\infty})$, and hence the only one.
  \medskip

  \noindent \textbf{(\ref{AC-Boolean-ii}).}
  The automatic continuity then follows from
  \cref{lem-anti-automatic-continuity} and \cref{AC-cantor} as any homomorphism
  from \(\End(B_{\infty})\) can be factored into an anti-isomorphism to
  \(C(2^\N)\) followed by an anti-homomorphism from \(C(2^\N)\).
\end{proof}

\section{A little foray into the land of clones}\label{section-clones}

Further examples of function clones which have automatic homeomorphicity with respect to close subclones of $\mathscr{O}_\N$ can be found:
\begin{itemize}
  \addtocounter{enumi}{11}
  \item the countable generic posets $(P, \leq)$ and $(P, <)$;

  \item the rational Urysohn space;

  \item the rational Urysohn sphere;

  \item the countable random graph with all the loops added.
\end{itemize}

In this section we will extend some of the results in the preceding sections from monoids to clones, which are introduced in the following subsections.  

\subsection{Function clones}
If $C$ is any set, $g: C ^ m \to C$ for some $m\in \N\setminus \{0\}$ and $f_1,\ldots, f_m: C ^ n \to C$ for some $n\in \N\setminus\{0\}$, then we define $(f_1,\ldots, f_m) \circ_{m, n} g : C ^ n \to C$ by
\[
  (x_1,\ldots, x_n) \mapsto \left( (x_1,\ldots, x_n)f_1, \ldots, (x_1, \ldots, x_n)f_m \right)g.      \]
If the values of $m$ and $n$ are clear from the context, then we write $\circ$ instead of $\circ_{m, n}$.
We denote by $\pi ^ n_i : C ^ n \to C$ the \textit{$i$-th projection of $C ^ n$} which is defined by
\[
  (x_1, \ldots, x_n) \mapsto x_i.
\]

A \emph{(function) clone $\mathscr{C}$} with domain $C$ is a set of functions of finite
arity from $C$ to $C$ which is closed under composition  and also contains all
projections. More precisely, the following hold:
\begin{enumerate}[(i)]
  \item if $g \in \mathscr{C}$ is $m$-ary and $f_1,\ldots, f_m \in
          \mathscr{C}$ are $n$-ary, then
        $(f_1,\ldots, f_m) \circ g\in \mathscr{C}$;

  \item $\pi ^ n_i\in \mathscr{C}$ for every $n\in \N$ and every $i \in \{1, \ldots, n\}$.
\end{enumerate}
If $\mathscr{C}$ is a function clone with domain $C$ and $\T$ is a topology on $C$, then we will say that
$\T$ is \textit{topological} for $\mathscr{C}$, or $\T$ is a \textit{clone topology}, on $\mathscr{C}$ if the composition function $\circ_{m, n}$ is continuous for every $m, n\in\N$;
see~\cite{Bodirsky2017aa} for more information.

Let $\mathscr{O}_X$ denote \emph{the full function clone on a set $X$}, that is,
the set of all finite arity functions from $X$ to
itself. In the definition of $\mathscr{O}_X$ given here, following \cite{Bodirsky2017aa,kerkhoff2014short,wattanatripop2020clones},
nullary functions are not permitted.
Some authors, see, for example, \cite{behrisch2014clones,mckenzie2018algebras,trnkova2009all}, allow nullary operations in the definition of $\mathscr{O}_X$.

We define the \emph{topology of pointwise convergence} on
$\mathscr{O}_X$ in a similar way to its namesake for the full transformation
monoid $X ^ X$, with the subbasic opens sets of the form
\[
  U_{(a_1, \dots, a_n),b}=\{ f \in \mathscr{O}_X \colon (a_1, \ldots, a_n)f = b\}
\]
for some fixed $a_1, \ldots, a_n, b \in X$. Similarly to the monoid case, if $X$ is countably infinite, this topology is Polish and topological for the full function clone $\mathscr{O}_X$.

A \textit{polymorphism} of a structure $\mathbb{X}$ is a homomorphism from some finite and positive power of $\mathbb{X}$ into $\mathbb{X}$. The set of all polymorphisms of $\mathbb{X}$ forms a function clone on the set $X$ which is closed in $\mathscr{O}_X$. This clone is called the \textit{polymorphism clone of $\mathbb{X}$} and denoted by $\Poly(\mathbb{X})$.

By restricting to the morphisms in a fixed category, we can extend this definition to obtain a clone of polymorphisms, for any object, in any category of sets and functions which allows finite products. For example we consider the clone \(\Poly(2^\N)\) of continuous maps from finite, positive powers of the Cantor space to itself.

A \textit{clone homomorphism} is a map between function clones which preserves arities, maps projections to corresponding projections, and preserves the composition maps \(\circ_{m, n}\) for every \(n\) and \(m\).
A topological clone $\mathscr{C}$ is said to have \emph{automatic continuity} with
respect to a class of topological clones, if every homomorphism from $\mathscr{C}$ to a
member of that class is continuous. 
The following observation and lemma associate a semigroup to each clone. They will be used to show that several clones have automatic continuity by using the automatic continuity of the associated semigroups. 

Let $\mathscr{C}$ be a function clone which is topological with respect to
the topology $\mathcal{T}$. Define a semigroup $S_{\mathscr{C}}$ in the
following way: the elements of $S_{\mathscr{C}}$ are the functions in
$\mathscr{C}$; and the binary operation $*$ on $S_{\mathscr{C}}$ is given
by
\[
  (x_1,\ldots, x_n) f * g = ((x_1, \ldots, x_n)f, \ldots,
  (x_1, \ldots, x_n)f)g
\]
for all $x_1, \ldots, x_n \in C$, whenever $f$ is an $n$-ary function in
$\mathscr{C}$. It is routine to verify that $*$, as defined above, is
associative, and so $S_{\mathscr{C}}$ is indeed a semigroup. Moreover, all clone homomorphisms from a function clone to a function clone are also homomorphisms of the corresponding semigroups. Observe also, that if both $f$ and $g$ are unary functions, then $*$ is simply
the usual composition of functions. Finally, it follows from the fact that
composition of functions in $\mathscr{C}$ is continuous under
$\mathcal{T}$, that $\mathcal{T}$ makes $S_{\mathscr{C}}$ into a topological
semigroup.

\begin{lem}\label{lem:lift-ac}
  Let \(\mathscr{C}\) be a topological clone, and suppose that \(S_\mathscr{C}\) has automatic continuity with respect to the topology on \(\mathscr{C}\) and the class of second countable topological semigroups. Then \(\mathscr{C}\) has automatic continuity with respect to the class of second countable topological clones.
\end{lem}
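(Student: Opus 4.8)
The plan is to reduce the continuity of an arbitrary clone homomorphism out of $\mathscr{C}$ to the assumed automatic continuity of the associated semigroup $S_{\mathscr{C}}$. Fix a second countable topological clone $\mathscr{D}$ and a clone homomorphism $\Phi : \mathscr{C} \to \mathscr{D}$; we must show that $\Phi$ is continuous with respect to the topologies on $\mathscr{C}$ and $\mathscr{D}$. First I would pass to the associated topological semigroups $S_{\mathscr{C}}$ and $S_{\mathscr{D}}$ constructed immediately before the lemma. By the observation recorded there, $\Phi$, viewed simply as a map of underlying sets, is at once a clone homomorphism and a homomorphism of the associated semigroups $S_{\mathscr{C}} \to S_{\mathscr{D}}$.

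Second, I would check that $S_{\mathscr{D}}$ lies in the class of second countable topological semigroups. Its underlying set and its topology coincide with those of $\mathscr{D}$, so it is second countable because $\mathscr{D}$ is. Moreover, the binary operation $\ast$ of $S_{\mathscr{D}}$ is assembled from the continuous composition maps $\circ_{m, n}$ of $\mathscr{D}$ together with the continuous diagonal maps $f \mapsto (f, \ldots, f)$; exactly as noted in the construction of $S_{\mathscr{C}}$, this makes $\ast$ continuous, so $S_{\mathscr{D}}$ is a topological semigroup.

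Third, I would apply the hypothesis. Since $S_{\mathscr{C}}$ has automatic continuity with respect to its topology and the class of second countable topological semigroups, the semigroup homomorphism $\Phi : S_{\mathscr{C}} \to S_{\mathscr{D}}$ is continuous. Because the topology on each associated semigroup is literally the topology of the corresponding clone, and $\Phi$ is the same underlying map in both pictures, continuity of $\Phi$ as a semigroup homomorphism is precisely continuity of $\Phi$ as a clone homomorphism. Hence $\Phi$ is continuous, and as $\mathscr{D}$ and $\Phi$ were arbitrary, $\mathscr{C}$ has automatic continuity with respect to the class of second countable topological clones.

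The argument is in essence a bookkeeping reduction, so no step is genuinely deep; the only point needing care, and the one I would flag as the main obstacle, is confirming that the assignment $\mathscr{C} \rightsquigarrow S_{\mathscr{C}}$ behaves functorially in the two required ways: that a clone homomorphism truly respects the derived operation $\ast$ (so that it really is a semigroup homomorphism), and that the class of admissible test objects is preserved, i.e.\ that $S_{\mathscr{D}}$ is again a \emph{second countable topological} semigroup. Both facts are furnished by the remarks preceding the lemma, so once they are invoked the conclusion is immediate.
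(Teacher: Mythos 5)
Your proposal is correct and follows essentially the same route as the paper's proof: view the clone homomorphism as a homomorphism of the associated semigroups $S_{\mathscr{C}} \to S_{\mathscr{D}}$, note that $S_{\mathscr{D}}$ is a second countable topological semigroup by the remarks preceding the lemma, and invoke the automatic continuity hypothesis on $S_{\mathscr{C}}$. The extra details you supply (second countability of $S_{\mathscr{D}}$ and continuity of $\ast$ via the composition and diagonal maps) merely make explicit what the paper delegates to its pre-lemma discussion.
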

\begin{proof}
  Suppose that \(\mathscr{A}\) is a second countable topological clone. Let \(f:\mathscr{C}\to \mathscr{A}\) be a clone homomorphism. It follows from the definitions of \(S_\mathscr{C}\) and \(S_\mathscr{A}\) that \(f\) is also a homomorphism between these semigroups.  The semigroups \(S_\mathscr{C}\) and \(S_\mathscr{A}\)  are topological with respect to the topologies on \(\mathscr{C}\) and \(\mathscr{A}\), respectively, as discussed before the lemma. Thus by the assumption that \(S_{\mathscr{C}}\) has automatic continuity, the map \(f\) is continuous.
\end{proof}

\subsection{The full function clone on a countably infinite set}
\label{subsection-full-clone}
Let $X$ be a countably infinite set. In Section \ref{subsection-full-transf} it was shown that the pointwise topology is the unique Polish semigroup topology on the full transformation monoid $X^X$ and that $X^X$ under the pointwise topology has automatic continuity with respect to the class of second countable topological semigroups. We will now show that the full function clone $\mathscr{O}_X$ has the analogous properties for clones. 
\begin{theorem}
  Let $X$ be a countably infinite set. The topological clone $\mathscr{O}_X$ with the pointwise topology has automatic continuity with respect to the class of second countable topological clones.
\end{theorem}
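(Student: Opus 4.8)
The plan is to apply Lemma \ref{lem:lift-ac}: it suffices to prove that the semigroup $S_{\mathscr{O}_X}$ associated to $\mathscr{O}_X$ has automatic continuity with respect to the pointwise topology and the class of second countable topological semigroups. To this end I would show that $S_{\mathscr{O}_X}$ (which is a topological, hence semitopological, semigroup under the pointwise topology, as observed before Lemma \ref{lem:lift-ac}) has property \textbf{X} with respect to the set $X^X$ of unary functions. Recall that $X^X$ is a subsemigroup of $S_{\mathscr{O}_X}$ on which $*$ restricts to ordinary composition, and on which the subspace topology is exactly the pointwise topology on $X^X$ (the higher-arity subbasic sets meet $X^X$ trivially). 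Since $X$ is countably infinite, $X^X$ with the pointwise topology has automatic continuity with respect to the class of second countable topological semigroups by \thref{propx-fulltrans}(iv); so property \textbf{X} together with \thref{lem-luke-0}(iv) will give automatic continuity for $S_{\mathscr{O}_X}$, and Lemma \ref{lem:lift-ac} then finishes the proof.

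To verify property \textbf{X}, let $s\in S_{\mathscr{O}_X}$ be $n$-ary. As $X$ is countably infinite there is a bijection $e\colon X^n\to X$, which I regard as an $n$-ary element of $S_{\mathscr{O}_X}$. I set $f_s=e$, $g_s=\pi^1_1=\operatorname{id}_X$, and $t_s = e^{-1}s\in X^X$, the unary function with $(v)t_s = ((v)e^{-1})s$. Unwinding the definition of $*$ (note $f_s * t_s * g_s$ is again $n$-ary and collapses the input through the single value $(x_1,\dots,x_n)e$), one gets $(x_1,\dots,x_n)(f_s * t_s * g_s) = (((x_1,\dots,x_n)e)e^{-1})s = (x_1,\dots,x_n)s$, so $s = f_s * t_s * g_s$ with $t_s\in X^X$.

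It then remains to check the neighbourhood condition. Let $B$ be a neighbourhood of $t_s$ in $S_{\mathscr{O}_X}$. Then $B\cap X^X$ contains a basic pointwise-open neighbourhood $B' = \{\, k\in X^X : (v_i)k = w_i \text{ for } i=1,\dots,m \,\}$ of $t_s$, where $(v_i)t_s = w_i$. The key computation is
\[
  f_s * B' * g_s = \{\, e * k : k\in B' \,\} = \bigcap_{i=1}^m U_{(v_i)e^{-1},\, w_i},
\]
where $(v_i)e^{-1}\in X^n$. Indeed, for $k\in X^X$ and $n$-ary $r$ one has $e * k = r$ if and only if $k = e^{-1}r$ (using that $e$ is a bijection), and then $(v_i)k = w_i$ if and only if $((v_i)e^{-1})r = w_i$. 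Hence $f_s * B' * g_s$ is a basic open set in the pointwise topology, and it contains $s = e * t_s$ since $t_s\in B'$. Therefore $f_s * (B\cap X^X) * g_s \supseteq f_s * B' * g_s$ is a neighbourhood of $s$, establishing property \textbf{X} with respect to $X^X$.

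The only genuine difficulty is the arity bookkeeping in the product $*$: since $f_s * k$ has the same arity as $f_s$ and factors every input through the single value $(x_1,\dots,x_n)f_s$, one must take $f_s$ to be a \emph{bijection} $X^n\to X$, so that left multiplication by $f_s$ identifies the unary functions with the $n$-ary functions and carries the open condition on the unary factor $k$ onto an open condition on the $n$-ary product. Everything else is a routine invocation of \thref{lem-luke-0}(iv), \thref{propx-fulltrans}(iv), and Lemma \ref{lem:lift-ac}, which combine to yield the theorem.
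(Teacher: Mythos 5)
Your proposal is correct. It runs the same overall machinery as the paper --- property \textbf{X} for the semigroup $S_{\mathscr{O}_X}$, then \thref{lem-luke-0}(iv), then the lift from semigroups to clones via Lemma~\ref{lem:lift-ac} --- but with a genuinely different decomposition. The paper proves property \textbf{X} with respect to $\Sym(X)$ in a single step: for $n$-ary $s$ it picks an injective $n$-ary $f$ whose image has infinite complement and a surjective unary $g$ with infinite fibres, and must twice extend injective partial maps to permutations (once to produce $t_s$, and again to produce the witness $b$ for each $u$ in the candidate neighbourhood), so the key step is the verified inclusion $U \subseteq f(B\cap \Sym(X))g$. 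You instead take $A = X^X$ and exploit that a bijection $e\colon X^n \to X$ makes $k \mapsto e * k$ a bijection from the unary functions onto the $n$-ary ones carrying basic pointwise-open sets to basic pointwise-open sets, so that $s = e * (e^{-1}s) * \id_X$ and $f_s * B' * g_s$ is computed \emph{exactly} as $\bigcap_{i=1}^m U_{(v_i)e^{-1},\, w_i}$; no extension argument is needed, and you then quote the automatic continuity of $X^X$ (\thref{propx-fulltrans}(iv)) rather than that of $\Sym(X)$. What the paper's route buys is uniformity with the pattern used throughout, where property \textbf{X} is always taken relative to the group of units, so the only external input is the automatic continuity of $\Sym(X)$; what yours buys is brevity and modularity --- the witnesses are exact rather than merely contained --- and, combined with \thref{propx-fulltrans}(i) and \thref{lem-prop-X-transitive}, your argument even recovers the paper's property \textbf{X} with respect to $\Sym(X)$ as a corollary. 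Two trivial points to tidy: take $m \geq 1$ in $B'$ (or observe that the set of $n$-ary functions in $\mathscr{O}_X$ is itself pointwise-open, being a union of subbasic sets $U_{\mathbf{x},y}$ over $y \in X$) so the displayed intersection never degenerates; and it is worth making explicit that $U_{(a_1,\ldots,a_n),b}$ contains only $n$-ary functions, which is precisely why your identity is an equality of sets and not merely a containment.
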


\begin{proof}
  As discussed above, the topology of pointwise convergence $\mathcal{P}$
  makes the semigroup $S_{\mathscr{O}_X}$ into a topological semigroup.
  Moreover, $\Sym(X) \subseteq \mathscr{O}_X$ and it has automatic continuity under
  the subspace topology of $\mathcal{P}$. Therefore, by~\cref{lem-luke-0}\eqref{lem-luke-0-iv},
  it
  suffices to show that $S_{\mathscr{O}_X}$ satisfies property \textbf{X} with
  respect to $\Sym(X)$. In order to do so fix $s \in \mathscr{O}_X$ of arity $n
    \geq 1$.

  Let $f$ be any injective $n$-ary function such that  the set $X \setminus
    \im(f)$ is infinite, and let $g$ be a unary surjective function such
  that $(x)g^{-1}$ is infinite for every $x \in X$. It is now routine
  to define a unary $t_s$ such that $s = f t_s g$. In particular, choose
  $t'$, an injective partial transformation on $X$, such that $\left( (\mathbf{x})f
    \right)t' \in \left( (\mathbf{x})s \right)g^{-1}$ for ever $\mathbf{x} \in X^n$. Since $t'$
  could have been chosen so that the sets $X \setminus \dom(t')$ and $X
    \setminus \im(t')$ are both countably infinite, $t'$ can be extended to
  $t_s \in \Sym(X)$, so that $s = f t_s g$.

  Let $B$ be a basic open ball containing $t_s$, that is,
  \[
    B = \{h \in X^X \colon (x)h = (x)t_s \text{ for all } x\in F\}
  \]
  for some fixed finite set $F \subseteq X$. Next, we will show that the
  open neighbourhood $U = \{u \in X^{X^n} \colon (\mathbf{x})u = (\mathbf{x})s \text{ for all } \mathbf{x}
    \in (F)f^{-1} \}$ of $s$ is contained in $f (B \cap \Sym(X)) g$. Let $u \in
    U$. Similarly to before, let $b'$ be an injective partial transformation on
  $X$ such that $\left((\mathbf{x})f \right)b' \in \left( (\mathbf{x})u \right)g^{-1}$ for all $\mathbf{x} \in X^n$.
  Moreover, this can be done in such a way that $(x)b' = (x)t_s$ for all $x
    \in F$. Since $F$ is finite, we may assume that $b'$ was chosen so that the
  sets $X \setminus \dom(b')$ and $X \setminus \im(b')$ are countably
  infinite. Hence $b'$ can be extended to $b \in \Sym(X)$, and therefore $u =
    f b g \in f (B \cap \Sym(X)) g$, as required.
\end{proof}

The proof of the next lemma is very similar to the proof of
\cref{thm-elliott-1}\eqref{thm-elliott-1-ii} $\Rightarrow$ \eqref{thm-elliott-1-iii} and omitted.
\begin{lem}\label{lem-from-semigroup-to-clone}
  Let $\mathscr{C}$ be a function clone on an infinite set $X$ containing all
  of the constant functions and such that for every $x\in X$ there exists a
  unary $f_{x}\in \mathscr{C}$ such that $(x) f_{x} ^ {-1} = \{x\}$ and
  $(X)f_{x}$ is finite.  If $\mathcal{T}$ is a $T_1$ clone topology of
  $\mathscr{C}$, then $\{f\in \mathscr{C} \colon (a_1, \ldots, a_n)f = b \}$
  is open in $\mathcal{T}$ for all $n \geq 1$ and all $a, b \in X$.
\end{lem}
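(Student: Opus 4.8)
The plan is to imitate the proof of the implication (ii) $\Rightarrow$ (iii) in \thref{thm-elliott-1}: rather than showing the set $\{f\in\mathscr{C}\colon (a_1,\ldots,a_n)f=b\}$ is open directly, I would realise its complement as the preimage, under a suitable continuous map, of a \emph{finite} set of constant functions. Since $\mathcal{T}$ is $T_1$ this finite set is closed, so its preimage is closed and the original set is open. The role played in the semigroup argument by left multiplication by a constant and right multiplication by $f_z$ will here be played by precomposition with constants (to substitute $a_1,\ldots,a_n$) and postcomposition with $f_b$ (to collapse the infinitely many possible output values into the finite set $(X)f_b$).

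Concretely, fix $n\geq 1$ and $a_1,\ldots,a_n,b\in X$. Let $\overline{a_i}$ be the unary constant function with value $a_i$, which lies in $\mathscr{C}$ by hypothesis, and let $f_b$ be the unary function with $(b)f_b^{-1}=\{b\}$ and $(X)f_b$ finite. On the $n$-ary part of $\mathscr{C}$ I would define
\[
  (f)\Psi = \bigl((\overline{a_1},\ldots,\overline{a_n})\circ f\bigr)\circ f_b .
\]
For every $x\in X$ one has $(x)\bigl((\overline{a_1},\ldots,\overline{a_n})\circ f\bigr)=(a_1,\ldots,a_n)f$, so this intermediate function is the unary constant with value $(a_1,\ldots,a_n)f$, and postcomposing with $f_b$ shows that $(f)\Psi$ is the unary constant with value $\bigl((a_1,\ldots,a_n)f\bigr)f_b\in(X)f_b$. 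Thus $\Psi$ takes values in the finite set of constant functions whose value lies in $(X)f_b$. Moreover $\Psi$ is continuous: the assignment $f\mapsto(\overline{a_1},\ldots,\overline{a_n})\circ f$ is continuous because $\circ_{n,1}$ is continuous and its first argument is held fixed, and $h\mapsto h\circ f_b$ is continuous because $\circ_{1,1}$ is continuous and its second argument is held fixed.

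The key equivalence, exactly analogous to the use of $(x)f_z^{-1}=\{x\}$ in \thref{thm-elliott-1}, is that, writing $b'=(a_1,\ldots,a_n)f$, one has $(b')f_b=(b)f_b$ if and only if $b'=b$; indeed $(b)f_b=b$ and $(b)f_b^{-1}=\{b\}$, so $(b')f_b=b$ forces $b'\in(b)f_b^{-1}=\{b\}$. Consequently the complement of $\{f\colon (a_1,\ldots,a_n)f=b\}$ in the $n$-ary part of $\mathscr{C}$ is precisely the preimage under $\Psi$ of those constant functions in $(X)f_b$ with value different from $b$, a finite union of singletons and hence closed by $T_1$-ness; its preimage under the continuous $\Psi$ is closed, so the set itself is open. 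I expect the only delicate points to be the bookkeeping of arities in the two composition maps and the verification that $\Psi$ lands in finitely many constant functions — both being the analogues of "the preimage of a finite set of constants is closed" in the transformation-monoid proof — while the remainder follows the pattern of \thref{thm-elliott-1} verbatim.
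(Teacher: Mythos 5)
Your proposal is correct and is essentially the paper's own argument: the paper omits the proof, saying only that it is very similar to \thref{thm-elliott-1}(ii)$\Rightarrow$(iii), and your map $\Psi$ --- substituting the constants $\overline{a_1},\ldots,\overline{a_n}$ via $\circ_{n,1}$, then post-composing with $f_b$ via $\circ_{1,1}$ so that the image lies in the finitely many constant functions with values in $(X)f_b$, of which those with value different from $b$ form a closed set by $T_1$-ness, with the equivalence $\bigl((a_1,\ldots,a_n)f\bigr)f_b=b \Leftrightarrow (a_1,\ldots,a_n)f=b$ coming from $(b)f_b^{-1}=\{b\}$ --- is exactly the intended adaptation of the semigroup argument, where left multiplication by a constant and right multiplication by $f_z$ are replaced as you describe. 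The one point worth making explicit is that $\Psi$ is defined only on the $n$-ary part $\mathscr{C}^{(n)}$, so literally you obtain openness in the subspace topology on $\mathscr{C}^{(n)}$; this yields openness in $\mathcal{T}$ under the standard convention (implicit in the paper, as in the topological-clones literature) that the arity parts are open in the disjoint union, so your ``bookkeeping of arities'' caveat correctly identifies the only delicate step.
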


Combining the lemmas above we obtain the following result.
\begin{cor}
  Let $X$ be a countably infinite set. The topology of pointwise convergence
  is the unique Polish topology which makes $\mathscr{O}_X$ into a
  topological clone.
\end{cor}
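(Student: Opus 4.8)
The plan is to show that the pointwise topology is both the minimum and the maximum among the Polish clone topologies on $\mathscr{O}_X$, from which uniqueness follows. The key tool is the passage between clone topologies and semigroup topologies on the associated semigroup $S_{\mathscr{O}_X}$, together with the earlier results on the full transformation monoid $X^X$.

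First I would establish that the pointwise topology is contained in every Polish (indeed every $T_1$) clone topology $\mathcal{T}$ on $\mathscr{O}_X$. Since $X$ is countably infinite, $\mathscr{O}_X$ contains all constant functions, and for every $x\in X$ there is a unary $f_x$ with $(x)f_x^{-1}=\{x\}$ and $(X)f_x$ finite (for example a function collapsing everything off $x$ to a fixed point). Hence \thref{lem-from-semigroup-to-clone} applies, and it shows directly that each subbasic pointwise-open set $\{f\in\mathscr{O}_X : (a_1,\ldots,a_n)f = b\}$ is open in $\mathcal{T}$. Therefore $\mathcal{P}\subseteq\mathcal{T}$.

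Next I would show the reverse containment for Polish topologies, namely that every Polish clone topology on $\mathscr{O}_X$ is contained in $\mathcal{P}$. This is exactly where the automatic continuity theorem proved just above does the work: suppose $\mathcal{T}$ is a Polish clone topology. The identity map $\id:(\mathscr{O}_X,\mathcal{P})\to(\mathscr{O}_X,\mathcal{T})$ is a clone homomorphism. Since $\mathcal{P}$ is Polish (hence second countable) and $\mathscr{O}_X$ with $\mathcal{P}$ has automatic continuity with respect to the class of second countable topological clones, and since a Polish topology is second countable, the map $\id$ is continuous; this gives $\mathcal{T}\subseteq\mathcal{P}$. Combining the two inclusions yields $\mathcal{T}=\mathcal{P}$ for every Polish clone topology, and since $\mathcal{P}$ itself is Polish and topological for $\mathscr{O}_X$ (as noted in Section~\ref{subsection-full-clone}), it is the unique such topology.

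The main obstacle I anticipate is purely bookkeeping rather than conceptual: one must confirm that a Polish clone topology is second countable so that the automatic continuity hypothesis genuinely applies, and that the identity map is legitimately a clone homomorphism (it preserves arities, fixes projections, and respects each $\circ_{m,n}$ trivially). Both are routine. A second point to verify carefully is that the continuity of the single identity clone homomorphism really yields topology containment in the correct direction; continuity of $\id:(\mathscr{O}_X,\mathcal{P})\to(\mathscr{O}_X,\mathcal{T})$ means preimages of $\mathcal{T}$-open sets are $\mathcal{P}$-open, i.e.\ $\mathcal{T}\subseteq\mathcal{P}$, as required. No genuinely hard step remains once \thref{lem-from-semigroup-to-clone} and the automatic continuity result are in hand.
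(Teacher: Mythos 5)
Your proposal is correct and matches the paper's intended argument: the corollary is obtained by combining \thref{lem-from-semigroup-to-clone} (Polish topologies are $T_1$, so they contain the pointwise subbasic sets) with the automatic continuity theorem applied to the identity clone homomorphism, exactly as you do. Your verification of the direction of containment and of the second-countability hypothesis is the same routine bookkeeping the paper leaves implicit.
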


\subsection{The Horn clone}\label{subsection-horn-clone}
Having considered $\mathscr{O}_X$, the analogue of the semigroup of all transformations on $X$, we now consider the analogue of the monoid $\Inj(X)$ of injective functions from $X$ to $X$.
The function clone \(\mathscr{H}_X\) is defined to be the subclone of \(\mathscr{O}_{X}\) generated by the injective $n$-ary function for every arity $n\in \N \setminus \{0\}$.
In the literature, this object is referred to as the \textit{Horn clone}; see \cite{bodirsky2010reducts,Bodirsky2017aa}.

In this section, we show that the pointwise topology can be extended to infinitely many Polish topologies for \(\mathscr{H}_X\) when $X$ is countably infinite.
Among these Polish topologies on $\mathscr{H}_{X}$ there is a maximum one, and $\mathscr{H}_{X}$ has automatic continuity with respect to this topology and the class of second countable topological clones. 
The main theorem, and its proof, in this section are similar to~\cref{theorem-injective}.

For \(n\)-ary \(f\in \mathscr{H}_X\), there exists a non-empty subset \(I_f\) of \(\{1, \dots, n\}\) such that
\[(\mathbf{x})f=(\mathbf{y})f \quad\text{if and only if}\quad  (\mathbf{x})\pi^n_i=(\mathbf{y})\pi^n_i \text{ for all }i\in I_f\]
for all \(\mathbf{x}, \mathbf{y}\in X^{n}\).
The elements of \(\mathscr{H}_X\) are exactly characterised by the existence of such a non-empty set.

Since nullary operations are vacuously injective, if they were permitted in the definition of $\mathscr{O}_X$, then the sets $I_f$ in the definition of $\mathscr{H}_X$ could be empty. Although we have opted to disallow nullary functions in the definition of $\mathscr{O}_X$, and consequently require the sets $I_f$ to be non-empty in the definition of $\mathscr{H}_X$,  the results in this paper hold regardless for the other versions of \(\mathscr{H}_X\).

The following is the main result of this section.

\begin{theorem} \label{horn-clone}
  Let $X$ be a countably infinite set and let 
  $\mathcal{H}$ be the topology on the Horn clone $\mathscr{H}_{X}$ generated by the pointwise topology together with the sets 
\[ V_{y}:=\{f\in \mathscr{H}_{X}: y\notin \im(f)\} \quad\text{and}\quad
F_n := \{f\in \mathscr{H}_{X}: |X\backslash\im(f)|= n\}
\] 
for \( y\in X\) and $n\in \N \cup \{\aleph_0\}$. Then the following hold:
  \begin{enumerate}[\rm (i)]
    \item \label{horn-clone-i}
    $\mathscr{H}_{X}$ admits infinitely many distinct Polish clone topologies, 
    including  the pointwise topology and \(\mathcal{H}\);

    \item \label{horn-clone-ii} 
    $S_{\mathscr{H}_{X}}$ has property \textbf{X} with respect to $\mathcal{H}$ and $\Sym(X)$;

    \item \label{horn-clone-iii} 
    $\mathscr{H}_{X}$ has automatic continuity with respect to the topology $\mathcal{H}$
    and the class of second countable topological clones;
    
    \item \label{horn-clone-iv} 
    $\mathcal{H}$ is the maximum Polish clone topology on $\mathscr{H}_{X}$;

    \item \label{horn-clone-v} 
          if $\T$ is any clone topology on $\mathscr{H}_{X}$ and $\T$ induces
          the pointwise topology on $\Sym(X)$, then $\T$ is contained in
          $\mathcal{H}$.
  \end{enumerate}
\end{theorem}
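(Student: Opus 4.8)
The plan is to follow the template of \thref{theorem-injective} for $\Inj(X)$, pushing every argument that can be phrased in terms of the associated semigroup $S_{\mathscr{H}_X}$ through the machinery of Section~\ref{section-property-x}, and isolating the genuinely clone-theoretic content in part (i). Throughout I write $\nabla(f) = |X \setminus \im(f)|$ for the image defect, and I use the structural fact that an $n$-ary $f \in \mathscr{H}_X$ factors as the projection onto its essential coordinates $I_f$ followed by an injection $\hat f$, so that $\im(f) = \im(\hat f)$ and $f$ is rigid: its restriction to $I_f$ determines it injectively.

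For part (ii) I would verify property \textbf{X} for $S_{\mathscr{H}_X}$ with respect to $\Sym(X)$ exactly as in \thref{theorem-injective}(ii). Given $n$-ary $s$, take $f_s = s$ and $t_s = g_s = \id$, so $s = f_s * t_s * g_s$; for a basic $\mathcal{H}$-neighbourhood $B$ of $\id$, whose trace on $\Sym(X)$ is $\{h \in \Sym(X) : h|_Y = \id\}$ for finite $Y$, one checks that $f_s * (B \cap \Sym(X)) * g_s = \{x \mapsto ((x)s)h : h \in \Sym(X),\ h|_Y = \id\}$ contains an $\mathcal{H}$-neighbourhood of $s$. The computation is the precise analogue of the injective case because under the diagonal product $*$ the defect is cleanly additive: $\nabla(f * g) = \nabla(f) + \nabla(\delta_g)$, where $\delta_g(z) = (z, \dots, z)\hat g$ is an injection and hence $\delta_g \in \Inj(X)$. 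Parts (iii) and (iv) are then formal. Part (iv) follows from (ii) and \thref{lem-luke-0}(ii): any clone topology makes the diagonal operation $*$ continuous, hence is a semitopological semigroup topology on $S_{\mathscr{H}_X}$, and both it and $\mathcal{H}$ restrict to the pointwise topology on $\Sym(X)$ (since $V_y \cap \Sym(X) = \varnothing$ and $F_\kappa \cap \Sym(X) = \varnothing$ for $\kappa > 0$). Part (iii) follows from (ii), \thref{lem-luke-0}(iv), Lemma~\ref{lem:lift-ac}, and the automatic continuity of $\Sym(X)$; the ``maximum Polish'' clause holds because any Polish clone topology $\T$ is second countable, so the identity clone homomorphism $(\mathscr{H}_X, \mathcal{H}) \to (\mathscr{H}_X, \T)$ is continuous, giving $\T \subseteq \mathcal{H}$.

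For part (i) I would define $F_\kappa = \{f : \nabla(f) = \kappa\}$ for each cardinal $\kappa \leq \aleph_0$, let $\mathcal{S}_0$ be generated by the pointwise topology together with the sets $V_y$, set $\mathcal{S}_n$ to be generated by $\mathcal{S}_{n-1}$ and $F_{n-1}$, and let $\mathcal{H}$ be the join of all the $\mathcal{S}_n$ with $F_{\aleph_0}$. The Polishness and distinctness steps transfer essentially verbatim from \thref{theorem-injective}(i): $F_n$ is closed in $\mathcal{S}_n$ because its complement is $\{f : \nabla(f) > n\}$ (open in $\mathcal{S}_0$ as a union of finite intersections of the $V_y$) together with $\bigcup_{m<n} F_m$; one then applies \cite[Lemma~13.2]{Kechris1995aa} at each finite stage and \cite[Lemma~13.3]{Kechris1995aa} for the countable joins, and distinctness is witnessed by an element of $F_n$ all of whose basic $\mathcal{S}_n$-neighbourhoods are already $\mathcal{S}_0$-basic.

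The main obstacle, and the only place where the clone structure (rather than the monoid structure) genuinely intervenes, is verifying that $\mathcal{S}_0$, each $\mathcal{S}_n$, and $\mathcal{H}$ are genuine \emph{clone} topologies, i.e.\ that every $\circ_{m,n}$ is continuous. The difficulty is that the defect of a full composition obeys $\nabla\big((f_1,\dots,f_m)\circ g\big) = \nabla(g) + |X^{I_g} \setminus \im(P)|$, where $P \colon X^n \to X^{I_g}$ sends $x$ to $((x)f_i)_{i \in I_g}$; the extra term depends on the entire inner tuple and is \emph{not} additive, so the clean monoid computation is unavailable. To show that the preimages of $V_y$ and $F_\kappa$ are nonetheless open I would exploit the rigidity of Horn functions: because each $f_i$ is injective in its essential variables, one cannot realise a value currently missing from $\im(P)$ by an arbitrarily small pointwise perturbation of the arguments without leaving $\mathscr{H}_X$, so the offending conditions are locally constant on $\mathscr{H}_X$. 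Making this precise — controlling how $\im(P)$, the essential set $I_g$, and the injective reduct $\hat g$ can vary under $\mathcal{H}$-convergence of the arguments — is the technical heart of the proof. I expect to handle it by checking sequential continuity (all the topologies in sight being metrizable) and reducing, via the factorisation $f = \pi_{I_f}\,\hat f$, to the already-understood additive behaviour of injective reducts.
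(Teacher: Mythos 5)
Your overall architecture coincides with the paper's: parts (ii)--(iv) are proved there exactly as you propose (same witnesses $f_s=s$, $t_s=g_s=\operatorname{id}$ in (ii); the same tower $\S_0,\S_1,\ldots$ with Kechris's Lemmas 13.2 and 13.3 in (i); and (iii), (iv) derived formally from (ii) via \thref{lem-luke-0} and Lemma~\ref{lem:lift-ac}). One gloss in (ii): the set $s(B\cap\Sym(X))$ is characterised by \emph{four} conditions, including $I_h=I_s$ and full preimage equality $(y)h^{-1}=(y)s^{-1}$, so you need that $W_{S,n}=\{f\in\mathscr{H}_X^{(n)}:I_f=S\}$ is pointwise-open and that $P_{T,y,n}=\{f:(y)f^{-1}=T\}$ is open once the $V_y$ are adjoined; the paper isolates these as \thref{lem-hard-to-comprehend} and \thref{lem-not-so-hard-to-comprehend}, and your proposal uses the factorisation $f=\pi_{I_f}\hat f$ but never actually establishes these openness facts, which are also needed for the $V_y$-preimages under $\circ_{n,m}$ (via the condition $\bigcap_{i\in I_g}(T_i)f_i^{-1}=\varnothing$).

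The genuine gap is precisely the step you defer: continuity of $\circ_{n,m}$ with respect to $\mathcal{H}$, i.e.\ local constancy of $\kappa=\nabla(g)+|X^{I_g}\setminus\im(P)|$. Your proposed reduction ``to the already-understood additive behaviour of injective reducts'' cannot work, because the correction term is not a function of the individual defects: take $g$ binary with $I_g=\{1,2\}$, and compare the inner tuples $(f_1,f_2)=(\pi^2_1,\pi^2_1)$ and $(f_1,f_2)=(\pi^2_1,\pi^2_2)$. All four inner functions are surjective with defect $0$, yet in the first case $\im(P)$ is the diagonal of $X^2$, so the correction is $\aleph_0$, while in the second $P$ is the identity and the correction is $0$. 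What determines $\kappa$ is the interaction of the essential sets, and the paper's proof of (i) consists of establishing the dichotomy your heuristic omits: for $|I_g|=1$ the defect is additive (your Case-$\alpha$-style computation); for $|I_g|\geq 2$ the correction is $0$ if the $I_{f_i}$, $i\in I_g$, are pairwise disjoint and each such $f_i$ is surjective, and is $\aleph_0$ otherwise --- where the overlapping-essential-sets case requires a bespoke construction of infinitely many points outside the image, via a sequence of tuples varying only in a shared essential coordinate, not any continuity or perturbation argument. Once this dichotomy is proved, local constancy follows because the relevant conditions are captured by the open sets $W_{S,n}$, $F_0$, and $P_{T,y,n}$; but the dichotomy itself (the paper's cases $\beta$, $\gamma$, $\delta$ built on the identity \eqref{equation-coimage-size}) is the technical heart of the theorem and is absent from, and not reachable by, the route you sketch.
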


We require the following three lemmas in the proof of~\cref{horn-clone}. 

\begin{lem}\label{lem-preimages-horn-clone}
  If $X$ is countably infinite, $f\in \mathscr{H}_{X} ^ {(n)}$,  and $x\in X$ is such that $(x)f ^ {-1}$ is non-empty, then 
  $(x)f ^ {-1} = T_1\times \cdots\times T_n$ where 
  $|T_i| = 1$ if $i\in I_f$ and $T_i = X$ otherwise.
\end{lem}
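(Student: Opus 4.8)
The plan is to read off the product structure directly from the defining property of the set $I_f$ recalled immediately before the lemma. Since $(x)f^{-1}$ is assumed non-empty, I would first fix some $\mathbf{a} = (a_1, \ldots, a_n) \in (x)f^{-1}$, so that $(\mathbf{a})f = x$. I then set $T_i := \{a_i\}$ for each $i \in I_f$ and $T_i := X$ for each $i \notin I_f$, and the goal becomes to verify the two inclusions establishing $(x)f^{-1} = T_1 \times \cdots \times T_n$.

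For the inclusion $(x)f^{-1} \subseteq T_1 \times \cdots \times T_n$, I take an arbitrary $\mathbf{y} \in (x)f^{-1}$, so that $(\mathbf{y})f = x = (\mathbf{a})f$. Applying the characterizing property of $I_f$ (the ``only if'' direction) to $\mathbf{y}$ and $\mathbf{a}$ gives $(\mathbf{y})\pi^n_i = (\mathbf{a})\pi^n_i$, that is $y_i = a_i$, for every $i \in I_f$; hence $y_i \in T_i$ for $i \in I_f$, while $y_i \in X = T_i$ holds trivially for $i \notin I_f$, so $\mathbf{y} \in T_1 \times \cdots \times T_n$. Conversely, for $\supseteq$, I take $\mathbf{y} \in T_1 \times \cdots \times T_n$, whence $y_i = a_i$ for all $i \in I_f$; the ``if'' direction of the same characterizing property then yields $(\mathbf{y})f = (\mathbf{a})f = x$, so $\mathbf{y} \in (x)f^{-1}$.

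I do not expect any real obstacle here: the content of the lemma is simply a restatement of the fact that every $f \in \mathscr{H}_X^{(n)}$ factors as an injection through the projection onto the coordinates indexed by $I_f$, and everything follows by unwinding the definition of $I_f$. The non-emptiness hypothesis is used only to produce the witness $\mathbf{a}$ that pins down the singletons $T_i$ for $i \in I_f$, and the countable infiniteness of $X$ plays no role beyond matching the standing hypotheses of the section.
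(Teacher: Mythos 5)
Your proof is correct and follows essentially the same route as the paper: fix a witness $\mathbf{a}\in (x)f^{-1}$, define $T_i=\{a_i\}$ for $i\in I_f$ and $T_i=X$ otherwise, and read both inclusions off the defining biconditional for $I_f$ (the paper writes this as a single chain of set equalities rather than two separate inclusions, but the argument is identical). Your closing remark is also accurate: non-emptiness only supplies the witness, and countability of $X$ is not used.
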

\begin{proof}
  Let \(T = (x)f^{-1}\) be non-empty and let \(\x \in T\). If $\y\in X ^ n$, then, by the definition of \(I_f\), \((\y)f=(\x)f\) if and only if  \((\x)\pi_i=(\y)\pi_i\) for all \(i\in I_f\). We define $T_i$
 to be $X$ if $i\not\in I_f$ and $\{(\x)\pi_i\}$ otherwise. Hence
  \[(x)f ^ {-1} = \set{\y\in X ^ n}{(\y)f = (\x)f}= \{\y\in X^n:(\y)\pi_i= (\x)\pi_i \text{ for }i\in I_f\} = T_1\times \cdots\times T_n,\]
  as required.
\end{proof}

\begin{lem}
\label{lem-hard-to-comprehend}
  If $X$ is countably infinite, then the sets 
  \[
   \mathscr{H}_{X}^{(n)} := \{f\in \mathscr{H}_{X} \colon \ar(f)=n\}\quad \text{and}\quad
  W_{S,n}  := \{f\in \mathscr{H}_{X}^{(n)} \colon I_f= S\}
  \]
  are open in the pointwise topology on $\mathscr{H}_{X}$ for all \( n\in \N\), and all finite \(S\subseteq \N\).
\end{lem}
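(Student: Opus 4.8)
The plan is to reduce everything to the combinatorial description of the sets $I_f$ recorded just before the lemma, together with the fact that in the pointwise topology a subbasic open set $U_{(a_1,\dots,a_n),b}$ automatically forces all of its members to be $n$-ary (only an $n$-ary function can be evaluated at an $n$-tuple). First I would dispose of $\mathscr{H}_{X}^{(n)}$: fixing any tuple $\mathbf{a}=(a_1,\dots,a_n)\in X^n$, every $n$-ary function sends $\mathbf{a}$ to some value, whereas no function of arity $\neq n$ lies in any $U_{(a_1,\dots,a_n),b}$; hence $\{f\in\mathscr{O}_X:\ar(f)=n\}=\bigcup_{b\in X}U_{(a_1,\dots,a_n),b}$ is open in $\mathscr{O}_X$, and so $\mathscr{H}_{X}^{(n)}$ is open in the subspace topology on $\mathscr{H}_{X}$.

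The heart of the argument is to detect, for a fixed coordinate $i\in\{1,\dots,n\}$, whether $i\in I_f$, using only finitely many pointwise conditions. Fix distinct $a,a'\in X$ (possible since $X$ is infinite), let $\mathbf{a}=(a,\dots,a)$ be the constant tuple, and let $\mathbf{a}^{(i)}$ be obtained from $\mathbf{a}$ by replacing its $i$th entry with $a'$, so that $\mathbf{a}$ and $\mathbf{a}^{(i)}$ agree in every coordinate except $i$. The key claim, which I would prove directly from the defining property of $I_f$, is that for every $f\in\mathscr{H}_{X}^{(n)}$ one has $i\notin I_f$ if and only if $(\mathbf{a})f=(\mathbf{a}^{(i)})f$: if $i\notin I_f$ then the two tuples agree on $I_f$ and therefore share an image; conversely, if their images agree then they must agree on $I_f$, which is impossible when $i\in I_f$ since they already differ in coordinate $i$. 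The decisive point is that this single pair of evaluation points works uniformly over all $f$, converting the a priori infinitary condition ``$f$ does not depend on coordinate $i$'' into one finite test (up to a union over the common output value).

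Using the claim I would write, with $U_{\mathbf{a},b}$ abbreviating $U_{(a,\dots,a),b}$,
\[
\{f\in\mathscr{H}_{X}^{(n)}: i\notin I_f\}=\mathscr{H}_{X}^{(n)}\cap\bigcup_{b\in X}\bigl(U_{\mathbf{a},b}\cap U_{\mathbf{a}^{(i)},b}\bigr)
\quad\text{and}\quad
\{f\in\mathscr{H}_{X}^{(n)}: i\in I_f\}=\mathscr{H}_{X}^{(n)}\cap\bigcup_{b_1\neq b_2}\bigl(U_{\mathbf{a},b_1}\cap U_{\mathbf{a}^{(i)},b_2}\bigr),
\]
both being unions of finite intersections of subbasic open sets, hence open; in particular each coordinate condition is clopen in $\mathscr{H}_{X}^{(n)}$. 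Finally, for finite $S\subseteq\N$ I would note that $W_{S,n}=\varnothing$ (which is open) unless $\varnothing\neq S\subseteq\{1,\dots,n\}$, in which case $W_{S,n}=\mathscr{H}_{X}^{(n)}\cap\bigcap_{i\in S}\{f:i\in I_f\}\cap\bigcap_{i\in\{1,\dots,n\}\setminus S}\{f:i\notin I_f\}$ is a finite intersection of open sets, hence open. The only genuine obstacle is the uniform single-test-point characterization of the middle paragraph: the naive description of ``$i\notin I_f$'' as an intersection over all pairs of tuples differing in coordinate $i$ yields only a closed set, and it is precisely the rigidity of $\mathscr{H}_{X}$ (the $I_f$ dichotomy) that upgrades it to an open, indeed clopen, set; once that is secured the remainder is bookkeeping of finite unions and intersections of subbasic opens.
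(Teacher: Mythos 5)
Your proposal is correct and follows essentially the same route as the paper: openness of $\mathscr{H}_{X}^{(n)}$ via a union of subbasic sets over output values, and the key observation that for $f\in\mathscr{H}_{X}^{(n)}$ a single pair of tuples differing only in coordinate $i$ detects whether $i\in I_f$, which is exactly the paper's equivalence $(\mathbf{x}_i)f\neq(\mathbf{y}_i)f \Leftrightarrow i\in I_f$. Your specific choice of the constant tuple and its one-coordinate perturbation, and your explicit handling of the degenerate cases $S=\varnothing$ or $S\not\subseteq\{1,\dots,n\}$, are immaterial refinements of the same argument.
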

\begin{proof}
  For \(n\in \N\), let \(\mathbf{x} \in X^n\) be fixed. Then the set $\mathscr{H}_{X}^{(n)}$ is the union of the subbasic open sets $U_{\mathbf{x}, y} = \set{f\in \mathscr{H}_{X}}{(\mathbf{x})f = y}$ (for the pointwise topology) for all $y\in X$. Hence $\mathscr{H}_{X}^{(n)}$ is open in the pointwise topology.
  
 Let $n\in \N$ be arbitrary and \(S\subseteq \{1, \ldots n\}\) be finite. For every \(i\in \{1, \ldots n\}\), we let \(\mathbf{x}_{i},\mathbf{y}_{i}\in X^{n}\) be such that \((\mathbf{x}_{i})\pi_j^ n \neq (\mathbf{y}_{i})\pi_j^ n\) if and only if \(i=j\) ($\mathbf{x}_{i}$ and $\mathbf{y}_{i}$ disagree only in their $i$th component). We will now show that
  \[W_{S,n}= \left(\bigcap_{i\in S}\{g\in \mathscr{H}_{X}^{(n)}: (\mathbf{x}_{i})g \neq (\mathbf{y}_{i})g\}\right)\cap\left(\bigcap_{i\in \{1, \ldots, n\}\backslash S}\{g\in \mathscr{H}_{X}^{(n)}: (\mathbf{x}_{i})g = (\mathbf{y}_{i})g\}\right).\]
  We denote the intersection in the previous displayed equations by $J$.
  Note that if $f\in \mathscr{H}_{X} ^ {(n)}$, then 
  \begin{equation}\label{thelabel}
  (\mathbf{x}_i)f \neq (\mathbf{y}_i)f\text{ if and only if }i \in I_f.
  \end{equation}
  If $f\in W_{S,n}$, then $f\in \mathscr{H}_{X} ^ {(n)}$ and $I_f = S$. Hence 
  $(\mathbf{x}_i)f \neq (\mathbf{y}_i)f$ if and only if $i \in S$. Hence $f\in J$. Conversely, if $f\in J$, then $f\in \mathscr{H}_{X} ^ {(n)}$ and so $I_f$ is well-defined. If $i \in S$, then, since $f\in J$, $(\mathbf{x}_i)f\neq (\mathbf{y}_i)f$ and so $i\in I_f$ by \eqref{thelabel}. Similarly, if $i\not \in S$, then $(\x_i)f = (\y_i)f$ and so $i \not \in I_f$. It follows that $I_f=S$ and we have shown that $W_{S, n} = J$.
\end{proof}

\begin{lem}
\label{lem-not-so-hard-to-comprehend}
  Let $X$ be countably infinite, and let \(\mathcal{H}_1\) be the topology on \(\mathscr{H}_{X} \) generated by the pointwise topology and the sets $V_y= \{f \in \mathscr{H}_X \colon y \notin \im(f)\}$ for all $y \in X$. Then the set
  \[P_{T, y, n}  := \{f\in \mathscr{H}_{X}^{(n)}\colon (y)f^{-1}= T\}\] is open in \(\mathcal{H}_1\) for every $y \in X$, $n\in \N$, and $T\subseteq X^ n$.
\end{lem}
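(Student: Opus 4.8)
The plan is to reduce the statement to the two preceding lemmas, \thref{lem-hard-to-comprehend} and Lemma~\ref{lem-preimages-horn-clone}, by a case analysis on the fibre $(y)f^{-1}$. The structural fact I would lean on is Lemma~\ref{lem-preimages-horn-clone}: a non-empty preimage $(y)f^{-1}$ is forced to be a product $T_1\times\cdots\times T_n$ in which $T_i$ is a singleton precisely for $i\in I_f$ and $T_i=X$ otherwise. So I would first split according to whether $T=\varnothing$ or $T\neq\varnothing$.

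For $T=\varnothing$ I would simply observe that
\[
P_{\varnothing,y,n}=\set{f\in \mathscr{H}_{X}^{(n)}}{y\notin \im(f)}=\mathscr{H}_{X}^{(n)}\cap V_y,
\]
which is open in $\mathcal{H}_1$ because $\mathscr{H}_{X}^{(n)}$ is pointwise-open by \thref{lem-hard-to-comprehend} and $V_y$ is one of the generators of $\mathcal{H}_1$. This is the only case that genuinely needs the extra generators $V_y$, since ``$y\notin\im(f)$'' is not a pointwise-open condition.

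For $T\neq\varnothing$ I would first discard the trivial subcase where $T$ does not have the product shape prescribed by Lemma~\ref{lem-preimages-horn-clone}: then no $f$ can satisfy $(y)f^{-1}=T$, so $P_{T,y,n}=\varnothing$ is open. Otherwise write $T=T_1\times\cdots\times T_n$, put $S=\set{i}{|T_i|=1}$ (which is non-empty, since $I_f$ is always non-empty in $\mathscr{H}_X$), and fix any $\x\in T$. The heart of the argument is the identity
\[
P_{T,y,n}=W_{S,n}\cap U_{\x,y},\qquad\text{where } U_{\x,y}=\set{f\in \mathscr{H}_X}{(\x)f=y},
\]
whose right-hand side is pointwise-open: $W_{S,n}$ is open by \thref{lem-hard-to-comprehend} and $U_{\x,y}$ is subbasic. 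The inclusion ``$\subseteq$'' follows from Lemma~\ref{lem-preimages-horn-clone} (which pins down $I_f=S$) together with $\x\in T=(y)f^{-1}$; for ``$\supseteq$'', if $I_f=S$ and $(\x)f=y$ then $(y)f^{-1}=\set{\z}{\z_i=\x_i\text{ for }i\in S}$, which equals $T$ precisely because $\x\in T$ and $T_i=X$ off $S$.

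The step I expect to require the most care is this last bookkeeping: checking that knowing the kernel pattern $I_f=S$ and a single sampled value $(\x)f=y$ already determines the whole fibre to be $T$. This is exactly where Lemma~\ref{lem-preimages-horn-clone} does the work, collapsing an a priori global condition on the preimage into a conjunction of the arity/kernel condition $W_{S,n}$ and one pointwise constraint; once that reduction is in hand, no topological difficulty remains, and only the $T=\varnothing$ case ever appeals to $\mathcal{H}_1$ beyond the pointwise topology.
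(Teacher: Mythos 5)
Your proposal is correct and follows essentially the same route as the paper's proof: split on $T=\varnothing$ versus $T\neq\varnothing$, use Lemma~\ref{lem-preimages-horn-clone} to force the product shape $T_1\times\cdots\times T_n$, fix $\x\in T$, and identify $P_{T,y,n}=W_{S,n}\cap U_{\x,y}$, with $W_{S,n}$ open in the pointwise topology by \thref{lem-hard-to-comprehend}. If anything, you are slightly more careful than the paper, which writes $P_{\varnothing,y,n}=V_y$ rather than $\mathscr{H}_X^{(n)}\cap V_y$ and leaves implicit both the degenerate case where $T$ lacks the prescribed product shape and the verification of the two inclusions that you spell out.
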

\begin{proof}
  If \(T = \varnothing\), then \(P_{T, y,n}= V_y\) which is open in $\mathcal{H}_1$.
  If $T\neq \varnothing$ and $f\in P_{T, y, n}$, then, by \cref{lem-preimages-horn-clone}, 
  \(T = T_1 \times \cdots \times T_n\) 
  where \(T_i\) is a singleton for $i\in I_f$ and $T_i=X$ for $i\not \in I_f$. Note that $T$ does not depend on the choice of $f\in P_{T, y, n}$, and so, in particular, $I_f = I_g$ for all $f, g \in P_{T, y,n}$. We denote this set by $Y$ and fix any \(\x \in T\).
  Therefore \(P_{T, y, n} = U_{\x, y} \cap W_{Y, n}\),
  where $U_{\x, y}$ is one of the sub-basic open sets in the definition of the pointwise topology, and $W_{Y, n}$
  is open in the pointwise topology by \cref{lem-hard-to-comprehend}.
\end{proof}

\begin{proof}[Proof of \cref{horn-clone}]
  \noindent
  \textbf{(\ref{horn-clone-i}).}
  First, we demonstrate that \(\mathscr{H}_X\) is a closed subset of \(\mathscr{O}_X\) in the pointwise topology.
  Let \(f \in \mathscr{O}_X \setminus \mathscr{H}_X\) be \(n\)-ary. Then for every non-empty \(S \subseteq \{1, \ldots, n\}\) there are \(\mathbf{x}_S, \mathbf{y}_S \in X^n\) such that 
  \( (\mathbf{x}_S)f = (\mathbf{y}_S)f\) if and only if there exists $i\in S$ such that  \( (\mathbf{x}_S)\pi_i \not= (\mathbf{y}_S)\pi_i\). 
  Hence 
  \[
  \bigcap_{\varnothing \neq S\subseteq \{1, \ldots, n\}} U_{\x_S, (\x_S)f}\cap U_{\y_S, (\y_S)f}
  \]
  is an open neighbourhood of $f$ contained in $\mathscr{O}_X \setminus \mathscr{H}_X$. 
  Thus \(\mathscr{O}_X \setminus \mathscr{H}_X\) is open in the pointwise topology, and so \(\mathscr{H}_X\) is closed in \(\mathscr{O}_X\). Hence \(\mathscr{H}_X\) with the pointwise topology is a Polish clone.
  
  Next, we show that \(\mathcal{H}\) is a clone topology by showing that $\circ_{n,m}: (\mathscr{H}_{X}^{(n)})^{m} \times \mathscr{H}_{X}^{(m)} \rightarrow \mathscr{H}_{X}$ is continuous for every \(n, m \in \N\). 
  Since \(\mathscr{H}_X\) is a Polish clone with respect to the pointwise topology, the set \((U_{\mathbf{x}, y})\circ_{n, m}^{-1}\) is open in the pointwise topology, and hence in \(\mathcal{H}\).
  It suffices to consider 
  \(f_1, \dots, f_m \in \mathscr{H}_X^{(n)}\) and \(g\in \mathscr{H}_X^{(m)}\) such that 
  \((f_1, f_2, \ldots, f_m) \circ g\) belongs to a subbasic open set $B$ and either $B = V_{y}$ for some $y\in X$ or $B = F_n$ for some $n\in \N\cup \{\aleph_0\}$. We will find a neighbourhood of \(((f_1, \ldots, f_m), g)\) which is mapped into \(B\) by \(\circ_{n, m}\).

  Suppose that \(y \in X\) is such that \((f_1, \ldots, f_m) \circ_{n,m} g \in V_y\). 
  Then either:
  \(y \notin \im(g)\) or \[(( \mathbf{z})f_1, \ldots, (\z)f_m) \not \in (y)g^{-1}\] for all \(\mathbf{z}\in X^n\). In the former case, \((\mathscr{H}_{X}^{(n)})^m \times (\mathscr{H}_{X}^{(m)}\cap V_y)\) is the required neighbourhood. 
  
  In the latter case, it follows from \cref{lem-preimages-horn-clone}, that $(y)g ^ {-1} = T_1 \times \cdots \times T_m$ where $|T_i| = 1$ if $i \in I_g$ and $T_i = X$ if $i\not\in I_g$. Hence if \(\z \in X^n\) is arbitrary, then $(( \mathbf{z})f_1, \ldots, (\z)f_m) \not \in (y)g^{-1}$ and so
   there exists $i\in I_g$ such that  \(\{(\mathbf{z})f_i\} \neq T_i\). 
   In other words, \(\bigcap_{i\in I_g} (T_i)f_i^{-1} = \varnothing\).
  If \(i \in I_g\) and \(h_i \in P_{(T_i) f_i^{-1},T_i,n}\), which is an open set by \cref{lem-not-so-hard-to-comprehend},
  then $(T_i)h_i^{-1} =(T_i)f_i^{-1}$, and so 
  \(\bigcap_{i\in I_g} (T_i)h_i^{-1} = \bigcap_{i\in I_g} (T_i)f_i^{-1} = \varnothing\). Hence
  \[
   \left( \prod_{i = 1}^m A_i\right) \times P_{(y)g^{-1}, y, m},
  \]
  where $A_i =  P_{(T_i) f_i^{-1},T_i,n}$ if $i\in I_g$ and $A_i = \mathscr{H}_{X} ^{(n)}$ if $i\not\in I_g$, 
  is the required open neighbourhood of \(((f_1, \ldots, f_m), g)\).

  Suppose that \(\kappa \leq \aleph_0\) and \((f_1, \ldots, f_m) \circ_{n, m} g \in F_\kappa\). Note that 
  \begin{align*}
    \kappa &= |X \setminus \im((f_1, \dots, f_m) \circ_{n,m}g)| \\ 
    &=|X\setminus \im(g)|+|\{x \in \im(g) \colon 
     ((\y)f_1, \ldots, (\y)f_m) \notin (x)g^{-1} \text{ for all } \y \in X^n\} |.
  \end{align*}
  Denote the second cardinal in the sum above by \(\lambda\). Then, by  \cref{lem-preimages-horn-clone},
  \begin{align*}
    \lambda &= |\{x \in \im(g) \colon 
     ((\y)f_1, \ldots, (\y)f_m) \notin (x)g^{-1} \text{ for all } \y \in X^n\} | \\ 
     &= |\{x \in \im(g) \colon 
     \text{ for each } \y \in X^n \text{ there is } i \in I_g \text{ such that } (\y)f_i \notin (x)g^{-1}\pi_i \} | \\
    &= |\{x \in \im(g) \colon 
     \text{ there is no } \y \in X^n \text{ with } (\y)f_i \in (x)g^{-1}\pi_i  \text{ for all }i \in I_g\}|.
  \end{align*}
  For every \(x \in \im(g)\), define \(\alpha_x \in X^{I_g}\) to be such that \((i)\alpha_x\) is the unique element in \((x)g^{-1}\pi_i\) for every \(i \in I_g\).
  Then, by definition of \(I_g\), the map \(\Phi \colon  
  \im(g) \to X^{I_g}\), given by \(x \mapsto \alpha_x\), is an injection. Therefore
  \begin{align*}
    \lambda 
    &=  |\{\alpha_x \in X^{I_g} \colon x \in \im(g) 
    \text{ and there is no } \y \in X^n \text{ with } (\y)f_i \in (x)g^{-1}\pi_i  \text{ for all }i \in I_g\}|\\
    &=  |\{\alpha_x \in X^{I_g} \colon x \in \im(g) 
    \text{ and there is no } \y \in X^n \text{ with } (\y)f_i = (i)\alpha_x  \text{ for all }i \in I_g\}|
  \end{align*}
  Moreover, suppose \(\alpha \in X^{I_g}\) is such that there is no \(\y \in X^n\) with \((\y)f_i = (i)\alpha\) for all \(i \in I_g\). Then we can choose \(\z \in X^n\) such that \((\z)\pi_i = (i)\alpha\) for all \(i \in I_g\). Hence \(\alpha = \alpha_x\) where \(x = (\z)g\) and there is no \(\y \in X^n\) with \((\y)f_i = (i)\alpha_x\) for all \(i \in I_g\), and so the sets
  \(\{\alpha_x \in X^{I_g} \colon x \in \im(g) 
    \text{ and there is no } \y \in X^n \text{ with } (\y)f_i = (i)\alpha_x  \text{ for all }i \in I_g\}\) 
    and 
  \(\{\alpha \in X^{I_g} \colon \text{there is no } \mathbf{y}\in X^n \text{ with } (\mathbf{y})f_i = (i)\alpha \text{ for all } i\in I_g\}\) are equal.
  Therefore,
  \begin{equation}\label{equation-coimage-size}
    \kappa = |X\backslash \im(g)| + |\{\alpha \in X^{I_g} \colon \text{there is no } \mathbf{y}\in X^n \text{ with } (\mathbf{y})f_i = (i)\alpha \text{ for all } i\in I_g\}|.
  \end{equation}
  There are 4 cases to consider. Since $g\in \mathscr{H}_{X}$, $I_g \neq \varnothing$.
  \medskip

  \noindent \textbf{Case $\boldsymbol\alpha$.} Suppose that \(I_g = \{i\}\)
  for some \(i \in \{1, \ldots, m\}\).
  Then \(\kappa = |X\backslash \im(g)|+ |X\backslash \im(f_i)|\) by~\eqref{equation-coimage-size}. 
  Hence
  \[\left(\prod_{k = 1}^n A_k\right)
  \times \left(W_{I_g, m} \cap F_{|X \setminus \im(g)|}\right),\]
  where \(A_i = F_{|X\backslash \im(f_i)|} \cap \mathscr{H}_{X}^{(n)}\) and \(A_k = \mathscr{H}_{X}^{(n)}\) if \(k \neq i\),
   is the required neighbourhood.\medskip
  
  \noindent\textbf{Case $\boldsymbol\beta$.} Suppose there are distinct \(i,  j\in I_g\) with \(I_{f_i}\cap I_{f_j}\neq \varnothing\). We will show that \(\kappa = \aleph_0\).
  
  Suppose that \(k\in I_{f_i}\cap I_{f_j}\). Let \((
  \x_{r})_{r\in \N}\) be an infinite sequence of pairwise distinct  tuples in \(X^n\) such that \((\x_a)\pi_l =(\x_b)\pi_l\) for all \(a, b \in \N\) and all \(l\neq k\). There is at most one \(a \in \N\) with \((\x_1)f_i=(\x_a)f_j\) as \((\x_a)f_j \neq (\x_b)f_j\) whenever \(a \neq b\). For each \(a\) with \((\x_1)f_i\neq (\x_a)f_j\) we choose an \(m\)-tuple \(\y_a\) whose $i$th entry is \((\x_1)f_i\) and whose $j$th entry is \((\x_a)f_j\). Suppose that there is \(\z \in X^n\) such that
  \((\y_a)g = ((\z)f_1, \ldots, (\z)f_m) \circ g\). Then
  \((\x_1)f_i = (\y_a)\pi_i = (\z)f_i\) and \((\x_a)f_j = (\y_a)\pi_j = (\z)f_j\) since \(i, j \in I_g\).
  Moreover, it follows that \( (\x_1)\pi_k = (\z)\pi_k = (\x_a)\pi_k\) since \(k \in I_{f_i} \cap I_{f_j}\), which is a contradiction if \(a > 1\). Therefore, the points \((\y_a)g\) are distinct and not in the image of \((f_{1}, \ldots, f_m)\circ g\) for \(a>1\), and thus
  \(\kappa = \aleph_0\).
  
  Then
  \[\left(\prod_{k = 1}^m W_{I_{f_k}, n}\right)\times W_{I_g,m}\] is the required neighbourhood.\medskip

  \noindent \textbf{Case $\boldsymbol\gamma$.} Suppose that \(|I_g| \geq 2\) and  \(I_{f_i}\cap I_{f_j}= \varnothing\) for all distinct \(i, j\in I_g\), and suppose further that \(f_i\) is surjective for all \(i\in I_g\). Then for every \(\alpha \in X^{I_g}\) we may find \(\mathbf{y} \in X^n\) such that \( (\mathbf{y})f_i = (i)\alpha\) for all \(i \in I_g\), by simply choosing the appropriate values on each set of coordinates \(I_{f_i}\) separately. Hence \(\kappa = |X \backslash \im(g)|\) by~\eqref{equation-coimage-size}. Let \(A_k=W_{I_{f_k}, n}\cap F_{0}\) for \(k\in I_g\) and let \(A_k=\mathscr{H}_X^{(n)}\) for \(k\notin I_g\). Then
  \[\left(\prod_{k = 1}^m   A_k   \right) \times (W_{I_g, m}\cap F_{|X \backslash \im(g)|})\]
  is the required neighbourhood.\medskip

  \noindent\textbf{Case $\boldsymbol\delta$.} Suppose that \(|I_g| \geq 2\) and  \(I_{f_i}\cap I_{f_j}= \varnothing\) for all distinct \(i, j\in I_g\), and suppose further that \(f_i\) is not surjective for some \(i\in I_g\). Then \(\aleph_0\geq \kappa \geq  |X \backslash \im(g)| + |(X\backslash \im(f_i)) \times X^{|I_g|-1}|= \aleph_0\) by~\eqref{equation-coimage-size}. Hence \(\kappa = \aleph_0\), and so
  \[\left(\prod_{k = 1}^m W_{I_{f_k},n}\cap F_{|    X\backslash \im(f_k)|}\right) \times (W_{I_g,m}\cap F_{|X \backslash \im(g)|})\]
  is the required neighbourhood. Therefore, concluding the argument that \(\mathcal{H}\) is a clone topology.

  For each \(y\in X\), the set \(V_y\) is closed in the pointwise topology. It follows by~\cite[Lemma 13.2]{Kechris1995aa} that for each \(y\in X\) the topology generated by the pointwise topology and \(V_y\) is Polish, hence by~\cite[Lemma 13.3]{Kechris1995aa} the topology \(\mathcal{H}_1\) generated by the pointwise topology together with all of them is Polish.

  As in the proof of~\cref{theorem-injective}, we can now define a sequence of topologies \(\S_0 = \mathcal{H}_1, \S_1, \ldots\) 
  where \(\S_{n + 1}\) is generated by \(\S_{n}\) and \(F_{n}\). By an argument similar to the one in the proof of~\cref{theorem-injective}, it can
  be shown that \(F_n\) is closed in \(\S_n\) and \(F_{\aleph_0}\) is closed in the topology generated by \(\bigcup_{n=0}^\infty \mathcal{S}_n\). Hence
  all of these topologies \(\S_1, \S_2, \dots\)  as well as \(\mathcal{H}\) are Polish. Moreover, it follows from the proof that \(\mathcal{H}\) is a clone topology that \(\S_0, \S_1, \dots\) are also clone topologies. Finally, when restricted to $\Inj(X)$, the topologies in this proof agree with the corresponding topologies in~\cref{theorem-injective}, which are all distinct. It follows that the clone topologies in this proof are distinct also. 
  \medskip
  
  \noindent\textbf{(\ref{horn-clone-ii}).}
  Let $s\in S_{\mathscr{H}_X}$ be \(n\)-ary for some \(n \in \N\). We show that there 
  exist $f_s, g_s\in S_{\mathscr{H}_X}$ and $a_s\in \Sym(X)$ such that $s = f_s a_s g_s$ and
  for every basic open neighbourhood $B$ of $a_s$ there exists an open
  neighbourhood $U$ of $s$ such that $U \subseteq f_s (B\cap \Sym(X))g_s$.

  Let $f_s= s$,  let $a_s$ and $g_s$ both be the identity map on \(X\), and let $B$ be a
  basic open neighbourhood of $a_s$. Then there is some
  finite $Y\subseteq X$ such that
  \[B\cap \Sym(X) = \{h\in \Sym(X): (y)h = y \text{ for all }y\in Y\}.\]
  It can be shown that $h \in \mathscr{H}_X^{(n)}$ is an element of $f_s (B\cap \Sym(X))g_s = s (B\cap \Sym(X)$ if and only if the following hold
  \begin{enumerate}
    \item  $y\notin \im(h)$ for all $y\in Y \backslash \im(s)$;

    \item $(y)s^{-1}= (y)h^{-1}$  for all $y\in Y \cap \im(s)$;

    \item $|X \backslash \im(s)|= |X \backslash \im(h)|$;

    \item $I_h= I_s$.
  \end{enumerate}
  In the reverse implication, the required element of \(B\cap \Sym(X)\) can be chosen to be any bijection extending the partial bijection mapping \(x\in \im(s)\) to the single point in the set \((x)s^{-1}h\).
  
  It follows that
  \[
    f_s (B\cap \Sym(X))g_s = \bigcap_{y \in Y \setminus \im(f_s)} V_y \cap \bigcap_{y \in Y\cap \im(f_s)} P_{(y)s^{-1}, y, n} \cap F_{|X \setminus \im(s)|} \cap W_{I_s, n},
  \]
  and so $f_s (B\cap \Sym(X))g_s$ is an open neighbourhood of $s$ in \(\mathcal{H}\), as required.\medskip
  
  \noindent\textbf{(\ref{horn-clone-iii}).}
 From part \eqref{horn-clone-ii} the monoid $S_{\mathscr{H}_{X}}$ has property \textbf{X} with respect to $\Sym(X)$ and the topology $\mathcal{H}$. Property \textbf{X} implies automatic continuity by~\cref{lem-luke-0}, and so  $S_{\mathscr{H}_{X}}$ has automatic continuity with respect to the class of second countable topological semigroups. It follows from \cref{lem-from-semigroup-to-clone} that $\mathscr{H}_{X}$ has automatic continuity with respect to the class of second countable topological clones.\medskip
  
  \noindent\textbf{(\ref{horn-clone-iv}).}
  This follows immediately from part \eqref{horn-clone-iii}.\medskip
  
 \noindent\textbf{(\ref{horn-clone-v}).}
Let $\mathcal{T}$ be a clone topology for \(\mathscr{H}_X\) that induces the pointwise topology on $\Sym(X)$. Every clone topology for \(\mathscr{H}_X\) is also a semigroup topology for \(S_{\mathscr{H}_X}\). By part~\eqref{horn-clone-ii}
and~\cref{lem-luke-0}, every semigroup topology on \(S_{\mathscr{H}_{X}}\) which induces the pointwise topology on \(\Sym(X)\), such as $\mathcal{T}$, is contained in \(\mathcal{H}\). 
\end{proof}


\subsection{The clones of the Cantor space and the countably infinite atomless Boolean algebra}
\label{subsection-poly-clone}
In this section, we consider the polymorphism clone of the Cantor space $2 ^ \N$ which is defined as
\[\Poly(2^\N):=\bigcup_{n\in \N\backslash \{0\}} C((2^\N)^n, 2^\N),\]
where $C(X,Y)$ denotes the set of continuous functions from a topological space $X$ to a space $Y$. The polymorphism clone of the countably infinite atomless Boolean algebra, which is \[\Poly(B_{\infty}):=\bigcup_{n\in \N\backslash \{0\}} \Hom((B_\infty)^n, B_\infty)\]
where \(\Hom((B_\infty)^n, B_\infty)\) denotes the set of homomorphisms from \((B_\infty)^n\) to \(B_\infty\). For more information on this Boolean algebra see~\cite{givant2008introduction}.

The compact-open topology on \(\Poly(2^\N)\) is defined  
so that for each \(n\in \N\backslash\{0\}\) we consider the function space \(C((2^\N)^n, 2^\N)\) with the compact-open topology (as defined previously) and view \(\Poly(2^\N)\) as the disjoint union of these spaces with the union topology. One can verify that this topology makes \(\Poly(2^\N)\) a topological clone.

The clone \(\Poly(2^\N)\) is an analogue of the monoid $C(2^{\N})$ of continuous functions on the Cantor space $2 ^ \N$. In~\cref{AC-cantor}, it is shown that \(C(2^\N)\) with the compact-open topology has automatic continuity with respect to the class of second countable topological semigroups and that the compact-open topology is the unique second countable Hausdorff semigroup topology on $C(2^{\N})$. The Stone duality theorem gives an anti-isomorphism (see, for example,~\cite[Theorem 34]{givant2008introduction}) between the (topological) semigroups \(C(2^\N)\) and \(\End(B_\infty)\), which was used in \cref{AC-Boolean} to prove analogues of the results about $C(2 ^ \N)$ for \(\End(B_\infty)\).

Stone duality associates products of Boolean algebras with coproducts (disjoint unions) of Stone spaces. Although the Stone dual of \(\End(B_\infty)\) is \(C(2^\N)\), the Stone dual of \(\Poly(B_\infty)\) is not \(\Poly(2^\N)\). Instead, the Stone dual $\Poly(B_\infty) ^ {\dagger}$ 
of $\Poly(B_\infty)$ is the disjoint union
  \begin{equation}\label{eqn-stone-dual-poly}
  \bigcup_{n\in \N\backslash\{0\}} C(2^\N, 2^\N \times \{1, \ldots, n\}),
  \end{equation}
  and operation \(\pentagram_{n, m}\) defined by 
    \[(x) \big(f\pentagram_{n, m} (g_1, \dots, g_n)\big)=((x)f\pi_1)g_{(x)f\pi_2}\]
  for every \(f\in C(2^\N, 2^\N \times \{1, \ldots, n\})\), \(g_1, \dots, g_n \in C(2^\N, 2^\N \times \{1, \ldots, m\})\) and \(x\in 2^\N\) such that  \((x)f= ((x)f\pi_1,(x)f\pi_2)\in 2 ^ \N \times \{1, \ldots, n\}\).
  In this context, for every \(n\in \N\), the Stone dual of \(\Hom(B_{\infty}^n, B_\infty)\) is \(C(2^\N, 2^\N \times \{1, \ldots, n\})\). Moreover, the Stone duality translates the pointwise topology to the compact-open topology and vice-versa.
While the results in this section are stated in terms of \(\Poly(2^\N)\) and \(\Poly(B_\infty)\), one of the proofs related to \(\Poly(B_\infty)\) is given in terms of $\Poly(B_\infty) ^ {\dagger}$, which is the reason for introducing  $\Poly(B_\infty) ^ {\dagger}$.
 It seems to the authors of this paper that the two clones \(\Poly(2^\N)\) and \(\Poly(B_\infty)\) are in fact meaningfully distinct.

To prove the main theorem of this section, we require the following proposition, which is similar to~\cref{prop-cantor-action}.
\begin{prop}\label{prop-cantor-action-clone}
  Suppose that $\Poly(2 ^ \N)$ is a second countable Hausdorff topological clone with
  respect to some topology $\T$. Then for all \(n\in \N\) the function $a_n : (2 ^ \N)^n \times C((2^\N)^n, 2 ^ \N) \to
    2 ^ \N$ defined by
  \[
    (\x, f)a_n = (\x)f
  \]
  is continuous where $(2 ^ \N)^n \times C((2^\N)^n, 2 ^ \N)$ has the product topology given by the subspace topology on $C((2^\N)^n, 2 ^ \N)$ induced by $\T$ and the usual topology on $2 ^ \N$.
\end{prop}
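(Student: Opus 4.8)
The plan is to mimic the argument used for the semigroup evaluation map in \thref{prop-cantor-action} and \thref{prop-luke-3}, realising $a_n$ as a composite of continuous maps by feeding unary constant functions into the clone operation. Write $\Poly(2^\N)^{(k)} = C((2^\N)^k, 2^\N)$ for the $k$-ary part of $\Poly(2^\N)$, and let $F\subseteq \Poly(2^\N)^{(1)}$ denote the set of unary constant functions, with $\phi\colon F\to 2^\N$ the map sending a constant function to its value. The first step is to observe that the restriction of $\T$ to $\Poly(2^\N)^{(1)}$ is a second countable Hausdorff semigroup topology on $C(2^\N)$: second countability and the Hausdorff property are inherited by subspaces, and the semigroup operation on $C(2^\N)$ is exactly the clone operation $\circ_{1,1}$, which is continuous since $\T$ is a clone topology. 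Hence \thref{prop-phi-is-homeo} applies and $\phi$ is a homeomorphism; in particular $\phi^{-1}\colon 2^\N\to F$ is continuous.

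Next I would define the map $\gamma\colon (2^\N)^n\times \Poly(2^\N)^{(n)}\to \big(\Poly(2^\N)^{(1)}\big)^n\times \Poly(2^\N)^{(n)}$ by
\[
  (\x, f)\gamma = \big((x_1)\phi^{-1}, \ldots, (x_n)\phi^{-1},\, f\big),
\]
where $\x = (x_1, \ldots, x_n)$. Each coordinate of $\gamma$ is continuous (the $i$th being the composite of the projection $\x\mapsto x_i$ with the continuous map $\phi^{-1}$, and the last being the identity), so $\gamma$ is continuous for the relevant product topologies.

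The key computation is that for $\x=(x_1,\ldots,x_n)\in(2^\N)^n$ and $f\in\Poly(2^\N)^{(n)}$, writing $c_{x_i}=(x_i)\phi^{-1}$ for the unary constant function with value $x_i$, the clone composite $(c_{x_1},\ldots,c_{x_n})\circ_{n,1}f$ is the unary function
\[
  y\longmapsto \big((y)c_{x_1},\ldots,(y)c_{x_n}\big)f=(x_1,\ldots,x_n)f=(\x)f,
\]
that is, the constant function in $F$ with value $(\x)f$. Since $\T$ is a clone topology, $\circ_{n,1}$ is continuous for the product of the subspace topologies on $\big(\Poly(2^\N)^{(1)}\big)^n$ and $\Poly(2^\N)^{(n)}$, and by the computation above its restriction to the image of $\gamma$ takes values in $F$; corestricting to the subspace $F$ preserves continuity. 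Applying the continuous map $\phi$ then recovers $(\x)f$. Therefore $a_n$ is the composite of the continuous maps $\gamma$, $\circ_{n,1}$, and $\phi$, and is continuous, as required.

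There is no serious obstacle: the only substantive ingredient, that $\phi$ is a homeomorphism, is supplied by \thref{prop-phi-is-homeo}, and the remainder is bookkeeping of arities. The one point warranting care is matching up the topologies, namely that $\circ_{n,1}$ is continuous precisely for the product of the $\T$-subspace topologies appearing in the domain of $\gamma$'s codomain, which is exactly the meaning of $\T$ being a clone topology, and the verification that inserting constant functions into the first $n$ slots of $\circ_{n,1}$ yields a constant function lying in the domain $F$ of $\phi$.
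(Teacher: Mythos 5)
Your proof is correct and takes essentially the same route as the paper's: both realise $a_n$ as the composite of the continuous map that inserts unary constant functions via $\phi^{-1}$ (the paper's maps $\alpha$ and $\beta$, your $\gamma$), the clone composition, and $\phi$, with \thref{prop-phi-is-homeo} as the only substantive input. Your explicit check that the $\T$-subspace topology on the unary part makes $C(2^\N)$ a second countable Hausdorff topological semigroup (so that \thref{prop-phi-is-homeo} applies) is a detail the paper leaves implicit, and your index order $\circ_{n,1}$ is the one consistent with the paper's definition of $\circ_{m,n}$, whereas the paper's own proof writes $\circ_{1,n}$.
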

\begin{proof}
  If we restrict \(\Poly(2^\N)\) to its elements of arity \(1\), then we obtain \(C(2^\N)\). By \cref{prop-phi-is-homeo},
   if $F$ denotes the set of constant maps of arity \(1\), and $\phi : F\to 2 ^ \N$ is the function that sends
  $g\in F$ to the unique value in $(2 ^ \N)g$, then \(\phi\) is a homeomorphism.

  Let \(\alpha:(2^\N)^n\to F^n\) be defined by $(x_1, x_2, \ldots, x_n)\alpha = ((x_1)\phi ^ {-1}, (x_2)\phi ^ {-1}, \ldots, (x_n)\phi ^ {-1})$. Then $\alpha$ is continuous since it is continuous in every component. If \(\beta:(2^\N)^n \times C((2^\N)^n,2^\N) \to F^n \times C((2^\N)^n,2^\N)\) is defined by $(\x, f)\beta = ((\x)\alpha, f)$ for all $\x\in (2^\N)^n$, $f\in C((2^\N)^n,2^\N)$, then $\beta$ is also continuous. 
  Then \(a_n\) is the composite of the continuous functions \(\beta\), the function clone \(\circ_{1, n}\), and \(\phi\), and hence, \(a_n\) is continuous.
\end{proof}

\begin{theorem}\label{AC-cantor-clone}
  Let \(\mathcal{K}\) be the compact-open topology on \(\Poly(2^\N)\). Then the following hold:
  \begin{enumerate}[\rm (i)]
      \item\label{AC-cantor-clone-i} \(S_{\Poly(2^\N)}\) has property \textbf{X} with respect to \(\mathcal{K}\) and \(C(2^\N)\);
      
      \item\label{AC-cantor-clone-ii} the clone \(\Poly(2^\N)\) has automatic continuity with respect to the class of second countable topological clones;
      
      \item\label{AC-cantor-clone-iii} \(\mathcal{K}\) is the only second countable Hausdorff clone topology on \(\Poly(2^\N)\).
  \end{enumerate}
\end{theorem}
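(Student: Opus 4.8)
The plan is to deduce all three parts from one structural observation: for each arity \(n\), left multiplication in \(S_{\Poly(2^\N)}\) by a fixed homeomorphism \(\Psi\colon (2^\N)^n\to 2^\N\) identifies the unary functions \(C(2^\N)\) with the \(n\)-ary component \(C((2^\N)^n,2^\N)\) of \(\Poly(2^\N)\). For part (i), fix such a \(\Psi\) (which exists since \((2^\N)^n\cong 2^\N\)) and regard it as an element of \(S_{\Poly(2^\N)}\). Since \(\mathcal{K}\) is a clone topology, \(S_{\Poly(2^\N)}\) is a topological semigroup, so \(\lambda_{\Psi}\) is continuous; restricted to the unary functions it is the map \(u\mapsto \Psi*u\), where \((\mathbf{x})(\Psi*u)=((\mathbf{x})\Psi)u\), and so it takes values in the \(n\)-ary component. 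Its inverse sends an \(n\)-ary \(h\) to \(\Psi^{-1}h\), i.e.\ precomposition by the fixed map \(\Psi^{-1}\), which is continuous for the compact-open topologies; hence \(\lambda_{\Psi}\) restricts to a homeomorphism from \(C(2^\N)\) onto the \(n\)-ary component. Now, given an \(n\)-ary \(s\), I would put \(f_s=\Psi\), \(t_s=\Psi^{-1}s\in C(2^\N)\) and \(g_s=\id\); then \(f_s * t_s * g_s = \Psi*(\Psi^{-1}s)=s\), and for any neighbourhood \(B\) of \(t_s\) the set \(f_s(B\cap C(2^\N))g_s=\lambda_{\Psi}(B\cap C(2^\N))\) is a neighbourhood of \(s\) because \(\lambda_{\Psi}\) is an open map (and the \(n\)-ary component is open in \(\mathcal{K}\)). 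This gives property \textbf{X} with respect to \(C(2^\N)\).

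Part (ii) is then essentially bookkeeping. The unary functions form a subsemigroup of \(S_{\Poly(2^\N)}\) whose subspace topology is the compact-open topology on \(C(2^\N)\), and \(C(2^\N)\) has automatic continuity with respect to the class of second countable topological semigroups by \thref{AC-cantor}. Combining this with part (i) via \thref{lem-luke-0}(iv) shows that \(S_{\Poly(2^\N)}\) has automatic continuity with respect to the class of second countable topological semigroups. Finally, Lemma~\ref{lem:lift-ac} lifts this to the clone, yielding automatic continuity of \(\Poly(2^\N)\) with respect to the class of second countable topological clones.

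For part (iii), I would argue the two inclusions separately. For \(\T\subseteq\mathcal{K}\), apply the automatic continuity from part (ii) to the identity clone homomorphism \(\id\colon(\Poly(2^\N),\mathcal{K})\to(\Poly(2^\N),\T)\): since \((\Poly(2^\N),\T)\) is a second countable topological clone, \(\id\) is continuous, so \(\T\subseteq\mathcal{K}\). For the reverse inclusion I would proceed one arity at a time. Assuming \(\T\) is second countable Hausdorff, each arity component \(C((2^\N)^n,2^\N)\) carries a first countable Hausdorff subspace topology, and by \thref{prop-cantor-action-clone} the evaluation \(a_n\colon (2^\N)^n\times C((2^\N)^n,2^\N)\to 2^\N\) is continuous; hence \thref{theorem-compact-open-hilbert} (applied with \(X=(2^\N)^n\), \(Y=2^\N\)) shows that the \(\T\)-subspace topology on the \(n\)-ary component contains the compact-open topology. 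Together with \(\T\subseteq\mathcal{K}\) this forces \(\T\) and \(\mathcal{K}\) to agree on each arity component, and since a clone topology is determined by its restrictions to the (clopen) arity components, I would conclude \(\T=\mathcal{K}\).

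The main point requiring care is the last globalisation step in (iii): \thref{theorem-compact-open-hilbert} is a statement about the single function space \(C(X,Y)\), so it yields equality only arity-by-arity, and passing from these per-arity equalities to \(\T=\mathcal{K}\) on all of \(\Poly(2^\N)\) relies on the arity components being clopen (equivalently, on a clone topology being determined by its sorted pieces, as in the topological-clone framework of \cite{Bodirsky2017aa}). I would make this explicit, and I would also be careful in part (i) to justify that the inverse of \(\lambda_{\Psi}\) is continuous, since this is where the compact-open (rather than merely semitopological) structure is genuinely used, via continuity of precomposition by the fixed homeomorphism \(\Psi^{-1}\).
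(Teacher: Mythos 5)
Your proposal is correct and follows essentially the same route as the paper: the identical decomposition $s = f_s t_s g_s$ with $f_s\colon (2^\N)^n\to 2^\N$ a homeomorphism, $t_s = f_s^{-1}s \in C(2^\N)$ and $g_s = \id$ for property \textbf{X} (the paper verifies openness of $f_s B$ by computing $f_s\bigcap_i[K_i,U_i]=\bigcap_i[(K_i)f_s^{-1},U_i]$, which is exactly your observation that $\lambda_{f_s}$ is a homeomorphism onto the $n$-ary component), the same chain \thref{AC-cantor} plus \thref{lem-luke-0}(iv) plus Lemma~\ref{lem:lift-ac} for (ii), and the same two inclusions in (iii) via automatic continuity applied to the identity homomorphism and via \thref{prop-cantor-action-clone} together with \thref{theorem-compact-open-hilbert}. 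Your explicit flagging of the final globalisation step in (iii) --- that the per-arity containments assemble into $\mathcal{K}\subseteq\T$ only because the arity components are treated as clopen sorts --- addresses a point the paper passes over silently, and making it explicit is a genuine improvement rather than a deviation.
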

\begin{proof}  
   \textbf{(\ref{AC-cantor-clone-i}).}  Let \(s\in \Poly(2^\N)\) be arbitrary and of arity \(n\). Let \(f_s:(2^\N)^{n} \to 2^\N\) be a homeomorphism, \(t_s:= f_s^{-1}s\) and \(g_s:= 1_{2^\N}\) (the identity function on $2 ^ \N$). Then \(f_st_sg_s=f_sf_s^{-1}sg_s=s\), \(f_s,g_s\in \Poly(2^\N)\) and \(t_s\in C(2^\N)\). Let \(B\) be a neighbourhood of \(t_s\) in \(C(2^\N)\). Then 
  \[t_s\in \bigcap_{i= 1} ^ {k} [K_i, U_i]\subseteq B,\]
   for some compact \(K_i\), open \(U_i\), and \(k\in \N\). Hence
  \[s=f_st_s\in  f_s \bigcap_{i = 1 } ^ k [K_i, U_i]\quad\text{and so}\quad
  \bigcap_{i = 1} ^ {k} [(K_i)f_s^{-1}, U_i] = f_s \bigcap_{i = 1} ^ {k} [K_i, U_i] \subseteq f_sB=f_sBg_s.\]
  Since each of $[(K_i)f_s^{-1}, U_i]$ is a subbasic open set, 
  \(f_sBg_s\) is a neighbourhood of $s$, as required.\medskip
  
 \noindent  \textbf{(\ref{AC-cantor-clone-ii}).} By part \textbf{(\ref{AC-cantor-clone-i})}, the semigroup \(S_{\Poly(2^\N)}\) has property \textbf{X} with respect to \(C(2^\N)\) and the compact-open topology. It is shown in~\cref{AC-cantor} that \(C(2^\N)\) has automatic continuity with respect to the class of second countable topological semigroups and the compact-open topology. Hence, by \cref{lem-luke-0}\eqref{lem-luke-0-iv}, and the fact \(S_{\Poly(2^\N)}\) has automatic continuity with respect to class of second countable topological semigroups. Therefore \cref{lem:lift-ac} implies that \(\Poly(2^\N)\) has automatic continuity, as required.\medskip
 
 \noindent  \textbf{(\ref{AC-cantor-clone-iii}).}
 Suppose that \(\Poly(2^\N)\) is equipped with a second countable Hausdorff clone topology $\T$. By part \textbf{(\ref{AC-cantor-clone-ii})}, this topology must be contained in the compact-open topology.
  It is shown in~\cref{theorem-compact-open-hilbert} that if 
 $C(X,Y)$ is first countable and
  Hausdorff with respect to some topology $\S$, for arbitrary compact metrizable spaces $X$ and $Y$, and $a : X \times C(X, Y) \to Y$ defined by
  \[
    (p, f)a = (p)f
  \]
  is continuous, then $\S$ contains the compact-open topology. 
Hence, by \cref{prop-cantor-action-clone}, $\T$ contains the compact-open topology.
\end{proof}

\begin{theorem}\label{AC-Boolean-clone}
The following hold:
\begin{enumerate}[\rm (i)]
    \item\label{AC-Boolean-clone-i} \(S_{\Poly(B_{\infty})}\) has property \textbf{X} with respect to \(\End(B_{\infty})\) and the pointwise topology;
    \item\label{AC-Boolean-clone-ii} \(\Poly(B_{\infty})\) has automatic continuity with respect to the pointwise topology and the class of second countable topological clones;
    \item\label{AC-Boolean-clone-iii} the pointwise topology is the only second countable Hausdorff clone topology on \(\Poly(B_{\infty})\).
\end{enumerate}
\end{theorem}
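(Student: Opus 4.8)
The plan is to follow the blueprint used for $\Poly(2^\N)$ in \thref{AC-cantor-clone}: establish property \textbf{X}, deduce automatic continuity from that of $\End(B_\infty)$, and pass to the Stone dual $\Poly(B_\infty)^{\dagger}$ to pin down the unique topology. For (i), fix an $n$-ary $s\in\Poly(B_\infty)$, that is, a homomorphism $s\colon B_\infty^n\to B_\infty$. Since $B_\infty^n$ is again a countably infinite atomless Boolean algebra, it is isomorphic to $B_\infty$; choose an isomorphism $f_s\colon B_\infty^n\to B_\infty$, which is in particular an $n$-ary polymorphism, and set $g_s=\id_{B_\infty}$ and $t_s=f_s^{-1}s\in\End(B_\infty)$, so that $s=f_s t_s g_s$ in $S_{\Poly(B_\infty)}$. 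For a basic pointwise neighbourhood $B=\bigcap_i\{h\in\End(B_\infty):(a_i)h=(a_i)t_s\}$ of $t_s$, the bijection $h\mapsto f_s h$ from $\End(B_\infty)$ onto the $n$-ary part of $\Poly(B_\infty)$ carries $B$ exactly onto $\bigcap_i\{u:(c_i)u=(c_i)s\}$, where $c_i:=(a_i)f_s^{-1}\in B_\infty^n$; this is a basic pointwise-open neighbourhood of $s$, so property \textbf{X} holds with respect to $\End(B_\infty)$.

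Part (ii) is then immediate: by (i) and \thref{lem-luke-0}(iv), together with the automatic continuity of $\End(B_\infty)$ from \thref{AC-Boolean}, the semigroup $S_{\Poly(B_\infty)}$ has automatic continuity with respect to the class of second countable topological semigroups, and Lemma~\ref{lem:lift-ac} lifts this to automatic continuity of the clone $\Poly(B_\infty)$. For (iii), the upper bound is easy: applying (ii) to the identity clone homomorphism from $(\Poly(B_\infty),\mathcal P)$ into $(\Poly(B_\infty),\T)$ shows $\T\subseteq\mathcal P$ for every second countable Hausdorff clone topology $\T$. The reverse inclusion $\mathcal P\subseteq\T$ is the heart of the matter, and here I would pass to the Stone dual. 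Because Stone duality furnishes a structure-preserving bijection $\Poly(B_\infty)\to\Poly(B_\infty)^{\dagger}$ carrying the pointwise topology to the compact-open topology and clone topologies to $\pentagram$-clone topologies, it suffices to show that the compact-open topology is contained in every second countable Hausdorff clone topology $\T^{\dagger}$ on $\Poly(B_\infty)^{\dagger}=\bigcup_n C(2^\N,2^\N\times\{1,\dots,n\})$. On each component $C(2^\N,Y_n)$, with $Y_n=2^\N\times\{1,\dots,n\}$ (and both $2^\N$ and $Y_n$ compact metrizable), \thref{theorem-compact-open-hilbert} reduces this to the continuity of the evaluation $a_n\colon 2^\N\times C(2^\N,Y_n)\to Y_n$.

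The main obstacle is exactly this evaluation continuity, the analogue of \thref{prop-cantor-action-clone}. I would first note that the arity-one part of $\Poly(B_\infty)^{\dagger}$ is $C(2^\N)$ with $\pentagram_{1,1}$ equal to ordinary composition, so by \thref{prop-phi-is-homeo} its constant maps are homeomorphic to $2^\N$. I would then identify the arity-$n$ constants with $Y_n$: the assignment sending $(c,j)$ to the constant map of value $(c,j)$ is continuous, being $\pentagram$-composition with the fixed tagging maps $e_j\colon y\mapsto(y,j)$, while $c$ is recovered continuously by $\pentagram$-composing with fixed maps that forget the label and applying the arity-one homeomorphism, and the label $j$ is recovered by $\pentagram$-composing with fixed constants of pairwise distinct values, which lands in a finite discrete set. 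Finally, writing $c_x$ for the arity-one constant of value $x$, one checks $c_x\pentagram_{1,n}f$ is the constant map of value $(x)f$, so that $a_n$ factors as $(x,f)\mapsto c_x\pentagram_{1,n}f$ followed by the arity-$n$ constants homeomorphism, a composite of continuous maps. The genuinely delicate points, and the reason the result for $\Poly(2^\N)$ cannot simply be transported, are that $\pentagram$ composes on the codomain side, so the recovery of $(x)f$ must be arranged through the tagging maps $e_j$, and that the discrete label must be detected continuously; getting these bookkeeping details right is where I expect the real work to lie.
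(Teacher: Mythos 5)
Your proposal is correct and follows essentially the same route as the paper: the same decomposition $s = f_s t_s \id$ via an isomorphism $B_\infty^n \cong B_\infty$ for property \textbf{X}, the same lift through \thref{lem-luke-0}(iv), \thref{AC-Boolean} and Lemma~\ref{lem:lift-ac} for automatic continuity, and the same Stone-duality passage to $\Poly(B_\infty)^{\dagger}$ with \thref{theorem-compact-open-hilbert} reducing everything to continuity of evaluation. Your only deviations are cosmetic: you factor the evaluation $a_n$ through a homeomorphism between the constant maps in $C(2^\N, 2^\N\times\{1,\dots,n\})$ and $2^\N\times\{1,\dots,n\}$, recovering the label by $\pentagram$-composing with constants of pairwise distinct values and using discreteness of a finite set, whereas the paper computes the two coordinate maps $a_n\pi_1$, $a_n\pi_2$ directly, using an explicit continuous surjection $q\colon 2^\N\to\{1,\dots,n\}$ for the label --- the underlying $\pentagram$-composition tricks are identical.
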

\begin{proof}
The proofs of parts \eqref{AC-Boolean-clone-i} and \eqref{AC-Boolean-clone-ii} of this theorem are similar to the corresponding parts of the proof of \cref{AC-cantor-clone}. \medskip

\noindent\textbf{(\ref{AC-Boolean-clone-i}).}
Let \(s\in \Poly(B_{\infty})\) be arbitrary and of arity \(n\).
Note that all finite non-zero powers of \(B_\infty\) are isomorphic to \(B_\infty\). This can be seen by noting that they are all atomless, or by noting that every finite non-empty disjoint union of \(2^\N\) with itself is homeomorphic to \(2^\N\).

  Let \(f_s:(B_{\infty})^{n} \to B_{\infty}\) be an isomorphism, \(t_s:= f_s^{-1}s\), and \(g_s:= 1_{B_{\infty}}\) (the identity function on $B_{\infty}$). Then \(f_st_sg_s=f_sf_s^{-1}sg_s=s\), \(f_s,g_s\in \Poly(B_{\infty})\) and \(t_s\in \End(B_{\infty})\).
  Let \(B\) be a neighbourhood of \(t_s\) in \(\End(B_{\infty})\). Then 
  \[t_s\in \bigcap_{i = 1} ^ {k} \{h\in \Hom(B_{\infty}^n, B_{\infty}): (a_i)h=b_i\}\subseteq B,\]
  for some \(a_i, b_i\in B_{\infty}\) and finite \(k\).  Since
  \[s\in \bigcap_{i = 1} ^ k \{h\in \Hom(B_{\infty}^n, B_{\infty}): ((a_i)f_s^{-1})h=b_i\}\subseteq f_sBg_s,\]
  it follows that $f_sBg_s$ is a neighbourhood of $s$. 
  \medskip

\noindent \textbf{(\ref{AC-Boolean-clone-ii}).}
The monoid $\End(B_{\infty})$ has automatic continuity with respect to the pointwise topology and the class of second countable topological semigroups; see~\cref{AC-Boolean}.
The proof of this part of the theorem then follows by a similar argument to \cref{AC-cantor-clone}\eqref{AC-cantor-clone-ii}. 
  \medskip

\noindent \textbf{(\ref{AC-Boolean-clone-iii}).}
  Suppose that \(\Poly(B_\infty)\) is equipped with a second countable Hausdorff clone topology $\T$. By part \eqref{AC-Boolean-clone-ii}, $\T$ is contained in the usual pointwise topology.

  It remains to show that $\T$ contains the pointwise topology. 
  Stone duality translates: the pointwise topology on $\Poly(B_\infty)$ to the compact-open topology on $\Poly(B_\infty) ^ {\dagger}$ (defined in \eqref{eqn-stone-dual-poly}); and the topology $\T$ to a second countable Hausdorff topology $\T ^ {\dagger}$ on $\Poly(B_\infty) ^ {\dagger}$ such that $\pentagram_{m, n}$ is continuous with respect to $\T ^ {\dagger}$ for all $m,n\in\N$.
  Hence it suffices to show that the topology $\T ^ {\dagger}$ on $\Poly(B_\infty) ^ {\dagger}$ obtained by translating $\T$ via Stone duality contains the compact-open topology. 
  In other words, it is enough to fix $n\in \N$ and show that the subspace topology on \(C(2^\N, 2^\N \times \{1, \ldots, n\})\) induced by $\T ^ {\dagger}$ contains the compact-open topology. As in the proof of \cref{AC-cantor-clone}\eqref{AC-cantor-clone-iii}, by~\cref{theorem-compact-open-hilbert}, it suffices to show that 
   \(a_n:2^\N \times C(2^\N,  2^\N \times \{1, \ldots, n\}) \to 2^\N \times \{1, \ldots, n\}\) defined by
  \[(p, f)a_n= (p)f\]
  is continuous.
 Recall that \(\pi_i: 2 ^ \N \times \{1, \ldots, n\} \to 2^\N\) denotes the $i$th projection and note that $(p,f)a_n=((p)a_n\pi_1,(p)a_n\pi_2)$ for all $p\in 2^{\N}$ and $f\in C(2^\N, 2^\N \times \{1, \ldots, n\})$. We conclude the proof by showing that $a_n\pi_1$ and $a_n\pi_2$ are continuous.
  
  By \cref{prop-phi-is-homeo},
  if $F$ denotes the set of constant maps of arity \(1\) in $C(2 ^ \N)$, and $\phi : F\to 2 ^ \N$ is the function that sends $g\in F$ to the unique value in $(2 ^ \N)g$, then \(\phi\) is a homeomorphism.
  Let \(1_{2^\N}\) be the identity function on \(2^\N\). If $p\in 2 ^ \N$ and $f \in C(2^\N, 2^\N \times \{1, \ldots, n\})$,   then \( (p)\phi^{-1}f\pi_1 \) is the constant map on \(2^\N\) with  value \( (p)f \pi_1\). Hence
  \[(p,f)a_n\pi_1 = (p)f\pi_1 = (p)\phi ^ {-1}f\pi_1 \phi =  (p)\phi^{-1} f \pi_11_{2^\N} \phi 
  = \left(((p)\phi^{-1}f) \pentagram_{n, 1} (1_{2^\N}, \ldots, 1_{2^\N})\right)\phi
  \]
  for all $p\in 2 ^ \N$ and $f\in C(2 ^ \N, 2 ^ \N \times \{1, \ldots, n\})$. 
  
    Suppose that $n\in \N$  and that 
  \[M_n :C(2^\N)\times C(2^\N, 2^\N \times \{1,\ldots, n\})\to C(2^\N, 2^\N \times \{1,\ldots, n\})\]
  is defined by  $(f, g)M_n = f \circ g$. Then $ (f, g)M_n = f \circ g = \overline{f}\pentagram_{1, n} (g)$ where $\overline{f}: 2 ^ \N \to 2 ^ \N \times \{1\}$ is defined by $(p)\overline{f}= ((p)f, 1)$, and so the functions $M_n$ are continuous with respect to \(\mathcal{T}^{\dagger}\).
  Since \(\phi^{-1}\) and \(M_n\) are continuous, the maps \( (p, f) \mapsto (p)\phi^{-1}f\) and
  \( (p, f) \mapsto (1_{2^\N}, \ldots, 1_{2^\N})\) are continuous. Moreover, \(\pentagram_{n, 1}\) and \(\phi\) are continuous, and so \(a_n \pi_1\) is continuous also.
  
  Let \(q:2^\N \to \{1,\ldots, n\}\) be continuous and surjective, and  let \(c_{m}: 2 ^ \N \to 2 ^ \N\) be any constant map with value \(x_m \in (m)q^{-1}\) for all \(m\in \{1,\ldots, n\}\). Then
  \[ (p,f)a_n \pi_2 = (p)f\pi_2 = x_{(p)f \pi_2}q
  = c_{(p)f\pi_2} \phi q 
  = \left((p)\phi^{-1}f \pentagram_{n,1} (c_{1}, \ldots, c_{n})\right)\phi q\]
  for all $p\in 2^\N $ and \(f \in C(2^\N, 2^\N \times \{1, \ldots, n\})\).
   The proof that $a_n\pi_2$ is continuous is then similar to the proof that $a_n\pi_1$ is continuous.
\end{proof}

\bibliography{sim}{}
\bibliographystyle{hplain}

\end{document}